\tikzset{
  font={\fontsize{12pt}{12}\selectfont}}
\definecolor{chocolate}{rgb}{0, 0, 0}
\definecolor{forest}{rgb}{0, 0, 0}
\definecolor{blue}{rgb}{0, 0, 0}
\definecolor{red}{rgb}{0, 0, 0}
\definecolor{magenta}{rgb}{0, 0, 0}
\def\Ddots{\mathinner{\mkern1mu\raise\p@
\vbox{\kern7\p@\hbox{.}}\mkern2mu
\raise4\p@\hbox{.}\mkern2mu\raise7\p@\hbox{.}\mkern1mu}}
\newcommand{\suchthat}{\;\ifnum\currentgrouptype=16 \middle\fi|\;}
\def\mcB{{\mathcal B}}
\def\mbbR{\mathbb{R}}
\newcommand{\beq}{\begin{equation}}
\newcommand{\eeq}{\end{equation}}
\newcommand{\beqn}{\begin{equation*}}
\newcommand{\eeqn}{\end{equation*}}
\def\x{\times}
\def\mcO{\mathcal{O}}
\def\mcN{\mathcal{N}}
\def\mcS{\mathcal{S}}
\def\mcL{\mathcal{L}}
\def\reals{{{\rm l} \kern -.15em {\rm R} }}
\def\mbbP{\mathbb{P}}
\def\mbbH{\mathbb{H}}
\newtheorem{definition}{Definition}[section]
\newtheorem{theorem}{Theorem}[section]
\newtheorem{remark}[theorem]{Remark}
\newtheorem{exam}{Example}
\newcommand{\dd}{\; d}
\newcommand{\abs}[1]{\left|#1\right|}
\begin{document}

\begin{frontmatter}
\title{What Is the Fractional Laplacian? \\ {\large A Comparative Review with New Results}}

\author[brown]{Anna Lischke\corref{cor1}}
\author[brown]{Guofei Pang\corref{cor1}}
\author[brown]{Mamikon Gulian\corref{cor1}}
\author[brown]{Fangying Song}
\author[sandia]{Christian Glusa}
\author[brown]{Xiaoning Zheng}
\author[brown]{Zhiping Mao}
\author[smu]{Wei Cai}
\author[msu]{Mark M. Meerschaert}
\author[brown]{Mark Ainsworth}
\author[brown]{George Em Karniadakis\corref{cor2}}

\cortext[cor1]{\color{chocolate} These three authors contributed equally to this work.}
\cortext[cor2]{\color{chocolate} Corresponding author.}

\address[brown]{Division of Applied Mathematics, Brown University, Providence, RI 02912}
\address[sandia]{Center for Computing Research, Sandia National Laboratory, Albuquerque, NM 87123}
\address[smu]{Department of Mathematics, Southern Methodist University, Dallas, TX 75275}
\address[msu]{Department of Statistics and Probability, Michigan State University, East Lansing, MI 48824}

\begin{abstract}
{\color{chocolate} The fractional Laplacian in $\mbbR^d$, which we write as $(-\Delta)^{\alpha/2}$ with $\alpha \in (0,2)$, has multiple equivalent characterizations. Moreover, in bounded domains, boundary conditions must be incorporated in these characterizations in mathematically distinct ways, and there is currently no consensus in the literature as to which definition of the fractional Laplacian in bounded domains is most appropriate for a given application. The Riesz (or integral) definition, for example, admits a nonlocal boundary condition, where the value of a function must be prescribed on the entire exterior of the domain in order to compute its fractional Laplacian. In contrast, the spectral definition requires only the standard local boundary condition. These differences, among others, lead us to ask the question: ``What is the fractional Laplacian?" Beginning from first principles, we compare several commonly used definitions of the fractional Laplacian theoretically, through their stochastic interpretations as well as their analytical properties. Then, we present quantitative comparisons using a sample of state-of-the-art methods. We discuss recent advances on nonzero boundary conditions and present new methods to discretize such boundary value problems: radial basis function collocation (for the Riesz fractional Laplacian) and nonharmonic lifting (for the spectral fractional Laplacian).}

{\color{chocolate} In our numerical studies, we aim to compare different definitions on bounded domains using a collection of benchmark problems. We consider the fractional Poisson equation with both zero and nonzero boundary conditions, where the fractional Laplacian is defined according to the Riesz definition, the spectral definition, the directional definition, and the horizon-based nonlocal definition. We verify the accuracy of the numerical methods used in the approximations for each operator, and we focus on identifying differences in the boundary behaviors of solutions to equations posed with these different definitions. Through our efforts, we aim to further engage the research community in open problems and assist practitioners in identifying the most appropriate definition and computational approach to use for their mathematical models in addressing anomalous transport in diverse applications.}
\end{abstract}

\begin{keyword}
Fractional Laplacian; anomalous diffusion; regularity; stable L\'evy motion; nonlocal model
\end{keyword}

\end{frontmatter}

\pagebreak

\tableofcontents

\vfill
\break

\begin{table}[ht!]
\centering
\begin{tabular}{| p{.97\textwidth} |}
\hline
\textbf{Guide to our notation and terminology:} 

\\

We define the fractional Laplacian to be $(-\Delta)^{\alpha/2}$, where $\Delta = \partial^2/\partial x_1^2 + 
... + \partial^2/\partial x_d^2$. We take the fractional power of $(-\Delta)$ to obtain a positive operator. As a result, our definition of the fractional Laplacian $(-\Delta)^{\alpha/2}$ is the \emph{negative} generator of the standard isotropic $\alpha$-stable L\'evy process, and reduces to $-\Delta = -\partial^2/\partial x_1^2 - 
... - \partial^2/\partial x_d^2$ when $\alpha = 2$.

\ \\
In this work, the symbol $\alpha$ is always used as the fractional order. In particular, the fractional Laplacian is represented as $(-\Delta)^{\alpha/2}$ and $\alpha \in (0,2)$.\\
\ \\
The symbol $s$ is always used in the representation of a real-ordered Sobolev space, $H^s$, and is often used in this work to describe the regularity of the source function $f$ of a fractional Poisson equation. The symbol $s$ should not be confused with the fractional order of the Laplacian, as often appears in the literature. All {\color{chocolate} fractional} Sobolev spaces mentioned in this work are defined in Appendix \ref{sobolev_spaces}, {\color{chocolate} where basic properties such as the fractional trace theorem are reviewed.}\\
\ \\
In Section \ref{background}, we do not make a notational distinction between the definitions of the fractional Laplacian, as the definition should be clear from the context or can be understood from the subsection heading. \\
\ \\
\emph{Homogeneous} fractional Laplacians are defined in the context of zero boundary conditions, and \emph{inhomogeneous} fractional Laplacians are defined with nonzero boundary conditions. The type of boundary condition (in this work, Dirichlet or Neumann) is specified in the text. \\
\ \\
In the sections following Section \ref{background}, multiple definitions appear together for the purpose of comparison, so we use the following notation: \\
\hspace{10pt} $(-\Delta_{\text{Riesz}})^{\alpha/2}$ represents the \emph{Riesz} (or \emph{integral}) definition (see Section \ref{sec:Riesz}), \\
\hspace{10pt} $(-\Delta_{\text{spectral}})^{\alpha/2}$ represents the \emph{spectral} definition (see Section \ref{spectral}), and \\
\hspace{10pt} $(-\Delta_{\text{M}})^{\alpha/2}$ represents the \emph{directional} definition (see Section \ref{directional}). \\
\hline
\end{tabular}
\end{table}

\section{Introduction}\label{intro}

\subsection{Overview}
During the past few decades, scientists have been exploring fractional calculus as a tool for developing more sophisticated mathematical models that can accurately describe complex anomalous systems \cite{pozrikidis_book, bucur2016nonlocal, meerschaert_sikorskii, vazquez2017mathematical}. In particular, the fractional Laplacian has been used in place of the integer-order Laplacian in many applications, including, for example, the fractional models listed in Table \ref{example-equations}. The fractional Laplacian can be defined in $\mbbR^d$ in many equivalent ways \cite{Kwasnicki2017}; however, when these definitions are restricted to bounded domains, the associated boundary conditions lead to distinct operators. 

\begin{table}[ht!]
\centering
\begin{tabular}{l | c | c }
	\multicolumn{2}{c|}{Fractional Equation} & Domain \\
	\hline
	Diffusion-Reaction \cite{Yamamoto2012} & $\partial_t u + (-\Delta)^{\alpha/2} u + c(t,x) u = 0$ & $(0,+\infty) \times \mbbR^d$ \\ 
	& & \\
	Quasi-geostrophic \cite{Constantin1999} & $\partial_t \theta + u \cdot \nabla \theta + \kappa (-\Delta)^{\alpha/2} \theta = f$ & $[0,T] \times \mbbR^2$ \\ 
	& & \\
	Cahn-Hilliard \cite{Akagi2015,AinsworthMao2017,MR3672018} & $\partial_t u + (-\Delta)^{\alpha/2} (-\varepsilon^2 \Delta u + f(u)) = 0$ & $(0,T] \times (0,2\pi)^2$ \\
	& &\\
	 Porous Medium \cite{Akagi2015,dePablo2011} & $\partial_tu + (-\Delta)^{\alpha/2}(|u|^{m-1}\text{sign}u) = 0$ & $(0,+\infty) \times \mbbR^d$ \\
	 & & \\
	 Schr\"odinger \cite{Laskin2000} & $i\hbar\partial_t \psi = D_\alpha(-\hbar^2 \Delta)^{\alpha/2} \psi + V(r,t) \psi$ & $(r,t) \in \mbbR^3 \times (0,+\infty)$ \\
	 & & \\
	 Ultrasound \cite{treebycox2010,ChenHolm2004} & $\frac{1}{c_0^2}\partial_t^2 p = \nabla^2 p - \left\{ \tau \partial_t (-\Delta)^{\alpha/2} + \eta (-\Delta)^{(\alpha+1)/2}\right\}p$ & $(-\infty,+\infty) \times \mbbR^d$ 
\end{tabular}
\caption{Important equations involving the fractional Laplacian. \label{example-equations}}
\end{table}

The purpose of this work is two-fold: (i) to give a comprehensive report of the commonly used definitions of the fractional Laplacian and examine their differences in bounded domains, and (ii) {\color{chocolate} to quantitatively explore these differences in benchmark problems using a sample of state-of-the-art numerical methods that are described in a detailed and reasonably self-contained way. 
As research on numerical methods for approximating the fractional Laplacian is extensive and ongoing, we do not attempt to include a comprehensive description of all available numerical methods. Instead, the sample of methods chosen for the comparisons in this work reflects the expertise of the authors.
Of significance is the emphasis on boundary value problems with nonzero boundary conditions, including stochastic methods, and the inclusion of new methods for discretizing such problems}. This work {\color{chocolate} will} be of use to practitioners looking to gain insight into which fractional Laplacian definition and associated numerical methods may be appropriate for their application.

{\color{forest}
A number of articles which include comparisons of the different fractional Laplacians on bounded domains have appeared recently, such as those of Bonito et. al. \cite{nochetto_three}, Duo et. al. \cite{duo_wang_zhang}., and \v{C}iegis et. al. \cite{margenov}.} 
{\color{chocolate}The present article differs from these in that there is a focus on recent advances in boundary value problems with \emph{nonzero} boundary conditions and the \emph{inhomogeneous} fractional operators that such problems entail. In addition to a review of the theoretical advances in this area, we include a number of new results and methods, which we now summarize. In Section \ref{RBFM} a novel radial basis function collocation method is presented for the Riesz fractional Poisson problem with nonzero Dirichlet boundary conditions based on discretizing the directional representation with the vector Gr\"unwald-Letnikov (GL) formula. This method offers advantages in complex domains and high dimensions, and has a clear extension to more general, non-symmetric operators corresponding to non-isotropic L\'evy motion.  Moreover, the method is applicable in the case of nonzero boundary conditions, which is significant due to the relative scarcity of solvers for such boundary value problems.
In Section \ref{sec:inhomogeneous_spectral}, we show the equivalence of recently proposed definitions of \cite{AntilPfeffererRogovs} and \cite{Cusimano2017} for the inhomogeneous spectral fractional Laplacian, and we provide a new equivalent characterization via the \emph{inverse} Laplacian. The equivalence of these approaches allows us to conclude that the problem of defining the inhomogeneous \emph{spectral} fractional Laplacian, and posing boundary value problems with it, has largely been solved. In addition, in Section \ref{sec:nonharmonic_lifting}, we introduce a new method of nonharmonic lifting for the Poisson problem with nonzero boundary conditions for the spectral fractional Laplacian. 
}

{\color{blue}
Another goal of this article is to illuminate the physical meaning of the different definitions of the fractional Laplacian in bounded domains through their associated stochastic processes. In particular, we discuss the fact that these differing definitions can be interpreted through different ways of applying boundary conditions to $\alpha$-stable L\'evy processes. We discuss this in \ref{sec:relation_to_levy}, \ref{killed_levy_motion}, and \ref{sec:subordinate_BM}, and compare the resulting stochastic processes and their operators in \ref{Summary}. This is most easily summarized for Dirichlet boundary conditions, where the stochastic picture involves two successive modifications of Brownian motion: stopping (when the motion exits the domain $\Omega$) and subordination (a stochastic time change by the standard $\alpha$-stable subordinator, a strictly increasing jump process). These modifications do not commute, leading to two distinct stochastic processes depending on the order in which the modifications are performed \cite{Song, GuanMa2006}. Each corresponds to a distinct fractional Laplacian operator. 
The spectral fractional Laplacian is the negative infinitesimal generator of subordinate stopped Brownian motion (Section \ref{sec:subordinate_BM}). Since paths of Brownian motion are continuous, stopped Brownian paths stop at $\partial \Omega$, and therefore subordinate stopped Brownian paths also stop at $\partial \Omega$, despite being discontinuous in the interior of $\Omega$. Thus, a local boundary condition prescribed only on $\partial \Omega$ is sufficient for a spectral Laplacian model of anomalous diffusion. In contrast, the Riesz fractional Laplacian is the negative infinitesimal generator of stopped subordinate Brownian motion (i.e., stopped $\alpha$-stable L\'evy motion; Sections \ref{sec:relation_to_levy} and \ref{killed_levy_motion}), which represents particles that are stopped upon exiting the domain via a jump over the boundary. Hence, conditions prescribed merely on the boundary of $\Omega$ are not sufficient to describe the behavior of particles that are exiting the domain, and instead an \emph{exterior condition} on the behavior of the process within {\color{magenta}$\mathbb{R}^d \setminus \Omega$} must be given to {\color{magenta}obtain} a physically meaningful model.
The relation to L\'evy processes is more than just conceptual; we use a recent stochastic solution method, the walk-on-spheres algorithm of \cite{kyprianou2016unbiasedwalk}, to solve the Riesz fractional Poisson equation, and we use the resulting data in our comparisons. 
}

{
\color{chocolate}
After considering a one-dimensional example below to illustrate some significant differences between the Riesz and spectral fractional Laplacians on bounded domains, the remainder of the article is organized as follows. In Section \ref{background}, we provide theoretical background on the fractional Laplacian definitions studied in this work, first in $\mbbR^d$ and then in bounded domains. In Section \ref{sec:num_meth}, we present the numerical methods used in this work, followed by comparisons of the solutions of two-dimensional benchmark problems with zero Dirichlet boundary conditions in Section \ref{sec:num_comp}. In Section \ref{sec:nonzerobcs}, we present some comparisons for nonzero Dirichlet boundary conditions. Section \ref{conclusion} contains a summary of the numerical methods discussed in this work, along with a discussion of our results and observations.
}

\subsection{\color{chocolate} Motivating Example}
{\color{chocolate}
To motivate the present study, we consider some one-dimensional benchmark problems involving different definitions of the fractional Laplacian, which are defined and discussed in detail in Section \ref{background}. Later in this work, we will return to this problem in higher dimensions and in different domains.
Consider the one-dimensional fractional Poisson problem on an interval $\Omega = (-L,L)$:
\begin{align}
\label{fracPoisson}
	(-\Delta)^{\alpha/2} u(x) &= f(x), \hspace{15pt} x \in \Omega,
\end{align}
with zero Dirichlet boundary conditions and $\alpha \in (0,2)$. Importantly, we consider two cases for the operator $(-\Delta)^{\alpha/2}$: the Riesz fractional Laplacian (introduced in Section \ref{sec:Riesz}) and the spectral fractional Laplacian (introduced in Section \ref{spectral}). The formulation of the zero Dirichlet boundary conditions depends on the definition of the fractional Laplacian. For the Riesz fractional Laplacian \eqref{E:riesz_definition_omega}, the boundary condition is formulated as $u(x) = 0$ in $\mbbR \setminus (-L,L)$, and for the spectral fractional Laplacian \eqref{E:spectral_omega}, the boundary condition is $u(\pm L) = 0$. The reasons for these formulations are discussed in Sections \ref{RieszDef} and \ref{SpectralDef}. The benchmark problems are posed with source functions $f = 1$ and $f = \sin(\pi x)$ on $\Omega$. Below, we observe that the spectral and Riesz solutions evolve in different ways as the fractional order $\alpha$ is changed, and that these evolutions are dependent on the size of the interval $\Omega$. Additionally, we discuss the differing behaviors of the solutions to the benchmark problems near the endpoints of the interval $\Omega$.}

\textbf{Case 1: $f(x) = 1$.} 
To discretize the spectral definition, we use the discrete eigenfunction method described in Sec. \ref{SEM}, and the Riesz fractional Poisson equation is solved numerically using the one-dimensional spectral method of Ref. \cite{MCS2016APPNM}. We plot numerical solutions of the fractional Poisson equation for both the spectral and Riesz definitions in Figure \ref{1dcomparisons} with various values of the fractional order $\alpha$. We observe from Figure \ref{1dcomparisons} that the maximum value of the Riesz solution at $x = 0$ does not vary in a monotone fashion as $\alpha$ ranges from $1.99$ to $0.01.$ Indeed, from $\alpha = 1.99$ to $\alpha = 0.5$, the maximum value increases, and from $\alpha = 0.5$ to $\alpha = 0.1$, the maximum value decreases. In contrast, the maximum values of the solutions corresponding with the spectral definition increase in a monotone fashion with $\alpha$.

If we instead defined Eq. \eqref{fracPoisson} on $\mbbR$, we would expect the solutions for different fractional Laplacian definitions to be the same, as these definitions are equivalent on $\mbbR$. This observation leads to the following question: is this monotonicity property of the solutions to Eq. \eqref{fracPoisson} affected by changing the size of the computational domain? To investigate this, we solved the same fractional Poisson problems (using both the spectral and Riesz definitions) but changed the length of the interval $\Omega$. {\color{blue} In this example, we denote the solutions to Eq. \eqref{fracPoisson} by $u_L$, to make the dependence on $L$ explicit. 
Figure \ref{figmax} includes plots of the trajectories $M_L(\alpha) = \max(u_L)$ of the maximum values of the solutions $u_L$ for $\alpha \in (0,2]$ and for different lengths $L$ of the interval $\Omega$ (each curve corresponds to a different value of $L$).  
Using a change of variables, one can show that the solution $u_L(x)$, for both the spectral and the Riesz fractional Laplacian in Eq. \eqref{fracPoisson}, has the property $u_L(x) = L^\alpha u_1 (x/L)$. However, as $u_1$ itself depends on $\alpha$, the interaction beteen the factors $L^\alpha$ and $u_1(x/L)$ leads to a switch in the trajectories $M_L(\alpha)$ from monotonically decreasing behavior, to non-monotonic behavior, and finally to monotonically increasing behavior as $L$ increases. As is clear in Figure \ref{figmax}, this occurs at different values of $\alpha$ for the different fractional Laplacians (Riesz and spectral). 
Thus, we observe that the size of the computational domain affects the behavior of the solutions in relation to the fractional order $\alpha$, and it does so in different ways for the different fractional Laplacian definitions.
}

In Figure \ref{fig:1d_diff1}, we plot the differences $u_{\text{Riesz}} - u_{\text{spectral}}$ of solutions to \eqref{fracPoisson} on $\Omega = (-1,1)$ with $f =1$ for different values of $\alpha$. We observe that boundary layers in the differences start to form as $\alpha$ drops below $1$, {\color{chocolate} becoming particularly sharp and developing singularities in their derivatives as} $\alpha$ approaches zero. This behavior can be understood by examining boundary regularity of solutions arising from the two different fractional Laplacian definitions, which we discuss in detail in Section \ref{sec:regularity}. Futhermore, we notice that the differences are nonnegative in all of $\Omega$, indicating that the Riesz solutions, for any value of $\alpha \in (0,2)$, 
{\color{blue} lie above} the spectral solutions. {\color{blue} This is consistent with the theoretical result that the inverse Riesz fractional Laplacian minus the inverse spectral fractional Laplacian (for zero Dirichlet boundary conditions) is positivity preserving \cite{MUSINA20161667}.} When the problem has nonzero boundary conditions, this {\color{blue}property need not hold}, as discussed in Section \ref{NonzeroComparison}. We also observe this property in the two-dimensional {\color{blue} zero Dirichlet boundary condition} comparisons in Section \ref{sec:num_comp}.

\begin{figure}[ht!]
\centering
 \subfloat[]{
\begin{minipage}[]{.5\textwidth}\centering
\includegraphics[scale=.36]{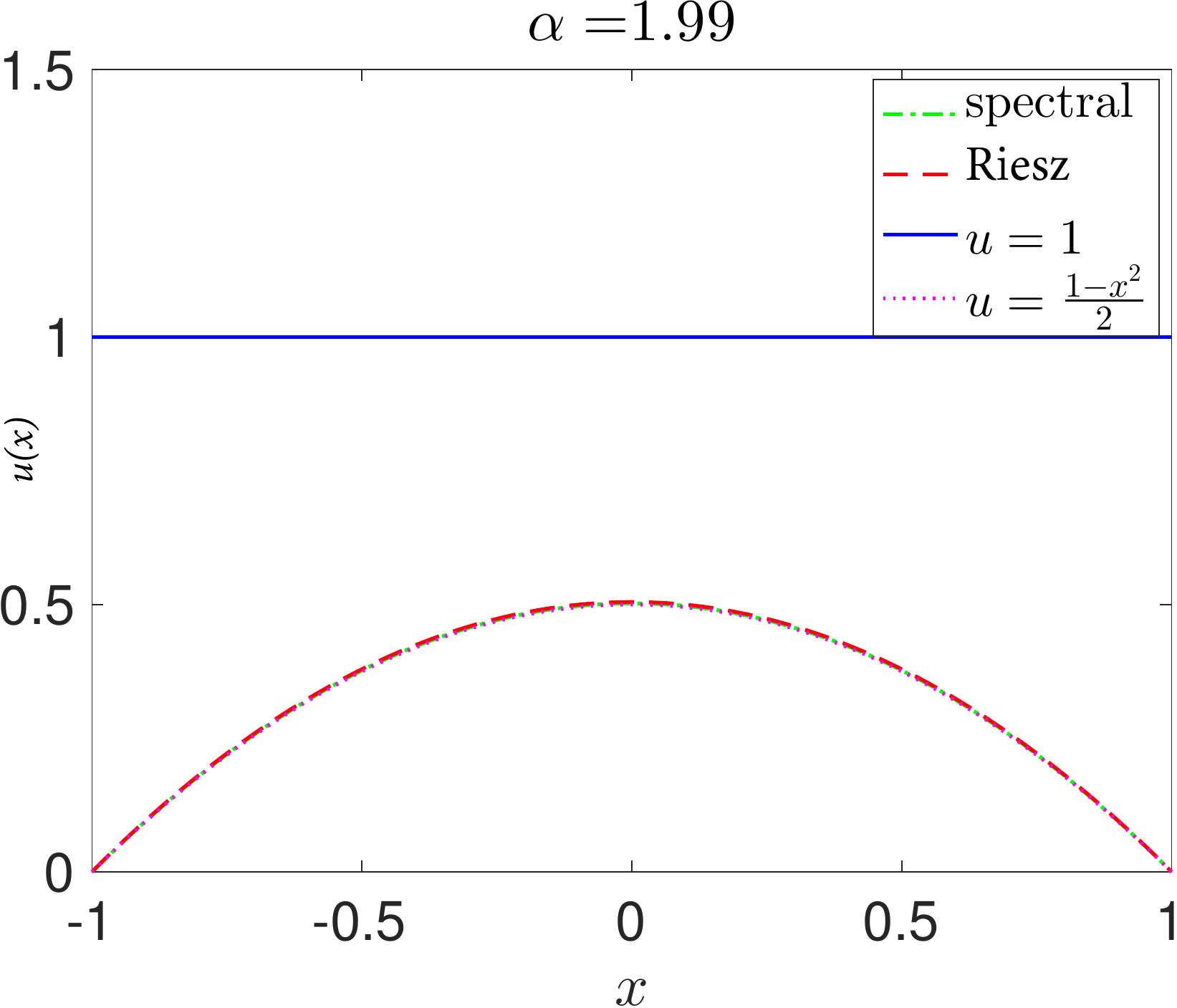}
\end{minipage}}
 \subfloat[]{
\begin{minipage}[]{.5\textwidth}\centering
\includegraphics[scale=.36]{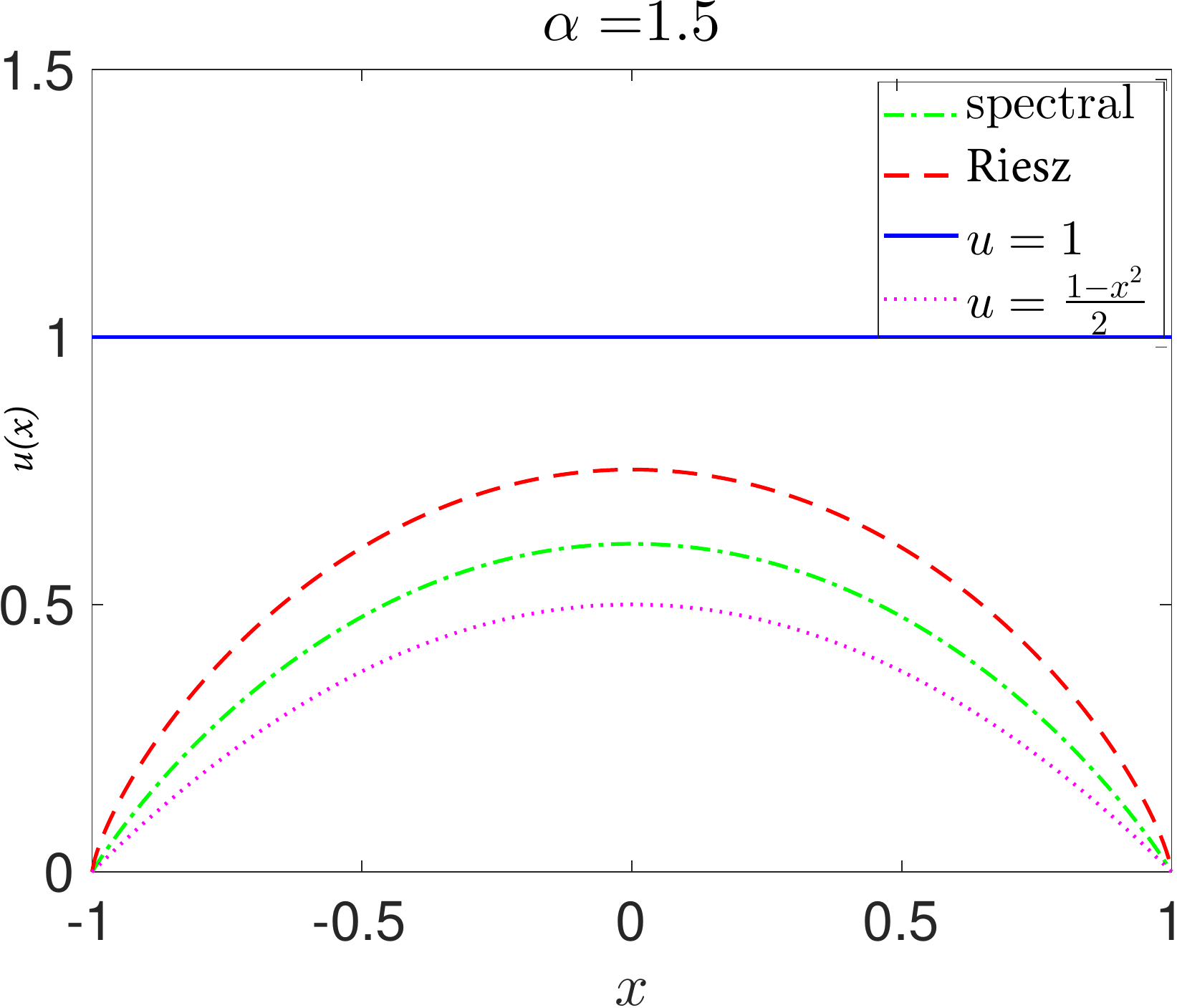}
\end{minipage}}\\
 \subfloat[]{
\begin{minipage}[]{.5\textwidth}\centering
\includegraphics[scale=.36]{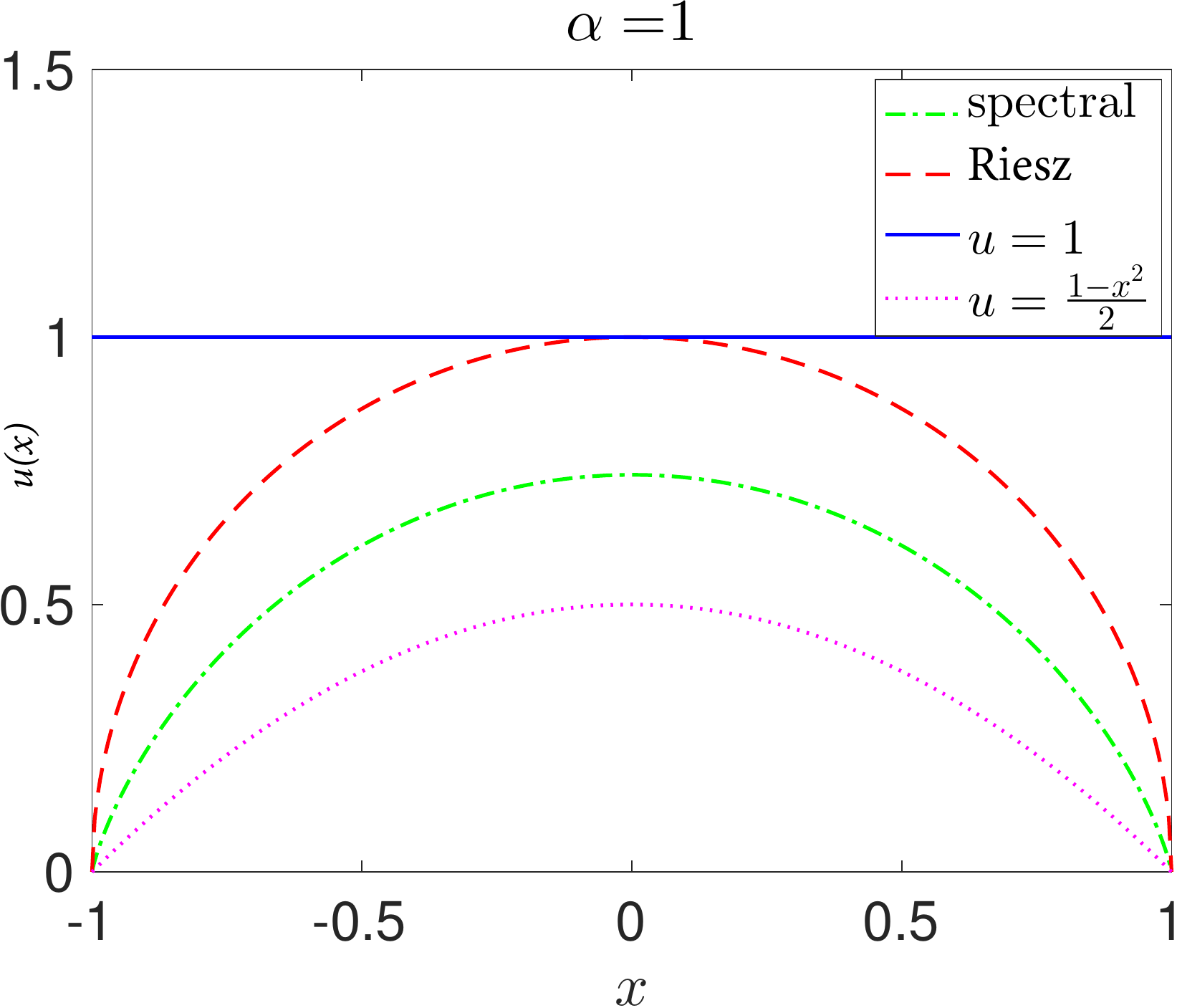}
\end{minipage}}
 \subfloat[]{
\begin{minipage}[]{.5\textwidth}\centering
\includegraphics[scale=.36]{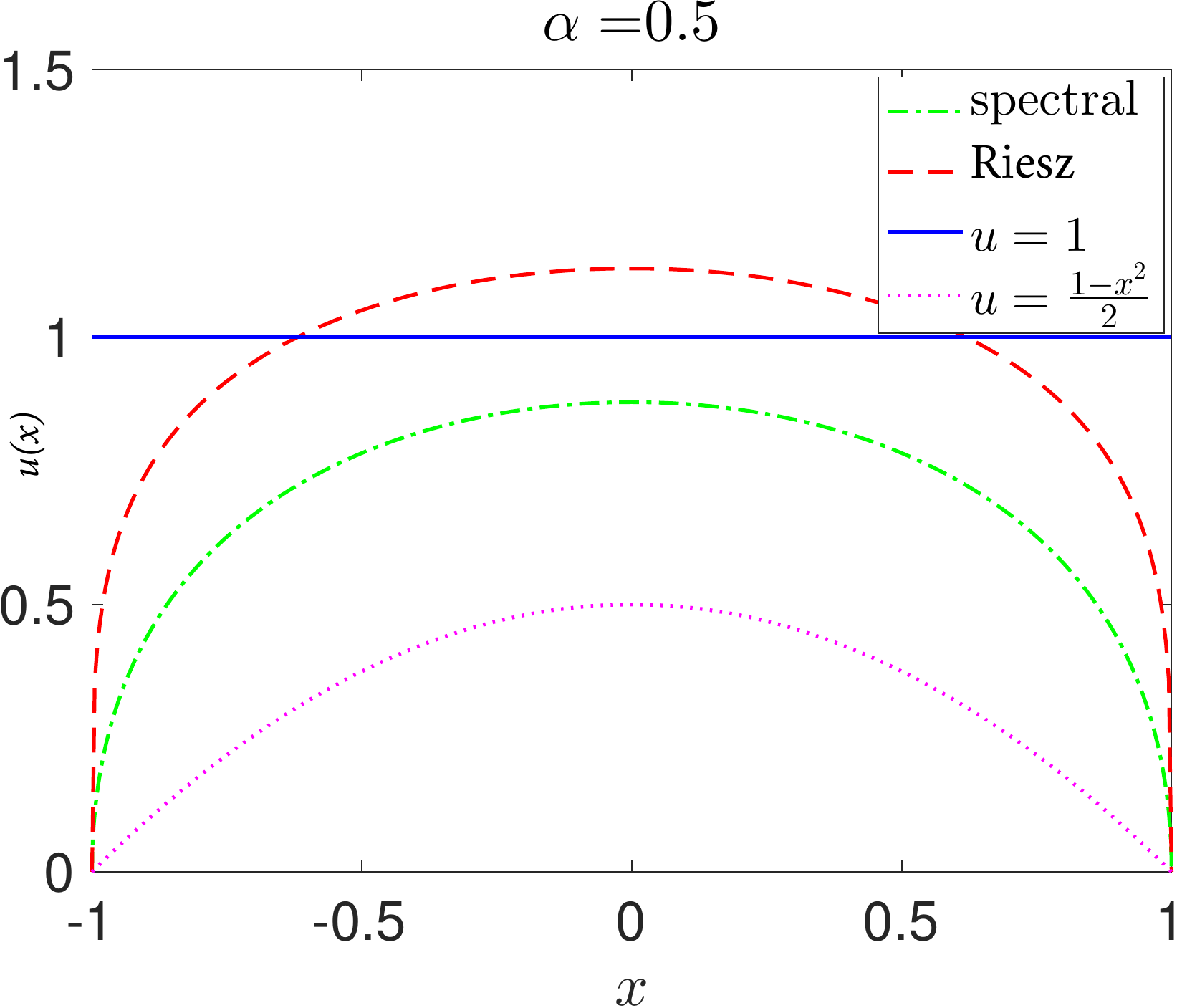}
\end{minipage}}\\
 \subfloat[]{
\begin{minipage}[]{.5\textwidth}\centering
\includegraphics[scale=.36]{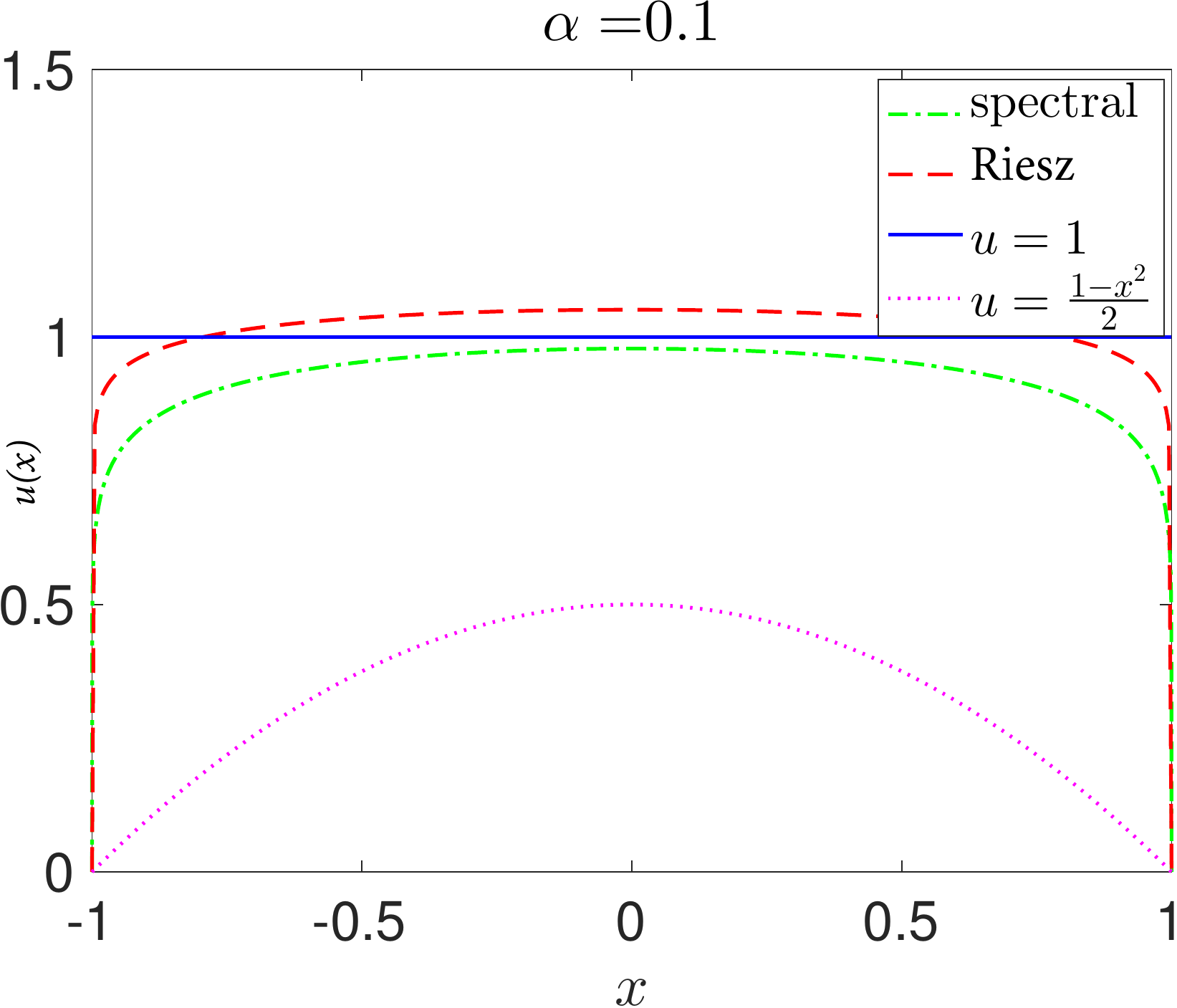}
\end{minipage}}
 \subfloat[]{
\begin{minipage}[]{.5\textwidth}\centering
\includegraphics[scale=.36]{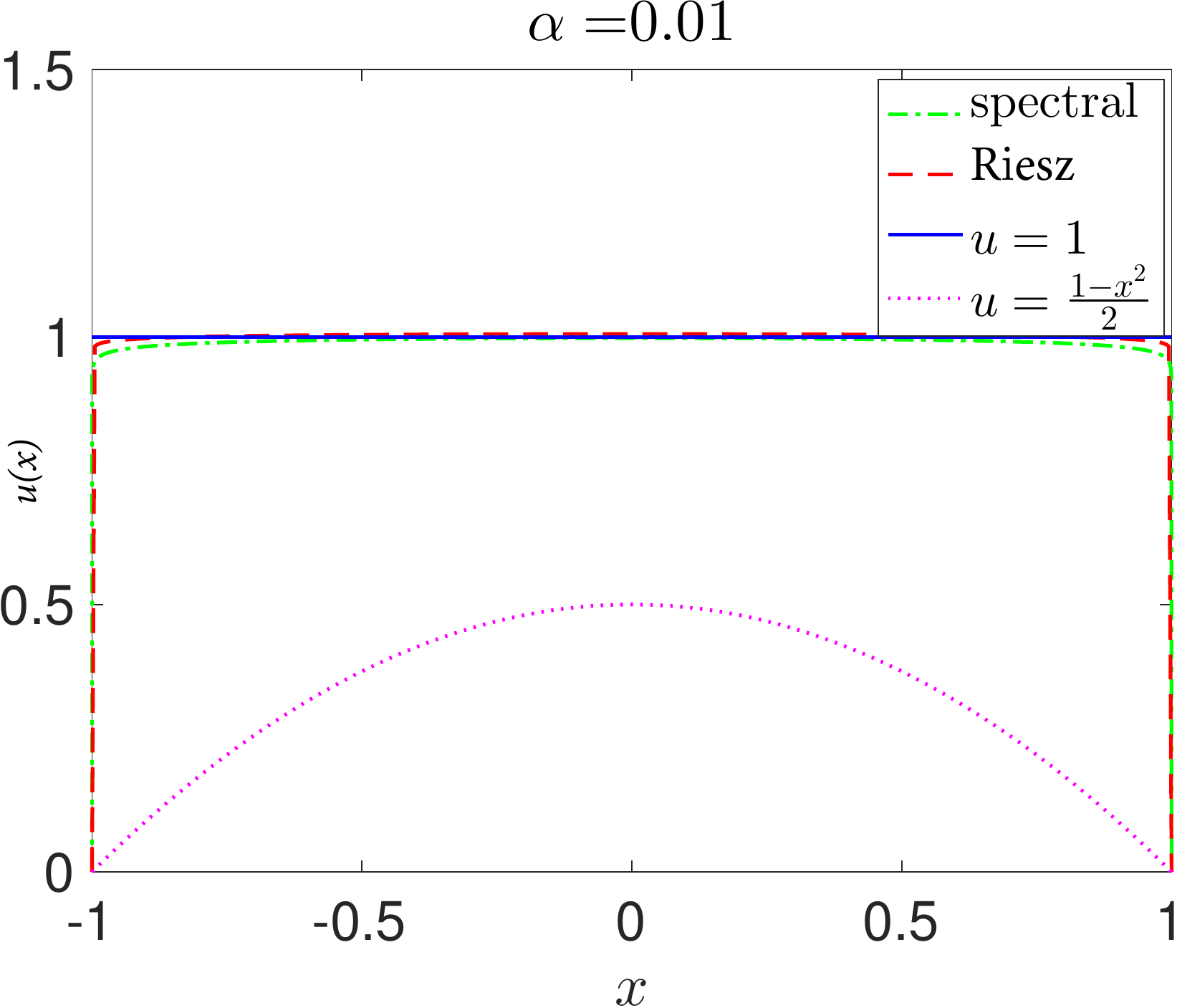}
\end{minipage}}
\caption{{One-dimensional study:} The profiles of the numerical solutions of {\color{chocolate} \eqref{fracPoisson}} for different fractional orders $\alpha$: (a)-(e) for $\alpha=1.99,1.5,1.0,0.5,0.1,0.01$. The pink dotted curve corresponds to the solution in the case $\alpha = 2$, and the blue line represents the (discontinuous) solution for $\alpha = 0$, and are included for reference. Note that the Riesz solution has greater amplitude than the spectral solution, and this amplitude increases above $u = 1$ as $\alpha$ goes to zero before decreasing to one in the plot for $\alpha = 0.01$. \label{1dcomparisons}}
\end{figure}

\begin{figure}[ht!]
 \centering
 \includegraphics[height=0.30\textheight]{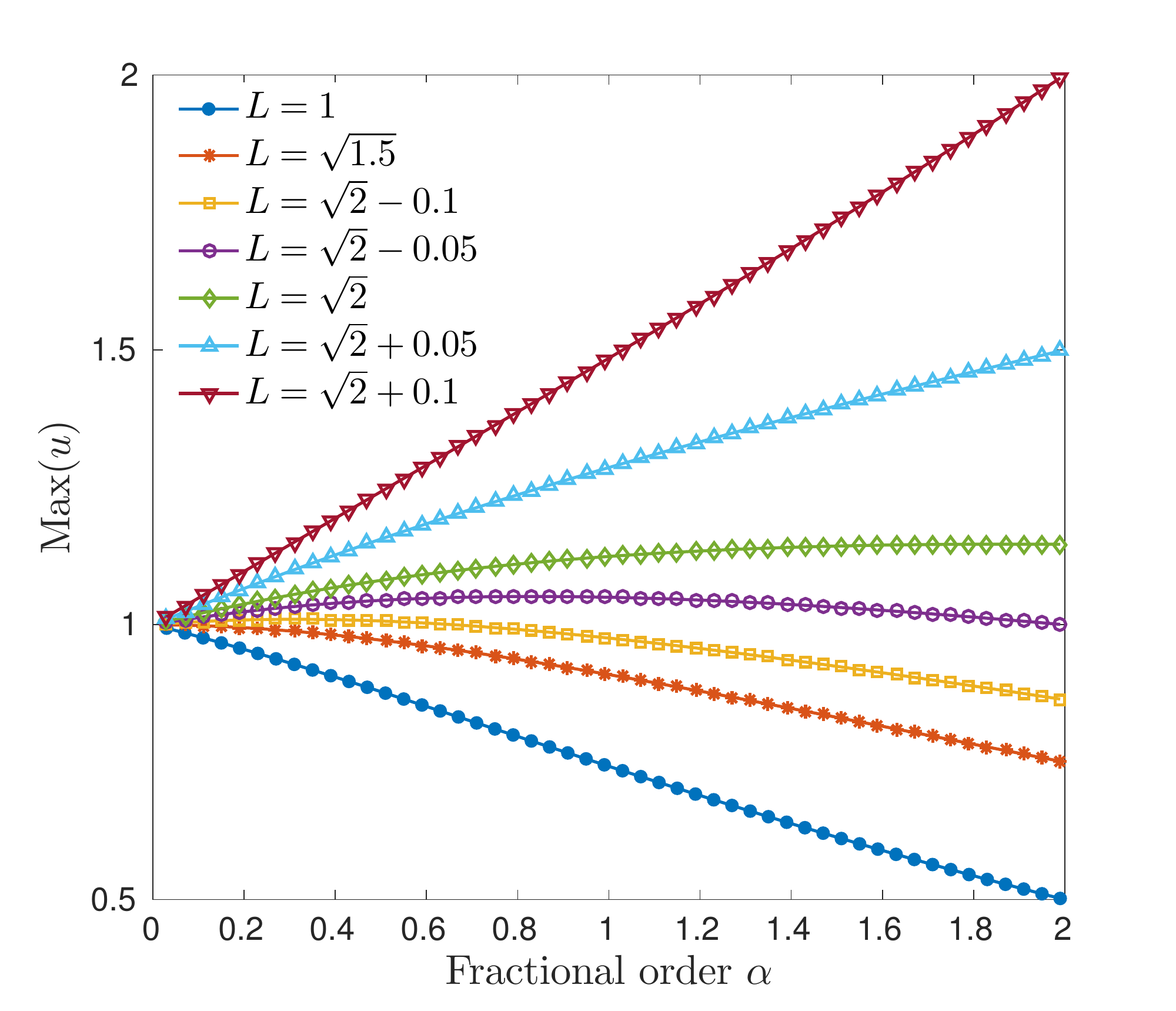}
 \includegraphics[height=0.30\textheight]{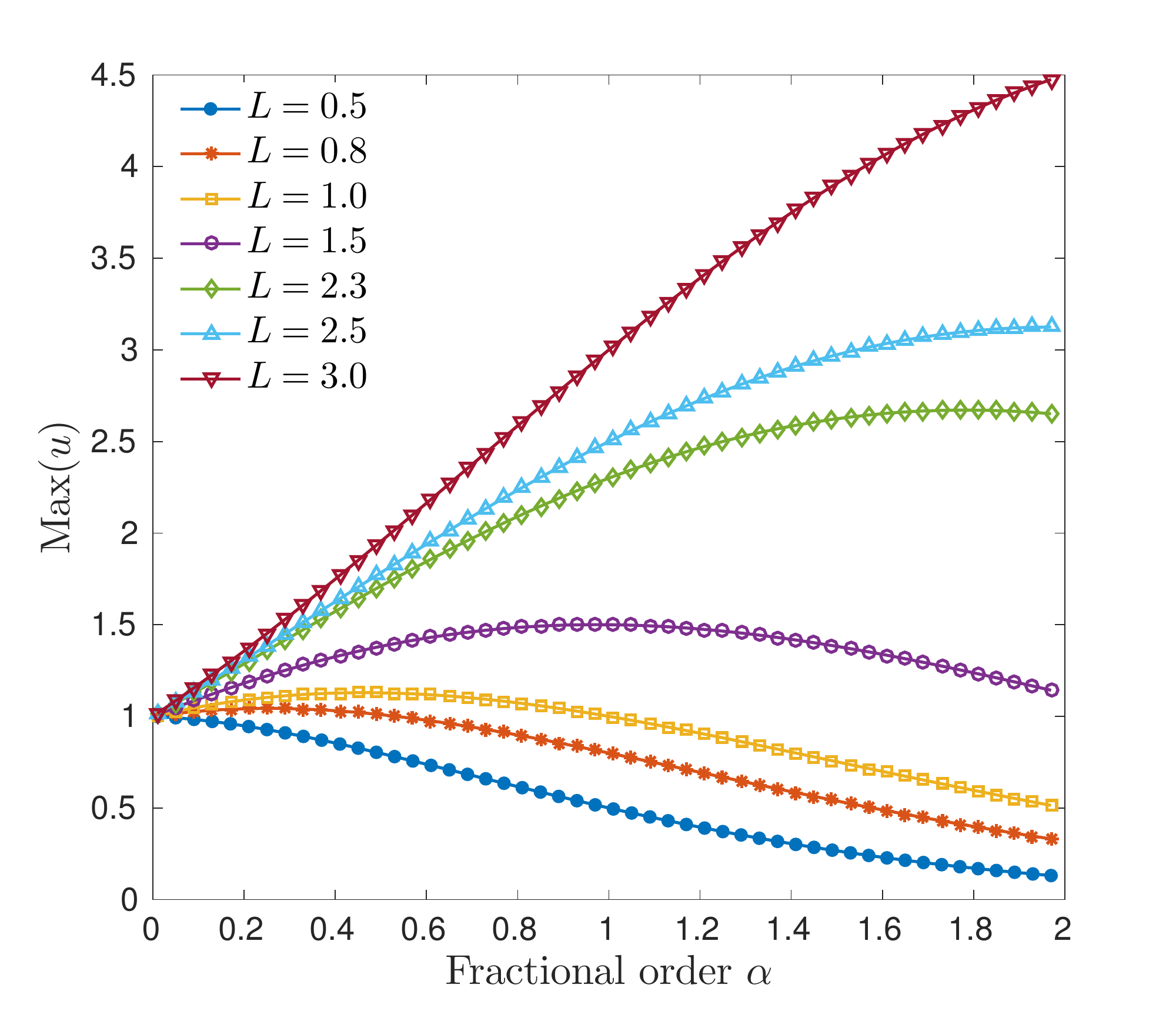}
 \caption{{Monotonicity Study}: (\emph{left}) The maximum value of the numerical solutions for the spectral fractional Poisson equation {\color{chocolate} \eqref{fracPoisson}} in the interval {\color{chocolate} $(-L,L)$} for $\alpha\in(0,2)$ with $f(x) = 1$. (\emph{right}) The maximum value of the numerical solutions for the Riesz fractional Poisson equation in the interval $(-L,L)$ for $\alpha \in (0, 2)$ with $f = 1$.}
 \label{figmax}
\end{figure}

\begin{figure}[ht!]
\centering
\includegraphics[width=.5\textwidth]{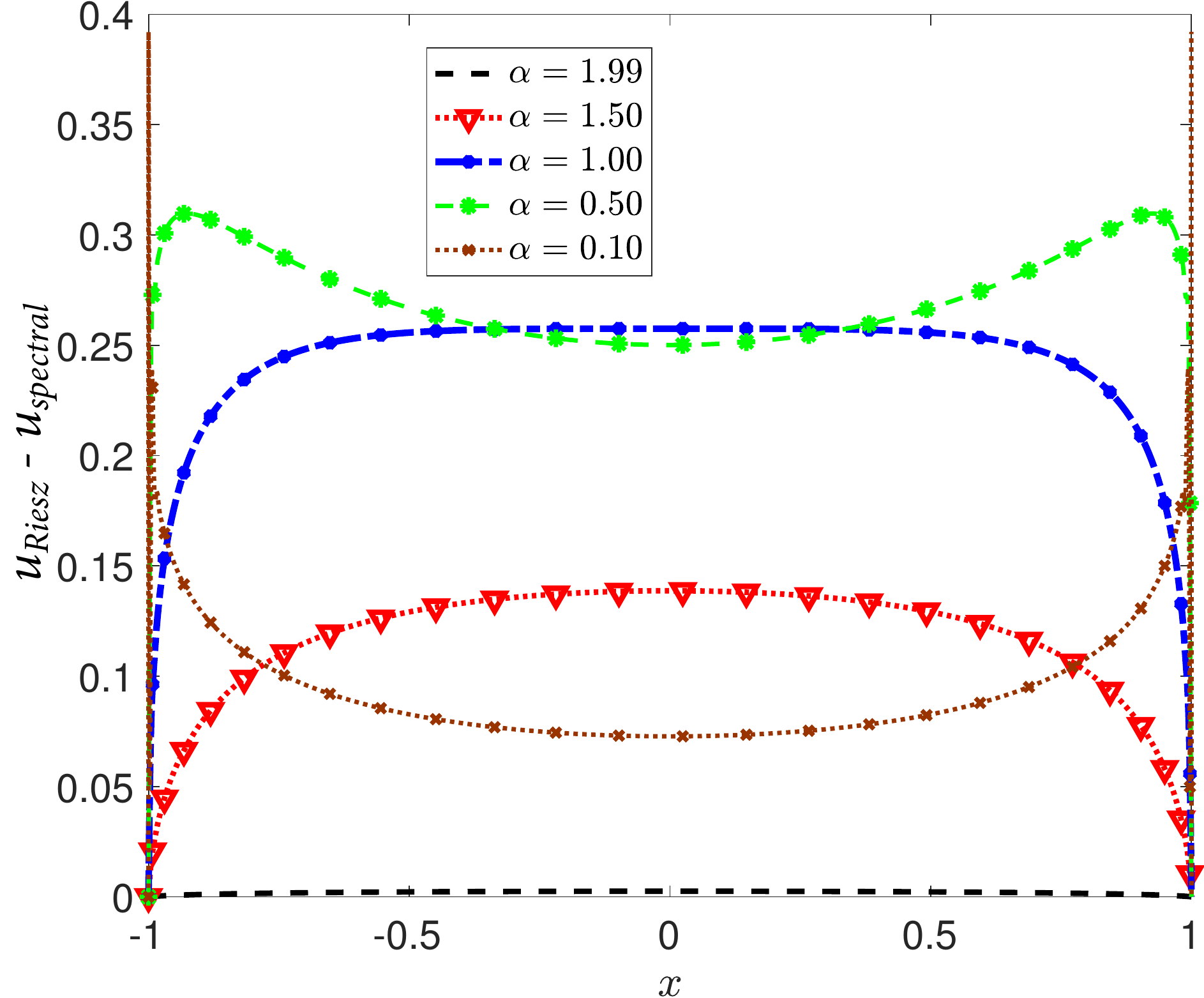}
\caption{{Plot of differences} between the spectral solutions and the {\color{chocolate} Riesz} solutions of {\color{chocolate} \eqref{fracPoisson}} for the right-hand-side $f = 1$ on the interval $(-1,1)$. For values of $\alpha$ between $1$ and $2$, the difference is greatest in the interior of the interval. When $\alpha < 1,$ a boundary layer forms and sharpens as $\alpha \rightarrow 0$.\label{fig:1d_diff1}}
\end{figure}

\textbf{Case 2: $f(x) = \sin(\pi x)$.} 
The solution to the Riesz fractional Poisson equation is computed using a spectral method \cite{MCS2016APPNM}, and the solution to the spectral Poisson equation is computed using the discrete eigenfunction method discussed in {\color{chocolate} Section} \ref{SEM}. The solutions are shown in Figure \ref{fsincomp}. The interesting feature to note is the boundary layer in the Riesz solutions that sharpens as $\alpha \rightarrow 0$ in comparison with the smooth behavior of the spectral solutions at the boundaries. Since the source function in this example, $f = \sin(\pi x)$, is an eigenfunction of the spectral Laplacian, the spectral solution is analytic in $\Omega$ and no boundary layer forms. For the Riesz solution, however, {\color{chocolate} the boundary regularity decreases with $\alpha$}, resulting in the boundary singularities observed in Fig. \ref{fsincomp}. In fact, for smooth source functions that satisfy the zero boundary conditions, we can always expect this difference in behaviors near the boundary.  On the other hand, it is possible to achieve a singular (at the boundaries) solution using the spectral definition if the source function itself is sufficiently singular; see Section \ref{sec:spectral_regularity}. This is a useful observation for modeling anomalous diffusion systems, since the Riesz definition may be a better choice to model data that exhibits such a boundary layer, given a smooth forcing function.

\FloatBarrier

\begin{figure}[htbp]
  \centering
  \includegraphics[height=0.25\textheight]{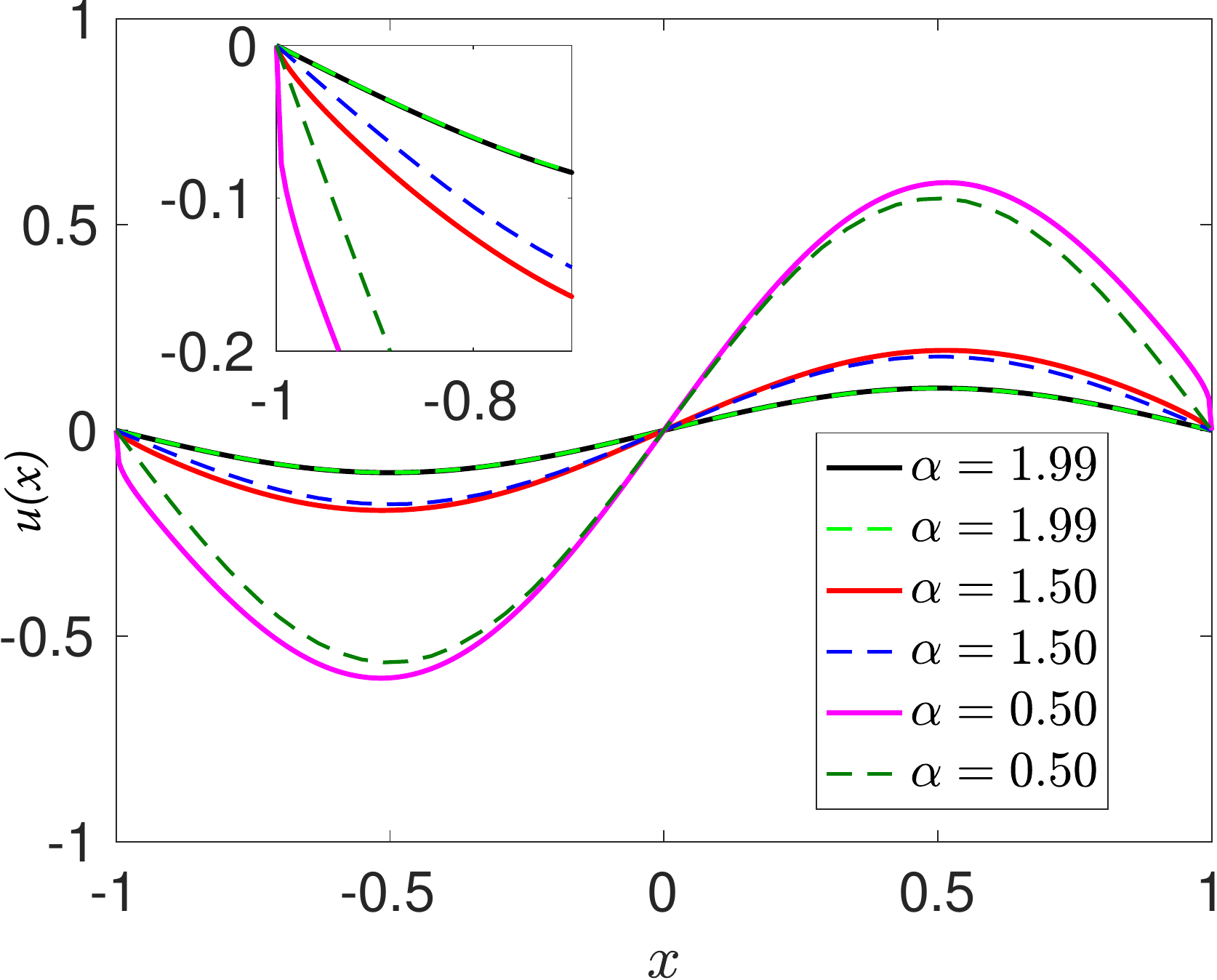}
   \includegraphics[height=0.25\textheight]{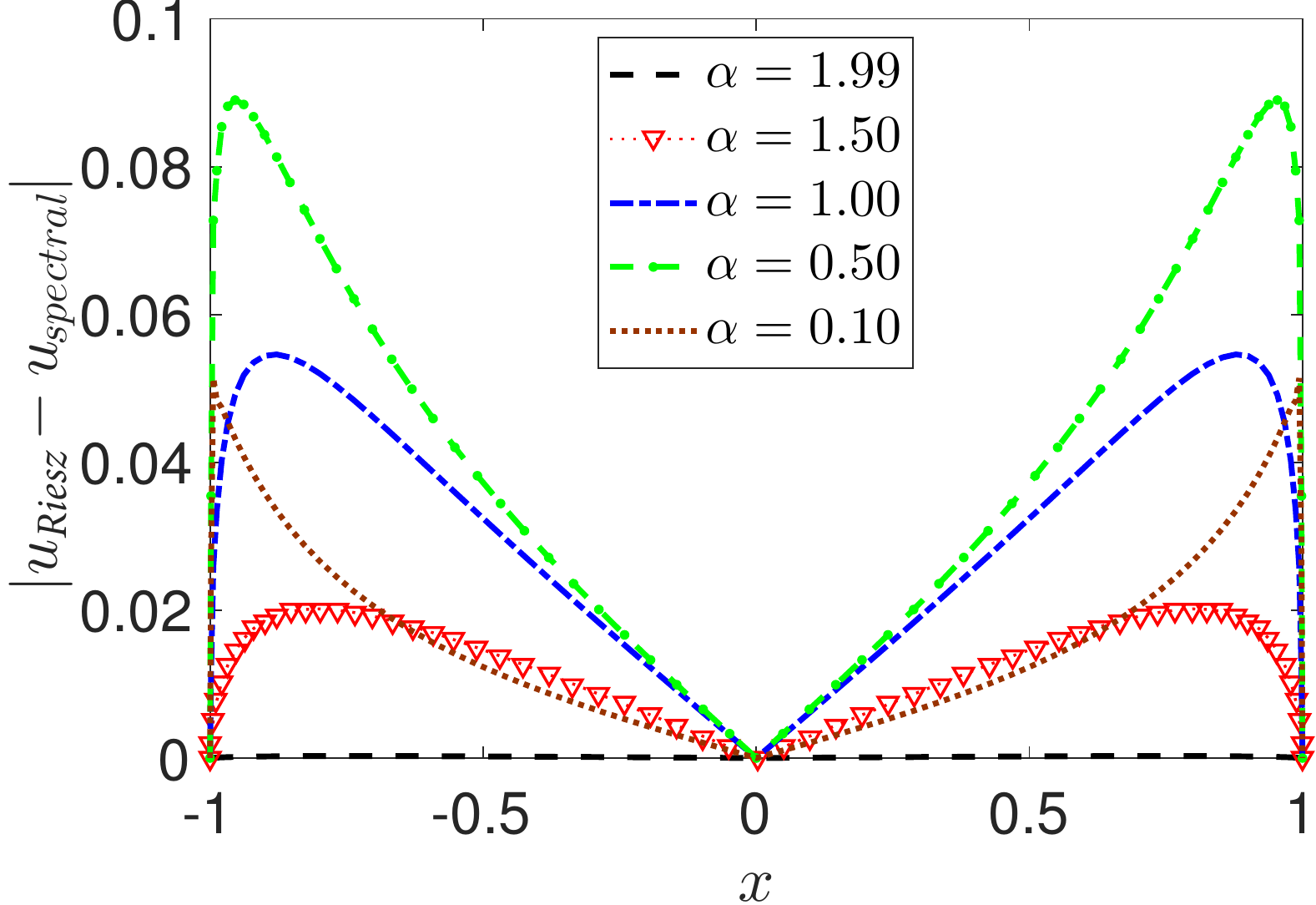}
  \caption{{One-dimensional study:} (\emph{left}) The \emph{numerical solutions} of both the spectral and Riesz fractional Poisson equations with $f=\sin(\pi x)$. The solid curves are solutions of the spectral Poisson equation computed with the spectral element method where $\mathcal{N}=64$. The dashed curves represent the solutions of the Riesz fractional Poisson equation which are computed using a finite element approximation.  (\emph{right}) The \emph{absolute differences} between the spectral solutions $u_{\text{spectral}}$ and the Riesz solutions $u_{\text{Riesz}}$ of their respective fractional Poisson equations for the smooth right-hand-side $f=\sin(\pi x)$ on the interval $(-1,1)$.}
  \label{fsincomp}
\end{figure}

\section{Definitions and Properties of Fractional Laplacians\label{background}}

\hrule
\vspace{1em}
\noindent \textbf{Section Overview}\\[0.2em]
\indent To describe and compare the different definitions of the fractional Laplacian considered in this work from a theoretical perspective, we review the derivations, regularity properties, and stochastic connections for the Riesz, spectral, directional, and regional definitions of the fractional Laplacian. We first discuss these different characterizations in $\mbbR^d$, including their derivations, equivalence, and relations to L\'evy processes. Next, we describe the breakdown in the equivalence when these characterizations are restricted to a bounded domain with either zero or nonzero boundary conditions. We provide a summary of the stochastic processes for which each fractional Laplacian (with the associated implicit boundary conditions) is the infinitesimal generator. This information is useful for developing Feynman-Kac type formulas for solving fractional elliptic/parabolic problems, {\color{chocolate} as in the} walk-on-spheres algorithm {\color{chocolate} discussed in Section \ref{sec:wos}}. Furthermore, we describe extension methods that have been used to reformulate the spectral fractional Laplacian, which make more conventional computational techniques useful in discretizing the differential equations involving these operators. We also summarize {\color{chocolate} well-posedness and }regularity properties of each operator.
\vspace{1em}
\hrule
\vspace{2em}

\subsection{The Fractional Laplacian on $\mathbb{R}^d$}

Although the fractional Laplacian on $\mathbb{R}^d$ is not considered in the numerical experiments contained in this work, the characterizations discussed in this section will lead to the definitions of the most commonly used fractional Laplacians (Riesz, directional, and spectral) in a bounded domain. Thus, this section serves as a basis for the entire article. For {\color{chocolate} proofs} that the characterizations we consider in this article, as well as other characterizations, are equivalent in $\mbbR^d$, see \cite{Kwasnicki2017}.

\subsubsection{Spectral/Fourier Definition}\label{Spectral/Fourier Definition}
We wish to construct a fractional power of the Laplacian, $(-\Delta)^{\alpha/2}$, for $0 < \alpha < 2$. A general approach to define the positive real powers $L^\rho$, with $\rho \in [-1,1]$, of a positive self-adjoint linear
operator $L$, such as $L = -\Delta$, is facilitated by the spectral {\color{chocolate} theorem. This result states that} for a self-adjoint, densely defined linear operator $L$ (not necessarily bounded) on a Hilbert space $\mathcal{H}$,
\begin{equation}\label{E:dense_subspace}
L: \mathcal{D}(L) \rightarrow \mathcal{\mathcal{H}} \text{,} \quad \mathcal{D}(L)\text{ a dense subspace of } \mathcal{H},
\end{equation}
there is a projection-valued measure $E_\lambda$ such that
\begin{equation}\label{E:spectral_theorem}
L = \int_{\lambda \in \sigma(L)} \lambda\, dE_\lambda \text{\quad on $\mathcal{D}(L) \subset \mathcal{H}$.}
\end{equation}
Here, $E_\lambda$ is the unique operator-valued spectral measure (resolution of the identity) associated to $L$, and ${\sigma(L) \subset \mathbb{R}}$ is the spectrum of ${L}$, which is the support of $E_\lambda$. 
See Reed \& Simon (\cite{simon}, p. 263) or Rudin (\cite{rudin}, p. 368)  for a full discussion of the spectral theorem for self-adjoint operators. 
{\color{chocolate}The dense domain $\mathcal{D}(L)$ is often referred to as a 
\emph{core} for the operator $L$ \cite{ethier2009markov}, and may be chosen to be smaller and more convenient space than a ``maximal'' domain of definition of $L$}. 
Using the above spectral representation, powers of the operator $L$ of order 
 $-1 \le \alpha/2 \le 1$
can be defined as the self-adjoint operator
\begin{equation}\label{E:spectral_power}
L^{\alpha/2} = \int_{\lambda \in \sigma(L)} \lambda^{\alpha/2} dE_\lambda.
\end{equation}
The domain of the operator $L^{\alpha/2}$ can then be extended to $\mathcal{H}$ by continuity. 

We wish to consider $L = -\Delta$ on $\mathbb{R}^d$ on the Sobolev space $\mathcal{H} = H^2(\mathbb{R}^d)$, {\color{chocolate} with domain the dense subspace $\mathcal{D}(-\Delta) = {\color{chocolate}C_0^\infty(\mathbb{R}^d) \subset \mathcal{H}}$ taken with the $\mathcal{H}$-norm. }
This gives
${-\Delta  = \int_{\sigma(-\Delta)} \lambda dE_\lambda}$ from \eqref{E:dense_subspace} and \eqref{E:spectral_theorem}. For a regular domain $\Omega$, 
the spectrum $\sigma(-\Delta)$ is a point spectrum, i.e., consisting
entirely of eigenvalues \cite{simon}, so one can think of ${dE_\lambda}$ as being, for each $\lambda$, 
a projection operator onto the eigenspace of ${\lambda}$.
Then, from \eqref{E:spectral_power}, the fractional Laplacian on $\mathbb{R}^d$ is defined by
\begin{equation}\label{E:spectral_general}
(-\Delta)^{\alpha/2} := \int_{\sigma(-\Delta)} \lambda^{\alpha/2} dE_\lambda.
\end{equation}
Let us briefly discuss the use of $-\Delta$ instead of $\Delta$ above. 
Since ${\Delta = \frac{\partial^2}{\partial x_1^2} + ... + \frac{\partial^2}{\partial x_d^2}}$ has negative 
eigenvalues, if we were to push such an operator through the above machinery, the resulting fractional operator would have complex eigenvalues. For this reason, the spectral theorem is usually applied to positive-definite operators. 

To make the definition \eqref{E:spectral_general} more explicit, the spectrum ${\sigma(-\Delta)}$ must be known exactly. 
On ${\mathbb{R}^d}$, this spectrum
consists of eigenvalues ${|\xi|^2}$, where $\xi \in \mbbR^d$, with corresponding generalized eigenfunctions
${e^{-i\xi \cdot x}}$. 
Thus, the projection valued measure is given on {\color{blue} $\mathcal{D}(-\Delta)$} by
\begin{align}
dE &= \frac{1}{(2\pi)^d}( \cdot, e^{-i \xi \cdot x} ) e^{i \xi \cdot x} d\xi,
\end{align}
where $( u, v ) = \int uv dx$ denotes the $L^2$ inner product on $\mathbb{R}^d$. The scale factor $1/(2\pi)^d$ is required so that $\int dE_\lambda = I$ (where $I$ is the identity).
The fractional Laplacian in ${\mathbb{R}^d}$ can therefore be written
as\footnote{
\color{blue}
A alternate statement of the spectral theorem can be made which involves representations of operators as multiplication operators. From that point of view, this result is not surprising, as $\mathcal{F}$ is precisely the unitary transformation $\mathcal{H} \rightarrow L^2$ specified in that theorem that diagonalizes the Laplacian, turning it into a multiplication operator (\cite{simon}, p. 260). 
}
\begin{equation}\label{E:definition_spectral}
(-\Delta)^{\alpha/2} u(x) = \frac{1}{(2\pi)^d} \int_{\color{chocolate}\mathbb{R}^d} |\xi|^{\alpha}  ( u , e^{-i \xi \cdot x} ) e^{i \xi \cdot x} d\xi
= \mathcal{F}^{-1} \left\{ |\xi|^{\alpha} \mathcal{F}\{u\}(\xi) \right\} (x),
\end{equation}
where $\mathcal{F}$ and $\mathcal{F}^{-1}$ denote the Fourier and inverse Fourier transforms, respectively\footnote{
Here, we use the convention $\mathcal{F}\{u\}(\xi)=\frac{1}{(2\pi)^{d/2}}\int_{\mathbb{R}^d} u(x) e^{- i \xi \cdot x} dx$, and 
$\mathcal{F}^{-1}\{\hat{u}\}(x)= \frac{1}{(2\pi)^{d/2}}\int_{\mathbb{R}^d} \hat{u}(\xi) e^{i \xi \cdot x} d\xi$.
}. 
Thus, we see that $(-\Delta)^{\alpha/2}$, defined by \eqref{E:spectral_general}, is a \emph{Fourier multiplier operator} with symbol ${|\xi|^{\alpha}}$, i.e., 
\begin{equation}\label{multiplier}
\mathcal{F} \left\{ (-\Delta)^{\alpha/2}u \right\} (\xi) = |\xi|^{\alpha} \mathcal{F}\{u\}(\xi),
\end{equation}
which generalizes the well-known Fourier multiplier property of ${-\Delta}$. 
Many authors use this relation to define the fractional Laplacian as a pseudodifferential operator \cite{stein}. The drawback {\color{chocolate} of taking this as a starting point} is that
the Fourier transform is no longer available for bounded domains, although the functional calculus approach \eqref{E:spectral_general} using the spectral theorem is applicable for the case of zero boundary conditions. Of course, in that setting, the Hilbert space $\mathcal{H}$, spectrum $\sigma(-\Delta)$, and measure ${dE_\lambda}$ must be taken accordingly{\color{chocolate}, as discussed in Section \ref{SpectralDef}}.

\subsubsection{Singular Integral Representation}\label{sec:singular_integral}

The fractional Laplacian can be expressed directly as a singular integral in real space ${\mathbb{R}^d}$, rather than as a ${2d}$-integral in both real and frequency space, as in Equation \eqref{E:definition_spectral}. This article focuses on positive powers $0 \le \alpha \le 2$, but the negative $\alpha$ case bears mentioning in connection with this goal. 

For  ${-d < \alpha < 0}$, i.e., for fractional \emph{inverse} Laplacians,
the multiplier ${ |\xi|^{\alpha}}$ is decaying and has a Fourier inverse in the sense of distributions:
$\mathcal{F}^{-1} \{  |\xi|^{\alpha} \} = C(d,\alpha)|x|^{-d-\alpha}$ (see Stein \cite{stein} or Landkof \cite{landkof}). The constant $C(d,\alpha)$ is given by 
\begin{equation}\label{E:constant}
C(d,\alpha) = \frac{2^{\alpha} \Gamma\left(\frac{\alpha}{2}+\frac{d}{2}\right)}{\pi^{d/2}
{\color{chocolate} |}
\Gamma\left(-\frac{\alpha}{2}
\right)
{\color{chocolate} |}
}.
\end{equation}
Then, \eqref{multiplier} and the convolution property of the Fourier transform 
imply that the fractional inverse Laplacian is given, for $-2 < \alpha < 0$, by
\begin{align}
\begin{split}
\label{E:riesz_potential}
(-\Delta)^{\alpha/2} u(x) &= C(d,\alpha) |x|^{-d-\alpha} * u (x) = C(d,\alpha) \int_{\mathbb{R}^d}\frac{u(y)}{|x-y|^{d+\alpha}} dy =: I_{-\alpha}u(x).
\end{split}
\end{align}
This results in a {\color{chocolate} well-defined} function if $u(x)$ is, say, a smooth function with {\color{chocolate} sufficient decay} (Joshi \& Freidlander \cite{joshi} or Reed \& Simon \cite{simon}). 
This operator ${I_{-\alpha}}$ is known as the Riesz potential, the properties of which
(such as $L^p$ boundedness) are discussed at length
in Stein \cite{stein}. The Riesz potential {\color{chocolate} is an important tool} in harmonic analysis and the analysis of linear PDEs \cite{hormander}.
 
For $0 < \alpha < 2$ (the fractional Laplacians in which we are interested), the above derivation fails because the inverse Fourier transform of the symbol 
$ |\xi|^{\alpha} $
no longer exists, even as a distribution. In addition, the representation \eqref{E:riesz_potential} does not continue for $\alpha>0$, since the singularity would no longer be
integrable. However, starting from the negative $\alpha$ case
\eqref{E:riesz_potential}, a nice argument that can be found in Landkof (\cite{landkof}, p. 45) 
based on analytic continuation in $\alpha$ of $(-\Delta)^{\alpha/2} u(x)$, for fixed $u$ and $x$, yields the following real-space formula for the fractional Laplacian:
\begin{align}
\label{E:riesz_Laplacian_rn}
(-\Delta)^{\alpha/2} u &= C(d,\alpha) \text{   p.v.}\int_{\mathbb{R}^d} \frac{u(x) - u(y)}{|x-y|^{d+\alpha}}dy.
\end{align}
The constant $C(d,\alpha)$ is the same as in Eq. \eqref{E:constant}, and
 ``p.v.'' denotes the principal value of the integral:
\begin{equation*}
\text{   p.v.}\int_{\mathbb{R}^d} \frac{u(x) - u(y)}{|x-y|^{d+\alpha}}dy
=
\lim_{\epsilon \rightarrow 0} \int_{\mathbb{R}^d \setminus {\color{chocolate} B_\epsilon(x)}} \frac{u(x) - u(y)}{|x-y|^{d+\alpha}}dy,
\end{equation*}
where $B_\epsilon{\color{chocolate}(x)}$ is a ball of radius $\epsilon$ {\color{chocolate} centered at $x$}. The difference $u(x)-u(y)$ in the numerator of \eqref{E:riesz_Laplacian_rn}, which vanishes at the singularity, provides a regularization, which together with averaging of positive and negative parts allows the principal value to exist, e.g., for smooth $u$ with sufficient decay. 

The relation between the Riesz potential and the fractional Laplacian is discussed in detail in \cite{samko1993fractional}, Sections 5.25 and 5.26, {\color{chocolate} where it is shown that}
\begin{align}
	(-\Delta)^{\alpha/2} I_{\alpha} u &= u.
\end{align}
This identity leads to the representation of the fractional Laplacian directly in terms of the Riesz potential:
\begin{equation}\label{e:laplacian_in_terms_of_riesz_potential}
(-\Delta)^{\alpha/2}u(x) = 
-\Delta {I_{2-\alpha}} u(x).
\end{equation}

\subsubsection{Via the standard Laplacian: Elliptic Extension, Heat Semigroup, \& Balakrishnan Formula}\label{s:via_standard_laplacian}

Next, we describe three representations of the fractional Laplacian 
$(-\Delta)^{\alpha/2}$ of a function $u(x)$ on $\mathbb{R}^d$ that require the solution of equations involving the standard Laplacian, albeit in the $(d+1)$-dimensional half-plane. The first is the \emph{extension method}, or the {\color{chocolate} \emph{Dirichlet-to-Neumann map}}, which requires the solution of a degenerate \emph{elliptic} equation in the half-plane using $u(x)$ as the Dirichlet boundary data, followed by a type-of normal derivative of the solution. The second is the \emph{heat semigroup method}, which requires the solution of a \emph{parabolic} equation -- the simple heat equation -- in the half-plane with $u(x)$ as the intial condition, followed by long-time integration. The third is the Balakrishnan formula, which expresses the fractional Laplacian in terms of the resolvent $(sI - \Delta)^{-1}$. 

The \emph{extension method} is based on the following result.
Given a function $u(x)$ on $\mathbb{R}^d$, consider the extension $\tilde{u}(x,y)$ on ${ \mathbb{R}^d \times [0, \infty) }$ that solves
\begin{align}
\begin{split}\label{E:degenerate_elliptic}
\Delta_x \tilde{u} + \frac{1-\alpha}{y} \partial_y \tilde{u} + \partial^2_y \tilde{u} &= 0 \\
\tilde{u}(x,0) &= u(x).
\end{split}
\end{align}
Then
\begin{equation}
(-\Delta)^{\alpha/2} u(x)  = c \lim_{y \rightarrow 0}\frac{u(x,y) - u(x,0)}{y^{\alpha}}
\end{equation}
for a certain constant $c$ that depends on $d$ and $\alpha$. 

The above statement is taken directly from Caffarelli and Silvestre \cite{CaffarelliSilvestre2007_ExtensionProblemRelatedToFractionalLaplacian}, which is the most widely read source for the extension method. The extension method was reported as early as 1968 by Molchanov and Ostrovskii, in their studies of symmetric stable processes \cite{Molchanov_Ostrovskii}. The result was also used by other authors (e.g., \cite{deblassie_exit_time}), before it was systematically addressed by Caffarelli and Silvestre.  

The \emph{heat semigroup representation} of the the fractional Laplacian uses the solution of 
the heat equation in ${\mathbb{R}^d \times [0,\infty)}$: 
\begin{equation}\label{heatsg_integral}
(-\Delta)^{\alpha/2} u(x) = \frac{1}{\Gamma(-\alpha)} \int_0^\infty  \left( e^{t\Delta} u(x) - u(x) \right) 
\frac{dt}{t^{1+\alpha/2}}.
\end{equation}
Here, ${e^{t\Delta}}$ is the propagator of the heat equation, i.e., ${w(x,t) = e^{t\Delta}u(x)}$ is the 
solution of the {\color{chocolate} problem} 
{\color{blue}
\begin{align}
\begin{split}\label{heatsg_heateqn}
\partial_t w - \Delta w &= 0  \text{  on $\mathbb{R}^d \times [0,\infty) $}  \\
w(x,t = 0) &= u. 
\end{split}
\end{align}
}
The family $\{e^{t\Delta}\}$ is called the heat semigroup. See \cite{stinga_thesis} and \cite{StingaTorrea2010_ExtensionProblemHarnacksInequalityFractionalOperators} for a full discussion. 
{ \color{chocolate} An implementation of the formula \eqref{heatsg_integral} on $\mathbb{R}^d$
was studied in \cite{stinga_5_nonlocal}. }

The \emph{Balakrishnan formula}, {\color{blue} introduced in \cite{balakrishnan1960}, is a result from spectral theory and the theory of semigroups. This formula for 
the fractional Laplacian is}
\begin{equation}\label{E:balakrishnan_rn}
(-\Delta)^{\alpha/2} u(x) = \frac{\sin(\alpha \pi/2)}{\pi} 
\int_0^\infty  \Delta (sI - \Delta)^{-1} u (x) s^{\alpha/2 - 1}ds. 
\end{equation}

For a further discussion of these characterizations in $\mathbb{R}^d$, we refer to \cite{Kwasnicki2017} and references therein. {\color{chocolate} Although fractional Laplacians in bounded domains will be introduced in the next subsection, we preface that discussion by pointing out that analogues of these methods hold in bounded domains, depending on the fractional Laplacian being considered.
In the case of the spectral fractional Laplacian for functions that satisfy zero Dirichlet boundary conditions, the characterization via an elliptic extension holds with the half-plane being replaced by a cylinder over the original domain \cite{stinga_thesis,StingaTorrea2010_ExtensionProblemHarnacksInequalityFractionalOperators}. 
The Balakrishnan formula and the closely related Dunford-Taylor formula (for the inverse spectral fractional Laplacian) are valid and have efficient numerical implementations \cite{bonito2015,bonito2017}. 
The heat kernel formula has been found to be valid even for nonzero boundary conditions \cite{Cusimano2017}. 
See sections \ref{homo_representations} and \ref{sec:heat_semigroup} for further discussion and references to proofs and implementations of these methods. 
}
\subsubsection{Relation to L\'evy processes}\label{sec:relation_to_levy}

The fractional Laplacian is connected to \emph{anomalous} diffusion, which accounts for 
much of the interest in modeling with fractional equations. Just as the Laplacian 
is the negative generator of Brownian motion 
{(\color{chocolate} scaled by $\sqrt{2}$)}, the fractional Laplacian is the 
infinitesimal generator of a standard isotropic $\alpha$-stable L\'evy motion $X_t^\alpha$, {\color{chocolate} which can be expressed by}
\begin{equation}\label{e:generator_levy}
-(-\Delta)^{\alpha/2} f(x) = \lim_{h \rightarrow 0} \frac{\mathbb{E} \left[ f(x-X^\alpha_h) - f(x) \right]}{h}.  
\end{equation}
{\color{blue} This connection is explained more fully in the last two paragraphs of this section.}
This process $X^\alpha_t$ is a L\'evy process (\cite{meerschaert_sikorskii}, p.\ 100) in which the increments are drawn from a spherically-symmetric $\alpha$-stable distribution (\cite{meerschaert_sikorskii}, Ex. 6.24). This process can be viewed as the long-time scaling limit of a random walk with power law jumps (\cite{meerschaert_sikorskii}, Theorem 6.17). For ${\alpha = 2}$, ${X^\alpha_t}$ reduces to {\color{chocolate} scaled }Brownian motion {\color{chocolate} $X^2_t = \sqrt{2} B_t$,}
with the $2$-stable distribution being the normal distribution 
{\color{chocolate} $\mathcal{N}(0,\sigma^2 = 2)$.}
For $\alpha < 2$, the $\alpha-$stable distribution exhibits heavy tails and infinite variance. Moreover, the mean is finite if and only if $\alpha>1$. The L\'evy process $X_t^\alpha$ has superdiffusive scaling $X_{ct}^\alpha \sim c^{1/\alpha}X_t^\alpha$, and exhibits long, infinite-variance jumps when $\alpha < 2$. As an example, a superdiffusing cloud of particles would spread in space like $t^{1/\alpha}$. In many ways, $\alpha$-stable L\'evy flights are the simplest generalization of Brownian motion.
As a result of \eqref{e:generator_levy}, the fractional Laplacian naturally appears in macroscopic governing equations of systems of particles undergoing $\alpha$-stable L\'evy motion, making it a powerful and useful generalization \cite{meerschaert_sikorskii}. In Figure \ref{fig:2dLevyBM_comparison}, we include examples of 2D isotropic stable L\'evy motion and standard Brownian motion.

\FloatBarrier

\begin{figure}[ht!]
  \centering
  \includegraphics[width=.45\textwidth]{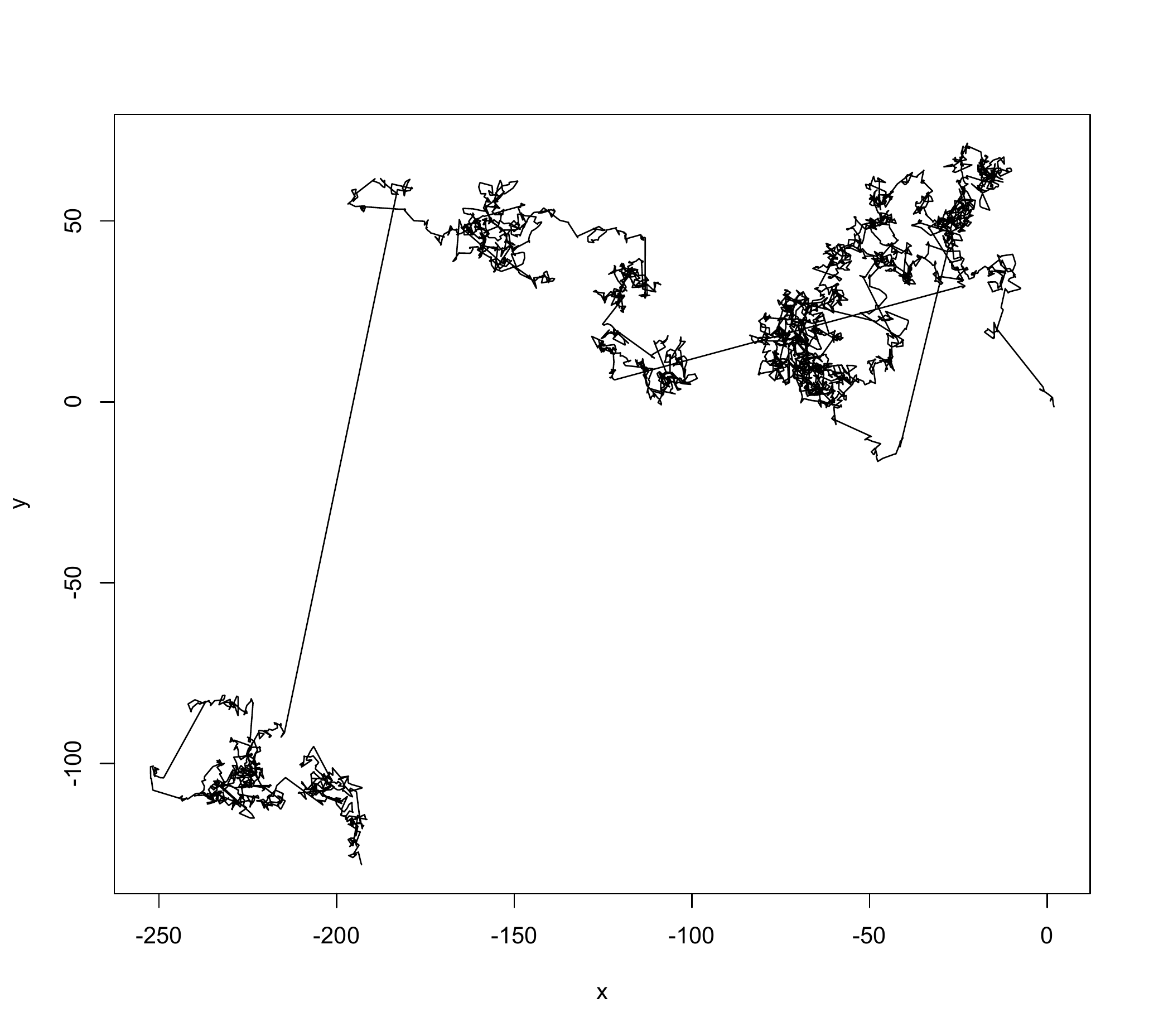}
  \includegraphics[width=.45\textwidth]{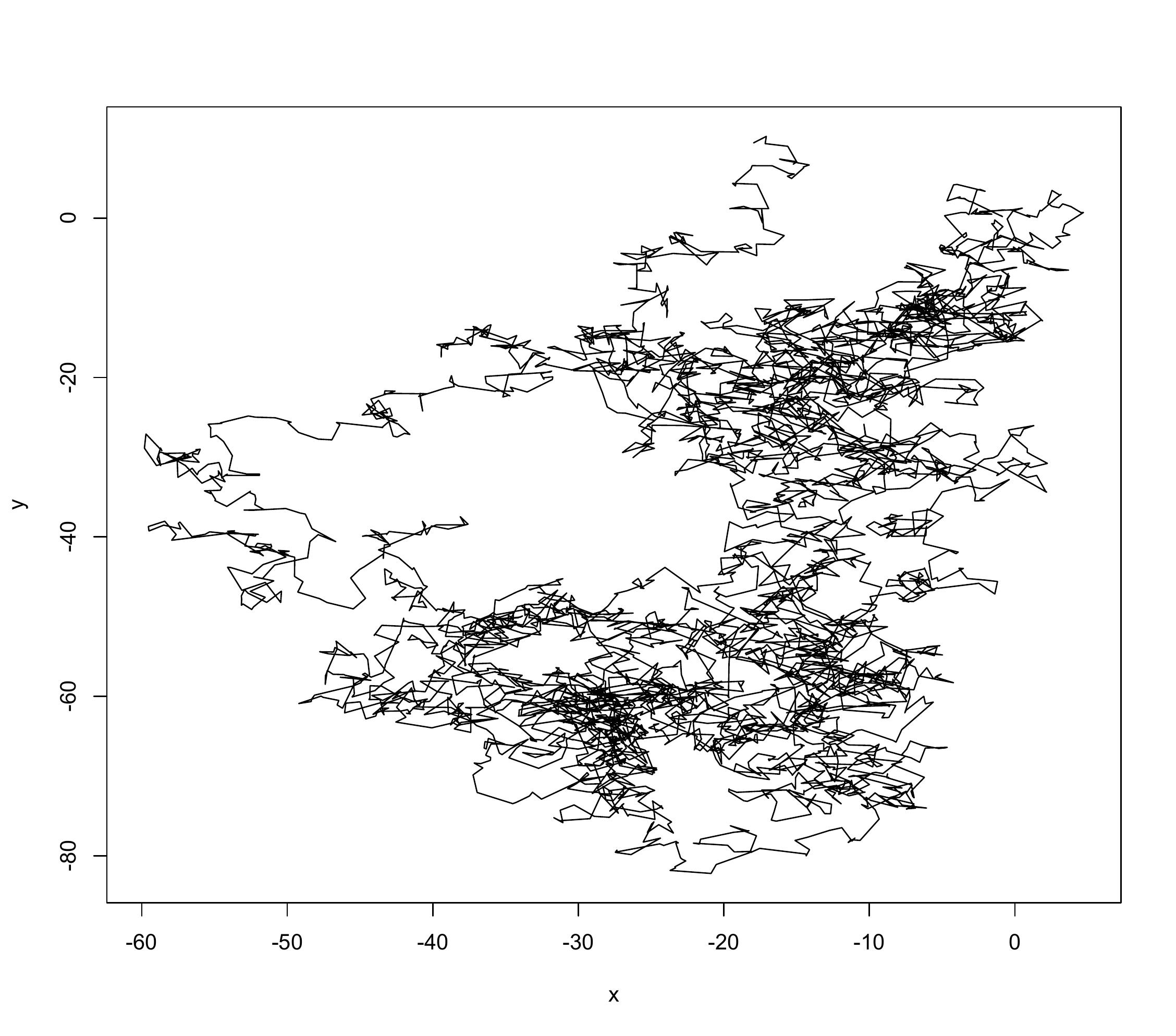}
\caption{Comparison of L\'evy motion and Brownian motion: \emph{(left)} A 2D standard isotropic stable L\'evy motion with $\alpha = 1.8$.  \emph{(right)} A 2D standard isotropic stable L\'evy motion with $\alpha = 2.0$ (Brownian motion).}
\label{fig:2dLevyBM_comparison}
\end{figure}

The heavy-tailed behavior of $\alpha$-stable random variables is very much in demand for modeling, although the infinite variance property may sometimes be undesirable, for physical or numerical reasons. Recently, \emph{tempered fractional calculus} has been developed to avoid this issue \cite{meerschaert_tempered}. {\color{chocolate} Tempered fractional diffusion equations model} particles that undergo \emph{tempered $\alpha$-stable L\'evy motion}, which is based on an exponentially tempered $\alpha$-stable density \cite{Koponen1995}. Roughly, this density exhibits the power-law decay up to a certain argument, then decays exponentially. The generator of tempered $\alpha$-stable L\'evy motion, i.e., the tempered fractional Laplacian, was discussed in \cite{deng_tempered_Laplacian}, where a Riesz basis Galerkin method was proposed to solve the Poisson problem with this operator. 

This characterization as an infinitesimal generator of ${X^\alpha_t}$ also lends itself to the stochastic (Monte Carlo) solution of boundary value problems involving the various fractional Laplacians in bounded domains. These formulas are typically referred to as Feynman-Kac (Dirichlet {\color{chocolate}boundary conditions}, \cite{feynman_kac_original}) or Brosamler (Neumann {\color{chocolate}boundary conditions}, after \cite{brosamler_original, lions_sznitman, hsu_reflecting_bm}) formulas. One such solution method is discussed in Section \ref{killed_levy_motion}, and its implementation is described in Section \ref{sec:wos}.

{\color{blue} We now give a brief outline of the probabilistic theory behind the relation of the fractional Laplacian to L\'evy processes. This will also result in the \emph{generator form} of the fractional Laplacian, which is related to the directional representation in the next Section \ref{sec:directional_unbounded}.
Following the text \cite{meerschaert_sikorskii}, we review the classical L\'evy-{\color{magenta}Khintchine} formula for generators of L\'evy processes and sketch how it results in the fractional Laplacian for the special case of isotropic $\alpha$-stable processes.} Given a L\'evy process $\{Z_t : t > 0\}$,  define the family of linear operators
\begin{align}
	T_t f(x) = \mathbb{E}[f(x-Z_t)], \hspace{10pt} t \geq 0,
\end{align}
for suitable functions $f$. Suppose that the characteristic function of the random variable $Z_1$ is $\mathbb{E}[e^{ik \cdot Z_1}] = e^{\psi(k)}$, where
\begin{align}
	\psi(k) &= ik \cdot a - \frac{1}{2}k \cdot B k + \int\left(e^{ik \cdot y} - 1 - \frac{ik \cdot y}{1+|y|^2}\right)\phi(dy).
\end{align}
Then $T_tf(x)$ defines a $C_0$-semigroup on $C_0(\mathbb{R}^d)$ with generator (\cite{meerschaert_sikorskii}, Theorem 6.26)
\begin{align}
\label{gen_form}
	Lf(x) = -a\cdot \nabla f(x) + \frac{1}{2} \nabla \cdot Q \nabla f(x) + \int\left(f(x-y)-f(x) + \frac{y \cdot \nabla f(x)}{1 + |y|^2}\right) \phi(dy),
\end{align}
where $\phi(dy)$ is some L\'evy measure (\cite{meerschaert_sikorskii}, Eq.\ (6.20)).  The domain of $L$ contains $\{f : f, f', f'' \in C_0(\mathbb{R}^d)\}$ (\cite{meerschaert_sikorskii}, Theorem 6.26). {\color{blue}This is the L\'evy-{\color{magenta}Khintchine} formula.} The first order differential term in this operator can be understood as the ``drift" term, the second order differential term can be understood as the {\color{chocolate} standard} diffusion, and the L\'evy measure describes the jumps of the L\'evy process (\cite{meerschaert_sikorskii}, p.\ 159).

{\color{blue} As we will now see,} if we choose $Z_t$ to be the standard isotropic $\alpha$-stable L\'evy motion, then \eqref{gen_form} reduces to the negative of the fractional Laplacian. The details of the following discussion can be found in \cite{meerschaert_sikorskii}, Examples 6.28 and 6.29. If  {\color{chocolate} $0 < \alpha < 1$} and $Z_t$ is a L\'evy process such that $Z_1$ has the characteristic function
\begin{align}
\label{characteristic_function}
	\mathbb{E}[e^{ik \cdot Z_1}] = \exp\left[-C\Gamma(1-\alpha) \int_{|\bm\theta|=1} (-ik \cdot \bm{\theta})^\alpha M(d\bm{\theta})\right],
\end{align}
then the generator of the corresponding stable semigroup can be written in the form 
\begin{align}
\label{stable_semigroup}
	Lf(x) = \int (f(x-y)-f(x))\phi(dy),
\end{align}
where 
\begin{equation}
\label{levy_phi}
\phi(dy) = \alpha Cr^{-\alpha-1}dr M(d\bm{\theta}) \text{ with $y = r\bm{\theta},$ $r > 0$, and $|\bm{\theta}|=1$.  }
\end{equation}
If $M(d{\bm \theta})$ is a uniform measure on the unit sphere, and if 
\begin{align}\label{char_constants}
	C^{-1} = B\Gamma(1-\alpha), \ \text{and} \ B = \cos(\pi \alpha/2)\int_{|{\bm{\theta}}|=1}| \theta_1|^\alpha M(d{\bm{\theta}}),
\end{align}
then the right-hand side of \eqref{characteristic_function} reduces to $\exp[-|k|^\alpha]$, so that $Z_t = X_t^\alpha$, the standard isotropic $\alpha$-stable L\'evy motion\footnote{
	The general isotropic $\alpha$-stable random variable has characteristic function $\exp[-c^\alpha|k|^\alpha]$, where $c$ is the scale parameter. The standard $\alpha$-stable random variable has scale parameter $c = 1$. Note that the standard isotropic 2-stable random variable corresponds to a normal random variable with characteristic function $\exp\left[\frac{-\sigma^2|k|^2}{2}\right]$ and variance 
{\color{chocolate} $\sigma^2 = 2$}.
}, 
and $L = -(-\Delta)^{\alpha/2}$, the negative of the fractional Laplacian (\cite{meerschaert_sikorskii}, Example 6.24). Indeed, \eqref{stable_semigroup} reduces to \eqref{E:riesz_Laplacian_rn} after a simple change of variables (see \cite{defterli2015}, Section 4). Now if $1 < \alpha < 2$, and $Z_1$ has the characteristic function
\begin{align}
\label{charfun}
	\mathbb{E}[e^{ik\cdot Z_1}] = \exp\left[C\frac{\Gamma(2-\alpha)}{\alpha-1}\int_{|\bm\theta|=1}(-ik\cdot\bm{\theta})^\alpha M(d\bm{\theta})\right],
\end{align}
then the generator of the corresponding stable semigroup can be written in the form
\begin{align}
\label{stable_semigroup2}
	Lf(x) = \int(f(x-y)-f(x) + y\cdot \nabla f(x))\phi(dy),
\end{align}
where $\phi$ is defined as in \eqref{levy_phi}, and the constants $B$ and $C$ are defined as in \eqref{char_constants}. {\color{chocolate}In this case, if $M(d\bm{\theta})$ is a uniform measure on the unit sphere, then $L = (-\Delta)^{\alpha/2}$, which has a different sign than in the case $0 < \alpha < 1$. The sign change is due to the fact that $B > 0$ for $0 < \alpha < 1$, and $B < 0$ for $1 < \alpha < 2$. }{\color{blue} Equations \eqref{stable_semigroup} and \eqref{stable_semigroup2} yield alternate forms of the fractional Laplacian \eqref{E:riesz_Laplacian_rn}, known as the generator forms, without using the principal value. The generator form \eqref{gen_form} of the fractional Laplacian is useful for probabilists working in this area. }

\subsubsection{Directional Representation}\label{sec:directional_unbounded}

Another integral characterization of the fractional Laplacian, which we refer to as the directional representation, can be found in \cite{samko1993fractional} (Eq. (26.24)). The operator is written as
\begin{equation}\label{def:directional_uniform}
(-\Delta)^{\alpha/2}u(x)=C_{\alpha,d}\int_{|\boldsymbol{\theta}|=1}D^{\alpha}_{\boldsymbol{\theta}}u(x)d\boldsymbol{\theta}, \quad x,\boldsymbol{\theta}\in \mathbb{R}^d, \alpha\in (0,1) \cup (1,2],
\end{equation}
where the scaling constant before the integral is \cite{samko1993fractional,pang2013gauss}
\begin{equation}\label{const}
C_{\alpha,d}=\frac{\Gamma(\frac{1-\alpha}{2})\Gamma(\frac{d+\alpha}{2})}{2{\pi}^\frac{1+d}{2}}.
\end{equation}
The Fourier transform of \eqref{def:directional_uniform} {\color{chocolate} corresponds to multiplication of the Fourier transform of $u$ by} \eqref{characteristic_function} when $\alpha < 1$ and by \eqref{charfun} when $\alpha > 1$. 
Here, $D_{\boldsymbol{\theta}}^{\alpha}(\cdot)$ is the Riemann-Liouville fractional directional derivative \cite{MEERSCHAERT2006181} given by
{\color{forest}
\begin{equation}\label{directional_integral}
D_{\boldsymbol{\theta}}^{\alpha}(\cdot)= ( \boldsymbol{\theta} \cdot \nabla ) I_{\boldsymbol{\theta}}^{1-\alpha}(\cdot)
\quad
\text{ for $0 < \alpha < 1$;}
\quad
D_{\boldsymbol{\theta}}^{\alpha}(\cdot)= ( \boldsymbol{\theta} \cdot \nabla )^2 I_{\boldsymbol{\theta}}^{2-\alpha}(\cdot)
\quad
\text{ for $1 < \alpha < 2$; }
\end{equation}
where $\boldsymbol{\theta} \cdot \nabla = \sum_{i = 1}^d \theta_i \frac{\partial}{\partial x_i}$ is the directional derivative, thus $(\boldsymbol{\theta} \cdot \nabla)^2 = \sum_{j=1}^d \sum_{i = 1}^d \theta_i \theta_j \frac{\partial^2}{\partial x_j \partial x_i}$}, and the fractional directional integral $I_{\boldsymbol{\theta}}^{\beta}(\cdot)$ is defined by (for $\beta\in(0,1)$)
\begin{equation}\label{directional_integral0}
I_{\boldsymbol{\theta}}^{\beta}u(x)=\frac{1}{\Gamma(1-\beta)}\int_0^{+\infty}\varsigma^{-\beta}u(x-\varsigma\boldsymbol{\theta})d\varsigma.
\end{equation}
{\color{forest}
Note that \eqref{def:directional_uniform} excludes the case $\alpha = 1$. 
When $\alpha = 1$, the directional representation is more complicated; for the one-dimensional case, see \cite{kelly2018anomalous}. 
}

{\color{chocolate}
We now explicitly write out the directional representation \eqref{def:directional_uniform} for one and two dimensions; these formulas will be 
used later in this article.  
In one dimension, using the identity $\Gamma(x)\Gamma(1-x) = \pi/\sin(\pi x)$, we have
\begin{equation}
\Gamma\left(\frac{1-\alpha}{2}\right)
\Gamma\left(\frac{1+\alpha}{2}\right)
=
\Gamma\left(\frac{1-\alpha}{2}\right)
\Gamma\left(1-\frac{1-\alpha}{2}\right)
=
\frac{\pi}{\sin\left(\pi\frac{1-\alpha}{2} \right)}
=
\frac{\pi}{\cos\left(\frac{\pi\alpha}{2}\right)},
\end{equation}
and so
\begin{equation}
\begin{split}
(-\Delta)^{\alpha/2}u(x)
&= 
\frac{1}{2\cos\left( \frac{\pi\alpha}{2}\right)}
\left[ D_{\theta = -1}^{\alpha}u(x) + D_{\theta = 1}^{\alpha} u(x) \right]\\
&= \frac{1}{2\cos\left( \frac{\pi\alpha}{2}\right)}
\left[_{-\infty}^{RL}D_x^{\alpha} u(x) + {}^{RL}_{\ \ x}D^\alpha_\infty u(x) \right],
\end{split}
\end{equation}
where $_{-\infty}^{RL}D_x^{\alpha}$ and ${}^{RL}_{\ \ x}D^\alpha_\infty$ are the well-known one-dimensional left- and right-sided Riemann-Liouville fractional derivatives, respectively \cite{yang2010numerical}. 
Note that, using the Fourier transforms of these operators, 
$
\mathcal{F}[(-\Delta)^{\alpha/2} u] =   \frac{1}{\cos(\alpha\pi/2)}
\left[ \frac{1}{2} (-i\xi)^{\alpha} 
+ \frac{1}{2} (i \xi)^{\alpha} \right] \hat{u} 
= |\xi|^{\alpha} \hat{u}$ for $\alpha \neq 1$. 
In two dimensions and for $\alpha \neq 1$, the fractional Laplacian can be written in polar form as
\begin{equation}
(-\Delta)^{\alpha/2}u(x)=C_{\alpha,2}\int_0^{2\pi}D_{{\theta}}^{\alpha} u(x)d{\theta},
\end{equation}
where $D_{{\theta}}^{\alpha}$ is a derivative along the direction $[\cos(\theta),\sin(\theta)]$.
}

Meerschaert et al. \cite{meerschaert1999multidimensional} extended the above operator \eqref{def:directional_uniform} to an anisotropic version, which they called the general (asymmetric) fractional derivative operator $\nabla^{\alpha}_M$,
\begin{equation}\label{general_directional_laplacian}
\nabla^{\alpha}_M u(x)=\int_{|\boldsymbol{\theta}|=1}D^{\alpha}_{\boldsymbol{\theta}}u(x)M(d\boldsymbol{\theta}), \quad x,\boldsymbol{\theta}\in \mathbb{R}^d, \alpha\in (0,1) \cup (1,2], 
\end{equation}
where $M(d\boldsymbol{\theta})$ is {\color{chocolate}an} arbitrary probability measure on the unit sphere $\{|\boldsymbol{\theta}|=1\}$. The motivation for the definition extension is to generate ``the full range of L\'evy-stable motions". For more details, see Equation \eqref{characteristic_function} and the corresponding discussion. {\color{chocolate} The study of this more general operator is beyond the scope of the current article. However, it is an interesting and pertinent topic for future extensions of many of the results and methods discussed in this work.}
It should be noted that the {\color{magenta}Riesz} fractional Laplacian is recovered if the measure $M(d\boldsymbol{\theta})$ is uniform, namely, $M(d\boldsymbol{\theta})=d\boldsymbol{\theta}/S_d$ where $S_d=2\pi^{d/2}/\Gamma(d/2)$ is the surface area of the unit sphere in $\mathbb{R}^d$. In this case, the two types of operators are related by
\begin{equation}
(-\Delta)^{\alpha/2}u(x)=\frac{\Gamma\left(\frac{d+\alpha}{2}\right)\Gamma\left(\frac{1-\alpha}{2}\right)}{\Gamma\left(\frac{d}{2}\right)\sqrt{\pi}}\nabla^{\alpha}_M u(x).
\end{equation}

\subsection{Fractional Laplacians on Bounded Domains}

We have discussed several characterizations and formulas for the 
fractional Lapacian in ${\mathbb{R}^d}$, which are all equivalent
in that setting.  If we {\color{chocolate}apply} these different formulas
{\color{chocolate} on} a bounded domain ${\Omega \subset \mathbb{R}^d}$, 
certain equivalences break down, which leads to the definition of several distinct
fractional Laplacians in a bounded domain. Some researchers have defined other fractional Laplacians on a bounded domain directly, instead of restricting the definition for $\mathbb{R}^d$ \cite{chen_pang_2016}. Here, we survey the \emph{Riesz} fractional Laplacian (sometimes called the \emph{integral} fractional Laplacian), the \emph{directional} fractional Laplacian, and the \emph{spectral} fractional Laplacian. We focus on the following topics.
\begin{itemize}
\item
Appropriate boundary conditions for the Poisson problem
\begin{align}\label{E:intro_poisson_problem}
\begin{split}
(-\Delta)^{\alpha/2} u &= f \text{ in } \Omega.
\end{split}
\end{align}
{\color{chocolate}
The spectral fractional Laplacian admits  
local boundary conditions on $\partial \Omega$; the regional fractional Laplacian has also been considered with local boundary conditions. On the other hand,} the Riesz (and directional) fractional Laplacians require \emph{exterior} boundary conditions on $\mathbb{R}^d \setminus \Omega$. 
\item
Connection of each fractional Laplacian with given boundary conditions to a stochastic process. The standard negative Laplacian $-\Delta$ in a bounded domain is the negative generator of \emph{stopped} \cite{kolokoltsov_book} Brownian motion if taken with Dirichlet BCs \cite{privault_book,Freidlin}, and \emph{reflected} Brownian motion if taken with Neumann BCs \cite{brosamler_original,hsu_reflecting_bm, Bencherif2009}.
Similarly,
the fractional Laplacians are negative generators of certain stopped\footnote{\color{chocolate}
Many results in the probability literature (e.g., \cite{Song}) are stated and proved for \emph{killed} processes rather than \emph{stopped} processes. Without going into details here, we note that the resulting semigroups are equivalent when applied to functions $f$ that vanish outside the domain, in the sense that 
$\mathbb{E}_x \left[ f(X_t) \right]$ is the same whether $X_t$ is the killed or the stopped process. Here, we are assuming that $f(\partial)=0$ where $\partial$ is the cemetery point specified for the killed process. 
}
or reflected L\'evy motions. 
This is useful for physical interpretation as well as stochastic solution methods.
\item
{\color{chocolate} 
Well-posedness and regularity properties of the fractional} Poisson problem \eqref{E:intro_poisson_problem} with appropriate boundary conditions.
\end{itemize}

{
\color{chocolate}
We do not consider the Laplacian with periodic boundary conditions (i.e., on the torus $\mathbb{T}^d$) in this article. The construction of such an operator is unambiguous,  and follows the construction of the fractional Laplacian in $\mathbb{R}^d$ in Section \ref{Spectral/Fourier Definition} except that the spectrum is discrete, so the operator is given by a Fourier series. For a basic discussion, see, e.g., \cite{hermann_physics}, and for discussions of extension characterizations as well as well-posedness and regularity, see \cite{stinga_1_torus, stinga_2_transference}. 
}

\subsection{Riesz Fractional Laplacian}\label{sec:Riesz}
\subsubsection{Definition}\label{RieszDef}
One approach to defining the fractional Laplacian on a bounded domain ${\Omega}$ is 
to apply the real space formula \eqref{E:riesz_Laplacian_rn} to functions on ${\Omega}$. 
This leads to the \emph{Riesz} fractional Laplacian in $\Omega$.
Let us first consider Dirichlet boundary conditions. 
Since formula \eqref{E:riesz_Laplacian_rn} requires values of ${u}$ on all of 
${\mathbb{R}^d}$, an \emph{exterior} boundary condition 
\begin{equation}\label{E:dirichlet_exterior_bc}
u = g \text{ in } 
\mathbb{R}^d \setminus \Omega
\end{equation}
is required, even to define the Riesz Laplacian within $\Omega$. 
For functions $u$ that satisfy \eqref{E:dirichlet_exterior_bc}, the \emph{Riesz} fractional Laplacian is defined for $x \in \Omega$ by
\begin{align}
\label{E:riesz_definition_omega}
\begin{split}
(-\Delta)^{\alpha/2} u(x) &= C(d,\alpha) 
\text{   p.v.}\int_{\color{forest} \mathbb{R}^d} \frac{u(x) - u(y)}
{|x-y|^{d+\alpha}}dy \\
&= 
C(d,\alpha)
\left[
\text{   p.v.}\int_{\Omega} \frac{u(x) - u(y)}
{|x-y|^{d+\alpha}}dy
+
\int_{\mathbb{R}^d \setminus \Omega} \frac{u(x) - g(y)}
{|x-y|^{d+\alpha}}dy
\right].
\end{split}
\end{align} 
The Riesz fractional Laplacian depends directly on $\Omega$ and the exterior boundary values $g$.
{\color{magenta} It also referred to in the literature as the \emph{integral} fractional Laplacian \cite{nochetto_three} or as the \emph{restricted} (Dirichlet) fractional Laplacian \cite{vazquez2014recent, grubb_spectral}; for consistency, we do not use these terms in this article.}

\subsubsection{Boundary Conditions: Dirichlet vs. Neumann}

The Neumann condition for the Riesz Laplacian is, at the time of this writing, an area of active development. A type of exterior fractional normal derivative is needed to specify $u$ on $\mathbb{R}^d \setminus \Omega$, but there is no widely studied or accepted definition. Recently, a fractional Neumann operator was proposed in \cite{gunzburger_neumann}, and the properties were studied in detail in \cite{Dipierro14}. Another approach to defining reflecting boundary conditions based on mass conservation for diffusion in one dimension was explored in \cite{meerschaert_bcs, Kelly2sided}.

The Poisson problem with Dirichlet boundary conditions
\begin{align}\label{E:dirichlet_poisson_riesz}
\begin{split}
(-\Delta)^{\alpha/2} u &= f \text{ in $\Omega$ }, \\
u &= g \text{ in $\mathbb{R}^d \setminus \Omega$ }
\end{split}
\end{align}
has been extensively studied in the $g \equiv 0$ case \cite{Acosta, rosoton_regularity, AinsworthGlusa2017_TowardsEfficientFiniteElement, AinsworthGlusa2017_AspectsAdaptiveFiniteElement, Grubb2015_FractionalLaplaciansDomainsDevelopment}, but literature based on the nonzero case is more recent and limited \cite{Grubb2015_FractionalLaplaciansDomainsDevelopment, kyprianou2016unbiasedwalk, borthagaray2018finite, antil2019external}.
For the case of zero 
exterior Dirichlet condition $g \equiv 0$ in \eqref{E:dirichlet_poisson_riesz}, finite element algorithms have been developed in 
\cite{Acosta},
\cite{AinsworthGlusa2017_TowardsEfficientFiniteElement}, and in particular the
adaptive finite element scheme of \cite{AinsworthGlusa2017_AspectsAdaptiveFiniteElement} has been used for the computations of this paper. {\color{magenta}For the case of nonzero exterior Dirichlet condition}, in \cite{kyprianou2016unbiasedwalk} a Monte Carlo
method based on the Feynman-Kac formula for the problem \eqref{E:dirichlet_poisson_riesz} 
was developed and studied. This method is described in more detail in Section \ref{sec:wos}. {\color{magenta}The radial basis function collocation method introduced in Section \ref{RBFM} (based on discretizing the directional representation of the Riesz fractional Laplacian) readily admits nonzero exterior Dirichlet condition as well. In \cite{acosta2018finite}, a finite element method for nonzero exterior Dirichlet conditions was introduced in which the exterior condition was enforced through a Lagrange multiplier formulation. Another approach for the case of nonzero exterior condition, with application to exterior control problems, is that of \cite{antil2019external}, which implemented nonzero exterior Robin  conditions and used this to approximate the solution to the nonzero exterior Dirichlet problem.}

\subsubsection{Stopped L\'evy Motion}\label{killed_levy_motion}
The Riesz fractional Laplacian with Dirichlet boundary conditions is the generator of the stopped $\alpha$-stable L\'evy motion. The stopped L\'evy motion is defined as follows \cite{kolokoltsov_book, privault_book, Freidlin}. Given a stable L\'evy motion $X_t^\alpha$ and a bounded domain $\Omega$, we define the stopping time (or exit time) 
\begin{align}\label{stopping_time}
	\sigma_\Omega = \inf \{t : X_t^\alpha \notin \Omega\}.
\end{align}
Then, the stopped L\'evy process is defined as $X^\alpha_{t \wedge \sigma_\Omega}$.
Unlike stopped Brownian motion, which has almost surely continuous paths that are stopped at the boundary, the paths of $\alpha$-stable L\'evy motion are discontinuous and almost surely exit the domain by a jump. Thus, the paths of stopped $\alpha$-stable L\'evy motion pass into the exterior $\mathbb{R}^d \setminus \Omega$ where they are stopped immediately.

In \cite{kyprianou2016unbiasedwalk}, this stochastic connection was exploited to prove a Feynman-Kac for the 
Poisson problem.
\begin{equation}\label{fra_Poisson}
\begin{split}
(-\Delta)^{\alpha/2}u(x) & =  f(x), \quad x\in \Omega\subset \mathbb{R}^d,\\
 u(x)  & =  g(x), \quad  x \in \mathbb{R}^d \setminus \Omega.
\end{split}
\end{equation}
Under mild conditions on $f$ and $g$, the following Feynman-Kac formula was proven:
\begin{equation}\label{FK-formula}
u(x)=\mathbb{E}_x\left[g(X_{\sigma_\Omega}^\alpha)\right]+\mathbb{E}_x\left[\int_0^{\sigma_\Omega}f(X_s^\alpha)ds\right], \ x \in \Omega,
\end{equation}
where $X^\alpha_t$ is the symmetric $\alpha$-stable process, $\mathbb{E}_x(\cdot)$ denotes the expectation with respect to all the sample paths with the initial location $x$, and $\sigma_\Omega=\inf\{t>0: X_t\notin \Omega\}$ is the first exit time of the sample path. 

In addition to a proof, in \cite{kyprianou2016unbiasedwalk} a walk-on-spheres (WOS) method for much faster computation of \eqref{FK-formula} was introduced and analyzed. This speeds up the implementation of \eqref{FK-formula} by replacing the simulation of exact stopped paths in $\Omega$ by a series of maximal inscribed spheres. Conceptually, the WOS method is based on the isotropy of the problem \eqref{fra_Poisson} as well as the the analytic formula for mean-exit time of $X^\alpha_t$ on a sphere \cite{getoor_mean_exit}. In the Brownian motion case, the center of each sphere is chosen by sampling a uniform distribution on the boundary of the previous sphere, and the procedure terminates when the process comes within some tolerance $\varepsilon$ of the boundary of $\Omega$. In the $\alpha$-stable case, the center of each sphere is chosen by sampling a distribution (which is described in detail in Section \ref{sec:wos}) on the exterior of the previous sphere, and the procedure terminates when the process jumps outside the domain $\Omega$.
This is because, in the $\alpha$-stable case, the sample path may exit the sphere by a jump, instead of by passing through the boundary; convergence in finite steps was proven in \cite{kyprianou2016unbiasedwalk}.

\subsubsection{{\color{red}Well-posedness and Regularity}}\label{sec:riesz_regularity}
In this section, we consider the problem 
\begin{align}\label{riesz_regularity_problem}
\begin{split}
(-\Delta)^{\alpha/2} u &= f \text{ in $\Omega$,}\\
u &= 0 \text{ in $\mathbb{R}^d\setminus\Omega$. }
\end{split}
\end{align}
{\color{red}Using the Lax-Milgram Lemma, {\color{forest}for a bounded Lipschitz domain $\Omega$,} it is easy to show that this problem is well-posed if $f(x)\in \mathbb{H}^{-\alpha/2}(\Omega)$ and the resulting solution $u(x)\in \mathbb{H}^{\alpha/2}(\Omega)$. See, for example, \cite{AinsworthGlusa2017_TowardsEfficientFiniteElement,nochetto_three,Acosta}. 
{\color{blue}For the definitions of the Sobolev space $\mathbb{H}^{s}$, which will also feature in the following regularity results, see Appendix \ref{sobolev_spaces}.}

As for regularity,}
our intention is not to give a complete survey of all the known regularity results for the fractional Laplacian. Instead, we mention two results that give an indication of the regularity for the Riesz fractional Laplacian, for the case of the zero boundary condition. These properties should be compared with the regularity of the spectral fractional Laplacian (see Section \ref{sec:spectral_regularity}). The regularity up to the boundary is a key difference between the two definitions. 

The H\"older regularity for the solution of the problem \ref{riesz_regularity_problem}
was studied in \cite{rosoton_regularity}. The authors were motivated by the exact solution for $f \equiv 1$ in the ball 
$B_r(x_0)$ centered at $x_0$ of radius  $r$:
\begin{align}\label{E:riesz_counterexample}
\begin{split}
(-\Delta)^{\alpha/2} u &= 1 \text{ in $B_r(x_0)$, } \\
u &= 0 \text{ in $\mathbb{R}^d\setminus B_r(x_0)$, }
\end{split}
\end{align}
which has solution
\begin{equation*}
u(x) = \frac{2^{-\alpha}\Gamma(n/2)}{\Gamma\left(\frac{n+\alpha}{2}\right)\Gamma(1+\alpha/2)}
\left( r^2 - |x-x_0|^2 \right)^{\alpha/2} \text{ in $B_r(x_0)$}.
\end{equation*}
This solution exhibits strict $C^{\alpha/2}$ regularity up the boundary. The authors proved the following result. 
{\color{forest} Let $\Omega$ be a bounded $C^{1,1}$ domain} and $\delta(x) = d(x,\partial\Omega)$ denote the distance to the boundary,
Then $g \in L^\infty(\Omega)$ implies that $u/\delta^{\alpha/2}$ can be extended to a $C^{r}(\overline{\Omega})$ function for some 
$r < \text{min}(\alpha/2, 1-\alpha/2)$, and 
\begin{equation*}
\|u/\delta^{\alpha/2} \|_{C^{r}(\overline{\Omega})} \le C \| g\|_{L^\infty(\Omega)}.
\end{equation*} 
This result is sharp for the problem \eqref{E:riesz_counterexample}. 
{\color{magenta}Boundary value problems in which weighted boundary conditions (on the trace of $u/\delta^{\alpha/2}$ rather than $u$) are specified on $\partial\Omega$ are studied in 
\cite{Grubb2015_FractionalLaplaciansDomainsDevelopment}.} 

The following theorem relates to the Sobolev regularity of \eqref{riesz_regularity_problem}. This form is presented nicely in \cite{AinsworthGlusa2017_AspectsAdaptiveFiniteElement} and is based on the results of \cite{Grubb2015_FractionalLaplaciansDomainsDevelopment} and \cite{acosta_bersetche_borthagaray}. {\color{magenta} We mention that although the formulation in \cite{AinsworthGlusa2017_AspectsAdaptiveFiniteElement} considers $s \geq -\alpha/2$, \cite{Grubb2015_FractionalLaplaciansDomainsDevelopment} establishes the first case below for $-1/2 < \alpha/2 + s < 1/2$. Regularity for the inhomogeneous problem \eqref{E:dirichlet_poisson_riesz} is also studied in [72], although we restrict ourselves to the case below.}
\begin{theorem}{\cite{AinsworthGlusa2017_AspectsAdaptiveFiniteElement,Grubb2015_FractionalLaplaciansDomainsDevelopment,acosta_bersetche_borthagaray}}
\label{Riesz_regularity}
{\color{forest}Let $\partial \Omega \in C^\infty$,} $f \in H^s(\Omega)$ for $s \geq -\alpha/2$ and $u \in \mathbb{H}^{\alpha/2}(\Omega)$ be the solution of the fractional Poisson problem \eqref{riesz_regularity_problem}. Then
\begin{align}
	u \in \begin{cases} H^{\alpha + s}(\Omega) & \text{if} \ 0 < \alpha/2 + s < 1/2, \\
	H^{\alpha/2+1/2-\varepsilon}(\Omega) \ \forall \varepsilon > 0 & \text{if} \ 1/2 \leq \alpha/2 + s.\end{cases}
\end{align}
\end{theorem}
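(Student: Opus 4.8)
The plan is to treat $(-\Delta)^{\alpha/2}$ as a classical elliptic pseudodifferential operator of order $\alpha$ on $\mbbR^d$, whose principal symbol $|\xi|^{\alpha}$ is real, positive, and homogeneous of degree $\alpha$, and then to separate the analysis into an interior part and a boundary part. The object to bootstrap is the variational solution $u \in \mbbH^{\alpha/2}(\Omega)$ produced by the Lax--Milgram argument stated just above. Away from $\partial\Omega$ the result is a purely local elliptic regularity statement: because the operator has the stated symbol and $f \in H^s$, the standard pseudodifferential calculus (an elliptic operator of order $\alpha$ gains exactly $\alpha$ derivatives) yields $u \in H^{\alpha+s}_{\mathrm{loc}}(\Omega)$ with no obstruction. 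The entire difficulty is therefore concentrated at the boundary.

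First I would record the relevant transmission structure. The operator $(-\Delta)^{\alpha/2}$ satisfies the $\mu$-transmission condition of H\"ormander--Grubb with $\mu = \alpha/2$; this is the precise analytic encoding of the heuristic --- already visible in the explicit ball solution $(r^2-|x-x_0|^2)^{\alpha/2}$ displayed above --- that the homogeneous Dirichlet solution behaves like $\delta^{\alpha/2}$ at the boundary, where $\delta(x) = d(x,\partial\Omega)$. Invoking Grubb's elliptic theory in the $\mu$-transmission (H\"ormander) spaces, the homogeneous problem \eqref{riesz_regularity_problem} with $f$ of the given regularity places $u$ in the $\mu$-transmission space $H^{\mu(t)}(\overline{\Omega})$ with $\mu = \alpha/2$ for the appropriate $t$; concretely, such functions admit a boundary expansion whose leading term is $\delta^{\alpha/2}$ multiplied by a factor that is smooth up to $\partial\Omega$.

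The dichotomy then follows from comparing two scales. The \emph{elliptic gain} predicts regularity $\alpha+s$, while the \emph{boundary layer} $\delta^{\alpha/2}\,\phi$ (with $\phi$ smooth up to $\partial\Omega$) lies in $H^r(\Omega)$ exactly when $r < \alpha/2 + 1/2$: this is the standard computation that $\delta^{\beta}$ near a smooth boundary belongs to $H^r$ iff $r < \beta + 1/2$, specialized to $\beta = \alpha/2$. Writing $\alpha + s = \tfrac{\alpha}{2} + (\tfrac{\alpha}{2}+s)$, one sees that the elliptic gain lies below the threshold $\alpha/2 + 1/2$ precisely when $\alpha/2 + s < 1/2$, and in that regime the boundary singularity is harmless, giving $u \in H^{\alpha+s}(\Omega)$. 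Once $\alpha/2 + s \geq 1/2$, the elliptic gain would exceed the threshold, the $\delta^{\alpha/2}$ term saturates the attainable regularity, and one can do no better than $H^{\alpha/2 + 1/2 - \varepsilon}(\Omega)$ for every $\varepsilon > 0$, the loss of $\varepsilon$ being exactly the endpoint failure at $r = \beta + 1/2$.

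The main obstacle is the rigorous boundary analysis: establishing membership in the $\mu$-transmission spaces and converting it faithfully into ordinary Sobolev regularity, which is where Grubb's pseudodifferential transmission calculus does the heavy lifting. A more hands-on alternative is the weighted-estimate and tangential difference-quotient approach of \cite{acosta_bersetche_borthagaray}, which establishes the same boundary behavior directly. A secondary technical point is that for negative $s$ the source $f$ is a distribution, so the duality pairing underlying the weak formulation and the bootstrap must be handled in the correct spaces $\mbbH^{\pm\alpha/2}(\Omega)$; I would verify the low-regularity endpoint $0 < \alpha/2 + s < 1/2$ separately, since this is precisely the range in which \cite{Grubb2015_FractionalLaplaciansDomainsDevelopment} establishes the sharp statement.
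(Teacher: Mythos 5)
Your proposal is correct and follows essentially the route the paper relies on: the paper gives no proof of its own but transcribes the result from \cite{AinsworthGlusa2017_AspectsAdaptiveFiniteElement}, attributing it to Grubb's $\mu$-transmission calculus in \cite{Grubb2015_FractionalLaplaciansDomainsDevelopment} (with $\mu=\alpha/2$, interior elliptic gain of order $\alpha$, and the $\delta^{\alpha/2}$ boundary factor capping global regularity at $\alpha/2+1/2-\varepsilon$) together with \cite{acosta_bersetche_borthagaray}, which is exactly the argument you outline. Your threshold computation $\alpha+s<\alpha/2+1/2\iff\alpha/2+s<1/2$ and the identification of the transmission-space dichotomy match the paper's own discussion of why increasing $s$ past this point only improves interior, not global, regularity.
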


An interesting feature of the Sobolev regularity for the Riesz fractional Laplacian is that if $\alpha/2 + s \geq 1/2$, the global regularity of $u$ need not improve if $s$, the regularity of $f$, is increased. Rather, $s$ merely improves the interior regularity of $u$. This was proven in \cite{Grubb2015_FractionalLaplaciansDomainsDevelopment}, where it was shown that for a smooth boundary $\partial\Omega$, $f \in H^s(\Omega)$ implies that $u \in H^{ \alpha +s}_\text{loc}(\Omega)$. Therefore, the global regularity (i.e., regularity up to the boundary) for the Riesz fractional Laplacian is in contrast to the global regularity for the spectral fractional Laplacian (see Section \ref{sec:spectral_regularity}). In the case when $f \in L^2(\Omega)$ for the spectral Laplacian, $u \in \mathbb{H}^\alpha(\Omega)$ for all $\alpha \in (0,2)$, while this is only true if $\alpha < 1$ for the Riesz fractional Laplacian. In the case when $f \in H^s(\Omega)$ and $s > 1/2$, then $u \in \mathbb{H}^{s+\alpha}(\Omega)$ for the spectral fractional Laplacian provided $f \equiv 0$ on the boundary $\partial \Omega$, while for the Riesz definition, $u \in H^{\alpha/2 + 1/2 - \varepsilon}(\Omega)$, independent of $s$. Even if $f \not\equiv 0$ on $\partial \Omega$, the regularity of $u$ for the spectral definition improves as $\alpha +1/2 - \varepsilon$, while in the Riesz case, it improves as $\alpha/2 + 1/2 - \varepsilon$.

\subsection{Directional Fractional Laplacian}\label{directional}

We can also apply the directional fractional Laplacian $(-\Delta)^{\alpha/2}_M$ to functions on $\Omega$ with an exterior boundary condition, which is applied in the same way as for the Riesz definition. Although the directional characterization was motivated in \cite{meerschaert1999multidimensional} by the desire to capture anisotropic anomalous diffusion, in this work, we always choose $M(d\boldsymbol{\theta})=d\boldsymbol{\theta}/S_d$ where $S_d=2\pi^{d/2}/\Gamma(d/2)$ is the surface area of the unit sphere in $\mathbb{R}^d$, so that our computational results for this definition can be compared to those of the Riesz definition. With this choice of measure, the associated L\'evy process is symmetric stable motion that is stopped upon exiting the domain $\Omega$, which is the same process as the one associated with the Riesz fractional Laplacian. A nice proof {\color{chocolate} of this equivalence} can be  found in \cite{chen_meerschaert_2012}, Lemma 4.1. Hence the discussions of Section \ref{killed_levy_motion} and \ref{sec:riesz_regularity} also apply to the directional fractional Laplacian with our choice of integration measure.

\subsection{Spectral Fractional Laplacian}\label{spectral}

In this article, we have split the discussion of the spectral fractional Laplacian into two: the case of zero boundary condition and the case of nonzero boundary conditions. This is due to the vast body of work that considers only the homogeneous spectral fractional Laplacian (i.e., the zero boundary condition case), as only recently was the inhomogeneous version considered. In deference to the fact that many results in the literature apply only to the homogeneous case, we have taken this approach to avoid misleading the reader. The results reported in Sections \ref{SpectralDef} and \ref{sec:spectral_regularity} on the homogeneous spectral fractional Laplacian should not be taken to apply in the case of nonzero boundary conditions without modification.

The inhomogeneous spectral fractional Laplacian has been considered by Antil et al. \cite{AntilPfeffererRogovs} (see Section \ref{inhom_series}) and Cusimano et al. \cite{Cusimano2017} (see Section \ref{sec:heat_semigroup}). In Section \ref{sec:heat_semigroup}, we point out that the definitions considered in \cite{AntilPfeffererRogovs} and \cite{Cusimano2017} are the same, and are essentially a harmonic lifting to the homogeneous spectral fractional Laplacian. Moreover, we show that this amounts to another natural definition in terms of the inverse spectral fractional Laplacian. 

\subsubsection{Zero Boundary Condition}\label{SpectralDef}
The spectral approach to defining the factional Laplacian in $\Omega$ is to start with the standard negative Laplacian $-\Delta$ on that domain and take the spectral power defined by \eqref{E:spectral_general}. Let us first consider the case of zero boundary conditions. {\color{blue} We will take $\mathcal{D}(-\Delta)$ to be the subspace of $H^2(\Omega)$ consisting of smooth functions with zero Dirichlet or zero Neumann boundary values\footnote{\color{chocolate}
We remind the reader that, for using the spectral theorem as in Section \ref{Spectral/Fourier Definition}, $\mathcal{D}(-\Delta)$ need not be all of $\mathcal{H}$, or even a ``maximal'' domain of definition of $-\Delta$ within $\mathcal{H}$, but rather a more convenient, dense subspace of $\mathcal{H}$ (a \emph{core} for $-\Delta$) . 
}, depending on the choice of boundary condition.}

The spectrum $\sigma(-\Delta)$, which is now discrete, depends on the domain 
$\Omega$ and on whether Dirichlet or Neumann conditions are used. 
The spectrum of the Laplacian in the Dirichlet case is defined by the problem, 
\begin{align}
\begin{split}\label{E:eig_dirichlet_zero}
-\Delta e_k &= \lambda_k e_k \text{ in $\Omega$} \\
e_k &= 0 \text{ on $\partial\Omega$},
\end{split}
\end{align}
and in the Neumann case by 
\begin{align}\label{E:eign_neumann_zero}
\begin{split}
-\Delta e_k &= \lambda_k e_k \text{ in $\Omega$} \\
\frac{\partial e_k}{\partial n} &= 0 \text{ on $\partial\Omega$}.
\end{split}
\end{align}
The $\lambda_k$ are the eigenvalues, and $e_k$ the eigenfunctions, 
of ${-\Delta}$ with zero Dirichlet or Neumann boundary condition, respectively.  
The spectral decomposition \eqref{E:spectral_theorem} then reads 
\begin{equation}\label{standard_laplacian_series}
-\Delta u(x) = \sum_{k=1}^\infty \lambda_k 
(
u, e_k
)_{L^2_\Omega}
e_k(x).
\end{equation}
We remark that this identity is valid on $\mathcal{D}(\Omega)$, and can be extended by continuity to $H_0^2(\Omega)$ in the Dirichlet case, or {\color{blue}$H_{\partial u/\partial n = 0}^2(\Omega) = \left\{ u \in H^2(\Omega) \ : \ \frac{\partial u}{\partial n}\big|_{\partial \Omega} = 0\right\}$} in the Neumann case. It is not in general true for functions $u$ with nonzero boundary values (Dirichlet or Neumann). For example, in one dimension with $-\Delta = -\frac{\partial^2}{\partial x^2}$, and $(\lambda_k, e_k)$ from \eqref{E:eig_dirichlet_zero} on $[0,2\pi]$ and $u=\cos(x)$, the equation \eqref{standard_laplacian_series} does not hold. In fact, the series on the right-hand side diverges. 

Thus, on $\mathcal{D}(-\Delta)$, in accordance with \eqref{E:spectral_general}, the \emph{spectral fractional Laplacian} is defined by
\begin{equation}\label{E:spectral_omega}
(-\Delta)^{\alpha/2} u (x) := \sum_{k=1}^\infty \lambda_k^{\alpha/2}
( 
u, e_k
)_{L^2_\Omega}
e_k(x).
\end{equation}
As usual, by continuity, this can be extended to an operator on $\mathbb{H}^\alpha(\Omega)$ (see Appendix \ref{sobolev_spaces}).
The spectral fractional Laplacian is nonlocal on the interior of $\Omega$ for noninteger $\alpha \in (0,2)$. 
We see that to compute the inner product $( u, e_k )_{L^2_\Omega}$, it suffices for 
$u$ to be defined on the interior of $\Omega$. 
{\color{chocolate}{ Unlike the Riesz fractional Laplacian, the spectral fractional Laplacian requires}} no information about $u$ on the exterior $\mathbb{R}^d \setminus \Omega$. 
Thus, from a conceptual viewpoint, in boundary value problems the spectral fractional Laplacian
{\color{chocolate} admits} the same type of boundary conditions as the standard, local Laplacian $-\Delta$. 
{\color{chocolate}
Precise conditions and references for rigorous proofs for well-posedness of boundary value problems for the spectral fractional Laplacian with such local boundary conditions (Dirichlet and Neumann) are discussed in Section \ref{sec:spectral_regularity}.
}

\subsubsection{Other representations}\label{homo_representations}
In this section, we merely point out the analogues of the characterizations in Section 
\ref{s:via_standard_laplacian} which have been proven (and implemented) for the spectral fractional Laplacian. 
Unless otherwise stated, these are not {\color{chocolate} applicable without modification} in the case of nonzero boundary conditions. We do not use these {\color{chocolate} characterizations} in our numerical comparisons, but each has its advantages and disadvantages. 

The analogue of the extension method \eqref{E:degenerate_elliptic} for the homogeneous spectral fractional Laplacian on a bounded domain was derived in \cite{StingaTorrea2010_ExtensionProblemHarnacksInequalityFractionalOperators}. In this characterization, a degenerate elliptic equation is solved in the extruded domain (``cylinder'') over $\Omega$, followed by a similar Neumann-type trace. Further studies include \cite{gale2013extension, stinga_4_caccioppoli,stinga_thesis} As this higher-dimensional formulation involves only integer-order operators, standard discretization approaches may be applied, as in \cite{Nochetto2015, Gatto2015, AinsworthGlusa2017_HybridFiniteElementSpectral}. 

 {\color{chocolate}  The heat semigroup formula on a bounded domain $\Omega$ was studied and implemented} in \cite{Cusimano2017}, where the heat semigroup was used to define the spectral fractional Laplacian. This approach is robust for nonzero boundary conditions, and is discussed at length in Section \ref{sec:heat_semigroup}.

The Balakrishnan formula \label{E:balakrishnan_rn} was implemented in, e.g., \cite{bonito2015}. More recently, in \cite{bonito2017}, a rapidly convergent sinc quadrature was developed for this formula, which is used together with a finite-element approximation to the resolvent $(sI - \Delta)^{-1}$. 

Yet another approach in \cite{VABISHCHEVICH2015289} considered a reformulation of the spectral fractional Poisson equation with zero Robin boundary conditions as an integer-order (time-dependent) pseudo-parabolic equation that could be solved with standard FEMs in space and finite difference methods in time. The solution of the pseudo-parabolic equation at time $t = 1$ turns out to be the inverse spectral fractional Laplacian applied to the source function, which yields the solution to the fractional Poisson problem.

\subsubsection{Subordinate Stopped/Reflected Brownian Motion}\label{sec:subordinate_BM}
It is well known that the Laplacian in a bounded domain is the generator of 
stopped Brownian motion for Dirichlet boundary conditions, and reflected Brownian motion for Neumann boundary conditions \cite{yosida1980}. The construction of stopped Brownian motion uses the same procedure as in \eqref{stopping_time}, while the construction of reflected Brownian motion involves the solution of the Skorokhod problem \cite{hsu_reflecting_bm}. The corresponding processes for the spectral fractional Laplacian can be obtained by means of subordination. 
Subordination of a process $X_t$ results in a process $X_{T(t)}$, in which time $t$ is replaced by 
``operational time'' ${T(t)}$, itself a stochastic process {\color{chocolate} -- more specifically, an increasing L\'evy process.} 
In $\mathbb{R}^d$, an isotropic $\alpha$-stable L\'evy motion can be constructed by subordinating the isotropic Brownian motion 
with the stable subordinator (see \cite{Cont2004}, Section 4.4). 
{\color{chocolate}The same time change} gives a way of converting Brownian paths that are stopped at the boundary into $\alpha$-stable L\'evy paths that are stopped at the boundary. 

The spectral fractional Laplacian with Dirichlet boundary conditions is the generator of subordinate stopped Brownian motion \cite{Song}, i.e., stopped Brownian motion that is then subordinated by the standard stable subordinator. The spectral fractional Laplacian with Neumann boundary conditions is the generator of subordinate reflected Brownian motion. 
These are both results from the semigroup theory of Markov processes (see \cite{yosida1980}, Chapter IX; 
a detailed discussion may be found in {\color{magenta}\cite{mamikon_thesis, gulian2018stochastic})}. Thus, one imposes the boundary condition on the process first, before subordinating and turning the stopped/reflected Brownian motion into a L\'evy motion. 
It is important to note the order of the modifications;
the reverse order corresponds to the \emph{Riesz} fractional
Laplacian in $\Omega$, as discussed in Section \ref{killed_levy_motion}.

\subsection{Inhomogeneous Spectral Fractional Laplacian}\label{sec:inhomogeneous_spectral}
The construction of the spectral power \eqref{E:spectral_omega} cannot simply be repeated for the case of nonzero boundary conditions. This is because subsets of $C^\infty(\Omega)$ that satisfy a fixed nonzero boundary condition are no longer linear spaces, which prohibits the use of the spectral theorem. However, in the case of the standard Laplacian $-\Delta$ on a bounded domain, using a lifting technique, a spectral series can be derived ({\color{chocolate} see eq. \eqref{E:series_inhomo_standard_laplacian}}). The generalization of this approach can be used to derive a suitable series representation for the inhomogeneous spectral fractional Laplacian, namely
\begin{align}
	(-\Delta_{\Omega,g})^{\alpha/2}u = \sum_{i=1}^\infty \left(\lambda_i^{\alpha/2} (u,e_i)_{L^2(\Omega)} - \lambda_i^{\alpha/2-1} \left(u,\frac{\partial e_i}{\partial n}\right)_{L^2(\partial\Omega)} \right)e_i,
\end{align}
where $g$ is the nonzero Dirichlet boundary condition for $u$. This approach was taken by Antil et al. \cite{AntilPfeffererRogovs} and is discussed in Section \ref{inhom_series}. The same authors show that under suitable regularity conditions, the inhomogeneous spectral fractional Laplacian can be written as
\begin{align}\label{lifting_characterization_for_harbir}
	(-\Delta_{\Omega,g})^{\alpha/2}u &= (-\Delta_{\Omega,0})^{\alpha/2}[{\color{magenta}u}  - v],
\end{align}
where $-\Delta v = 0$ in the weak sense, and $v \big|_{\partial \Omega} = g$. This essentially reduces the inhomogeneous operator to the well-studied homogeneous spectral fractional Laplacian. The details of the harmonic lifting are discussed in Section \ref{sec:harmonic_lifting}. Nonharmonic lifting functions may also be used; this is discussed in Section \ref{sec:nonharmonic_lifting}. {\color{magenta} We note here a consequence of \eqref{lifting_characterization_for_harbir} that may be surprising at first: as $\alpha \rightarrow 0$, $(-\Delta_{\Omega,g})^{\alpha/2}u \rightarrow u - v$; in other words, for $g \not\equiv 0$, $(-\Delta_{\Omega,g})^{\alpha/2}$ does not reduce to the identity operator as $\alpha \rightarrow 0$.}

A different approach was taken by Cusimano et al. \cite{Cusimano2017} where the heat semigroup was used to define the {\color{chocolate} inhomogeneous} spectral fractional Laplacian, for which they used the notation {\color{chocolate} $\mcL_{\Omega,g}^{\alpha/2}$:}
\begin{align}
\label{cusimano_def1}
	\mcL_{\Omega,g}^{\alpha/2}u &= -\frac{1}{\Gamma(-\alpha/2)}\int_0^\infty \left(e^{t\Delta_{\Omega,g}}u(x) - u(x)\right) \frac{dt}{t^{1+\alpha/2}}.
\end{align}
This approach is discussed in Section \ref{sec:heat_semigroup}. A lifting characterization equivalent to \eqref{lifting_characterization_for_harbir} was obtained for this operator $\mcL_{\Omega,g}^{\alpha/2}$; thus both the operator $(-\Delta_{\Omega,g})^{\alpha/2}$ defined my Antil et al. \cite{AntilPfeffererRogovs} and the operator $\mcL_{\Omega,g}^{\alpha/2}$ defined by Cusimano et al. \cite{Cusimano2017} are the same. Hence, we use the symbol $(-\Delta_{\Omega,g})^{\alpha/2}$ in the remainder of this work. We compare numerical methods that can be applied to both approaches in Section \ref{lifting_weak_compare}.

\subsubsection{Series Representation \label{inhom_series}}

We motivate the approach in \cite{AntilPfeffererRogovs} by discussing the spectral representation of the standard Laplacian $-\Delta u= -\frac{\partial^2 u}{\partial x_1^2} - ... - \frac{\partial^2 u}{\partial x_d^2}$ on a bounded domain $\Omega$ for arguments $u$ with nonzero boundary values.
Given a function $u$ such that
$ u|_{\partial \Omega} = g$, where $g$ need not be zero, subtract from $u$ a harmonic function $v$ inside $\Omega$ with the same boundary values:
\begin{equation*}
-\Delta v = 0,\quad v|_{\partial \Omega} = g.
\end{equation*}
We refer to this function $v$ as a \emph{harmonic lifting} of $g$. Then
\begin{equation*}
-\Delta u = -\Delta(u - v), \quad (u-v)|_{\partial \Omega} = 0.
\end{equation*}
In other words, harmonic lifting of the boundary values does not change the Laplacian $u$, and since the boundary value for $u-v$ is zero, 
we can use it in formula \eqref{standard_laplacian_series}:
\begin{align*}
-\Delta u &= -\Delta (u - v)\\ 
&= \sum \lambda_k ( u - v, e_k )_{\color{blue} L^2(\Omega) } e_k \\
&= \sum \lambda_k ( u, e_k )_{\color{blue} L^2(\Omega) } e_k - \lambda_k ( v, e_k )_{\color{blue} L^2(\Omega) } e_k.
\end{align*} 
Let us rewrite the second inner product in the sum, which involves the harmonic function $v$: 
\begin{align*}
( v, e_k )
&= ( v,  -\Delta (-\Delta)^{-1} e_k )_{\color{blue} L^2(\Omega) }  \\
&= ( v,  -\Delta {\color{magenta}\lambda_k}^{-1} e_k )_{\color{blue} L^2(\Omega) }  \text{ \hspace{0.5in} (by the eigenfunction property) } \\
&= {\color{magenta}\lambda_k}^{-1} \int_{\Omega} -\Delta v e_k + {\color{magenta}\lambda_k}^{-1} \int_{\partial\Omega} v \frac{\partial e_k}{\partial n}  - 
{\color{magenta}\lambda_k}^{-1} \int_{\partial\Omega} e_k \frac{\partial v}{\partial n} \text{ \hspace{0.5in} (Green's second identity) } \\
&=  {\color{magenta}\lambda_k}^{-1} \int_{\partial\Omega} v \frac{\partial e_k}{\partial n} 
\text{  \hspace{0.5in} (since $-\Delta v =0$ on $\Omega$ and $e_k = 0$ on $\partial \Omega$)} \\
&=  {\color{magenta}\lambda_k}^{-1} \int_{\partial\Omega} u \frac{\partial e_k}{\partial n} \text{ \hspace{0.5in}
 (since $u-v = 0$ on $\partial\Omega$)}.
\end{align*}
This gives us the spectral expansion of $-\Delta$ which is now valid for \emph{any} smooth function $u$ on $\Omega$, regardless of boundary values:
\begin{equation}\label{E:series_inhomo_standard_laplacian}
-\Delta u = \sum  \left( \lambda_k \int_\Omega u e_k - \int_{\partial\Omega} u \frac{\partial e_k}{\partial n}  \right)e_k. 
\end{equation} 

This is a key result for defining the inhomogeneous spectral fractional Laplacian. However, before we can proceed, the operator $(-\Delta)^{-1}$ that appeared in the derivation above requires clarification. \emph{A priori}, it is a multiple-valued operator, since $(-\Delta)^{-1} f$ is only specified up to a harmonic function in $\Omega$. However, owing to the uniqueness of the (standard) Poisson problem, this arbitrariness may be removed by requiring $(-\Delta)^{-1} f$ to have fixed boundary value $\tilde{g}$ on $\partial\Omega$. Thus, we define $u = (-\Delta_{\tilde{g}})^{-1} f$ as the function such that $-\Delta u = f$ and $u = \tilde{g}$ on $\partial \Omega$. Regardless of the fixed boundary condition, this will result in a single-valued operator $(-\Delta_{\tilde{g}})^{-1}$ such that 
\begin{equation}\label{e:inverse_forward_first}
-\Delta(-\Delta_{\tilde{g}})^{-1} = \text{Id}_{L^{2}},
\end{equation}
{\color{blue} and}
\begin{equation}\label{e:inverse_forward_second}
(-\Delta_{\tilde{g}})^{-1} (-\Delta) = \text{Id}_{ \{ u \in {\color{blue} H^2} \text{ such that } u|_{\partial{\Omega}} = \tilde{g} \} }.
\end{equation}
For definiteness, and to obtain an identity operator on a linear subset of {\color{blue} $H^2$} in \eqref{e:inverse_forward_second}, the zero boundary value $\tilde{g} \equiv 0$ is chosen. We sometimes refer to this as the \emph{zero gauge} inverse Laplacian, and simply write $(-\Delta)^{-1}$ rather than $(-\Delta_0)^{-1}$. By the spectral theorem, 
the series expansion of the inverse Laplacian is then the spectral power
\begin{equation*}
(-\Delta)^{-1} f = \sum_{i=1}^{{\color{magenta}\infty}} {\color{magenta}\lambda_i}^{-1} (f,e_i)_{\color{blue} L^2(\Omega) } e_i,
\end{equation*}
which defines an operator from $H^{-2}(\Omega)$ to $L^2(\Omega)$. We remark that, even with the choice of a zero gauge, \eqref{e:inverse_forward_first} is valid on functions with arbitrary boundary values. 

Using the result \eqref{E:series_inhomo_standard_laplacian}, a natural approach to defining the spectral fractional Laplacian with nonzero boundary conditions is to write 
\begin{equation*}
(-{\Delta})^{\alpha/2}u := (-\Delta)^{\alpha/2-1} (-\Delta) u
\end{equation*}
and to combine a definition of $(-\Delta)^{\alpha/2-1}$ with the series \eqref{E:series_inhomo_standard_laplacian} above. 
At first glance, this merely shifts the problem to defining the \emph{inverse} spectral fractional Laplacian with nonzero boundary conditions. However, defining this inverse inhomogeneous operator is considerably easier. By the above discussion of $(-\Delta)^{-1}$, the spectral power
\begin{equation*}
(-\Delta)^{\alpha/2-1} :=
\sum_{i=1}^\infty \lambda_i^{\alpha/2-1} (\cdot, e_i)_{\color{blue} L^2(\Omega) } e_i
\end{equation*}
is a convergent series defining a single valued operator on $H^{2(\alpha/2-1)}(\Omega)$, regardless of boundary conditions. For $\alpha=0$, we understand the operator $(-\Delta)^{-1}u$ for $u \in L^2$ to be the projection onto the zero-boundary value functions $H^2_0(\Omega)$ of all functions $v$ such that $-\Delta v = u$.  Thus, the zero-boundary condition is used to eliminate multi-valuedness of $(-\Delta)^{-1}$, but the operator $(-\Delta)^{\alpha/2-1}$ may take as argument a function with arbitrary boundary values, in particular $(-\Delta)u$.  

As a result of these definitions, using the spectral expansion \eqref{E:series_inhomo_standard_laplacian} of the inhomogeneous (integer-order) Laplacian, we obtain
\begin{align}
\label{fractional_power_series}
\begin{split}
	(-\Delta)^{\alpha/2}u = (-\Delta)^{\alpha/2-1} (-\Delta) u &= \sum_{i=1}^\infty \lambda_i^{\alpha/2-1} (-\Delta u, e_i) _{\color{blue} L^2(\Omega) } e_i \\
	&= \sum_{i=1}^\infty \lambda_i^{\alpha/2-1} \left(\sum_{j=1}^\infty \lambda_j (u,e_j)_{\color{blue} L^2(\Omega) } e_j - \left(u, \frac{\partial e_j}{\partial n}\right)_{\color{blue} L^2(\partial \Omega) } e_j, e_i \right) e_i \\
	&= \sum_{i=1}^\infty \lambda_i^{\alpha/2-1} \left(\lambda_i (u,e_i)_{\color{blue} L^2(\Omega) } - \left(u, \frac{\partial e_i}{\partial n}\right)_{\color{blue} L^2(\partial \Omega) } \right)e_i \\
	&= \sum_{i=1}^\infty \lambda_i^{\alpha/2} (u,e_i)_{\color{blue} L^2(\Omega) } e_i - \lambda_i^{\alpha/2-1} \left(u,\frac{\partial e_i}{\partial {n}}\right)_{\color{blue} L^2(\partial \Omega) } e_i.
\end{split}
\end{align}
This is the definition in \cite{AntilPfeffererRogovs} of a general inhomogeneous spectral fractional Laplacian. {\color{forest} This can be considered the fractional analogue of \eqref{E:series_inhomo_standard_laplacian}}. In their paper, Antil et al. also proved a similar formula for nonzero Neumann boundary conditions. Furthermore, they developed an integration-by-parts formula for these formulations, proved regularity results, introduced finite element discretizations, derived the associated error estimates, and included numerical experiments.

\subsubsection{Harmonic Lifting \label{sec:harmonic_lifting}}

Taking the above series representation of the operator $(-\Delta_{\Omega,g})^{\alpha/2}$ as a definition, Antil et al. \cite{AntilPfeffererRogovs} considered the inhomogeneous spectral fractional Poisson problem:
\begin{align}
\begin{split}
	(-\Delta_{\Omega,g})^{\alpha/2} u(x) &= f(x), \hspace{15pt} x \in \Omega, \ \ \ \alpha \in (0,2), \\
	u(x) &= g(x), \hspace{15pt} x \in \partial \Omega.
\end{split}
\end{align}
By linearity, the solution $u$ can be written as
\begin{align}
\label{harbir_splitting}
u(x) = a(x) + b(x),
\end{align}
where $a$ solves
\begin{align}
\label{harbir_splitting_zero_bc}
\begin{split}
	(-\Delta_{\Omega,g})^{\alpha/2} a = (-\Delta_{\Omega,0})^{\alpha/2} a &= f \hspace{20pt} \text{in} \ \Omega, \\
	a \big|_{\partial \Omega} &= 0 \hspace{20pt} \text{on} \ \partial \Omega,
\end{split}
\end{align}
and the component $b$ solves the equation
\begin{align}
\label{harbir_splitting_homogeneous}
\begin{split}
	(-\Delta_{\Omega,g})^{\alpha/2} b &= 0 \hspace{20pt} \text{in} \ \Omega, \\
	b \big|_{\partial \Omega} &= g \hspace{20pt} \text{on} \ \partial \Omega.
\end{split}
\end{align}
However, this ``fractional harmonic" function $b$, under certain conditions on the regularity of $g$, is simply the solution to the standard Laplace equation
\begin{align}
\label{harbir_splitting_very_weak}
\begin{split}
	-\Delta b &= 0 \hspace{20pt} \text{in} \ \Omega, \\
	b \big|_{\partial \Omega} &= g \hspace{20pt} \text{on} \ \partial \Omega.
\end{split}
\end{align}
The simple intuition behind this equivalence involves the characterization of the operator $(-\Delta_{\Omega,g})^{\alpha/2}$ as \\ $(-\Delta)^{\alpha/2-1}(-\Delta)$ and by our choice of the inverse Laplacian with $(-\Delta)^{\alpha/2-1} 0 = 0$, which yield
\begin{align}
	-\Delta b = 0 \implies (-\Delta_{\Omega,g})^{\alpha/2}b = 0.
\end{align}
Indeed, the authors of \cite{AntilPfeffererRogovs} show that Equation \eqref{harbir_splitting_homogeneous} {\color{chocolate} is, for boundary functions $g \in L^2(\partial \Omega)$, equivalent to the \emph{very weak variational form} of Equation \eqref{harbir_splitting_very_weak}:}
\begin{equation}
\label{variational_very_weak}
\int_\Omega b (-\Delta) \phi = \int_{\partial \Omega} g \frac{\partial \phi}{\partial n}, \text{ for all } \phi \in H^1_0(\Omega) \cap H^2(\Omega).
\end{equation} 
{\color{chocolate} The phrase ``very weak''} refers to the transfer of all derivatives of $v$ to the test function $\phi$ via fractional integration-by-parts, a result of the same work \cite{AntilPfeffererRogovs}. {\color{chocolate} Of course, if the boundary data $g$ is sufficiently regular, then $b$ may be sought as a solution to the weak (rather than very weak) variational form of Equation \eqref{harbir_splitting_very_weak}:
\begin{equation}
\int_\Omega \nabla b \cdot \nabla \phi = \int_{\partial \Omega} g \frac{\partial \phi}{\partial n}, \text{ for all } \phi \in H^1_0(\Omega).
\end{equation}
For the precise regularity estimate for the problem \eqref{variational_very_weak} in terms of the boundary function $g$, we refer to Lemma 4.1 and the surrounding discussion in} the article \cite{AntilPfeffererRogovs}. We point out the simplest case, in which $g \in H^{1/2}(\partial \Omega)$ implies that $b \in H^1(\Omega)$ and $b$ satisfies \eqref{harbir_splitting_very_weak} in the weak sense.
{\color{chocolate}
This approach is implemented in Section \ref{lifting_weak_compare}.
}
\subsubsection{Nonharmonic Lifting \label{sec:nonharmonic_lifting}}

While Antil et al. \cite{AntilPfeffererRogovs} used a fractional harmonic lifting in their approach, it is possible to obtain a variational form with an arbitrary {\color{blue}(i.e., nonharmonic)} lifting function. In this section, we describe a lifting approach in which the lifting function $v \in H^{1}(\Omega)$ need only satisfy the boundary condition $v \big|_{\partial \Omega} = u \big|_{\partial \Omega}$. Again, we wish to solve the spectral fractional Poisson problem
\begin{align}
\label{SpectralPoisson}
\begin{split}
	(-\Delta)^{\alpha/2} u &= f, \hspace{10pt} \text{in} \ \Omega\\
	u \big|_{\partial \Omega} &= g, \hspace{10pt} \text{on} \ \partial \Omega.
\end{split}
\end{align}
The unknown function $u$ can be decomposed as $u = w - v$, where $v$ is the lifting function. This function $v$ need not be unique, and the {\color{chocolate} fractional harmonic} function $v$ described above is also admissible.

To derive the variational form of Equation \eqref{SpectralPoisson}, we need the following integration-by-parts formula for the inverse spectral fractional Laplacian $(-\Delta)^{\alpha/2-1}$. 
\begin{theorem}\label{intbyparts}
{\color{blue}
Let $0 \le \alpha \le 2$ and $f, \phi \in L^2(\Omega)$. Then}
\begin{align}
	((-\Delta)^{\alpha/2-1}f,\phi) &= (f, (-\Delta)^{\alpha/2-1}\phi).
\end{align}
\end{theorem}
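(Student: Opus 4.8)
The plan is to prove this self-adjointness statement directly from the series definition of the inverse-type spectral power, exploiting that the Dirichlet eigenfunctions form an orthonormal basis of $L^2(\Omega)$ and that the spectral multiplier $\lambda_i^{\alpha/2-1}$ is a \emph{real} scalar. First I would record that $\{e_i\}_{i=1}^\infty$ is an orthonormal basis of $L^2(\Omega)$ consisting of the eigenfunctions from \eqref{E:eig_dirichlet_zero}, with eigenvalues $0 < \lambda_1 \le \lambda_2 \le \cdots \to \infty$. For $f, \phi \in L^2(\Omega)$ I set $f_i = (f, e_i)_{L^2(\Omega)}$ and $\phi_i = (\phi, e_i)_{L^2(\Omega)}$; by Parseval both coefficient sequences lie in $\ell^2$.

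Second, I would verify that $(-\Delta)^{\alpha/2-1}$ is a bounded operator on $L^2(\Omega)$, which legitimizes all the interchanges below. Since $0 \le \alpha \le 2$, the exponent satisfies $\alpha/2 - 1 \in [-1,0]$, and because $\lambda_i \ge \lambda_1 > 0$ with a nonpositive exponent, we have $0 < \lambda_i^{\alpha/2-1} \le \lambda_1^{\alpha/2-1}$. Thus the multiplier sequence $(\lambda_i^{\alpha/2-1})_i$ is uniformly bounded, so the series $(-\Delta)^{\alpha/2-1}f = \sum_i \lambda_i^{\alpha/2-1} f_i\, e_i$ converges in $L^2(\Omega)$ and defines a bounded operator on all of $L^2(\Omega)$, not merely on a dense subspace.

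Third, I would compute both inner products using orthonormality. Expanding each operator via its series and applying Parseval gives
\[
((-\Delta)^{\alpha/2-1}f,\phi) = \sum_{i=1}^\infty \lambda_i^{\alpha/2-1} f_i\, \phi_i = (f,(-\Delta)^{\alpha/2-1}\phi),
\]
where the middle expression is the common value shared by both sides. The point needing care is the absolute convergence of this sum, which justifies pairing the $i$-th coefficient of one factor with the $i$-th coefficient of the other; this follows from Cauchy--Schwarz, since $\sum_i |\lambda_i^{\alpha/2-1} f_i\, \phi_i| \le \lambda_1^{\alpha/2-1}\, \|(f_i)\|_{\ell^2}\, \|(\phi_i)\|_{\ell^2} < \infty$. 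The symmetry of the middle expression in $f$ and $\phi$ is immediate precisely because $\lambda_i^{\alpha/2-1}$ is real.

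The main (and essentially only) obstacle is the convergence bookkeeping: one must confirm the operator is genuinely defined on all of $L^2(\Omega)$ and that its series may be manipulated termwise against the expansion of $\phi$. This is exactly where the hypothesis $0 \le \alpha \le 2$, equivalently $\alpha/2 - 1 \le 0$, enters, guaranteeing a bounded multiplier and hence a bounded self-adjoint operator, so no domain restrictions or principal-value subtleties arise. In fact the statement is the concrete realization, for the discrete Dirichlet spectrum, of the general principle that a real-valued Borel function of a self-adjoint operator yields a self-adjoint operator; the series computation above simply makes this explicit.
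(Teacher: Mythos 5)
Your proposal is correct and follows essentially the same route as the paper's proof: expand both arguments in the Dirichlet eigenbasis, use orthonormality to reduce both inner products to the common sum $\sum_i \lambda_i^{\alpha/2-1} \hat{f}_i \hat{\phi}_i$, and exploit that the multiplier is real. The only difference is that you explicitly justify the boundedness of the multiplier and the absolute convergence of the sum, which the paper leaves implicit.
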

\begin{proof}
We use the notation $\hat{f}_i = (f,e_i)$ and $\hat{\phi}_i = (\phi,e_i)$.
\begin{multline}
((-\Delta)^{\alpha/2-1}f, \phi) = \left(\sum_{i=1}^\infty \lambda_i^{\alpha/2-1} \hat{f}_i e_i, \sum_{j=1}^\infty \hat{\phi}_j e_j\right) 
	= \sum_{i=1}^\infty \sum_{j=1}^\infty \lambda_i^{\alpha/2-1} \hat{f}_i \hat{\phi}_j (e_i,e_j) 
	= \sum_{i=1}^\infty \lambda_i^{\alpha/2-1} \hat{f}_i \hat{\phi}_i \\
	= \sum_{i=1}^\infty \sum_{j=1}^\infty \lambda_j^{\alpha/2-1} \hat{f}_i \hat{\phi}_j (e_i,e_j) 
	= \left(\sum_{i=1}^\infty \hat{f}_i e_i, \sum_{j=1}^\infty \lambda_j^{\alpha/2-1} \hat{\phi}_j e_j\right) 
	= (f, (-\Delta)^{\alpha/2-1} \phi).
\end{multline}
\end{proof}
This formula is valid regardless of the boundary values of the functions $f$ and $\phi$. This is due to our choice of zero gauge for the definition of $(-\Delta)^{\alpha/2-1}$. Then, the variational form of the problem is written as follows: find the function $w \in \mathbb{H}^{\alpha}$ such that for any $\phi \in L^2(\Omega)$, 
\begin{align}
	((-\Delta)^{\alpha/2}(w+v), \phi) &= (f,\phi).
\end{align}
Now we can define the inhomogeneous spectral fractional Laplacian of $v$ in weak form by integrating-by-parts in the $v$ term on the left-hand-side. 
\begin{align}
	((-\Delta)^{\alpha/2}v,\phi) &= (-\Delta v, (-\Delta)^{\alpha/2-1}\phi) \\
	&= (\nabla v, \nabla (-\Delta)^{\alpha/2-1}\phi),
\end{align}
which is our (weak sense) definition of the inhomogeneous spectral fractional Laplacian.

Finally, we can solve for $w$ by solving the variational equation
\begin{align}
\label{weak_nonharmonic_lifting_form}
	((-\Delta)^{\alpha/2} w, \phi) &= (f,\phi) - (\nabla v, \nabla((-\Delta)^{\alpha/2-1}\phi)),
\end{align}
where all the fractional operators that appear are now applied to functions satisfying zero boundary conditions. {\color{blue} Note that this (standard) weak form \eqref{weak_nonharmonic_lifting_form} requires that $v \in H^1(\Omega)$, which by $v|_{\partial \Omega} = g$ and the trace theorem \ref{trace_theorem}, requires the boundary data $g \in H^{1/2}(\Omega)$. While other weak variational forms leading to less regular solution spaces may be studied to treat rougher boundary data, for the purposes of this article,  \eqref{weak_nonharmonic_lifting_form} will suffice.} We recover the solution to Equation \eqref{SpectralPoisson} using the relation $u = w + v$.

These lifting approaches are compared using numerical examples in Section \ref{lifting_weak_compare}.

\subsubsection{Heat Semigroup \label{sec:heat_semigroup}}

The authors of \cite{Cusimano2017} proposed an extension of the heat semigroup method to define an inhomogeneous spectral fractional Laplacian. In their work, only discretizations of their newly-defined operator are considered, not the solution of the Poisson problem using this operator. We compare these discretizations with the approach of Antil et al. \cite{AntilPfeffererRogovs} numerically in Section \ref{comparison_APR_HeatSG}.

Cusimano et al. define their inhomogeneous spectral fractional Laplacian as in Equation \eqref{cusimano_def1}, in which
$e^{t\Delta_{\Omega,g}}$ is the propagator of the heat equation. 
Next, the authors use a splitting for $w(x,t)$, which allows one to relate $\mcL_{\Omega,g}^{\alpha/2}$ to $(-\Delta)_{\Omega,0}^{\alpha/2}$, the  homogeneous spectral fractional Laplacian. The unknown function $w(x,t)$ can be expressed as $w(x,t) = v(x,t) + z(x)$, where $z(x)$ is a harmonic function, i.e.,
\begin{align}
\label{harmonic_fn}
\begin{split}
	-\Delta z &= 0 \hspace{10pt} \text{in} \ \Omega, \\
	z & = g \hspace{10pt} \text{on} \ \partial \Omega,
\end{split}
\end{align}
and $v(x,t)$ solves a zero boundary value heat equation:
\begin{align}
\begin{split}
	\partial_t v - \Delta v &= 0 \hspace{10pt} \text{in} \ \Omega \times [0,+\infty), \\
	v(x,t=0) &= u(x) - z(x), \\
	v(x,t) &= 0 \hspace{10pt} \text{on} \ \partial \Omega \times [0,+\infty).
\end{split}
\end{align}
We notice that $w(x,0) = v(x,0) + z(x) = u(x) - z(x) + z(x) = u(x)$. Then, using the definition of $w(x,t)$ and Equation \eqref{cusimano_def1}, we see that
\begin{align}
\begin{split}
	\mcL_{\Omega,g}^{\alpha/2}u &= \frac{1}{\Gamma(-\alpha/2)} \int_0^\infty (w(x,t) - w(x,0)) \frac{dt}{t^{1+\alpha/2}} \\
	&= \frac{1}{\Gamma(-\alpha/2)} \int_0^\infty ([v(x,t) + z(x)] - [v(x,0) + z(x)]\frac{dt}{t^{1+\alpha/2}} \\
	&= \frac{1}{\Gamma(-\alpha/2)} \int_0^\infty (v(x,t) - v(x,0)) \frac{dt}{t^{1+\alpha/2}} \\
	&= \frac{1}{\Gamma(-\alpha/2)} \int_0^\infty (e^{t\Delta_{\Omega,g}} [u-z](x) - [u-z](x)) \frac{dt}{t^{1+\alpha/2}} \\
	&= (-\Delta_{\Omega,0})^{\alpha/2}[u-z](x).
\end{split}
\end{align}

Given this information, we can see that the two formulations of the spectral fractional Laplacians in works \cite{AntilPfeffererRogovs} and \cite{Cusimano2017} are equivalent. We know that the heat semigroup approach leads to the relation
\begin{align}
	\mcL_{\Omega,g}^{\alpha/2}u &= (-\Delta_{\Omega,0})^{\alpha/2} [u-z],
\end{align}
where $z$ is the harmonic function given by Equation \eqref{harmonic_fn}. We can also obtain a similar formula for the operator of Antil et al. From Equations \eqref{harbir_splitting_homogeneous} and \eqref{harbir_splitting_zero_bc}, we know 
\begin{align}
	(-\Delta_{\Omega,g})^{\alpha/2}u &= (-\Delta_{\Omega,0})^{\alpha/2} {\color{magenta}a}.
\end{align}
Inserting ${\color{magenta}a} = u-{\color{magenta}b}$ from the splitting \eqref{harbir_splitting}, we have
\begin{align}
	(-\Delta_{\Omega,g})^{\alpha/2} u &= (-\Delta_{\Omega,0})^{\alpha/2} [u-{\color{magenta}b}].
\end{align}
This shows that the inhomogeneous spectral fractional Laplacian $(-\Delta_{\Omega,g})^{\alpha/2}$ is just the standard homogeneous operator applied to the lifting $u-{\color{magenta}b}$. Since ${\color{magenta}b}$ is also a harmonic function in $\Omega$ with ${\color{magenta}b}\big|_{\partial \Omega} = g$, by uniqueness, ${\color{magenta}b} = z$. Therefore
\begin{align}
	(-\Delta_{\Omega,g})^{\alpha/2}u &= (-\Delta_{\Omega,0})^{\alpha/2}[u-z] = \mcL_{\Omega,g}^{\alpha/2} u.
\end{align}
This fact is demonstrated numerically in Section \ref{comparison_APR_HeatSG}.

\subsubsection{\color{red}Well-posedness and Regularity}\label{sec:spectral_regularity}
{
\color{red}
We begin by discussing well-posedness (existence and uniqueness) of the spectral fractional Poisson problem. We consider the general case of nonzero boundary conditions. Thus, the fractional Laplacian that is used in the two problems below is the inhomogeneous operator that has been reviewed in this section. We have transcribed the existence and uniqueness theorems proven in \cite{AntilPfeffererRogovs}. 
These results have been abbreviated by considering $\alpha/2 \ge 1/2$, resulting in minimal regularity of boundary data $g \in L^2(\partial \Omega)$ in the Dirichlet case, and $g \in H^{-1}(\partial \Omega)$ for the Neumann case; the full theorems in \cite{AntilPfeffererRogovs} actually allow for $\alpha/2 < 1/2$ and rougher boundary data. The Sobolev space notations used in this section are defined in Appendix \ref{sobolev_spaces}.

For the Dirichlet boundary condition, {\color{forest} let $\Omega$ be a bounded quasi-convex domain.} If $f \in \mathbb{H}^{-{\frac{\alpha}{2}}}(\Omega)$ and 
$g \in H^{{\frac{\alpha}{2}}-\frac{1}{2}}(\partial\Omega)$ for ${\alpha/2} \ge 1/2$, then there exists a unique solution $u \in H^{\frac{\alpha}{2}}(\Omega)$ to
\begin{equation}\label{regularity_spectral_dirichlet_inhomo}
(-\Delta)^{\alpha/2} u = f \text{ in $\Omega$}, \quad { u|_{\partial\Omega} = g},
\end{equation}
which satisfies
\begin{equation}
\| u \|_{H^{\frac{\alpha}{2}}(\Omega)} \le C 
\left(
\| f \|_{\mathbb{H}^{-{\frac{\alpha}{2}}}(\Omega)}
+
\| g \|_{{H}^{{\frac{\alpha}{2}}-\frac{1}{2}}(\partial\Omega)}
\right).
\end{equation}
For ${\alpha/2} = 1/2$, the convention is $H^{0}(\partial\Omega) = L^2(\partial\Omega)$. This statement is a special case of Theorem 4.5 in \cite{AntilPfeffererRogovs}. 

For the Neumann boundary condition, {\color{forest} again let $\Omega$ be a bounded quasi-convex domain.} Let $1/2 \le {\alpha/2} \le 1$. If $f \in {H}^{{\frac{\alpha}{2}}}(\Omega)^*$ (the dual of 
${H}^{{\frac{\alpha}{2}}}(\Omega)$; for ${\alpha/2} > 1/2$ this is distinct from $\mathbb{H}^{-{\frac{\alpha}{2}}}(\Omega)$) and $g \in H^{{\frac{\alpha}{2}}-\frac{3}{2}}(\partial\Omega)$ satisfy the compatibility condition
\begin{equation}\label{spectral_compatibility_condition}
\int_\Omega f + \int_{\partial\Omega} g = 0,
\end{equation}
then there exists a unique solution $u \in H^{\frac{\alpha}{2}}(\Omega)$ such that $\int_{\Omega} u = 0$ to  
\begin{equation}\label{regularity_spectral_neumann_inhomo}
(-\Delta)^{\alpha/2} u = f \text{ in $\Omega$}, \quad { \partial u/\partial n|_{\partial\Omega} = g},
\end{equation}
which satisfies
\begin{equation}
| u |_{H^{\frac{\alpha}{2}}(\Omega)} \le C 
\left(
\| f \|_{{H}^{{\frac{\alpha}{2}}}(\Omega)^*}
+
\| g \|_{\mathbb{H}^{{\frac{\alpha}{2}}-\frac{3}{2}}(\partial\Omega)}
\right).
\end{equation}
This statement is a special case of Theorem 5.4 in \cite{AntilPfeffererRogovs}. 
}

{
\color{red}
Next, we discuss regularity for the spectral fractional Poisson equation. Unlike the well-posedness results discussed above, we now focus exclusively on the case of  \emph{zero} boundary conditions. This is because almost all regularity results that have been published at the time of this writing have been presented in this form. Various extensions to nonzero boundary conditions can be made by considering these results in combination with the harmonic lifting property discussed in \ref{sec:harmonic_lifting} and \ref{sec:heat_semigroup}.
}

For the standard homogeneous spectral fractional Laplacian, we state only the simplest case of the Sobolev regularity results in \cite{grubb_spectral}, and direct the reader to that article for a full discussion. First, we consider the Dirichlet problem, using the zero Dirichlet boundary value fractional Sobolev space $\mbbH^{\alpha/2}(\Omega)$. 
{\color{forest}Let $\Omega$ be a bounded, $C^\infty$-smooth subset of $\mathbb{R}^d$.} Let $u$ satisfy the equation
\begin{equation}\label{regularity_spectral_dirichlet}
(-\Delta)^{\alpha/2} u = f \text{ in $\Omega$}, \quad {\color{chocolate} u|_{\partial\Omega} = 0} \\
\end{equation}
for $0 < \alpha < 2$. 
Then
\begin{enumerate}
\item
If $s < 1/2$, then $f \in H^{s}(\Omega)$ implies $u \in \mbbH^{s+\alpha}(\Omega)$. 
\item
If $s = 1/2$, then $f \in H_{00}^{s}(\Omega)$ implies $u \in \mbbH^{s+\alpha}(\Omega)$. 
\item
If $1/2 < s < 2 + 1/2$, then $f \in H^{s}(\Omega)$ implies only that $u \in \mbbH^{1/2-\epsilon + \alpha}(\Omega)$, and we have the stronger result $u \in \mbbH^{s + \alpha}(\Omega)$ if and only if $f = 0$ on 
$\partial\Omega$. 
\end{enumerate}

{\color{blue} We illustrate that this last result is sharp with a simple example. Note that the Sobolev norm in $H^s(I)$ of a Fourier sine series is given by 
\begin{equation*}
\left\| \sum a_k\sin(kx) \right\|_{H^s(I)} = \sum (1+k^2)^s | a_k^2 |. 
\end{equation*}
Consider the interval $[0,2\pi]$, where the eigenvalues of the (Dirichlet) Laplacian are $\lambda_k = k^2$, with corresponding eigenfunctions $e_{\lambda_k}=\sin(kx)$. Let us begin by constructing a low regularity $f \in L^2$:
\begin{equation}\label{E:regularity_f_definition}
f = \sum_{k=1}^\infty \frac{1}{\sqrt{k}\log(k+1)} \sin(kx). 
\end{equation}
Then, $f \in L^2$ since 
\begin{align*}
\sum_{k=1}^\infty \frac{1}{k\log^2(k+1)} &\leq \int_1^\infty \frac{dx}{x \log^2(x+1)}  = \int_1^\infty \frac{dx}{(x+1)\left(\log^2(x+1) - \frac{\log^2(x+1)}{x+1}\right)} = \int_{\log 2}^\infty \frac{dx}{x^2(1-e^{-x})} \leq 2.
\end{align*}
But ${f \not\in H^s}$ for
any ${s>0}$ since  
\begin{align*}
	\|f\|_{H^s} \sim \sum_{k=1}^\infty \frac{(1+k^2)^s}{k(\log^2(k+1))} &\geq \sum_{k=1}^\infty \frac{k^{2s}}{k \log^2(k+1)} \geq \sum_{k=1}^\infty \frac{1}{k} = \infty.
\end{align*}
Now, the solution to the fractional Poisson problem with zero Dirichlet boundary conditions and $f$ given by 
\eqref{E:regularity_f_definition} is 
\begin{equation}
\label{summation_u}
u = \sum_{k=1}^\infty \frac{k^{-\alpha}}{\sqrt{k}\log(k+1)} \sin(kx).
\end{equation}
We see $\|u\|_{H^{s+\alpha}} \sim \|f\|_{H^s}$, so $u$ has strict  $H^{\alpha}$ regularity: $u \in H^{\alpha}$ and $u \not\in H^{\alpha+\epsilon}$. \emph{Thus, the $\alpha$-gain in regularity as stated in the result above is sharp}.}

Next, we consider the Neumann problem, and for $s > 3/2$ we define the zero Neumann boundary value fractional Sobolev space
\begin{align*}
{\color{blue} H^{s}_{\partial u/\partial n = 0}(\Omega)} = \left\{ u \in H^s(\Omega) \text{ such that } \frac{\partial u}{\partial n}\suchthat_{\partial\Omega}  = 0 
\right\}.
\end{align*}
If $s<3/2$, we define 
${\color{blue} H^{s}_{\partial u/\partial n = 0}(\Omega)} = H^{s}(\Omega)$.
{\color{forest}Let $\Omega$ be a bounded, $C^\infty$-smooth subset of $\mathbb{R}^d$.}
{\color{blue} Let $u$ satisfy the equation} 
\begin{equation}\label{regularity_spectral_neumann}
(-\Delta)^{\alpha/2} u = f \text{ in $\Omega$}, \quad {\color{chocolate} \partial u/\partial n|_{\partial\Omega} = 0.}  \\
\end{equation}
{\color{blue} Then,}
\begin{enumerate}
\item
If $s < 3/2$, then {\color{chocolate} $f \in H^{s}(\Omega)$ implies } $u \in {\color{blue} H^{s+\alpha}_{\partial u/\partial n = 0}(\Omega)}$. 
\item
If $3/2< s < 7/2$, then {\color{chocolate} $f \in H^{s}(\Omega)$ implies } $u \in {\color{blue} H^{s+\alpha}_{\partial u/\partial n = 0}(\Omega)}$ if and only if $\partial f/\partial n = 0$ on 
$\partial\Omega$. 
\end{enumerate}
Among additional results, \cite{grubb_spectral} discusses the extension by induction of the above points to higher $H^s(\Omega)$ regularity of $f$.

{
\color{chocolate}
Regularity in the H\"older spaces $C^{k,r}$ for the spectral fractional Poisson problem was studied extensively in \cite{stinga_4_caccioppoli}. In that work, an array of results was obtained, for both interior and boundary regularity of the solution $u$ in the Dirichlet problem \eqref{regularity_spectral_dirichlet} and in the Neumann problem \eqref{regularity_spectral_neumann}, with conditions of the form $f\in C^{0,r}(\Omega)$ or of the form $f \in L^p(\Omega)$, under fairly general conditions on the domain $\Omega$. The results also allow for powers of more general, variable-coefficient elliptic operators. We transcribe just one of these results for the fractional Laplacian, namely, the interior regularity for  $f\in C^{0,r}(\Omega)$ of the Poisson problem, which is the same for both zero Dirichlet or zero Neumann boundary condition:

Assume that $\Omega$ is a bounded Lipschitz domain and that $f\in C^{0,r}(\Omega)$,
for some $0<r<1$. Let $u$ be a solution to \eqref{regularity_spectral_dirichlet} or \eqref{regularity_spectral_neumann}.
 \begin{enumerate}[$(1)$]
  \item If $0<r+\alpha<1$, then $u\in C^{0,r+\alpha}({\Omega})$ and
  $$[u]_{C^{0,r+\alpha}({\Omega})}\leq C\big(\|u\|_{L^2(\Omega)}+[u]_{H^{\alpha/2}(\Omega)}
  +\|f\|_{C^{0,r}(\Omega)}\big).$$
   \item If $1<r+\alpha<2$, then $u\in C^{1,r+\alpha-1}({\Omega})$ and
  $$[u]_{C^{1,r+\alpha-1}({\Omega})}\leq C\big(\|u\|_{L^2(\Omega)}+[u]_{H^{\alpha/2}(\Omega)}
  +\|f\|_{C^{0,r}(\Omega)}\big).$$
 \end{enumerate}
  The constants $C$ depend only on $d$, $\Omega$, $r$, and $\alpha$. For the proof of this the other regularity results, see \cite{stinga_4_caccioppoli}. Similar results have also been obtained for the equation $(-\epsilon \Delta)^{1/2}u+u=f$ with zero Neumann boundary condition in \cite{stinga_3_keller}. 
}    

\subsection{Summary}\label{Summary}
\begin{center}
\begin{table}[ht!]

\caption {\label{table:summary}Comparison of the Riesz and spectral fractional Laplacians in a bounded domain ${\Omega \subset \mathbb{R}^d}$.
\label{comparison_table}}
\begin{tabular}{|p{0.6in}|p{1.375in}|p{0.75in}|p{0.75in}|p{1.25in}|p{1.25in}|}
\hline
$(-\Delta)^{\alpha/2} u$ & Definition & Domain & BC type & Stopped process for Dirichlet BC &
Reflected process for zero Neumann BC \\
\hline
Spectral & 
$ \sum \lambda^{\alpha/2} ( u, e_\lambda ) e_\lambda$,
where $(\lambda, e_\lambda)$ is the spectrum
of ${-\Delta}$ on $\Omega$. 
&
Functions on ${\Omega}$
& Boundary ($\partial \Omega$) 
& Subordinate stopped Brownian Motion {\color{magenta}\cite{yosida1980,Song,gulian2018stochastic}}
& Subordinate reflected Brownian Motion \cite{yosida1980,mamikon_thesis}\\
\hline
Riesz & 
$
C \ \text{p.v.} \int_{\mathbb{R}^d} \frac{u(x) - u(y)}
{|x-y|^{d+\alpha}}dy,
$
{\color{chocolate} 
for
$C =
\frac{2^{\alpha} \Gamma\left(\frac{\alpha}{2}+\frac{d}{2}\right)}{\pi^{d/2}
{|}
\Gamma\left(-\frac{\alpha}{2}
\right)
{|}
}.
$
}
&
Functions on ${\mathbb{R}^d}$.
& Exterior ({\color{magenta}$\mathbb{R}^d \setminus \Omega$})& Stopped $\alpha$-stable motion \cite{kyprianou2016unbiasedwalk}. 
& Various conditions/processes proposed \cite{gunzburger_neumann,Dipierro14} \\
\hline
\end{tabular}
\end{table}
\end{center}

In Table \ref{table:summary}, we have summarized the fundamental properties and stochastic interpretations of the Riesz and spectral fractional Laplacians, as they were discussed in this section. From the stochastic perspective, these operators differ in that the Riesz Laplacian is associated to processes that leave the closure of the domain, while the spectral Laplacian is associated to processes that are confined to the closure the domain. 

\begin{remark}\normalfont
	The Riesz and the spectral fractional Laplacians are merely the two most commonly used definitions. Another fractional Laplacian that has been studied is the \emph{regional} definition:
	\begin{align}
		(-\Delta_{\text{regional}})^{\alpha/2} u(x) &= C(d,\alpha) \ \text{p.v.} \int_{\Omega} \frac{u(x) - u(y)}{|x-y|^{d+\alpha}}dy,
	\end{align}
	where $C(d,\alpha)$ is given by \eqref{E:constant}. Note that the domain of this operator consists of functions defined on $\Omega$, rather than $\mbbR^d$.
	The regional definition differs from the Riesz definition, even if $u(x) \equiv 0$ for $x \in \mbbR^d \setminus \Omega$:
	\begin{align}
	\begin{split}
		(-\Delta_{\text{Riesz}})^{\alpha/2}u(x) &= C(d,\alpha) \ \text{p.v.} \int_{\mbbR^d} \frac{u(x)-u(y)}{|x-y|^{d+\alpha}} \ dy \\
		&= C(d,\alpha) \ \text{p.v.} \int_\Omega \frac{u(x)-u(y)}{|x-y|^{d+\alpha}} \ dy - u(x) \int_{\mbbR^d \setminus \Omega} \frac{C(d,\alpha)  }{|x-y|^{d+\alpha}} \ dy \\
		&= (-\Delta_{\text{regional}})^{\alpha/2}u(x) - u(x)  \int_{\mbbR^d \setminus \Omega} \frac{C(d,\alpha) }{|x-y|^{d+\alpha}} \ dy.
	\end{split}
	\end{align}
	The well-posedness of the fractional Poisson problem involving the regional Laplacian was studied using the Feynman-Kac formula \cite{guan_ma2005}. For a further discussion of the regional Laplacian and the relation to reflected and censored $\alpha$-stable processes, see \cite{GuanMa2006}. Neumann and Robin boundary conditions for the regional Laplacian have been discussed in \cite{Warma2015}.

Other notions of fractional Laplacians that arise from related processes are discussed in \cite{kim2017boundary}. In general, the probabilistic literature on stable-type processes in bounded domains and related notions of fractional Laplacians is very rich \cite{bogdan2009potential}. For example, estimates for eigenvalues of the spectral Laplacian were derived using probabilistic techniques in \cite{chen2005two}.	
	
\end{remark}

\section{Numerical Methods} \label{sec:num_meth}

\hrule
\vspace{1em}
\noindent \textbf{Section Overview}\\[0.2em]
\indent One of our primary goals in this work is to compare numerical solutions for the fractional Poisson problem using different definitions of the fractional Laplacian. To this end, we develop new or implement existing methods to discretize each definition. All methods that we use to compute the solutions of the benchmark problems are described in this {\color{chocolate} section}. To discretize the Riesz fractional Laplacian, we use the adaptive finite element method (AFEM) of \cite{AinsworthGlusa2017_TowardsEfficientFiniteElement} and the Walk-on-Spheres (WOS) method of \cite{kyprianou2016unbiasedwalk}. We discretize the spectral fractional Laplacian directly using the spectral element method (SEM) of \cite{SongXuKarniadakis2017}, and the heat semigroup approach \cite{stinga_thesis, Cusimano2017, StingaTorrea2010_ExtensionProblemHarnacksInequalityFractionalOperators}, which is used in Section \ref{sec:nonzerobcs}, and elliptic extension approaches \cite{CaffarelliSilvestre2007_ExtensionProblemRelatedToFractionalLaplacian, 
StingaTorrea2010_ExtensionProblemHarnacksInequalityFractionalOperators,
stinga_thesis, gale2013extension, stinga_4_caccioppoli} are also discussed for completeness. We develop a new approximation method for the directional definition using a radial basis function (RBF) collocation method, which also makes use of the vector Gr\"unwald scheme of \cite{meerschaert2004vector}. This is also the first work in which numerical results have been produced using the vector Gr\"unwald scheme, as no other work (to our knowledge) has implemented the method of \cite{meerschaert2004vector}.

Furthermore, we examine another nonlocal operator, the ``horizon-based nonlocal" definition \cite{dusiamreview}, which can be seen as an approximation to the Riesz fractional Laplacian. We develop a finite volume method to compute the numerical results of one-dimensional fractional Poisson equations posed with different horizon parameters. We compare these results to the numerical solution of the Riesz fractional Poisson equation. The horizon-based nonlocal definition is equivalent to the Riesz fractional Laplacian in the limit as the horizon parameter approaches infinity, as is demonstrated by our numerical examples. We can also see in these examples approximately how large the horizon parameter should be to result in a reasonably close approximation of the solution of the Riesz fractional Poisson equation.
\vspace{1em}
\hrule
\vspace{2em}

\subsection{Riesz Definition}\label{Riesz}

Using the Riesz definition presented above in Equations \eqref{E:constant} and \eqref{E:riesz_Laplacian_rn}, we consider the fractional Poisson equation \eqref{fracPoisson} with zero Dirichlet boundary conditions $u(x) = 0$ for $x \in \mbbR^d \setminus \Omega,$ where $\Omega \subset \mbbR^d$ is a bounded Lipschitz domain, and we define the Riesz fractional Laplacian to be the \textit{restriction} of the operator to functions with compact support in $\Omega.$ The boundary condition for this definition is considered ``nonlocal" and is also called a ``volume constraint", as it is defined on the exterior of the domain $\Omega.$

\subsubsection{Adaptive Finite Element Method (AFEM) \label{sec:afem}}

The Riesz fractional Poisson problem takes the variational form \cite{AinsworthGlusa2017_TowardsEfficientFiniteElement}
\begin{align}
  \text{Find } u\in \mathbb{H}^{\alpha/2}\left(\Omega\right) : \quad a\left(u,v\right)=\left( f,v\right) \quad \forall v\in \mathbb{H}^{\alpha/2}\left(\Omega\right), \label{eq:fracPoissonVariational}
\end{align}
where
\begin{align}\label{afem_varform}
  a(u,v)
  &=\frac{C(d,\alpha)}{2} \int_{\Omega} \dd{\color{magenta}x} \int_{\Omega}\dd{\color{magenta}y}  \frac{\left(u\left({\color{magenta}x}\right)-u\left({\color{magenta}y}\right)\right)\left(v\left({\color{magenta}x}\right)-v\left({\color{magenta}y}\right)\right)}{\abs{{\color{magenta}x}-{\color{magenta}y}}^{d+\alpha}} \\
  &\quad+ \frac{C(d,\alpha)}{\alpha} \int_{\Omega} \dd {\color{magenta}x} \int_{\partial\Omega} \dd {\color{magenta}y} \frac{u\left({\color{magenta}x}\right) v\left({\color{magenta}x}\right) ~ {\color{magenta}n}_{{\color{magenta}y}}\cdot\left({\color{magenta}x}-{\color{magenta}y}\right)}{\abs{{\color{magenta}x}-{\color{magenta}y}}^{d+\alpha}},\label{eq:bilinearForm}
\end{align}
and where \({\color{magenta}n}_{y}\) is the \emph{inward} normal to \(\partial\Omega\) at \({\color{magenta}y}\). The space $\mathbb{H}^{\alpha/2}(\Omega)$ is defined in Appendix \ref{sobolev_spaces}, Definition \ref{sobdef:boldh}.

A straightforward finite element discretisation of \eqref{eq:fracPoissonVariational} encounters several difficulties:
\begin{enumerate}
\item
  The element contributions for adjacent or identical element pairs of \eqref{eq:bilinearForm} are given by singular integrals.
  Special Gaussian quadrature methods are given in \cite{AinsworthGlusa2017_TowardsEfficientFiniteElement}, \cite{AinsworthGlusa2017_AspectsAdaptiveFiniteElement}.
\item
  The resulting linear algebraic system is dense.
  In \cite{AinsworthGlusa2017_TowardsEfficientFiniteElement}, the panel clustering technique known from the boundary element literature is adapted for the fractional Laplacian.
  This reduced the complexity of the matrix-vector product from \(\mathcal{O}\left(n^{2}\right)\) to \(\mathcal{O}\left(n \log^{2d} n\right)\), where \(d=1\) or \(d=2\) is the spatial dimension.
\item
  As shown in \cite{Grubb2015_FractionalLaplaciansDomainsDevelopment}, solutions to the Riesz fractional Laplacian display generally low regularity close to the boundary of the domain. {\color{magenta}This behavior is noticeably different from the classical integer-order Laplacian for which higher regularity of the domain and right-hand side imply higher regularity of the solution.}
  Therefore, globally quasi-uniform meshes are ill-suited for the discretisation of the integral fractional Laplacian.
  {\color{magenta}In \cite{AinsworthGlusa2017_AspectsAdaptiveFiniteElement}, posteriori error estimates of residual type and a D\"orfler
marking strategy are employed to adaptively refine the discretisation,
resulting in meshes that are highly refined close to the boundary.}
  It was shown that, using piecewise linear finite elements, optimal rates of convergence are obtained:
  \begin{center}
    \begin{tabular}{l|cc}
      &\(\mathbb{H}^{\alpha/2}\)& \(L^{2}\)\\
      \hline
      \(d=1\)& \(n^{\alpha/2-2}\) & \(n^{-2}\)  \\
      \(d=2\)& \(n^{-1/2}\) & \(n^{-1/2-\alpha/4}\)
    \end{tabular}
  \end{center}
  Again, a clustering approach leads to \(\mathcal{O}\left(n \log^{2d} n\right)\) complexity for the computation of the error indicators.
\item
  The efficient solution of the arising linear systems of equations can be achieved by a standard multigrid solver \cite{AinsworthGlusa2017_TowardsEfficientFiniteElement}, \cite{AinsworthGlusa2017_AspectsAdaptiveFiniteElement}.
  Quasi-optimal complexity is shown to be obtained.
 
\end{enumerate}

\begin{remark}\label{AFEM_regularity}\normalfont
Notice that in the variational form \eqref{eq:fracPoissonVariational}, we only require $u \in \mathbb{H}^{\alpha/2}(\Omega)$, and in the case where $0 < \alpha < 1$, this variational form accepts functions $u$ that may not admit a trace (see Theorem \ref{trace_theorem}). Although the weak solution of the Riesz fractional Poisson problem has the regularity described in Theorem \ref{Riesz_regularity}, and therefore admits a trace if $\alpha + s > 1/2$, the AFEM approximation space is $\mathbb{H}^{\alpha/2}(\Omega)$, which is only contained in the trace space $\mathbb{H}^{1/2 + \varepsilon}(\Omega)$ in the case $\alpha > 1$. This is the reason that the examples in Sec. \ref{sec:num_comp} (namely, Figures \ref{square12}, \ref{disk12}, and \ref{diskslicesf1y0}) where $\alpha < 1$ show solutions to the homogeneous Riesz fractional Poisson equation in which the zero boundary condition is not strongly enforced on the approximation, despite the solution belonging to $H^{1/2+\varepsilon}(\Omega)$ and possessing a zero trace. 
As the finite element mesh is refined and the approximant converges to the true weak solution, the oscillations at the boundary are diminished.
\end{remark}

\FloatBarrier

\subsubsection{Walk-on-Spheres (WOS) Method \label{sec:wos}}

The \emph{walk-on-spheres} method is a type of Monte Carlo method for simulating solutions to the Dirichlet fractional Poisson problem with both zero and nonzero boundary conditions. 
{\color{chocolate}
It was originally proposed by Muller \cite{muller1956}  in 1956 for solving Laplace equations with Dirichlet boundary conditions (see also \cite{cai2013}), and has been used for Neumann boundary conditions \cite{cai2017}, and Robin boundary conditions \cite{Zhou2016} as well.}
This approach {\color{chocolate} has recently been extended \cite{kyprianou2016unbiasedwalk}} to the following Riesz fractional Poisson problem:
\begin{align}\label{riesz_poisson}
\begin{split}
	(-\Delta_{\text{Riesz}})^{\alpha/2} u(x) &= f(x), \hspace{10pt} \text{in} \ \Omega, \\
	u(x) &= g(x), \hspace{10pt} \text{in} \ {\color{magenta}\mathbb{R}^d \setminus \Omega},
\end{split}
\end{align}
where $g$ and $f$ are suitably regular functions, $f : \Omega \rightarrow \mathbb{R}$, and $g : {\color{magenta}\mathbb{R} \setminus \Omega} \rightarrow \mathbb{R}^d$. In order to formulate the walk-on-spheres method, one must first identify the stochastic process that is generated by the operator $(-\Delta_{\text{Riesz}})^{\alpha/2}$. In this case, the process is a {\color{chocolate} killed} isotropic $\alpha$-stable process with $\alpha \in (0,2)$ \cite{kyprianou2016unbiasedwalk}. One significant difference in this setting is that, in contrast with the Brownian Motion setting when $\alpha = 2$, the stable process exits $\Omega$ by a jump rather than by passing through the boundary. A consequence of this is that disconnected domains can be considered, and the walk-on-spheres algorithm will terminate in finite steps, whereas the walk-on-spheres algorithm for Brownian Motion will not terminate and must be truncated.

Kyprianou et al. \cite{kyprianou2016unbiasedwalk} proved the following Feynman-Kac formula for the solution of Equation \eqref{riesz_poisson} (see Theorem 6.1). {\color{blue} Given a Borel set $S$, define the space $L_\alpha^1(S)$ to be all real-valued, measurable functions $\phi$ that satisfy
\begin{align*}
	\int_{S} \frac{|\phi(x)|}{1 + |x|^{\alpha+d}} \, dx < \infty.
\end{align*}
}Let $g \in C(\Omega) \cap L^1_\alpha({\color{magenta}\mathbb{R}^d \setminus \Omega}),$ $f \in C^{\alpha+\varepsilon}(\overline{\Omega})$ with some $\varepsilon > 0$. Then there exists a continuous solution $u(x) \in L_\alpha^1(\mathbb{R}^d)$ to Equation \eqref{riesz_poisson}, where
\begin{align}\label{feynman_kac_shardlow}
	u(x) &= \mathbb{E}_x\left[g(X_{\sigma_\Omega})\right] + \mathbb{E}_x \left[\int_0^{\sigma_\Omega} f(X_s) ds\right],
\end{align}
and $\sigma_\Omega = \inf\{t > 0 : X_t \notin \Omega\}$ is the first exit time of the process $X_t$ from $\Omega$.
{\color{magenta}This equation is known as a Feynman-Kac formula and allows for stochastic solution of the associated boundary value problem; in principle, the derivation of such formulas can make possible stochastic solution methods for other definitions of fractional Laplacian and other types of boundary conditions as well \cite{gulian2018stochastic}.}

In order to use the Feynman-Kac formula \eqref{feynman_kac_shardlow}, we must simulate a {\color{chocolate} very large number} of paths of the $\alpha$-stable process $X_t$, beginning from the point $x \in \Omega$ at which we want to compute the solution.
This approach is embarrassingly parallel, as one could assign each point $x$ in the domain to a different compute node, and no information needs to be shared between processors. Further, while one could generate an exact simulation of each path, the idea of the walk-on-spheres approach is to ``speed up" these simulations by only computing a few points along each path until the process jumps out of the domain. 
{\color{chocolate} This is especially effective when the right-hand side $f$ in the problem \ref{riesz_poisson} is zero, since the Feynman-Kac formula \eqref{feynman_kac_shardlow}
reduces to an expectation over the distribution of exit points of the process.}
We will describe the walk-on-spheres algorithm, first presented in \cite{kyprianou2016unbiasedwalk}, below.

First, we discuss some of the key ingredients of the paper by Kyprianou et al. \cite{kyprianou2016unbiasedwalk} that are used to formulate the walk-on-spheres algorithm. The following result gives the distribution of a stable process that begins from the origin when it first exits the unit sphere:
\begin{theorem}{\cite{kyprianou2016unbiasedwalk}}
	Suppose that $B(0,1)$ is the unit ball centered at the origin, and write $\sigma_{B(0,1)} = \inf \{t > 0 : X_t \notin B(0,1)\}.$ Then
	\begin{align}\label{first_exit}
		\mathbb{P}_0(X_{\sigma_{B(0,1)}} \in dy) = \pi^{-(d/2+1)}\Gamma(d/2) \sin(\pi \alpha/2) |1-|y|^2|^{-\alpha/2} |y|^{-d} dy, \hspace{10pt} |y| > 1.
	\end{align}
\end{theorem}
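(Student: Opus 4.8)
The plan is to recognize the claimed density as the classical Poisson kernel of the ball for the fractional Laplacian (M.~Riesz; Blumenthal--Getoor--Ray), evaluated at the center $x=0$, and to prove it by combining the harmonic-measure interpretation of the exit distribution with an explicit verification that a candidate kernel is $\alpha$-harmonic inside $B(0,1)$. First I would reduce the probabilistic statement to a Dirichlet problem. Taking $f \equiv 0$ in the Feynman--Kac formula \eqref{FK-formula}, for any bounded Borel set $A \subset \{|y|>1\}$ the function $h_A(x) = \mathbb{P}_x(X_{\sigma_{B(0,1)}} \in A) = \mathbb{E}_x[\mathbf{1}_A(X_{\sigma_{B(0,1)}})]$ solves the exterior Dirichlet problem $(-\Delta)^{\alpha/2} h_A = 0$ in $B(0,1)$ with $h_A = \mathbf{1}_A$ on $\mathbb{R}^d \setminus B(0,1)$. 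Since every boundary point of the ball is regular for the stable process, the bounded solution of this problem is unique (by the fractional maximum principle, or directly from the probabilistic representation). Hence it suffices to produce the Poisson kernel $P(x,y)$, the density in $y$ of $X_{\sigma}$ started at $x$, and then set $x=0$.

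Next I would exhibit and verify the candidate kernel
\[
 P(x,y) = C_{d,\alpha}\left(\frac{1-|x|^2}{|y|^2-1}\right)^{\alpha/2}\frac{1}{|x-y|^d}, \qquad |x|<1<|y|,
\]
and check that $u(x) = \int_{|y|>1} P(x,y)\,g(y)\,dy$ satisfies $(-\Delta)^{\alpha/2}u = 0$ for $x \in B(0,1)$ while $u \to g$ at $\partial B(0,1)$ and $u = g$ on the exterior. Specializing to $x=0$ gives $1-|x|^2 = 1$ and $|x-y| = |y|$, so $P(0,y) = C_{d,\alpha}(|y|^2-1)^{-\alpha/2}|y|^{-d} = C_{d,\alpha}\,|1-|y|^2|^{-\alpha/2}|y|^{-d}$, which is exactly \eqref{first_exit} once the constant is identified.

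The $\alpha$-harmonicity verification is the crux and will be the main obstacle, since it is a genuinely singular integral computation against the kernel of \eqref{E:riesz_Laplacian_rn}. I would handle it via the inversion $y \mapsto y/|y|^2$ (the Kelvin transform adapted to $(-\Delta)^{\alpha/2}$), which maps $B(0,1)^c$ onto $B(0,1)$ and transforms the Riesz kernel in a controlled way, reducing the check to the known Getoor identity that $(-\Delta)^{\alpha/2}\bigl(1-|x|^2\bigr)_+^{\alpha/2}$ is constant on $B(0,1)$ (the torsion-function computation). A conceptually transparent alternative avoids direct verification altogether: the Ikeda--Watanabe formula expresses the exit density as
\[
\mathbb{P}_0\bigl(X_{\sigma}\in dy\bigr) = \left(\int_{B(0,1)} G_B(0,z)\,C(d,\alpha)\,|y-z|^{-d-\alpha}\,dz\right)dy,
\]
where $G_B$ is the Green function of the ball and $C(d,\alpha)$ is the constant \eqref{E:constant}; inserting the explicit $G_B(0,z)$ and evaluating the resulting special-function integral yields the same functional form, at the cost of a heavier computation.

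Finally I would fix the constant by normalization. Because the stable process a.s.\ exits the bounded ball in finite time, $\int_{|y|>1} P(0,y)\,dy = 1$, and this single scalar check determines $C_{d,\alpha}$. Passing to polar coordinates and substituting $t = 1/|y|^2$ collapses the radial integral to a Beta integral, $\tfrac12 B\bigl(\tfrac{\alpha}{2},1-\tfrac{\alpha}{2}\bigr) = \tfrac12\Gamma(\tfrac{\alpha}{2})\Gamma(1-\tfrac{\alpha}{2}) = \pi/\bigl(2\sin(\pi\alpha/2)\bigr)$; combined with the sphere area $2\pi^{d/2}/\Gamma(d/2)$ this forces $C_{d,\alpha} = \pi^{-(d/2+1)}\Gamma(d/2)\sin(\pi\alpha/2)$, confirming both the stated constant and that \eqref{first_exit} is a genuine probability density.
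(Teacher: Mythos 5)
The paper does not actually prove this statement: it is quoted verbatim from Kyprianou, Osojnik, and Shardlow, who in turn take it from the classical Blumenthal--Getoor--Ray computation of the Poisson kernel of the ball for the isotropic $\alpha$-stable process. Your proposal is therefore a reconstruction rather than a comparison, and as a reconstruction it is sound. The reduction of $\mathbb{P}_x(X_{\sigma}\in A)$ to the exterior Dirichlet problem, the identification of the candidate kernel $C_{d,\alpha}\bigl(\tfrac{1-|x|^2}{|y|^2-1}\bigr)^{\alpha/2}|x-y|^{-d}$ with specialization to $x=0$, and the two proposed routes to $\alpha$-harmonicity (Kelvin inversion reducing to the Getoor identity for $(1-|x|^2)_+^{\alpha/2}$, or the Ikeda--Watanabe formula with the explicit Green function) are all standard and workable; the first is essentially the historical argument of M.~Riesz and Blumenthal--Getoor--Ray. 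Your normalization computation also checks out: with surface area $2\pi^{d/2}/\Gamma(d/2)$ and the radial integral $\int_1^\infty(\rho^2-1)^{-\alpha/2}\rho^{-1}\,d\rho=\tfrac12\Gamma(\tfrac{\alpha}{2})\Gamma(1-\tfrac{\alpha}{2})=\pi/(2\sin(\pi\alpha/2))$, the requirement $\int_{|y|>1}P(0,y)\,dy=1$ forces exactly the stated constant $\pi^{-(d/2+1)}\Gamma(d/2)\sin(\pi\alpha/2)$, and this uses correctly the fact that for $\alpha<2$ the process exits by a jump so no mass sits on $\partial B(0,1)$. The only point to be careful about if you wrote this out in full is the uniqueness step: to conclude that the Poisson-kernel integral equals $\mathbb{E}_x[g(X_\sigma)]$ you should invoke Dynkin's formula for continuous exterior data and then pass to indicators by approximation, since $\mathbf{1}_A$ is not continuous and the bounded solution is only determined up to the regular-harmonicity formulation.
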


Using this theorem, we can construct a series of points along the sample paths of stable processes. One must choose a tolerance $\varepsilon$ and an initial point $x = X_0$. Then $x$ is circumscribed by a sphere of radius $\varepsilon$.  Let $E_1$ represent a sampling from the distribution of Equation \eqref{first_exit}, which gives the exit from a ball of radius one centered at the origin. Using a scaling property and the fact that $X$ has stationary and independent increments, $x + \varepsilon E_1$ gives the exit position from the ball $B(X_0=x,\varepsilon)$. Then, we define $X_1 = x + \varepsilon E_1$ and proceed inductively, generating $X_{n+1}$ as the exit point of the ball centered at $X_n$ with radius $\varepsilon$, i.e., $X_{n+1} = X_n + \varepsilon E_{n+1}$, where $E_{n+1}$ is an i.i.d. copy of $E_1$.

Now we describe the walk-on-spheres solution method for the fractional Poisson equation \cite{kyprianou2016unbiasedwalk}. Suppose that $\Omega$ is a convex domain in $\mathbb{R}^d$, $d \geq 2$.  $\Omega$ can be bounded or unbounded, as long as the measure of ${\color{magenta}\mathbb{R}^d \setminus \Omega}$ is not zero. Given a starting position $x \in \Omega$, we inscribe the largest sphere that fits inside $\Omega$ and is centered at $\rho_0 := x$. The radius of this sphere is denoted by $r_1$. We continue inductively to generate the ``walk-on-spheres" $(\rho_n, \ n\geq 0)$. Given $\rho_{n-1}$, we select the distribution of $\rho_n$ according to an independent copy of $X_{\sigma_{B_n}}$ under $\mathbb{P}_{\rho_n -1}$ (the shifted version of $\mathbb{P}_0$ from Equation \eqref{first_exit}), where
\begin{align}
	B_n = \{ x \in \mathbb{R}^d : |x-\rho_{n-1}| < r_n \} \ \ \ \text{and} \ \ \ \sigma_{B_n} = \inf\{t > 0 : X_t \notin B_n\}.
\end{align}
The algorithm terminates at the random index $N = \min\{ n \geq 0 : \rho_n \notin \Omega\}$, i.e., when the walk-on-spheres jumps out of the domain $\Omega$. We refer the reader to \cite{kyprianou2016unbiasedwalk} for details of the implementation of this procedure.

{\color{blue} For two different classes of domain $\Omega$, Kyprianou et al. also proved that for all $x \in \Omega$, the index $N$ will always be finite and at most geometrically distributed. The first class is convex (possibly unbounded) domains, and the second class is non-convex bounded domains that satisfy the uniform exterior-cone condition See \cite{kyprianou2016unbiasedwalk} for proofs in either case and a discussion of how convexity relates to the proof.} Furthermore, the convergence of the walk-on-spheres method is proved, and the code for the numerical examples implemented in \cite{kyprianou2016unbiasedwalk} can be found at \texttt{https://bitbucket.org/wos\_paper/wos\_repo}.

{\color{chocolate}
Using the sequences of spheres, the Feynman-Kac formula for the solution $u$ can be replaced by an expectation over the boundary condition $g$ evaluated at the terminal centers $\rho_N$, and the right-hand side $f$ integrated over the expected occupation measure of the stable process prior to exiting each sphere. The final representation obtained by \cite{kyprianou2016unbiasedwalk} is,
for $x\in D$, $g\in L^1_\alpha(D^\mathrm{c})$ and $f \in C^{\alpha +\varepsilon}(\overline{D})$,
\begin{equation}
u(x) =\mathbb{E}_x[g(\rho_{N})] 
+
\mathbb{E}_x \left[
\sum_{n=0}^{N-1} r_n^{\alpha} 
V_{1}\big(0, f(\rho_n + r_n  \cdot )\big)
\right], \text{ where }
\quad
V_1(x,f(\cdot)) = \int_{|y-x|<1}f(y)\,V_1(x,dy).
\end{equation}

The expected occupation measure $V_1(x,dy)$ of the stable process prior to exiting a unit ball centered at the origin is given \cite{boggio1905sulle, blumenthal1961distribution}, for $|y|<1$, by 
\begin{align}
V_1(0,dy)
=
2^{-\alpha}\,\pi^{-d/2}\, \frac{\Gamma(d/2)}{\Gamma(\alpha/2)^{2}}\,|y|^{\alpha -d}\, \left(
\int_0^{|y|^{-2}-1}
(u+1)^{-d/2}
u^{\alpha/2-1}
du 
\right)
dy.
\end{align}
The implementation of this formula, using Monte Carlo integration for the integral 
$V_{1}\big(0, f(\rho_n + r_n  \cdot )\big)$, is discussed in \cite{kyprianou2016unbiasedwalk}. We use the WOS algorithm in Figure \ref{cmp-RBF-WOS} and throughout Sections \ref{sec:num_comp} and \ref{sec:nonzerobcs}.}

\subsection{Spectral Definition}\label{Spectral}

Several approaches have been proposed for discretizing the spectral definition, which is defined in Equation \eqref{E:spectral_omega}. {\color{chocolate}Stinga and Torrea \cite{StingaTorrea2010_ExtensionProblemHarnacksInequalityFractionalOperators} showed } that the fractional Poisson problem
\begin{align}
  \left\{
  \begin{array}{rlrl}
    \left(-\Delta_{\text{spectral}}\right)^{\alpha/2}u\left({\color{magenta}x}\right) &= f\left({\color{magenta}x}\right), && {\color{magenta}x}\in\Omega,\\
    u({\color{magenta}x})&=0, && {\color{magenta}x}\in\partial\Omega
  \end{array}\right. \label{eq:specFracLapl}
\end{align}
can be recast as a problem over the extruded domain \(\mathcal{C}=\Omega\times[0,\infty)\):
\begin{align}
  \left\{
  \begin{array}{rlrl}
    -\nabla \cdot y^{\beta} \nabla U\left({\color{magenta}x},y\right) &= 0, && \left({\color{magenta}x},y\right)\in\mathcal{C}, \\
    U\left({\color{magenta}x},y\right) &= 0, && \left({\color{magenta}x},y\right)\in\partial_{L}\mathcal{C} := \partial\Omega\times[0,\infty), \\
    \frac{\partial U}{\partial n^{\beta}}\left({\color{magenta}x}\right) &= d_{\alpha}f\left({\color{magenta}x}\right), && {\color{magenta}x}\in \Omega,
  \end{array} \right. \label{eq:extendedProblem}
\end{align}
where \(\beta = 1-\alpha/2\), \(d_{\alpha} = 2^{1-\alpha}\frac{\Gamma\left(1-\alpha/2\right)}{\Gamma\left(\alpha/2\right)}\), and
\begin{align*}
  \frac{\partial U}{\partial n^{\beta}}\left({\color{magenta}x}\right)&= -\lim_{y\rightarrow 0^{+}}y^{\beta}\frac{\partial U}{\partial y}\left({\color{magenta}x},y\right).
\end{align*}
The solution to \(\eqref{eq:specFracLapl}\) can then be recovered by taking the trace of \(U\) on \(\Omega\), i.e. \(u=\operatorname{tr}_{\Omega}U\). As this higher-dimensional formulation involves only integer-order operators, standard discretization approaches may be applied, as in \cite{Nochetto2015, Gatto2015, AinsworthGlusa2017_HybridFiniteElementSpectral}.

The work of Nochetto et al. \cite{Nochetto2015} used the Dirichlet-to-Neumann map for a singular elliptic problem posed on a semi-infinite cylinder in order to study solution techniques for problems on bounded domains with Dirichlet, Neumann, and Robin boundary conditions. A truncation was proposed based on the rapid decay of the solution to the problem on the cylinder, and a priori error estimates were derived in weighted Sobolev spaces. Also along these lines, a hybrid finite element-spectral method was recently introduced by Ainsworth and Glusa \cite{AinsworthGlusa2017_HybridFiniteElementSpectral}, where the discretization along the direction of the problem domain $\Omega$ was done using a finite element method, and the direction along the cylinder was discretized with a spectral method. We emphasize that these methods are all for the fractional Poisson problem with zero Dirichlet boundary conditions. For the computations in this article, we prefer to use the spectral element method of Song et al. \cite{SongXuKarniadakis2017}, described below, due to its accuracy and ease of implementation for the considered examples.

\subsubsection{Discrete Eigenfunction and Spectral Element Methods}\label{SEM}

We consider the {\color{black} following} eigenvalue problem (EVP) for the spectral Laplacian operator:
\begin{align}
\label{eq:Eig}
\begin{split}
-\Delta \phi - \lambda \phi &= 0, \hspace{10pt} x \in \Omega, \\
\phi \big|_{\partial \Omega} &= 0,
\end{split}
\end{align}
where $\Omega\subset\mbbR^d,$ with $d=1,2,3$, is a bounded domain. When $d=1$, we use a Galerkin expansion to discretize Equation \eqref{eq:Eig}. For a nonnegative integer $N$, let $\mbbP_N(\Omega)$ be the space of polynomials on $\Omega$ up to order $N$. The Galerkin basis functions $\{p_n(x)\}_{n=1}^N$ are chosen from the space $\mcS_N(\Omega) := \mbbP_N(\Omega) \cap H_0^1(\Omega)$, so that the eigenfunctions $\phi$ can be approximated as
\begin{align}\label{galerkin_expansion}
	\phi &\approx \sum_{n=1}^N \hat{\phi}_n p_n(x),
\end{align}
where $\hat{\phi}_n = (\phi, p_n)$ and $(\cdot,\cdot)$ represents the $L^2(\Omega)$-inner product. The weak form of Equation \eqref{eq:Eig} is then written as
\begin{align}\color{forest}
	(-\Delta \phi, p_k) - \lambda(\phi, p_k) = (\nabla \phi, \nabla p_k) {\color{magenta}- \lambda(\phi, p_k)} &= 0.
\end{align}
The inner product {\color{forest}of the gradients} is discretized according to
\begin{align}
	(\nabla \phi, \nabla p_k) &= \sum_{n=1}^N \hat{\phi}_n (\nabla p_n, \nabla p_k) = A_N \bm{\phi},
\end{align}
where $(A_N)_{kn} = (\nabla p_n, \nabla p_k)$ and $\bm{\phi} = (\hat{\phi}_1, \hat{\phi}_2, \dots, \hat{\phi}_{N})$. {\color{forest}Again using the expansion \eqref{galerkin_expansion} of $\phi$ in $(\phi,p_k)$ yields}
\begin{align}
	(\phi, p_k) &= \sum_{n=1}^N \hat{\phi}_n ( p_n, p_k) = (M_N) \bm{\phi},
\end{align}
{\color{forest}where $(M_N)_{kn} = (p_n, p_k).$} These discretizations result in the discrete eigenproblem
\begin{align}
\label{disceigenp} A_N \bm{\phi} - \lambda M_N \bm{\phi} &= 0.
\end{align}
We left-multiply Equation \eqref{disceigenp} by $M_N^{-1}$, so that the discrete eigenproblem becomes
\begin{align}
	(M_N^{-1} A_N - \lambda) \bm{\phi} &= 0.
\end{align}

We define the matrix $K_N := M_N^{-1}A_N$ and compute its discrete eigenpairs $\{(\lambda_i, \phi_i)\}_{i=1}^N$ using the QR algorithm, {\color{forest}following \cite{SongXuKarniadakis2017}.} The eigenpairs are used to approximate the fractional Laplacian of a function $u$ according to
\begin{align}
	(-\Delta_{\text{spectral}})^{\alpha/2} u &\approx \sum_{i=1}^N \lambda_i^{\alpha/2} (u, \phi_i)_{L^2(\Omega)} \phi_i.
\end{align}
{\color{forest}Furthermore, the solution of the spectral fractional Poisson equation with zero Dirichlet boundary conditions may be approximated using the formula}
\begin{align}
	u &\approx \sum_{k=1}^N u_k \phi_k, \ \ \ \text{where} \ \ \ u_k := \frac{(f,\phi_k)}{\lambda_k^{\alpha/2}}.
\end{align}
We call this method the \emph{Discrete Eigenfunction Method}.

When $d > 1$, we use a spectral element method (SEM) to discretize Equation \eqref{eq:Eig}. First, a grid with $\ell$ elements is generated to discretize the domain $\Omega$. The SEM is developed using nodal Lagrangian polynomials of degree $N$ on each element. The number of degrees of freedom for the SEM is denoted by $\mcN := \ell \cdot N$. $A_\mcN$ represents the $\mcN \x \mcN$ stiffness matrix associated with the integer Laplacian, where each element of $A_\mcN$ is the $L^2$-inner product of the gradients of the Lagrangian polynomials. Similarly, the mass matrix $M_\mcN$ of size $\mcN \x \mcN$ is computed as the $L^2$-inner product of the Lagrangian polynomials. {\color{forest}The eigenpairs $\{\lambda_i, \phi_i\}_{i=1}^\mcN$ are computed by solving the eigenproblem \eqref{disceigenp}.
Then, a weighted Gram-Schmidt procedure is applied to transform the basis $\{\phi_i\}$ into the orthonormal basis $\{\tilde{\phi}_i\}_{i=1}^\mcN$ (see \cite{SongXuKarniadakis2017} for the details of this procedure). Finally, we use the equation
\begin{align}
	\sum_{i=1}^\mcN \lambda_i^{\alpha/2} (u, \tilde{\phi}_i) \tilde{\phi}_i &\approx \sum_{i=1}^\infty (f,\tilde{\phi}_i)\tilde{\phi}_i \approx \sum_{i=1}^\mcN (f, \tilde{\phi}_i) \tilde{\phi}_i,
\end{align}
from which we infer that $\hat{u}_i = (u,\tilde{\phi}_i)= (f,\tilde{\phi}_i)\lambda_i^{-\alpha/2}.$
Then, the solution is written as $u \approx \sum_{i=1}^\mcN \hat{u}_i \tilde{\phi}_i.$}
Further details of this method can be found in \cite{SongXuKarniadakis2017}, {\color{blue} where it is demonstrated in numerical examples that the method is stable and accurate for the same domains that we consider in the following sections. Rigorous proofs of these properties are under development and have not been published at the time of this writing.}

\subsubsection{\color{blue} Boundary Regularity of Solutions using the Discrete Eigenfunction Method}\label{sec:regularity}

{\color{blue}Now we address an important issue of regularity near the boundary and the enforcement of boundary conditions of solutions to the spectral fractional Poisson equation using the discrete eigenfunction method. Recall the regularity results of Grubb \cite{grubb_spectral} for the Dirichlet problem for the spectral fractional Laplacian for $\alpha<1$, reviewed in Section \ref{sec:spectral_regularity}: 
 if $(-\Delta)^{\alpha/2}u = f$, and
if $f \in H^s$, then $u \in H^{s+\alpha}$. This means that if $s+\alpha \leq 1/2$, then $u \not \in H^{1/2+\varepsilon}$. However, {\color{magenta} numerical solutions obtained by} the discrete eigenfunction method even with $s+\alpha \leq 1/2$ will always satisfy the zero boundary condition exactly, even when the true solution, by the regularity result just stated, does not admit a trace. This is an artifact of the discrete eigenfunction method, in which the approximant is always a finite-dimensional projection of $u$ onto the eigenfunctions, which are zero on the boundary. 
\color{magenta}{As a simple example in one dimension, one can plot the partial sums of the solution \eqref{summation_u} to the homogeneous spectral fractional Poisson with the strictly $L^2$ right-hand side \eqref{E:regularity_f_definition}; for $\alpha \leq 0.5$, the lack of trace of the true solution will manifest as Gibbs oscillations as the number of discrete eigenfunctions is increased.}}

\FloatBarrier
{
\color{chocolate}
\subsection{Directional {\color{chocolate}Representation}}\label{direction_definition}
The directional fractional Laplacian $\nabla^{\alpha}_M$ in $\mathbb{R}^d$ \eqref{general_directional_laplacian}
was reviewed in Section \ref{sec:directional_unbounded}. 
It was pointed out that for a uniform measure $M$, the directional fractional Laplacian reduces to the standard fractional Laplacian $(-\Delta)^{\alpha/2}$ that is the focus of this paper; otherwise, it represents the generator of general multivariate $\alpha$-stable Levy motions. Thus, when applied to functions in a bounded domain satisfying an exterior boundary condition, this (uniform) directional fractional Laplacian agrees with the Riesz fractional Laplacian, and is an advantageous representation in certain geometries. 

In this section, we present a new radial basis function collocation method based on the directional representation and a modified vector Gr\"unwald-Letnikov formula to discretize the Poisson equation in bounded domains for the Riesz fractional Laplacian. The method has a clear extension to directional fractional Laplacians (with non-uniform measures), but to maintain the scope of the article, we focus only on the uniform case resulting in the Riesz fractional Laplacian. The use of this method for numerical studies of general directional fractional Laplacians in bounded domains is an interesting topic for future work.}

{\color{magenta}To introduce the method and illustrate the main idea, we split the fractional directional integral \eqref{directional_integral0} appearing in the directional representation \eqref{general_directional_laplacian} of the fractional Laplacian as the sum of two integrals corresponding to integration of $u$ inside and outside the bounded domain $\Omega$: 
\begin{equation}\label{splitting_of_directional_integral}
    I^{\beta}_{\boldsymbol{\theta}}u(x)=\frac{1}{\Gamma(1-\beta)}\left(\int_{0}^{\delta(x,\boldsymbol{\theta},\Omega)}\varsigma^{-\beta}u(x-\varsigma\boldsymbol{\theta})d\varsigma+\int_{\delta(x,\boldsymbol{\theta},\Omega)}^{+\infty}\varsigma^{-\beta}g(x-\varsigma\boldsymbol{\theta})d\varsigma\right).
\end{equation}
Here, $\delta(x,\boldsymbol{\theta},\Omega)$ is the distance from $x$ to the domain boundary $\partial\Omega$ along the direction $-\boldsymbol{\theta}$. 
In \cite{pang2013gauss,pang2015space}, $\delta(\cdot,\cdot,\cdot)$ is termed the \textit{backward distance}. The nonlocal exterior Dirichlet condition $u(x)=g(x)$ for $x\in \mathbb{R}^d\setminus \Omega$ is assumed. Note that for $g \equiv 0$, the second integral vanishes. For general (non-zero) $g$, we introduce a finite difference (Gr\"unwald-Letnikov type) scheme to approximate the fractional directional derivative $D^{\alpha}_{\boldsymbol{\theta}}u(x)$ following the above representation for $I^{\beta}_{\boldsymbol{\theta}}u(x)$, which is then used in a collocation method.}

{\color{magenta}
\subsubsection{A Radial Basis Function Collocation Method for the Riesz fractional Laplacian}\label{RBFM}

The radial basis function (RBF) collocation method has been used to solve equations involving directional fractional Laplacians \cite{pang2015space}. Here, we focus on the fractional Poisson problem
\begin{equation}\label{frac-Poisson}
 \begin{split}
  (-\Delta_{\text{Riesz}})^{\alpha/2}u(x) & = f(x), \quad x\in\Omega \subset \mathbb{R}^d,\\
  u(x) & = g(x), \quad x\in \mathbb{R}^d\setminus\Omega.
 \end{split} 
\end{equation}
The first step of the RBF collocation method is to approximate $u(x)$ inside $\Omega$ as the weighted sum of RBFs:
\begin{equation}\label{RBF-expansion}
    u(x) \approx \sum_{j=1}^{M+N}\lambda_j\phi(|x-x_j|),\quad x\in \Omega,
\end{equation}
where $\lambda_j$'s are unknown coefficients. The RBF $\phi(\cdot)$ is taken as the multiquadric function with the shape parameter $c$,
\begin{equation*}
    \phi(r)=\sqrt{r^2+c^2}.
\end{equation*}
It is straightforward to consider other RBFs \cite{chen2014recent}, but here we restrict our attention to the multiquadric RBF.
The collection of points $\{x_j\}_{j=1}^{j=M+N}$ are called collocation points. The first $M$ collocation points are located inside the domain $\Omega$, and the last $N$ collocation points are located on the boundary $\partial \Omega$ (to obtain higher accuracy near $\partial \Omega$). Enforcing the approximate solution \eqref{RBF-expansion} to satisfy both the equation inside $ \Omega$ and the exterior condition on $\partial \Omega$ from \eqref{frac-Poisson} produces the following equations:
\begin{equation}\label{discrete-sym-0}
    \begin{split}
       C_{\alpha,d}\int_{|\boldsymbol{\theta}|=1}D^{\alpha}_{\boldsymbol{\theta}}u(x_i)d\boldsymbol{\theta} & = f(x_i), \quad x_i\in \Omega, \quad i=1,2,\cdots, M, \\
        \sum_{j=1}^{M+N}\lambda_j\phi(|x_i-x_j|)&=g(x_i), \quad x_i \in \partial\Omega, \quad i=M+1, M+2, \cdots, M+N.
    \end{split}
\end{equation}
The full exterior condition $u = g$ on $\mathbb{R}^d \setminus \Omega$ in \eqref{frac-Poisson} is enforced directly through the discretization of the operator $D^{\alpha}_{\boldsymbol{\theta}}$ in the first equation above as alluded to by \eqref{splitting_of_directional_integral}. 
We discretize $D^{\alpha}_{\boldsymbol{\theta}}$ through a finite difference scheme known as the vector Gr\"unwald-Letnikov (GL) formula that was proposed in \cite{meerschaert2004vector}, splitting the terms into interior and exterior contributions. The original vector GL formula takes the form
\begin{equation}\label{original_gl_formula}
    D^{\alpha}_{\boldsymbol{\theta}}u(x_i)  = h^{-\alpha}\sum_{k=0}^{\infty}(-1)^k\binom{\alpha}{k}u(x_i-kh\boldsymbol{\theta})+O(h). 
\end{equation}
This formula has first-order accuracy with respect to the spatial step $h$. The coefficients $c_k = (-1)^k\binom{\alpha}{k}$ can be alternatively calculated by the iterative formula $c_0 =1, c_k =\left(1-\frac{\alpha+1}{k}\right)c_{k-1},k\ge 1$. For our scheme, we truncate the vector GL formula and combine it with the RBF approximation \eqref{RBF-expansion}:
\begin{equation}\label{modified-GL}
  \begin{split}
    D^{\alpha}_{\boldsymbol{\theta}}u(x_i) 
    & \approx h^{-\alpha}\sum_{k=0}^{K_1}c_k u (x_i-kh\boldsymbol{\theta}) + h^{-\alpha}\sum_{k=K_1+1}^{K_2}c_k g(x_i-kh\boldsymbol{\theta}) \\
    & = h^{-\alpha}\sum_{k=0}^{K_1}c_k\sum_{j=1}^{M+N}\lambda_j\phi(|x_i-kh\boldsymbol{\theta}-x_j|) + h^{-\alpha}\sum_{k=K_1+1}^{K_2}c_k g(x_i-kh\boldsymbol{\theta}).
    \end{split}    
\end{equation}
The integer $K_1$ depends on $x_i$, $\boldsymbol{\theta}$ and $\Omega$ -- it is determined by the value of the backward distance $\delta(x,\boldsymbol{\theta},\Omega)$, and is an integer taken such that $x_i - K_1 h \boldsymbol{\theta} \in \Omega $ and $x_i - (K_1+1)h\boldsymbol{\theta} \in \mathbb{R}^d\setminus \Omega$. It is easy to see that $\delta(x_i,\boldsymbol{\theta},\Omega)\approx K_1 h$ by noting that $|\boldsymbol{\theta}|=1$. We refer to formula \eqref{modified-GL} as the modified GL formula.

Before discussing how \eqref{modified-GL} is used in \eqref{discrete-sym-0} to obtain a numerical scheme, we discuss in more detail the truncation parameter $K_2$. 
Unlike $K_1$, the integer $K_2$ is user-selected and effects the accuracy of the modified GL formula \eqref{modified-GL}, introducing the truncation error 
\begin{equation}\label{modified-GL-error}
    R(x_i)= h^{-\alpha}\sum_{k=K_2+1}^{\infty}c_kg(x_i-kh\boldsymbol{\theta})
\end{equation}
into the calculation of $D^{\alpha}_{\boldsymbol{\theta}} u(x_i)$.
In the case that $g$ has compact support, choosing $K_2 h \ge \text{diam}(\text{supp } g)$ ensures no truncation error. In the case that $g$ does not have compact support, but decays at infinity, the following are two strategies to truncate:
\begin{enumerate}
\item Replace $g$ in the problem \eqref{frac-Poisson} by $g \chi_\mathcal{D}$, where $\chi_\mathcal{D}$ denotes the indicator function of an appropriate truncation domain $\mathcal{D}$  outside of which $g$ is sufficiently small:
\begin{equation}\label{trunc-Poisson}
 \begin{split}
  (-\Delta_{\text{Riesz}})^{\alpha/2} \tilde{u} & = f(x), \quad x\in\Omega \subset \mathbb{R}^d,\\
  \tilde{u}(x) & = g(x) \chi_\mathcal{D}(x), \quad x\in \mathbb{R}^d\setminus\Omega.
 \end{split} 
\end{equation}
One can then use the modified GL formula \eqref{modified-GL} with $g \chi_\mathcal{D}$ as the exterior condition and $K_2 h \ge \text{diam}(\mathcal{\mathcal{D}})$. Theoretically, the total error in solving \eqref{frac-Poisson} is controlled by the numerical error in solving the truncated problem \eqref{trunc-Poisson} plus the truncation error, i.e., the difference of exact solutions $u - \tilde{u}$. This truncation error has been studied by Acosta, Borthagaray, and Heuer in  \cite{acosta2018finite}, where the same truncation strategy was used to implement a finite element solver for the Riesz fractional Poisson problem with nonzero exterior condition. It was shown that $g \in H^s(\mathbb{R}^d \setminus \Omega)$ implies that as $\text{diam}(\mathcal{D}) \rightarrow \infty$, $\|u - \tilde{u}\|_{H^{\alpha/2}(\Omega)} \rightarrow 0$. Practically, this strategy corresponds to choosing $K_2h \approx \delta(x_i, \bm{\theta},\mathcal{D})$. 
\item
A more direct way to exploit the decay of the coefficients in \eqref{original_gl_formula} is to choose $K_2$ such that $R(x_i)$ is below a threshold value. In the simplest case where $K_2$ is chosen to be a constant independent of $x_i$, theoretically this corresponds to truncating the integral \eqref{splitting_of_directional_integral} or equivalently \eqref{directional_integral0} in the definition of the operator. In effect, this introduces a truncating parameter or \emph{horizon} into the directional fractional Laplacian \eqref{def:directional_uniform} (more generally \eqref{general_directional_laplacian}). Truncatated forms of the Riesz fractional Laplacian are discussed Section \ref{sec:horizon}, where references are given that establish convergence of the solutions of the associated Poisson problem to those of \eqref{frac-Poisson} in the case of zero exterior condition. However, we are not aware of such convergence results for the inhomogeneous problem, or for works that establish a connection to truncating the directional representations \eqref{general_directional_laplacian}.
\end{enumerate}

We adopt the second strategy; in practice, regardless of the strategy, the choice of $K_2$ should take into account the rate of decay of the exterior condition $g$ away from $\Omega$. The truncation error should be studied both at the level of approximating $D^{\alpha}_{\boldsymbol{\theta}}u(x)$ (i.e., by computing \eqref{modified-GL-error}) at various points $x$ and at the level of the final numerical solution using the RBF collocation scheme outlined below; the numerical solution should be converged with respect to $K_2$ for a fixed set of collocation points. 

We demonstrate this study for the case of $\Omega = \{(x_1, x_2) \in \mathbb{R}^2 \ | \ x_1^2 + x_2^2 < 1\}$ and $g(x)=\exp(-|x|^2)$ for $x\in \mathbb{R}^2\setminus \Omega$. In Fig. \ref{truncation-error}, the remainder $|R(x_i)|$ in \eqref{modified-GL-error} is compared for $K_2 = 2,000$ and $K_2 = 6,000$ with the spatial step fixed to be $h=0.001$  at two ``extreme cases'' of collocation points $x_1=(0,0)$ and $x_2=(0.98,0)$. The summation in $R(x_i)$ is taken from $K_2+1$ to $20,000$. 
Comparing the subplots indicates that for the collocation point $x_1=(0,0)$ at the center of the disk, the truncation error is isotropic with respect to $\theta$ due to the symmetry of $g(x)=\exp(-|x|^2)$ around $x_1$, but for the point $x_2=(0.98,0)$ near the boundary, the truncation error is direction-dependent. However, the choice of $K_2 = 6,000$ yields $|R(x_i)|$ sufficiently close to machine precision for use in the RBF collocation scheme. 

We also verify that the solution obtained by solving the RBF collocation scheme below is converged at this value of $K_2$ in Fig. \ref{solu-conv-K2}. In that figure, we show the relative $L^2$-error of the final numerical solution from the reference solution with respect to $K_2$ (for full details, see the discussion of \eqref{exact-u-ihm} below). Not surprisingly, the final numerical solution $u$ is less sensitive to $K_2$ than is $D^{\alpha}_{\boldsymbol{\theta}}u$, and is actually converged near $K_2 = 2,000$ (at which point the error becomes dominated by the numerical error of solving the RBF collocation system below). This justifies the use of $K_2 = 6,000$ with $h = 0.001$ for this choice of $\Omega$ and $g$, which are used as test cases below.

\begin{figure}[H]
\begin{minipage}{.49\textwidth}
\begin{center}
\includegraphics[width=\textwidth]{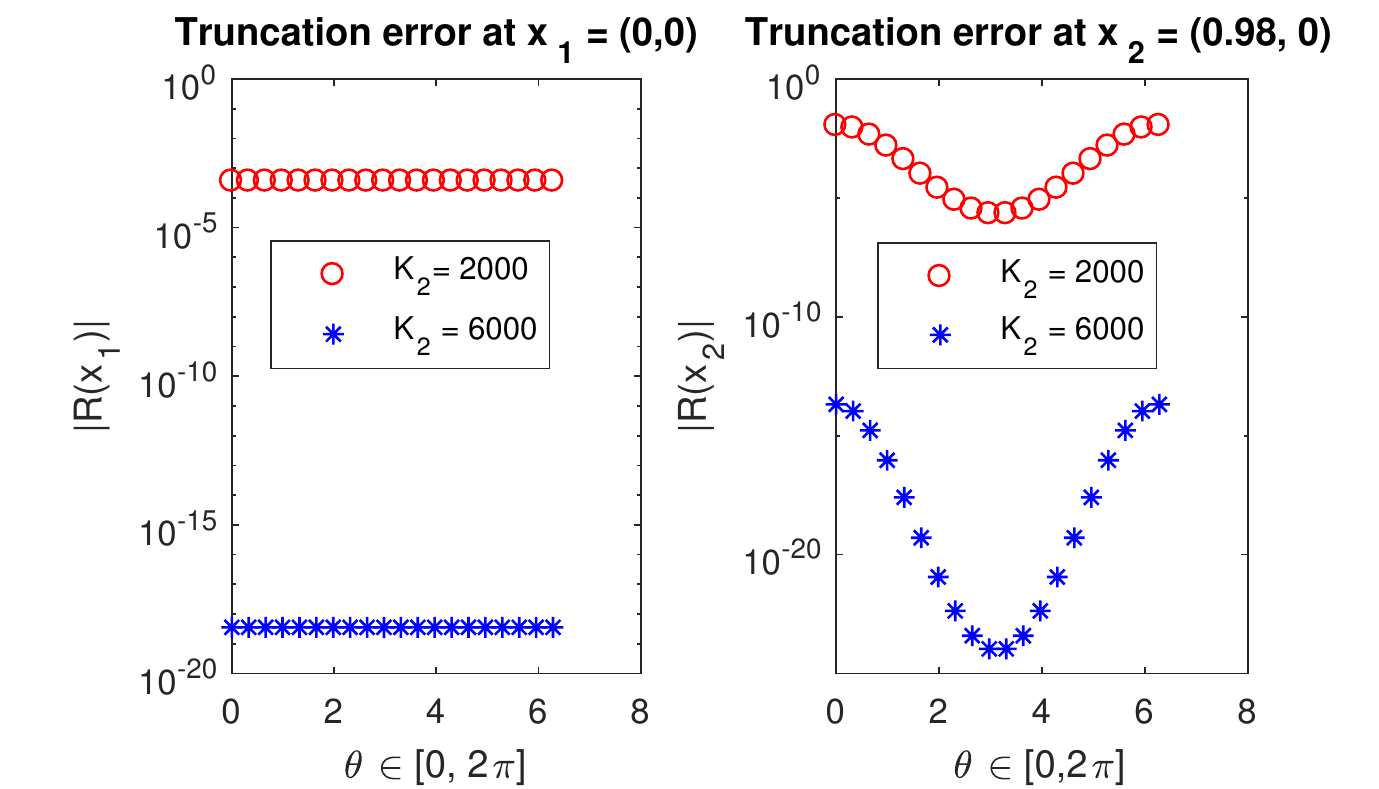}
\small (a) $\alpha = 0.5$
\end{center}
\end{minipage}
\begin{minipage}{.49\textwidth}
\begin{center}
\includegraphics[width=\textwidth]{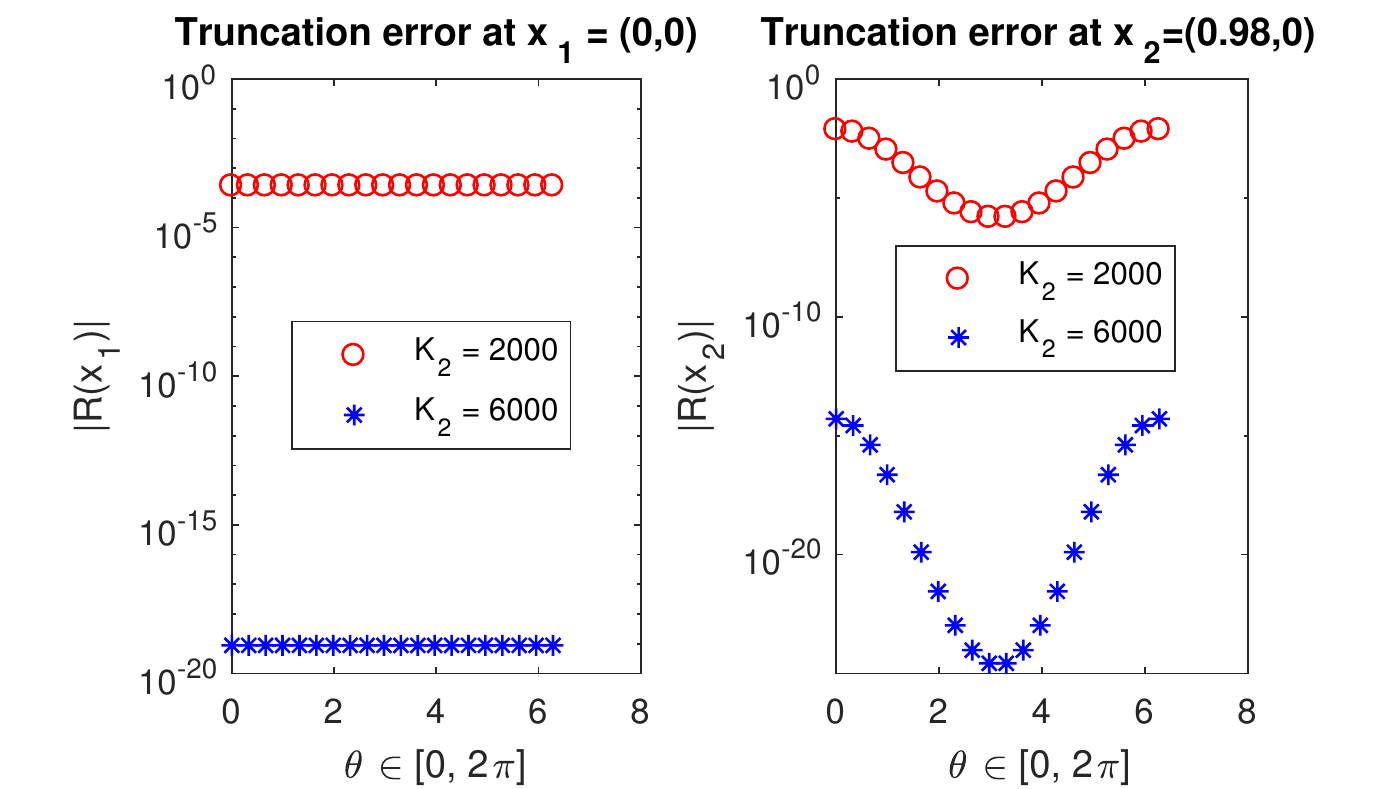}
\small (b) $\alpha = 1.5$
\end{center}
\end{minipage}
\caption{\label{truncation-error}\color{magenta}Absolute values of truncation errors $|R(x_i)|$ \eqref{modified-GL-error} for the modified GL formula \eqref{modified-GL} evaluated at the collocation points $x_1=(0,0)$ and $x_2=(0.98,0)$. Two truncation parameters $K_2=2000$ and $K_2=6000$ are considered. Subplots (a) and (b) correspond to $\alpha=0.5$ and $\alpha=1.5$, respectively. }
\end{figure}

{\color{magenta}
To complete the description of the RBF collocation scheme, next we discuss the discretization of the integral of $D^{\alpha}_{\boldsymbol{\theta}}(\cdot)$ with respect to 
$\boldsymbol{\theta}$, as called for by \eqref{discrete-sym-0}. This can be done, e.g., with the trapezoidal rule or Gauss-Legendre quadrature; 
we adopt the latter.
Of course, in one dimension, the desired integral reduces to 
\begin{equation*}
    \begin{split}
  \int_{|\boldsymbol{\theta}|=1}D^{\alpha}_{\boldsymbol{\theta}}u(x_i)d\boldsymbol{\theta} & = D^{\alpha}_{\boldsymbol{\theta}=(1,0)}u(x_i) + D^{\alpha}_{\boldsymbol{\theta}=(-1,0)}u(x_i) 
     \end{split}  
\end{equation*} 
and no quadrature rule is needed.  
In two dimensions, the integral over $\boldsymbol{\theta}$ is approximated by 
\begin{equation}\label{GL-rule}
    \int_{|\boldsymbol{\theta}|=1}D^{\alpha}_{\boldsymbol{\theta}}u(x_i)d\boldsymbol{\theta} = \int_0^{2\pi}D^{\alpha}_{\boldsymbol{\theta}=(\cos\theta,\sin\theta)}u(x_i)d\theta \approx \sum_{l=1}^P w_l D^{\alpha}_{\boldsymbol{\theta}_l=(\cos\theta_l,\sin\theta_l)}u(x_i).
\end{equation}
$w_l$ and $\theta_l$ are the Gauss-Legendre weights and points, respectively. 
In three dimensions case, Gauss-Legendre quadrature yields
\begin{equation*}
    \begin{split}
         \int_{|\boldsymbol{\theta}|=1}D^{\alpha}_{\boldsymbol{\theta}}u(x_i)d\boldsymbol{\theta} & = \int_0^{2\pi}\int_0^{\pi}D^{\alpha}_{\boldsymbol{\theta}=(\sin\phi\cos\theta,\sin\phi\sin\theta,\cos\phi)}u(x_i)\sin\phi d\phi d\theta \\
         & \approx \sum_{l=1}^P\sum_{m=1}^Q w_l v_m\sin\phi_m D^{\alpha}_{\boldsymbol{\theta}_l=(\sin\phi_m\cos\theta_l,\sin\phi_m\sin\theta_l,\cos\phi_m)}u(x_i),
    \end{split}
\end{equation*}
where $(\phi_m,v_m)$ and $(\theta_l,w_l)$ are Gauss-Legendre quadrature point and weight pairs, respectively. The $\sin\phi$ appearing in the integrand comes from the determinant of Jacobian matrix in the transformation from Cartesian coordinates to spherical coordinates. Once the integral of $D^{\alpha}_{\boldsymbol{\theta}} u$ has been discretized as above, it can be combined with the modified GL formula \eqref{modified-GL} to obtain a linear system for the coefficients $\lambda_j$ in the RBF expansion 
\eqref{RBF-expansion}.

For simplicity, we consider two dimensions. 
According to the modified GL formula \eqref{modified-GL} and the quadrature rule \eqref{GL-rule}, we can rearrange the discretized system \eqref{discrete-sym-0} as a system of $M+N$ equations in $M+N$ unknown coefficients:
\begin{align}
\begin{split}
\label{discrete-sym-1}
& C_{\alpha,d}h^{-\alpha}\sum_{j=1}^{M+N}\sum_{l=1}^P w_l\sum_{k=0}^{K_1}c_k \phi(|x_i-kh\boldsymbol{\theta}_l-x_j|)\lambda_j = 
\\
&\qquad \qquad \qquad \qquad \qquad f(x_i) - C_{\alpha,d}h^{-\alpha}\sum_{l=1}^P w_l \sum_{k=K_1+1}^{K_2}c_k g(x_i - kh\boldsymbol{\theta}_l), \quad x_i\in \Omega, \quad i=1,2,\cdots,M;
\\
&\sum_{j=1}^{M+N}\phi(|x_i-x_j|)\lambda_j = g(x_i), \quad x_i \in \partial\Omega, \quad i=M+1, M+2, \cdots, M+N.
\end{split}
\end{align}
This linear system can also be abbreviated in matrix-vector form as
\begin{equation}\label{matrix_RBF}
    \left[ 
    \begin{array}{c}
       (-\Delta)^{\alpha/2}\boldsymbol{\Phi}_{M\times (M+N)} \\
       \boldsymbol{\Phi}_{N\times(M+N)}
    \end{array} \right] 
    \left[
    \begin{array}{c}
        \boldsymbol{\lambda}_{M\times1} \\
        \boldsymbol{\lambda}_{N\times1}
  \end{array} \right] =
   \left[
    \begin{array}{c}
       \mathbf{F}-\mathbf{R}(K_2) \\
        \mathbf{G}
   \end{array} \right].        
\end{equation}
}

\subsubsection{\color{chocolate} Validation Using {\color{magenta}} Reference Solutions and Walk-on-Spheres\label{RBF_validation}}
{\color{magenta}Next, we demonstrate the convergence of the numerical solutions produced by the RBF collocation method by considering a circular domain 
$\{(x_1, x_2) \in \mathbb{R}^2 \ | \ x_1^2 + x_2^2 < 1\}$. 
In principle, the collocation points $\{x_i\}$ can have arbitrary distribution in $\Omega$ provided they do not overlap. 
We choose a total of $I^2+1$ collocation points in $\Omega \cup \partial \Omega$ on a uniform grid in polar coordinates:
\begin{equation}
\label{generating_points}
x_i = \big(
i_1\Delta r\cos(i_2\Delta\theta), i_1\Delta r\sin(i_2\Delta\theta) 
\big), 
\quad i = (i_1-1) \times I + i_2, 
\quad i_1,i_2 = 1,2,\cdots I, 
\end{equation}
where $\Delta r = 1/I$, and $\Delta \theta=2\pi/I$. 
The subscript pair $(i_1, i_2)$ labeling a collocation point in two dimensions is used to define the single subscript $i$ to index the vector of collocation points/weights. 
There are $M = (I-1) \times I + 1$ collocation points inside $\Omega$, consisting of $I-1$ concentric rings of $I$ collocation points together the center point $(0,0)$, and $N = I$ collocation points on $\partial \Omega$. The number of collocation points varies for convergence studies, but the the numerical errors are evaluated at a fixed collection of test points 
$\mathbf{x}_{\text{test}}$ which are generated via \eqref{generating_points} using $I=39$ throughout.
The shape parameter $c$ in the RBF $\phi(\cdot)$ is selected as $c=3/I$. For the Gauss-Legendre quadrature \eqref{GL-rule}, 16 quadrature points are taken, i.e, $P=16$. 
The spatial step in the modified GL formula is $h = 0.001$. Standard $LU$ decomposition in MATLAB is used to solve the linear system \eqref{matrix_RBF}.

Below, we test both the case of zero exterior condition, in which case the term 
$\sum_{k=K_1+1}^{K2} c_k g(x_i-kh\boldsymbol{\theta})$ in the modified GL formula (\ref{modified-GL}) is simply zero, and the non-zero exterior condition case, in which case we take $K_2=6000$, i.e., $K_2 h = 6$, three times the diameter of the unit disk. 
}
}

\textbf{Convergence of the GL formula}. \textcolor{chocolate}{Before demonstrating the convergence of RBF solutions, we first demonstrate the convergence of the vector GL formula \eqref{modified-GL} with respect to the parameter $h$}. {\color{magenta}In two dimensions}, we consider the following two functions:
\begin{equation}
\label{two_exact_us}
\text{
(i) $u_1(x)=(1-|x|)_+^{\alpha/2}$ and (ii) $u_2(x)=(1-|x|)_+^{1+\alpha/2}$,
}
\end{equation} where $(x)_+ = x$ if $x \ge 0$ and $(x)_+ = 0$ if $x < 0$. 
{\color{magenta}The domain is taken to be a unit disc: $\Omega=\{(x_1,x_2) \ | \  x_1^2+x_2^2 < 1\}$. }
The Riesz fractional Laplacians of these functions are \cite{dyda2012fractional}
\begin{equation}\label{f1_direction_test}
f_1(x)=(-\Delta_{\text{Riesz}})^{\alpha/2}u_1(x)=2^{\alpha}\Gamma\left(\frac{\alpha}{2}+1\right)^2,
\end{equation}
and
\begin{equation}\label{f2_direction_test}
f_2(x) = (-\Delta_{\text{Riesz}})^{\alpha/2}u_2(x) = 2^{\alpha}\Gamma\left(\frac{\alpha}{2}+2\right)\Gamma\left(\frac{\alpha}{2}+1\right)\left(1-\left(1+\frac{\alpha}{2}|x|^2\right)\right),
\end{equation}
respectively. 
\textcolor{chocolate}{
To measure the error, we use the $L^\infty$-error
\begin{equation}
\begin{split}
\epsilon_1 & = \|(-\Delta)^{\alpha/2}\mathbf{u}_1(\mathbf{x}_{\text{test}})-\mathbf{f}_1(\mathbf{x}_{\text{test}})\|_{L^\infty}, \\
\epsilon_2 & = \|(-\Delta)^{\alpha/2}\mathbf{u}_2(\mathbf{x}_{\text{test}})-\mathbf{f}_2(\mathbf{x}_{\text{test}})\|_{L^\infty},
\end{split}
\end{equation}
as well as the relative $L^2$-error
\begin{equation}\label{relative-error}
\begin{split}
E_1 & = \frac{\|(-\Delta)^{\alpha/2}\mathbf{u}_1(\mathbf{x}_{\text{test}})-\mathbf{f}_1(\mathbf{x}_{\text{test}})\|_{L^2}}{\|\mathbf{f}_1(\mathbf{x}_{\text{test}})\|_{L^2}}, \\
E_2 & = \frac{\|(-\Delta)^{\alpha/2}\mathbf{u}_2(\mathbf{x}_{\text{test}})-\mathbf{f}_2(\mathbf{x}_{\text{test}})\|_{L^2}}{\|\mathbf{f}_2(\mathbf{x}_{\text{test}})\|_{L^2}},
\end{split}
\end{equation}
where $(-\Delta)^{\alpha/2}\mathbf{u}(\mathbf{x}_{\text{test}})$ and $\mathbf{f}_i(\mathbf{x}_{\text{test}})$ are the vectors formed by the directional Laplacian using \eqref{modified-GL} and the functions  \eqref{f1_direction_test}, \eqref{f2_direction_test} evaluated at a collection of test points, respectively.} 
{\color{magenta} Fig. \ref{err-int-dir-hm} demonstrates first-order convergence of the errors with respect to $h$}, which agrees with the theoretical convergence rate given in \cite{meerschaert2004vector}.

\begin{figure}[ht!]
\centering
\includegraphics[width=0.5\textwidth]{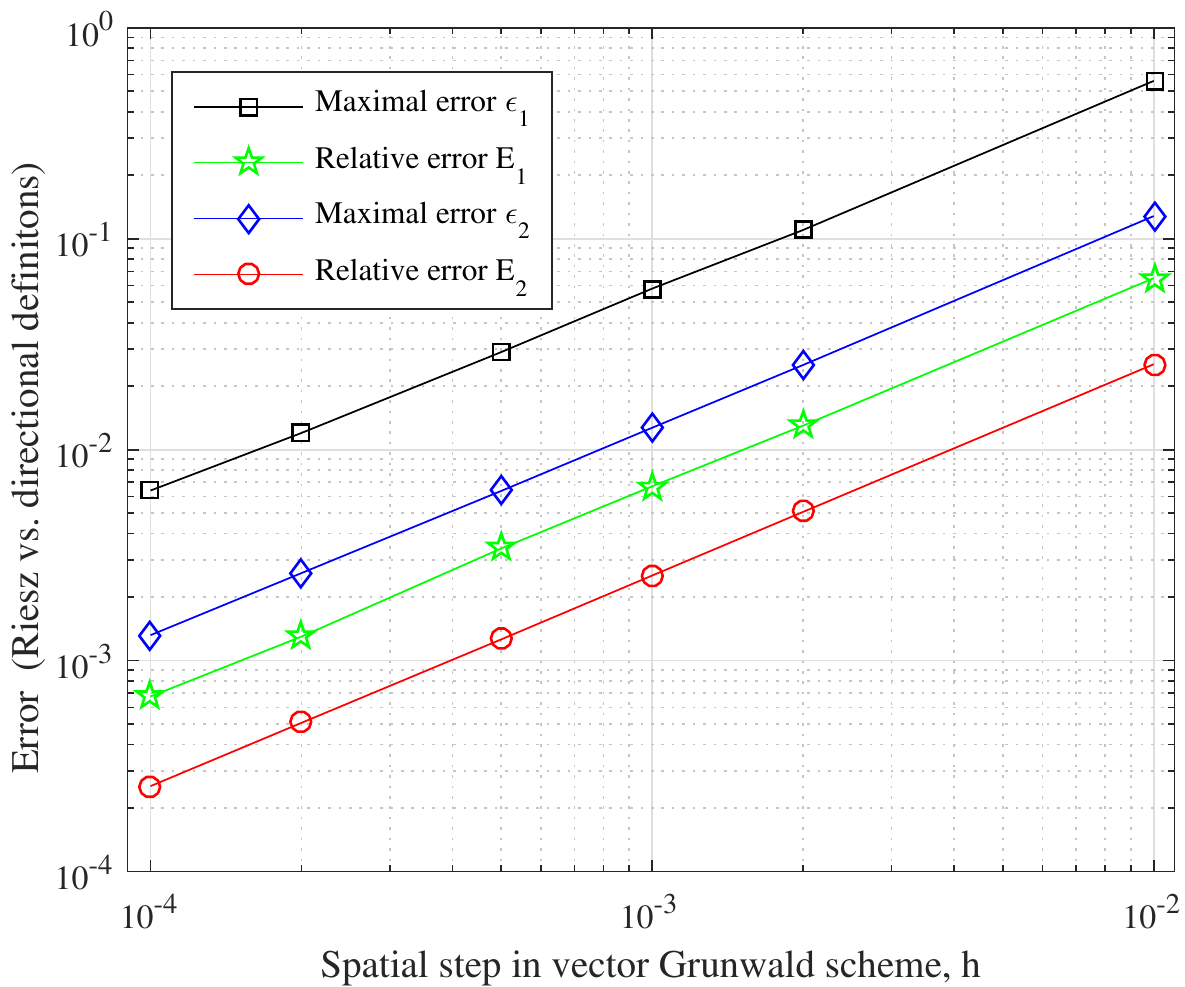}
\caption{\label{err-int-dir-hm}\textit{
Convergence of the {\color{magenta} modified} GL formula with respect to the step size $h$ for the 2D problem with $g(x)=0$ on the unit disk}. 
We plot the error between the modified vector GL formula \eqref{modified-GL} approximation of the fractional Laplacian of $u_1(x)$ and $u_2(x)$ and the source functions $f_1(x)$ and $f_2(x)$, respectively. The slope of each line is 1, demonstrating that the convergence rate of the vector Gr\"unwald scheme is 
$\mcO(h)$.}
\end{figure}

{\color{chocolate}\textbf{Convergence of the RBF solution for $\mathbf{g(x)=0}$}. We demonstrate the convergence of the RBF collocation method with respect to the number of collocation points (namely, $M+N$ in RBF expansion \eqref{RBF-expansion}) for solving the fractional Poisson problem with a zero Dirichlet boundary condition $g(x) = 0$ for $x \in \mathbb{R}^d\backslash\Omega$.
{\color{magenta}
We plot in Fig. \ref{conv_RBF} the convergence of the RBF method for two fabricated solutions $u_1$ and $u_2$ and four different values of $\alpha = 0.1, 0.5, 1.5$ and $1.9$. The convergence is measured in the relative $L_2$ error.} 
It is observed from Fig. \ref{conv_RBF} that the RBF method is convergent, although the convergence rate loses is reduced at larger numbers of collocation points due to the rapidly increasing condition number of the collocation matrix in the linear system \eqref{discrete-sym-1}; see \cite{chen2014recent} for a discussion of this well-known issue. 
{\color{magenta}Also, we see that the RBF method achieves lower accuracy for $u_1$ compared to that for $u_2$; this can be expected since $u_1$ has much higher gradients than $u_2$ near the boundary. We study the local behavior of the error in Fig. \ref{RBF-exa-hom}, in which exact and RBF solutions are compared. We see that the error is larger near the boundary because the collocation points are sparser near the boundary than around the domain center.}

\FloatBarrier

\begin{figure}[H] 
\centering
\includegraphics[width=.8\textwidth]{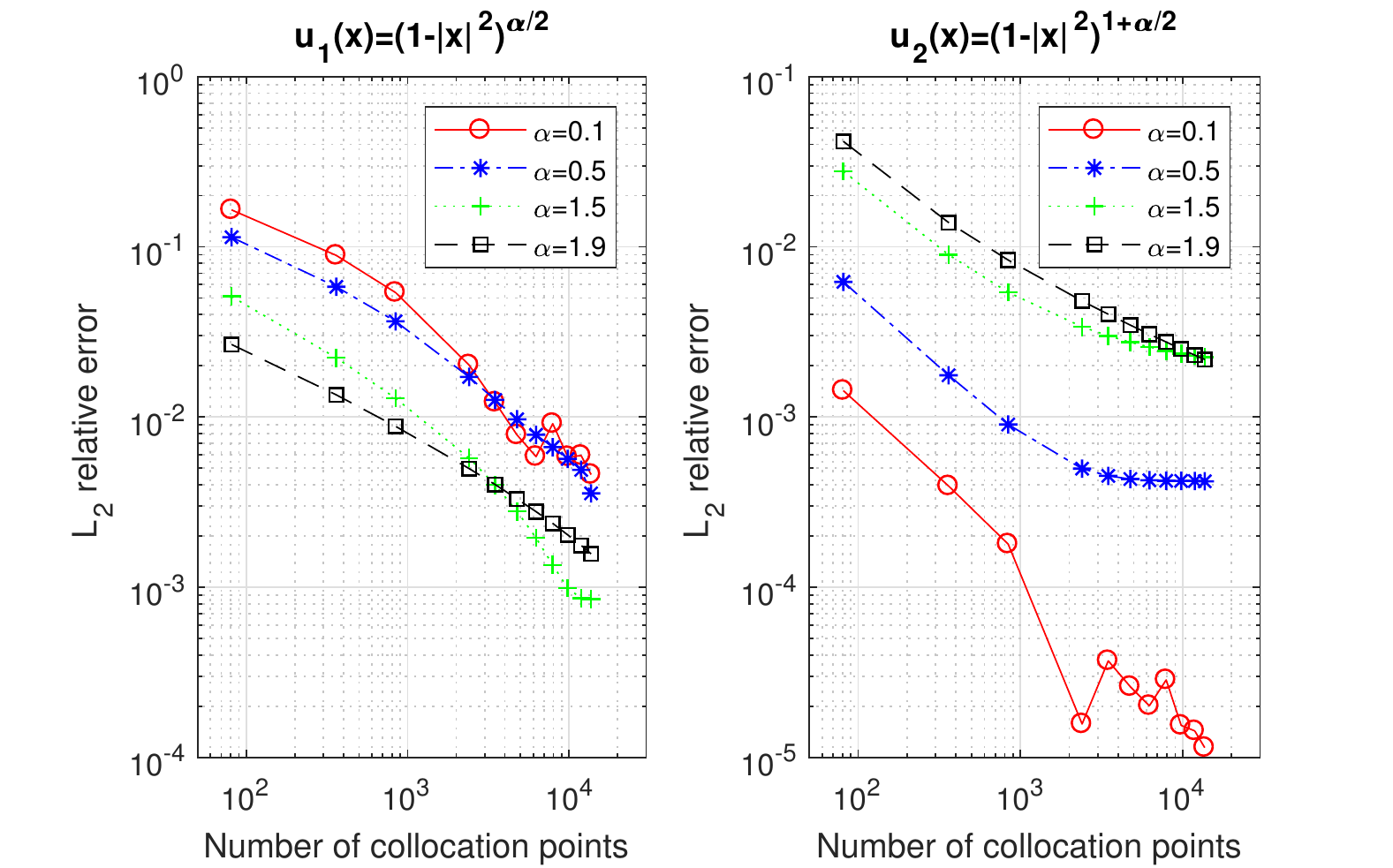}
\caption{\label{conv_RBF}\color{magenta}Convergence of the RBF method for the 2D fractional Poisson problem with zero exterior values on a unit disk. Left: fabricated solution $u_1$ with $\alpha=0.1,0.5,1.5$, and 1.9. Right: fabricated solution $u_2$ with $\alpha=0.1,0.5,1.5$, and 1.9.}
\end{figure}
 
\begin{figure}[ht!] 
\centering
\includegraphics[width=.8\textwidth]{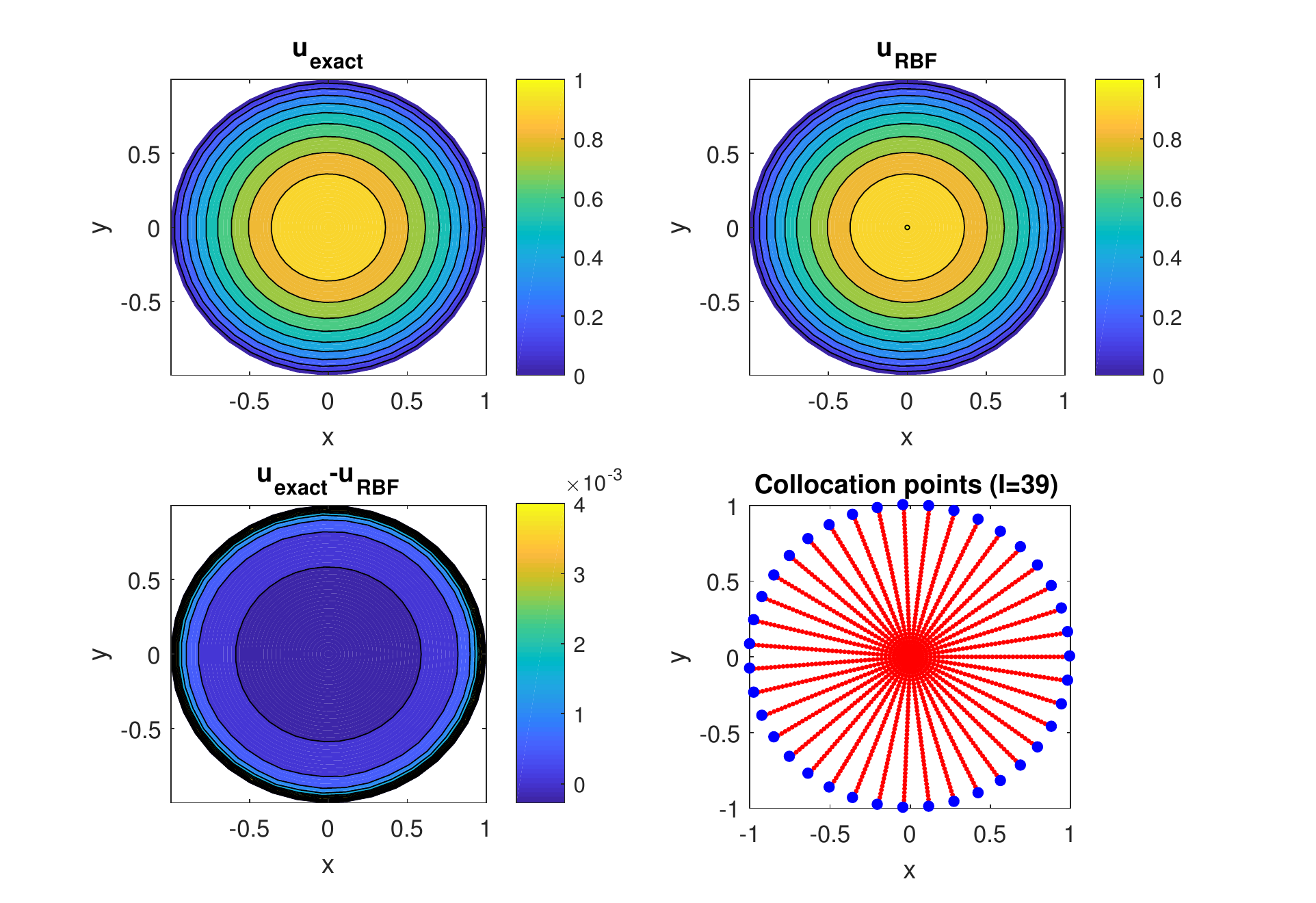}
\caption{\color{magenta}\label{RBF-exa-hom}Comparison of reference and RBF solutions for fractional Poisson problem with zero exterior values on a unit disk. Upper left: Reference solution $u_1$ with $\alpha=1.5$. Upper right: RBF solution with roughly 10000 collocation points (namely $I=99$). Lower left: Pointwise error between exact and RBF solutions. Lower right: Illustration of collocation points with $I=39$ where blue and red points are boundary and domain collocation points, respectively.  }
\end{figure} 

\FloatBarrier

\textbf{Convergence of the RBF solution for $\mathbf{g(x)\ne 0}$ {\color{magenta}and comparison with Walk-on-Spheres}}. 
{\color{chocolate}
From Theorem 2.11 of \cite{bucur2015some}, a solution formula for {\color{magenta}\eqref{frac-Poisson} with $f \equiv 0$} in the case that $\Omega$ is a ball $B_r$ of radius $r$ is
}
\begin{equation} \label{exact-u-ihm}
u(x)=\left\{\begin{array}{ll}
\int_{\mathbb{R}^d\setminus\Omega}P_r(y,x)g(y)dy & x\in B_r,\\
g(x) & x \in \mathbb{R}^d\setminus B_r.
\end{array}\right.
\end{equation}
The {\color{magenta}fractional} Poisson kernel $P_r(y,x)$ is given by
\begin{equation}
P_r(y,x)=\frac{\Gamma(\frac{d}{2})\sin{\frac{\pi\alpha}{2}}}{\pi^{\frac{d}{2}+1}}\left( \frac{r^2-|x|^2}{|y|^2-r^2} \right)^{\alpha/2}\frac{1}{|x-y|^d}.
\end{equation}
Using the RBF collocation method, we solve this problem with $g(x)=\exp(-|x|^2)$. To obtain the reference solution, the integral in \eqref{exact-u-ihm} is computed using a costly direct Monte Carlo (MC) integration with $5\times 10^7$ samples. 
{\color{magenta}To obtain the RBF solution, as discussed in section \ref{RBFM}, we use the truncation parameter $K_2=6,000$ and step size $h=0.001$ in \eqref{modified-GL}. Recall that we verified in Fig. \ref{truncation-error} that these values yielded sufficiently high accuracy of the discretization \eqref{modified-GL}, and at the same time in Fig. \ref{solu-conv-K2} that the final RBF solution was converged with respect to these truncation parameters. Using these parameters, we compute and plot the pointwise errors of RBF and WOS solutions with respect to the reference solution in Fig. \ref{cmp-RBF-WOS}(a); we see that both RBF and WOS methods can achieve accurate results.}

{\color{magenta}Next we compare errors and compute (wall) times of the RBF collocation and WOS methods in Fig. \ref{cmp-RBF-WOS}(b) and Fig. \ref{cmp-RBF-WOS}(c), respectively. Both methods are parallelized over 24 cores. We observe that the error of WOS solution decays with the rate $O(NT^{-0.5})$ where $NT$ is the number of trajectories, as expected for a Monte Carlo method. The error of the RBF method does not exhibit consistent convergence rate. Clearly, the convergence of the RBF method is stymied due to the increasing condition number of the collocation matrix. In terms of the computational time, WOS (which is embarassingly parallel) has time complexity $O(NT)$, whereas the time complexity of the RBF method is $O(NC^2)$ where $NC$ is the number of RBF collocation points. The assembly time of the collocation matrix \eqref{matrix_RBF} is $O(NC^2)$, which dominates the solution time and accounts for the quadratic time complexity here. However, if we solve \eqref{matrix_RBF} by direct methods, cubic complexity will be observed as we increase the number of collocation points. For this reason, iterative methods should be adopted. However, the increasing condition number of the collocation matrix prevents successful application of iterative methods, and preconditioning is required. It is still an open problem to develop preconditioners for the fractional Laplacian collocation matrix. 

For these simulation parameters, the two methods are roughly comparable and WOS method has a slight edge over the RBF method. For example, to achieve a $L_2$ relative error of $10^{-3}$, the WOS and the RBF methods take around 900 and 1100 seconds, respectively. Nevertheless, the RBF method has the potential to be more flexible and converge faster than the WOS method. The WOS method requires establishing Feynman-Kac formula for the associated boundary value problem, but for many fractional PDEs and boundary conditions, the corresponding Feynman-Kac formulas have not yet been found. Moreover, efficient sampling algorithms for WOS require characterization of exit distributions of the associated stochastic process.
}

\begin{figure}[ht!] 
\centering
\includegraphics[width=.5\textwidth]{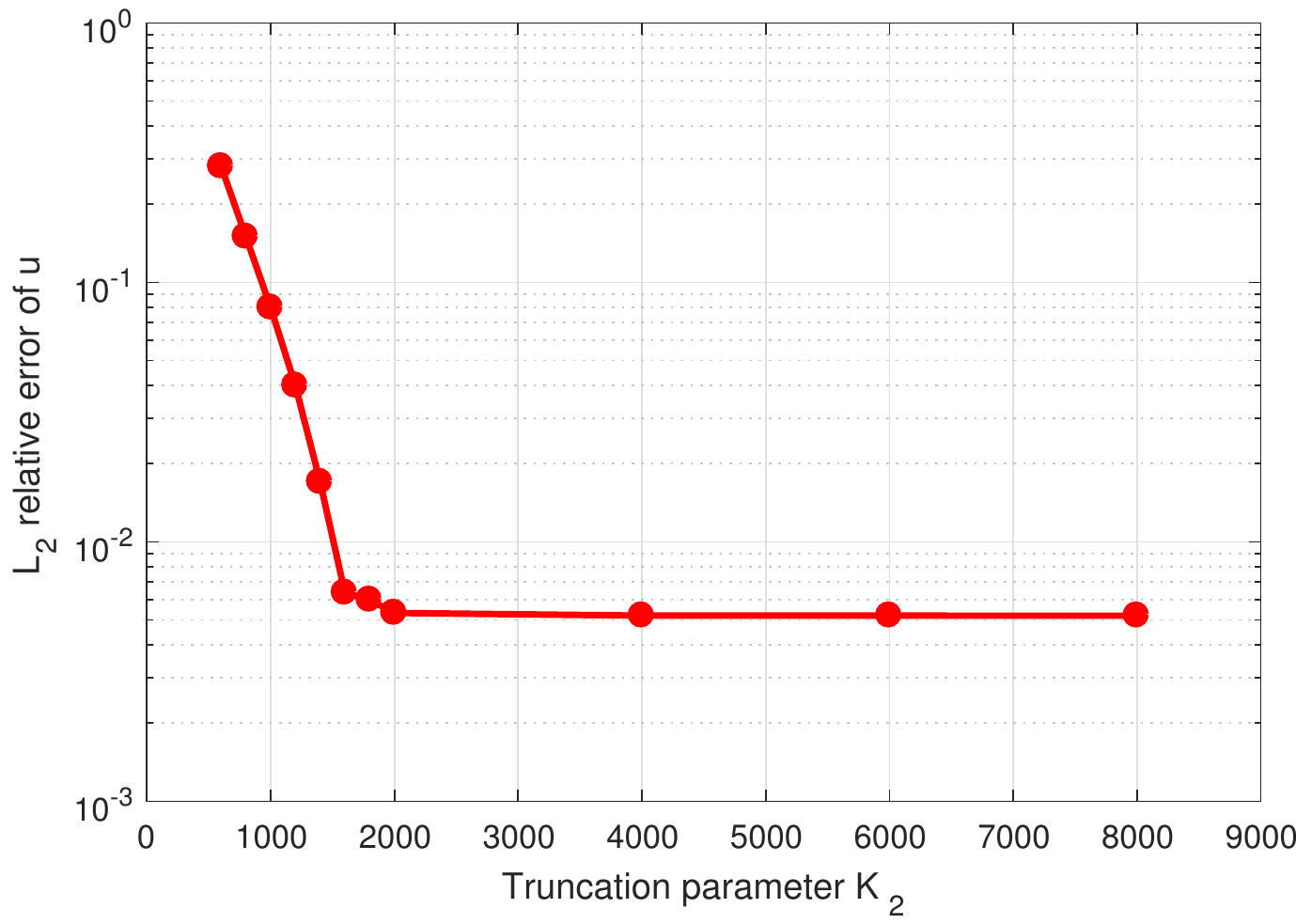}
\caption{\label{solu-conv-K2}\color{magenta} Relative $L^2$ error between the reference solution and the RBF solution to the problem \eqref{frac-Poisson} on the unit disk with $g(x)=\exp(-|x|^2)$ and zero source term $f(x)=0$. This indicates convergence to a consistent numerical solution with respect to the truncation parameter $K_2$ in the modified GL formula \eqref{modified-GL} for $\alpha=1.5$ at a fixed set of 2500 collocation points ($I=39$); for $K_2 > 2000$, the error is dominated by the numerical error in solving the linear system \eqref{matrix_RBF} at these collocation points.}
\end{figure} 

\begin{figure}[ht!]
\centering
\subfloat[Exact vs RBF vs WOS solutions]{
\includegraphics[width=.99\textwidth]{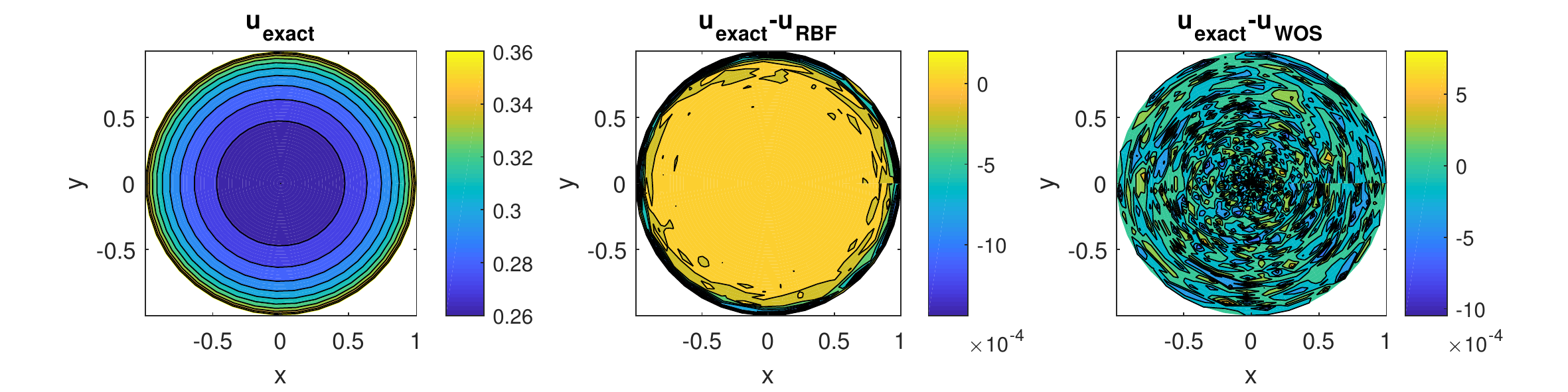}}\vfill
\subfloat[Convergence]{
\includegraphics[width=.45\textwidth]{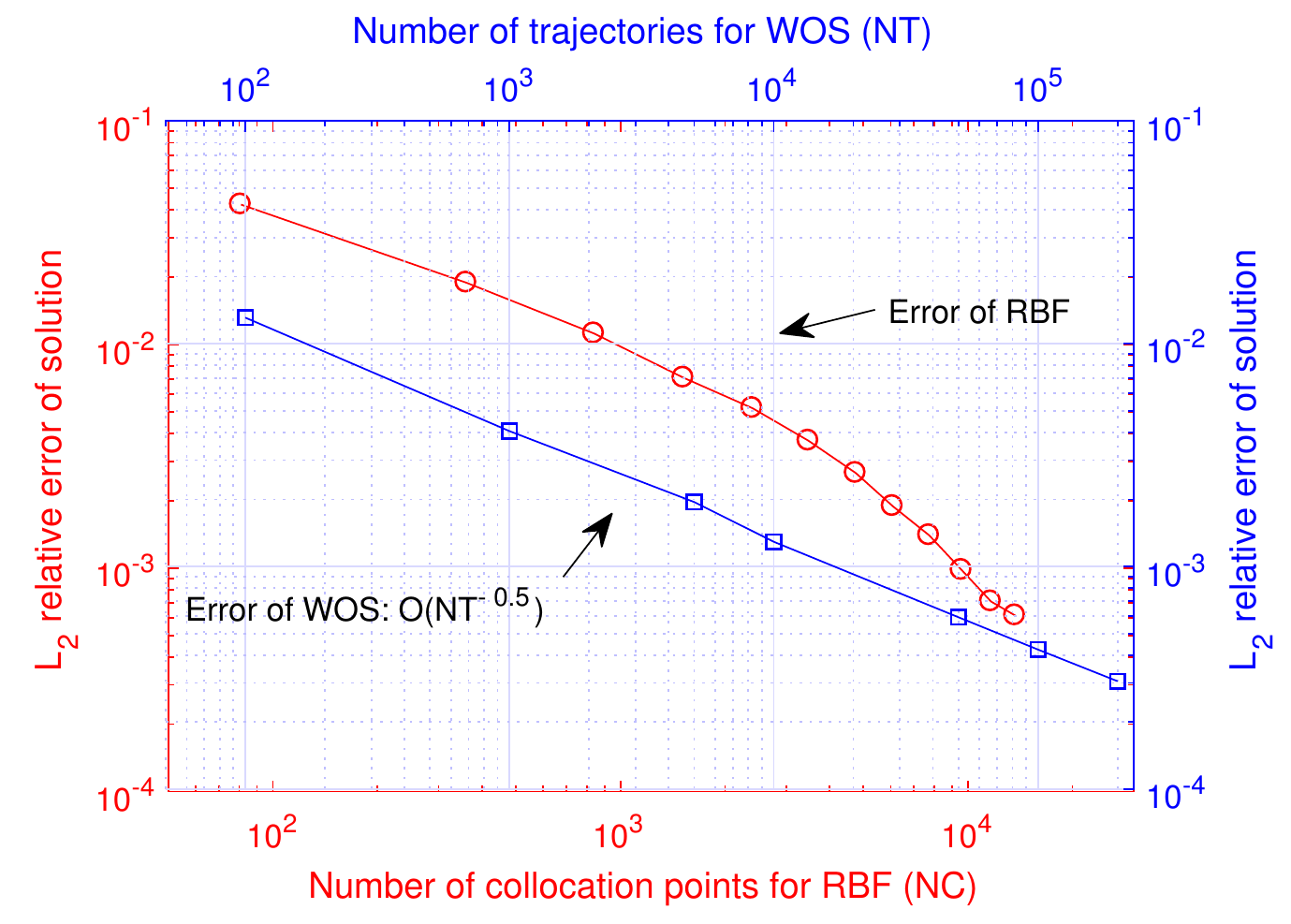}} \hfill
\subfloat[CPU time]{
\includegraphics[width=.45\textwidth]{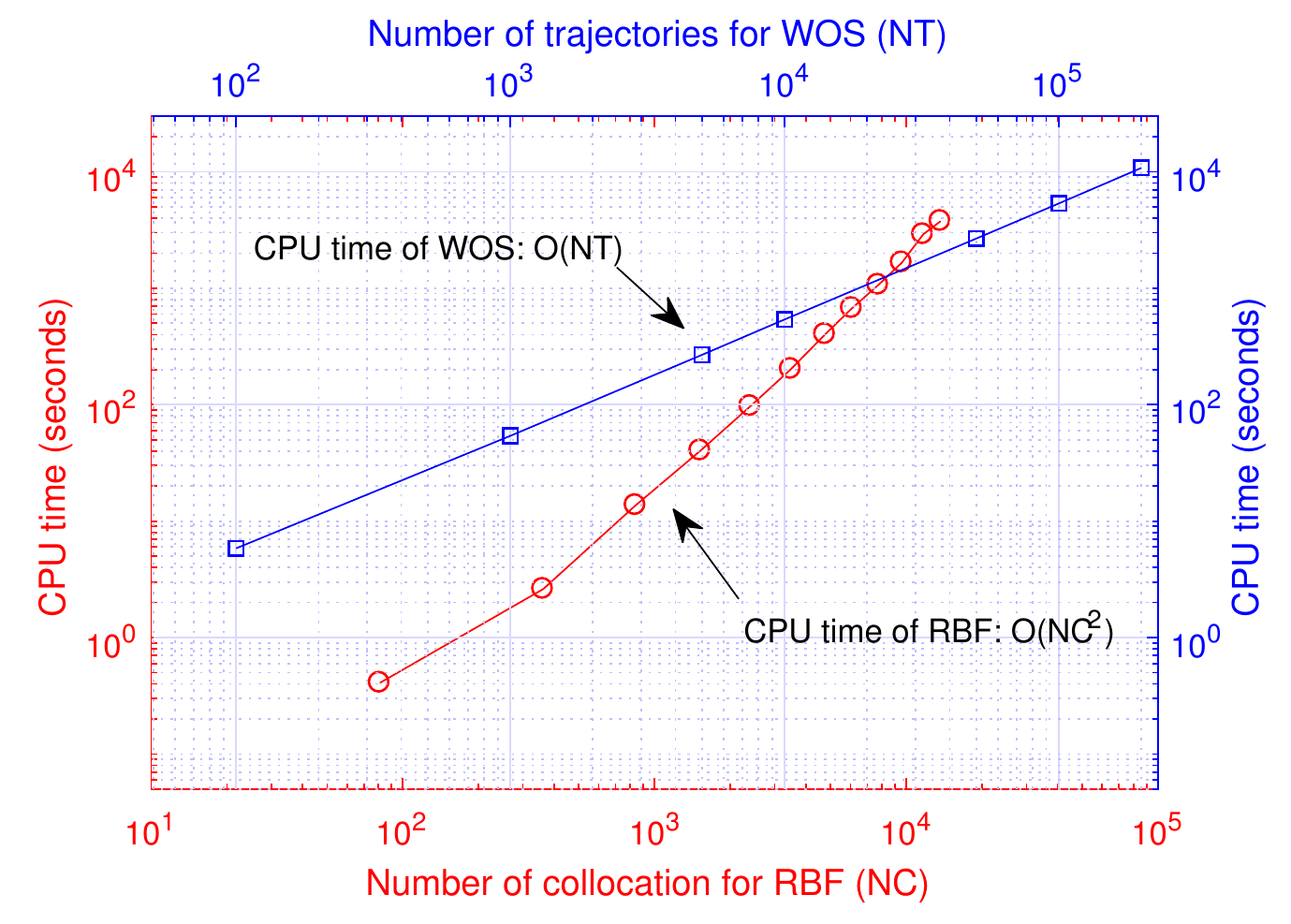}}
\caption{\label{cmp-RBF-WOS} \color{magenta}Comparison of the RBF and the WOS methods in terms of solution error and CPU time for the problem \eqref{frac-Poisson} on the unit disk with non-zero exterior condition $g(x)=\exp(-|x|^2)$ and zero source term $f(x)=0$. (a) Pointwise errors of RBF and WOS methods with respect to the reference solution (b) Comparison of $L_2$ relative errors of RBF and WOS methods. (c) Comparison of CPU times of RBF and WOS methods; both methods are parallelized over 24 CPU cores.}
\end{figure}

{\color{magenta}
\subsubsection{Future Directions}
The RBF collocation method introduced here for the Riesz fractional Laplacian takes advantage of the directional representation \eqref{def:directional_uniform} and extends readily to general nonisotropic directional fractional Laplacians \eqref{general_directional_laplacian}. We established two problems for further work: (1) Development of effective preconditioners for the RBF collocation matrices. This will improve convergence rates as the number of collocation points increases.  (2) Study of \emph{truncated} fractional directional Laplacians and error bounds for the difference between solutions of the associated boundary value problems and those of \eqref{frac-Poisson}. While in our example we drove the truncation error in \eqref{modified-GL} close to machine precision by choosing $K_2$ sufficiently large, theoretical justification for truncation in the modified GL formula should be studied.  To have full impact, these questions should be studied in the context of the most general directional fractional Laplacians \eqref{general_directional_laplacian}  connected to the class of multivariate stable processes, which goes beyond the scope of the current article. 
}

\FloatBarrier

\subsection{Horizon-based Nonlocal Definition}\label{sec:horizon}

Even though we consider fractional equations involving bounded domains, the region of dependence for the Riesz and directional fractional Poisson equations is still $\mbbR^d$. This is in contrast with the spectral fractional Laplacian, which admits only the usual locally-defined boundary conditions, and does not utilize information about the solution outside the bounded domain. However, in the Riesz or directional fractional Poisson problems, it can be noticed through computational experiments that the dependence of the solution on points far away from the domain decreases with distance.
Du et al. \cite{dusiamreview} proposed a type of nonlocal problem admitting a ``horizon parameter", which is used to describe an ``interaction domain", a proper subset of $\mbbR^d,$ in which all computational considerations take place. In other words, data from outside the interaction domain is ignored. Thus, the operator used in the formulation of these equations is distinct from operators we have presented above, although the nonlocal operator does approximate the Riesz fractional Laplacian as the interaction domain increases in size. Indeed, taking the horizon parameter to $+\infty$, we recover the Riesz definition. In this section, we describe this nonlocal formulation and demonstrate how the value of the horizon parameter affects the behavior of the solution to the following problem. A similar study of the nonlocal diffusion operator was carried out by D'Elia et al. in \cite{DElia2013}. Another noteworthy paper by Duo et al. \cite{duo_wang_zhang} compared numerical results of some one-dimensional fractional diffusion problems for different definitions of the fractional Laplacian and the horizon-based nonlocal operator, which they call the ``peridynamic" operator.

\subsubsection{Nonlocal Problem}
Let $\Omega:=(-1,1) \subset \mathbb{R}$, {\color{magenta}$L = |\Omega| = 2$}, and $B_\delta(x) = \{y\in \mathbb{R}:|y-x|<\delta \}$ denotes the interval centered at $x$ having length $2\delta$, where $\delta$ is the horizon parameter. Define the interaction domain as:
 $$\Omega_{\mathcal{I}} = \{y\in \mathbb{R}\setminus \Omega :|y-x|<\delta \text{ for }x\in\Omega\} = (-1-\delta,-1] \cup [1,1+\delta),$$
i.e., $\Omega_{\mathcal{I}}$ consists of those points outside of $\Omega$ that interact with points in $\Omega$. Consider the one-dimensional nonlocal problem  with a Dirichlet constraint,
\begin{equation}\label{nonlocalProblem}
\begin{aligned}
- \mathcal{L}_{\delta}u(x) &= f(x)   &\text{ in } \Omega, \\
u  &= 0   &\text{ in } \Omega_{\mathcal{I}},
\end{aligned}
\end{equation}
where
\begin{equation}\label{NL:FL}
    -\mathcal{L}_\delta u =  C_{\alpha} \, \text{p.v.}\, \int_{B_\delta(x)} \frac{u({ x}) - u({ y})} {|x-y|^{1+\alpha}} d{ y}, \quad 0<\alpha<2,
\end{equation}
with
$$C_\alpha = \frac{2^{\alpha}\alpha \Gamma((\alpha+1)/2)}{2\pi^{1/2} \Gamma(1-\alpha/2)}.$$

\subsubsection{Finite Volume Discretization}

In order to solve the nonlocal problem \eqref{nonlocalProblem}, we consider a finite volume discretization. We divide the domain $\bar{\Omega}= [-1,1]$ into $N$ sub-domains, $h = 2/N$ is the space step size, and we divide the interaction domain $\Omega_{\mathcal{I}}$ into $2K$ sub-domains where $K=\delta/h$. Then, we have a partition $x_k = -1 + kh,\,k=-K,\cdots,0,1,\cdots,N,N+1,\cdots,N+K$, and we denote the partition by $I_j = [x_{j-1},x_{j}], j = 1,\cdots,N$. The finite volume formulation of \eqref{nonlocalProblem} is written as:
\begin{equation}\label{FVM}
    \frac{C_\alpha}{h}\, \text{p.v.} \, \int_{I_j}\int_{\color{magenta}B_\delta(x)} \frac{u({x}) -u({y}) }{|{ y - { x}}|^{1+\alpha}} d{y} d{x} = \frac{1}{h} \int_{I_j} f  d{x}, \quad j = 1, \cdots,N.
\end{equation}
Let $\bar{v}_j: = \frac{1}{h} \int_{I_j} v(x)  d{x}$  be the mean value of $v(x)$ in the interval $I_j$. Note that
{\color{magenta}
\begin{align}
\begin{split}
\label{FVMComp}
\frac{C_\alpha}{h} \int_{I_j}\int_{B_\delta(x)} \frac{u(x) -u(y) }{|y-x|^{1+\alpha}} dydx
&= \frac{C_\alpha}{h} \int_{I_j}\int_{B_\delta(x)} \frac{u(x) -u(x+z) }{|z|^{1+\alpha}}dzdx\\
&= \frac{C_\alpha}{h} \int_{I_j} \int_{-\delta}^{\delta}  \frac{u({x}) -u({ x+z}) }{|z|^{1+\alpha}} dz dx.
\end{split}
\end{align}}
Using the right rectangle rule to discretize the inner integral of the above equation, the above equation is equal to
{\color{magenta}
\begin{equation*}
    \frac{C_\alpha}{h} \int_{I_j} \int_{0<|z|<Kh}  \frac{u({x}) - u({x+z}) }{|z|^{1+\alpha}} dz dx
    \approx  C_\alpha \sum_{0<k\le K}  \frac{-\bar{u}_{j+k} - \bar{u}_{j-k} + 2\bar{u}_{j}}{|kh|^{1+\alpha}}.
\end{equation*}}
Thus, \eqref{FVM} becomes 
\begin{equation}\label{FVM1}
   C_\alpha \sum_{0<k\le K}  \frac{-\bar{u}_{j+k} - \bar{u}_{j-k} + 2\bar{u}_{j}}{|kh|^{1+\alpha}} = \bar{ f }_j, \quad j = 1, \cdots,N.
\end{equation}

Writing the discretized equation in matrix form, we have
\begin{equation}\label{FVM-Matrix}
    SU = F,
\end{equation}
where
$U = [\bar{u}_{1},\bar{u}_{1},\cdots,\bar{u}_{N}]^T$ and $F = [\bar{f}_{1},\bar{f}_{1},\cdots,\bar{f}_{N}]^T$. $S$ is the stiffness matrix with its elements:
\begin{equation*}
    \begin{aligned}
      &S_{jj} = \sum_{k=1}^K \frac{2}{k^{1+\alpha}},\\
      &S_{j,j+|k|} = -\frac{1}{k^{1+\alpha}}, \quad   |k| = 1, 2, \cdots, \min(K,N-|k|), j = 1, \cdots,N. \\
    \end{aligned}
\end{equation*}
If $K<N$, then $S_{j,j+|k|} = 0$ for $|k| > K$, from which, we can see that $S$ is a symmetric, diagonally dominant Toeplitz matrix.

The diagonal dominance property ensures that the system \eqref{FVM-Matrix} admits a \emph{unique} solution. The third property of $S$ suggests that we can efficiently solve the linear system \eqref{FVM-Matrix} in $O(N\log N)$ operations and $O(N)$ storage by using a preconditioned iterative method, such as GMRES.

\subsubsection{Numerical Examples}

We first test the convergence of the finite volume scheme.
\begin{exam}
  Let $f(x) = 1$. Recall that $L = 2$ is the length of the domain $\Omega$. We test two different values of horizon parameter $\delta = 2L, 64L$ for different values of fractional order $\alpha = 0.5, 1.5$. Since there is no exact solution, we use the numerical solution with $N = 2^{15}$ as the reference solution. The convergence results of the $L^2$- and $L^\infty$-error are shown in Figure \ref{fig:convergence}.
\end{exam}
{\color{blue}
We observe in Figure \ref{fig:convergence} that the convergence rates of the $L^2$-error for $\alpha=0.5$ and $\alpha = 1.5$ are about $O(h^{1.2})$ and $O(h^{1.5})$, respectively, while the convergence rates of the $L^\infty$-error for $\alpha=0.5$ and $\alpha = 1.5$ are about $O(h^{0.5})$ and $O(h^{0.7})$, respectively. We point out here that theoretical estimates for the convergence rate {\color{magenta} for finite-volume schemes} are not known at the time of this writing. {\color{magenta} However, we mention that $L^2$ convergence rates for finite element schemes for such problems have been established in \cite{borthagaray2018finite, burkovska2018regularity}. 
}

\begin{figure}[htp]
\begin{minipage}{.45\textwidth}
\begin{center}
\includegraphics[scale=0.6,angle=0]{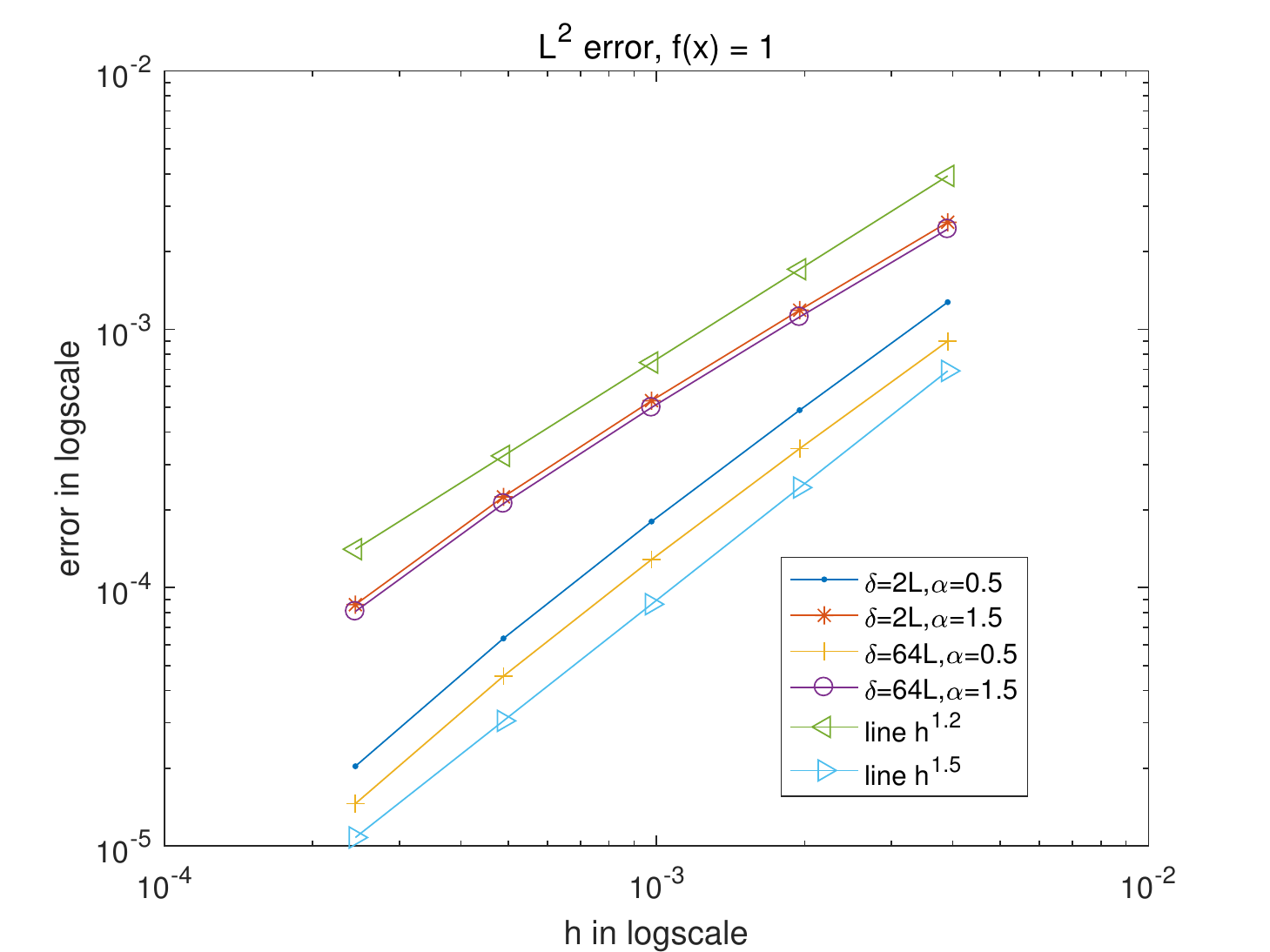}\end{center}
\end{minipage}
\begin{minipage}{.45\textwidth}
\begin{center}
\includegraphics[scale=0.6,angle=0]{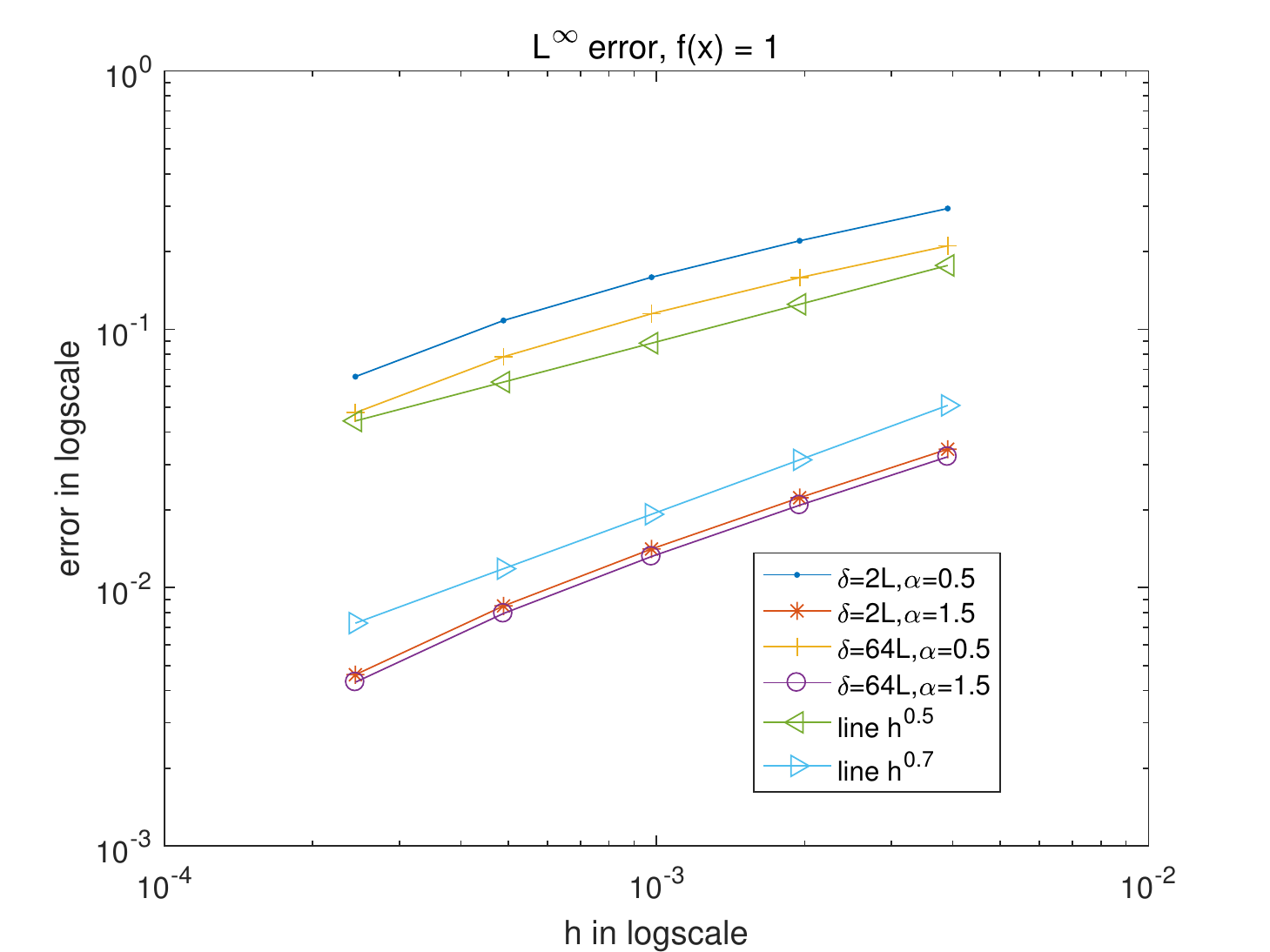}\end{center}
\end{minipage}
\caption{Convergence of the errors of the finite volume discretization, left: $L^2$-error, right: $L^\infty$-error.
}\label{fig:convergence}
\end{figure}

We now study how the value of the horizon parameter affects the behavior of the solution of the nonlocal problem. We also compare it with the solution of the nonlocal diffusion problem 
\begin{equation}\label{fracProblem1}
\begin{aligned}
C_{\alpha} \,\text{p.v.}\,  \int_{\mathbb{R}} \frac{u({ x}) - u({ y})} {|x-y|^{(1+\alpha)}} d{ y} &= f(x)   &\text{ in } \Omega, \\
u  &= 0   &\text{ in } \Omega^c: = \mathbb{R}\setminus \Omega.
\end{aligned}
\end{equation}
The above problem is equivalent to the Riesz fractional diffusion problem 
\begin{align}
\begin{split}
\label{fracProblem}
(-\Delta)^{\alpha/2} u(x) &= f(x)   \text{ in } \Omega, \\
{\color{magenta}u  }&= 0 {\color{magenta}\text{ in } \mathbb{R} \setminus \Omega.} 
\end{split}
\end{align}
The above fractional diffusion problem can be  solved by using the Petrov-Galerkin method proposed in \cite{MCS2016APPNM} (see also \cite{MK2017SINUM}).

\begin{exam}
  Let $N = 2^{14}$. We vary the value of horizon parameter $\delta$ for $f(x) = 1$ and $f(x) = \sin(\pi x)$, and consider different values of the fractional order $\alpha = 0.5, 1.5$. The results are shown in Figure \ref{fig:f1N14}.
\end{exam}

We observe that the numerical solution converges to the solution of the fractional diffusion problem \eqref{fracProblem1} as the horizon parameter becomes infinite. This phenomenon has also been observed by D'Elia and Gunzburger \cite{DElia2013}. 
{\color{blue} 
Moreover, we study the convergence rate in $L^\infty$ in the sense of the asymptotic behavior, in particular, we plot $\|u - u_\delta\|_{\infty}$ against the horizon parameter $\delta$ in log-log scale in Figure \ref{fig:f1a05N16gap}. 
We observe that the convergence rate of $\|u-u_{\delta}\|_{\infty}$ is 0.5 for $\alpha=0.5$ while it is 1.5 for $\alpha = 1.5$. The convergence rate can also be analyzed theoretically for $\alpha > 1$. We now show this as follows: Let $u$ and $u_{\delta}$ be the solutions of \eqref{fracProblem1} and \eqref{nonlocalProblem}, respectively, it has been shown that (see \cite[Theorem 3.1]{DElia2013}) 
\begin{equation*}
    \|u - u_\delta\|_{H^{\alpha/2}}\le C_1\delta^{-\alpha}, \quad  \|u - u_\delta\|_{L^{2}}\le C_2\delta^{-\alpha}. 
\end{equation*}
Thus, by using the interpolation theory and noting that $\|v\|_{\infty} \le c \|v\|_{H^s}$ for $s>1/2$, we have 
\begin{equation}\label{est:delta:inf}
    \|u - u_\delta\|_{\infty}\le C\delta^{-\alpha}, \quad \text{ for } \alpha > 1. 
\end{equation}
}
{\color{blue}
We point out in the following that, for solving the equation \eqref{fracProblem1} using the FVM, the corresponding integral \eqref{FVMComp} can be partitioned as}
\begin{multline}\label{FVMComp1}
\frac{C_\alpha}{h} \int_{I_j}\int_{\mathbb{R}} \frac{u(x) -u(y) }{|y-x|^{1+\alpha}} dydx
=
{\color{magenta}
\frac{C_\alpha}{h} \int_{I_j}\int_{\mathbb{R}} \frac{u(x) - u(x+z) }{|z|^{1+\alpha}} dzdx}\\
    = \frac{C_\alpha}{h} \int_{I_j} \int_{-L}^{L}  \frac{u({x}) -u({ x+z}) }{|z|^{1+\alpha}} dz  
      + \frac{C_\alpha}{h} \int_{I_j} \left\{\int_{-\infty}^{-L}  \frac{u({x}) -u({ x+z}) }{|z|^{1+\alpha}} dz +  \int_{L}^{\infty}  \frac{u({x}) -u({ x+z}) }{|z|^{1+\alpha}} dz  \right \} dx.
\end{multline}
The first term of the above integral is computed as before by using a rectangle rule discretization, while the last two terms are computed exactly. Thus, using the fact that {\color{magenta}$u= 0$ outside $\Omega$ and $x+z$ lies outside $\Omega$ for $x \in I_j \subset \bar{\Omega}$ and $|z| \ge L = |\Omega|$}, the above integral is approximated by 
$$
C_\alpha \sum_{0<k\le L/h}  \frac{-\bar{u}_{j+k} - \bar{u}_{j-k} + 2\bar{u}_{j}}{(kh)^{1+\alpha}}
+ \frac{2C_\alpha}{\alpha} L^{-\alpha} \bar{u}_{j}.
$$

\begin{figure}[htp]
\begin{minipage}{0.45\textwidth}
\begin{center}
\includegraphics[scale=0.6,angle=0]{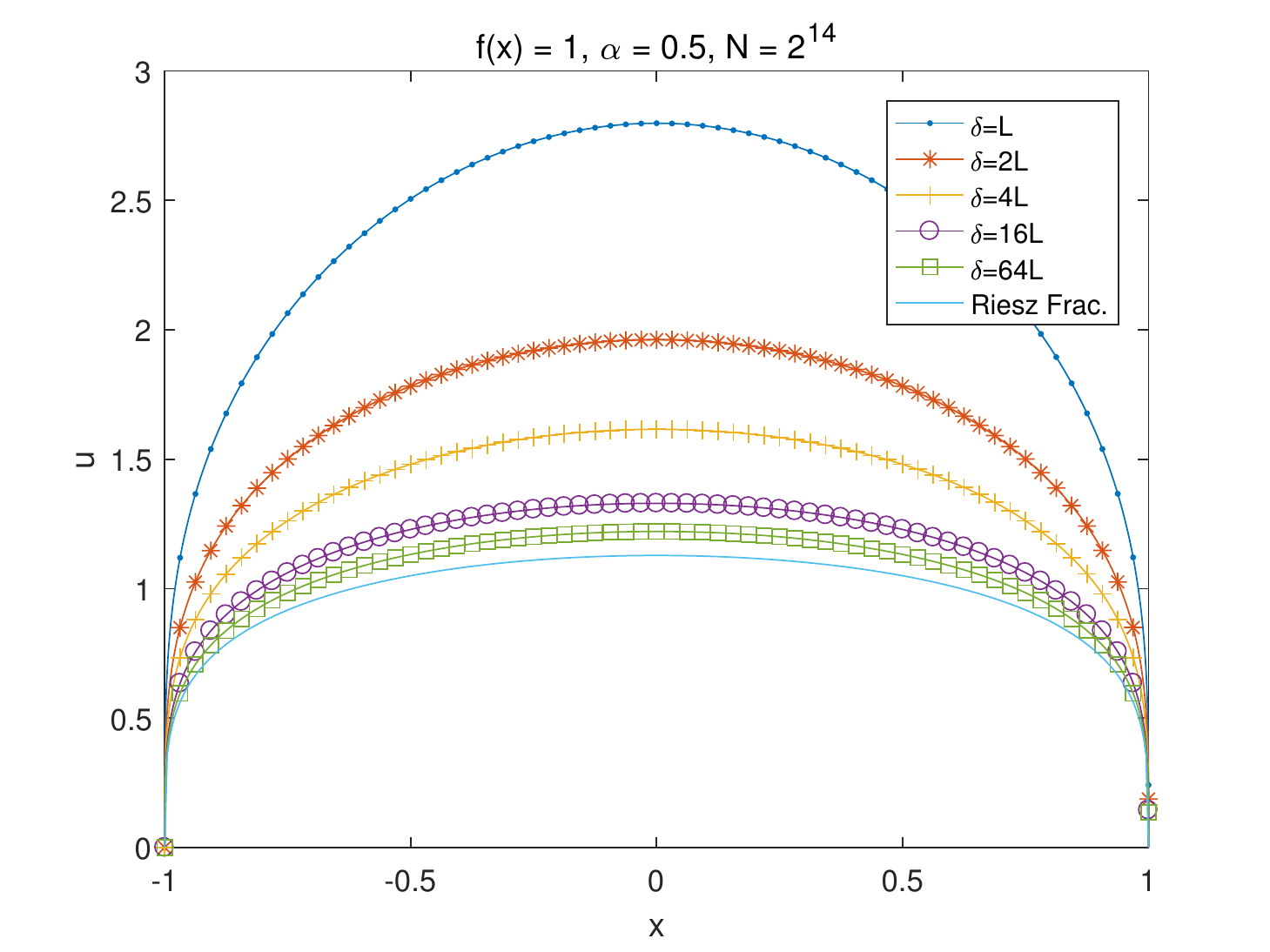}\end{center}
\end{minipage}
\begin{minipage}{0.45\textwidth}
\begin{center}
\includegraphics[scale=0.6,angle=0]{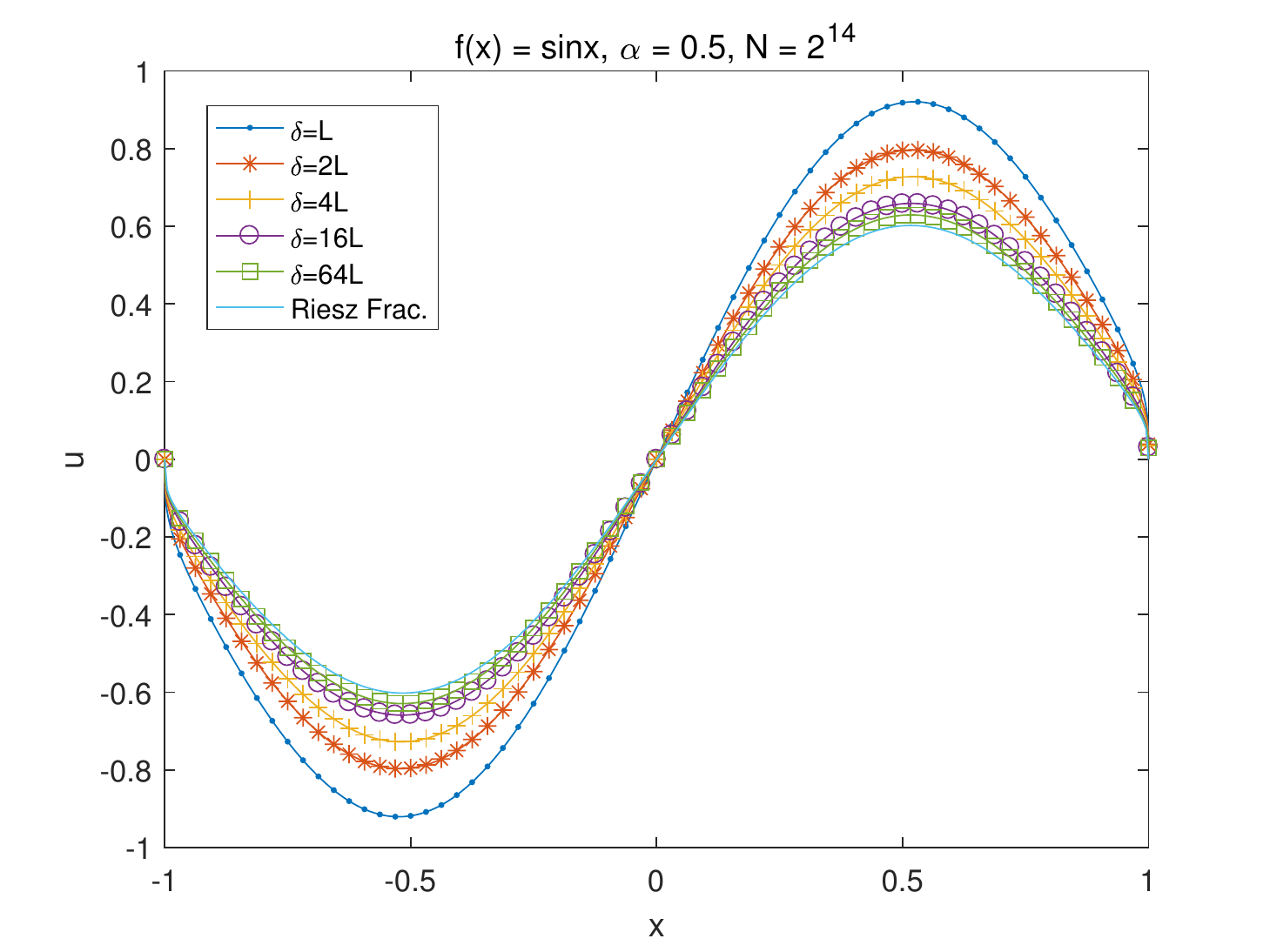}\end{center}
\end{minipage}
\caption{{Numerical solutions with different values of horizon parameter}, $N = 2^{14}$, $\alpha = 0.5$, (\emph{left}) $f = 1$, (\emph{right}) $f = \sin(\pi x)$.
}\label{fig:f1N14}
\end{figure}

\begin{figure}[http]
\begin{center}
\includegraphics[scale=0.65,angle=0]{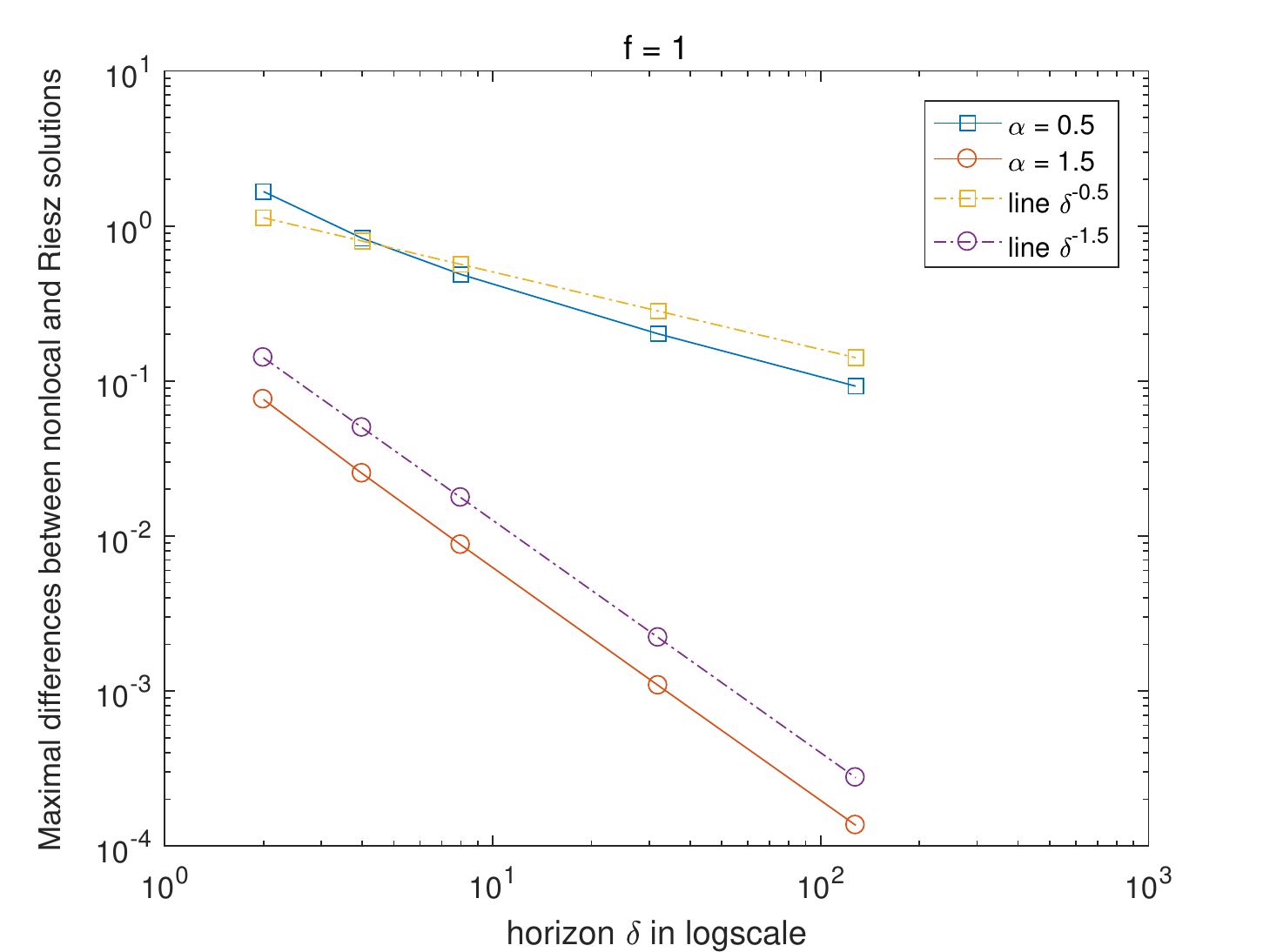}
\end{center}
\caption{
{\color{blue}
Convergence of  $\|u-u_{\delta}\|_{\infty}$ with respect to the horizon $\delta$ with different values of fractional order $\alpha$, $L = 2$, and $f(x) = 1$.
}
}\label{fig:f1a05N16gap}
\end{figure}

\section{Numerical Solutions: Comparisons} \label{sec:num_comp}

\hrule
\vspace{1em}
\noindent \textbf{Section Overview}\\[0.2em]
\indent {\color{magenta}In the preceding sections, we have thoroughly discussed the theoretical properties of the Riesz and spectral fractional Laplacians, and have established significant differences in the fractional Poisson problem associated to each operator. In this section, we leverage some of the numerical methods described in Section \ref{sec:num_meth} to demonstrate the practical differences between these different operators in benchmark problems. As expected from the preliminary numerical studies in Section \ref{intro} and the theoretical discussion in Section \ref{background}, these differences in the solutions of the respective Poisson problems are highly dependent on the fractional order $\alpha$ and the properties of the forcing term $f$.} We focus on numerical methods that are generalizable to higher dimensions, and we use these methods to compute benchmark solutions in two dimensions on the square, L-shaped, and unit disk domains. We solve fractional Poisson problems on each of these domains using the spectral, Riesz, and directional definitions with $\alpha = 0.5$ and $\alpha = 1.5$ with zero Dirichlet boundary conditions. We make observations about the differences between these numerical solutions, including regularity issues near the boundary, equivalence of the directional and Riesz definitions, and boundary oscillations in the case $\alpha = 0.5$. We also provide plots of the spectral and Riesz solutions compared along one-dimensional slices of the two-dimensional domains. Finally, we summarize the computational advantages and disadvantages of each method so that practitioners are better able to choose which method is best suited for a given equation.
\vspace{1em}
\hrule

\vspace{1.5em}

We consider four fractional Poisson problems in three $2$-dimensional domains of different shapes. In particular, we compare the solutions of Equation \eqref{fracPoisson} in a bounded domain $\Omega \subset \mbbR^2$ with both the Riesz and spectral fractional Laplacians and zero Dirichlet boundary conditions,
where the choices of $\Omega$, $f$, and $\alpha$ are described in Secs. \ref{2d:square} - \ref{2d:Lshape} and outlined in Table \ref{2dbenchmarks}.

To solve the Riesz fractional Poisson equation, we use the AFEM (see Sec. \ref{sec:afem}), the WOS method (see Sec. \ref{sec:wos}), and the RBF collocation method (see Sec. \ref{RBFM}).  
The shape parameters of the RBF collocation method for each example are reported in the figure captions for the corresponding grid figures in Appendix \ref{grids}. Sixteen-point Gauss-Legendre quadrature is used to approximate the integral of the directional derivative in Eq. \eqref{GL-rule}. We use multiple methods to verify the accuracy of the solutions and to ensure that the observed differences in the numerical solutions reflect differences in the fractional operators instead of inaccuracies in the numerical results. The meshes or collocation points used in each example below are displayed in Appendix \ref{grids}.

\begin{table}
\centering
\begin{tabular}{|c | c | c |}
	\multicolumn{3}{c}{\textbf{Benchmark Problems with Zero Dirichlet BCs}} \\
	\hline
	Sec. \ref{2d:square}, Square: $[-1,1]^2$ & Order $\alpha$ & Source function $f$ \\
	\hline
	Case 1 \& 2 & $0.5$ \& $1.5$ & $f = 1$ \\
	Case 3 \& 4 & $0.5$ \& $1.5$ & $f = \sin(\pi x)\sin(\pi y)$ \\
	\hline
	\hline
	Sec. \ref{2d:disk}, Disk: $\{(x,y) : x^2 + y^2 \leq 1\}$ & Order $\alpha$ & Source function $f$ \\
	\hline
	Case 1 \& 2 & $0.5$ \& $1.5$ & $f = 1$ \\
	Case 3 \& 4 & $0.5$ \& $1.5$ & $f = \sin(\pi r^2)$ \\
	\hline
	\hline
	Sec. \ref{2d:Lshape}, L-shape: $[-1,1]^2 \setminus [0,1)^2$ & Order $\alpha$ & Source function $f$ \\
	\hline
	Case 1 \& 2 & $0.5$ \& $1.5$ & $f = 1$ \\
	Case 3 \& 4 & $0.5$ \& $1.5$ & $f = \sin(\pi x)\sin(\pi y)$ \\
	\hline
\end{tabular}
\caption{\label{2dbenchmarks} Guide to benchmark problems formulated with zero Dirichlet boundary conditions. The test cases were used to make comparisons on the square, disk, and L-shaped domains using the Riesz and spectral definitions as discussed in Secs. \ref{2d:square} - \ref{2d:Lshape}.}
\end{table}

\subsection{Square Domain}\label{2d:square}

In this section, we solve the benchmark problems on the domain $[-1,1]^2$. In Figure \ref{square12}, we show cases 1 and 2, where $f = 1$ and $\alpha = 0.5$ and $1.5$, respectively. In each case, the spectral solution $u_{\text{spectral}}$ {\color{blue} lies below} the Riesz solution $u_{\text{Riesz}}$, {\color{blue} just as for}  the one-dimensional comparisons in Figure \ref{1dcomparisons}. {\color{blue} Since $f \ge 0$ in the domain, this is a special case of the theoretical results of \cite{MUSINA20161667}}. 
{\color{blue}
We remark that the apparent failure in the plotted solution $u_\text{Riesz}$ at enforcing the zero boundary condition for $\alpha = 0.5$ is an artifact of the AFEM method  \cite{AinsworthGlusa2017_TowardsEfficientFiniteElement}; the true solution does in fact possess a zero trace. This is discussed in detail in Remark \ref{AFEM_regularity}. The solution $u_\text{spectral}$ for $\alpha = 0.5$ also possesses a trace (as discussed in Sec. \ref{sec:regularity}), and in contrast, the numerical method used to compute it (SEM) is able to enforce the zero boundary condition.
}

In Figure \ref{square34}, we consider Cases 3 and 4, where $f = \sin(\pi x)\sin(\pi y)$, and $\alpha = 0.5$ and $1.5$, respectively. In this case, the issue of satisfying the boundary condition in the Riesz solution when $\alpha = 0.5$ is not as pronounced, but the oscillations are visible in the plot of $u_{\text{Riesz}}-u_{\text{spectral}}$.

Figure \ref{squareslicesf1y0} displays the solutions plotted along the lines $y = 0$ for $f = 1$ and $f = \sin(\pi x)\sin(\pi y)$, respectively. These figures highlight the difference between the boundary layers for the spectral and Riesz solutions, as we discussed in relation to Figure \ref{1dcomparisons}. Indeed, the one- and two-dimensional profiles are qualitatively similar for all four cases, and the boundary layer in the Riesz solution for Case 4 (see Figure \ref{squareslicesf1y0}) due to the singularity in the Riesz definition \eqref{E:riesz_potential} is particularly noteworthy, as the spectral solution always will have smooth behavior near the boundary given smooth $f$.

We also computed the solution to the four benchmark problems using the directional definition \eqref{def:directional_uniform}, and this solution is plotted in the top left panel of Figure \ref{cmp-squ-const}. This figure also shows the Riesz solution computed in {\color{blue}three} ways: (i) using the AFEM method \cite{AinsworthGlusa2017_TowardsEfficientFiniteElement}, (ii) using the walk-on-spheres method of Section \ref{sec:wos}, {\color{blue} and (iii) using the RBF collocation method of Section \ref{RBFM}}. The similarity of the solutions (on the square domains and on the domains discussed in the following sections) verifies our numerical results, and demonstrates that the directional solution is equivalent to the Riesz solution, up to some numerical error. The spectral solution in the top right panel is significantly different from the Riesz and directional solutions, as indicated by their maximal values reported in the titles of the plots.
 
\newpage

\begin{figure}[ht!]
 \centering
\subfloat[Solutions $u$ associated with $f=1$ and $\alpha = 0.5$ in the square domain using the spectral definition (using SEM) (\emph{left}) and the Riesz definition (using AFEM) (\emph{center}), and the difference between $u_{\text{Riesz}}$ and $u_{\text{spectral}}$ for this case (\emph{right}).]{
 \begin{minipage}[]{\textwidth}\centering
 \includegraphics[width=0.25\textwidth]{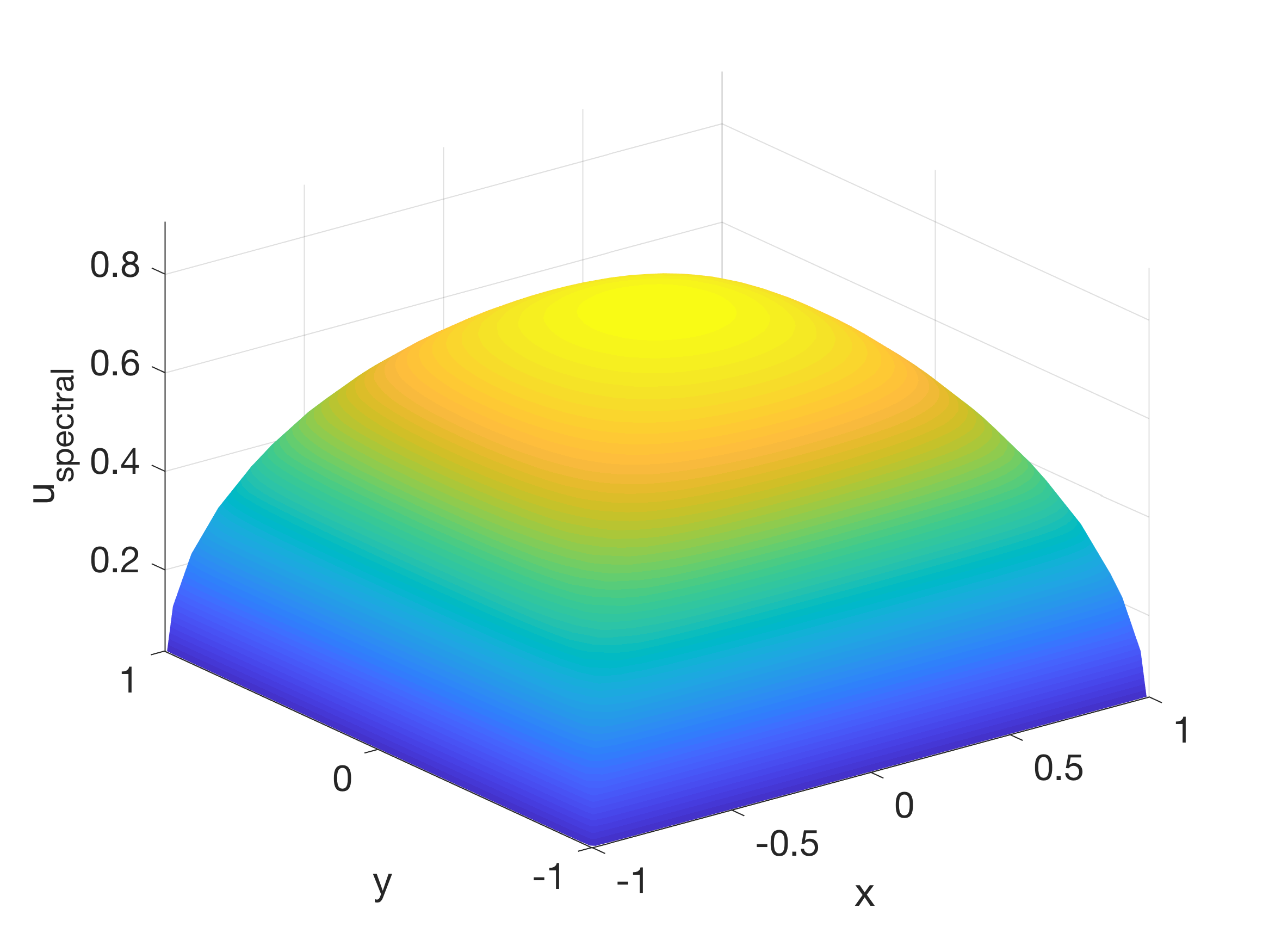}
  \includegraphics[width=0.25\textwidth]{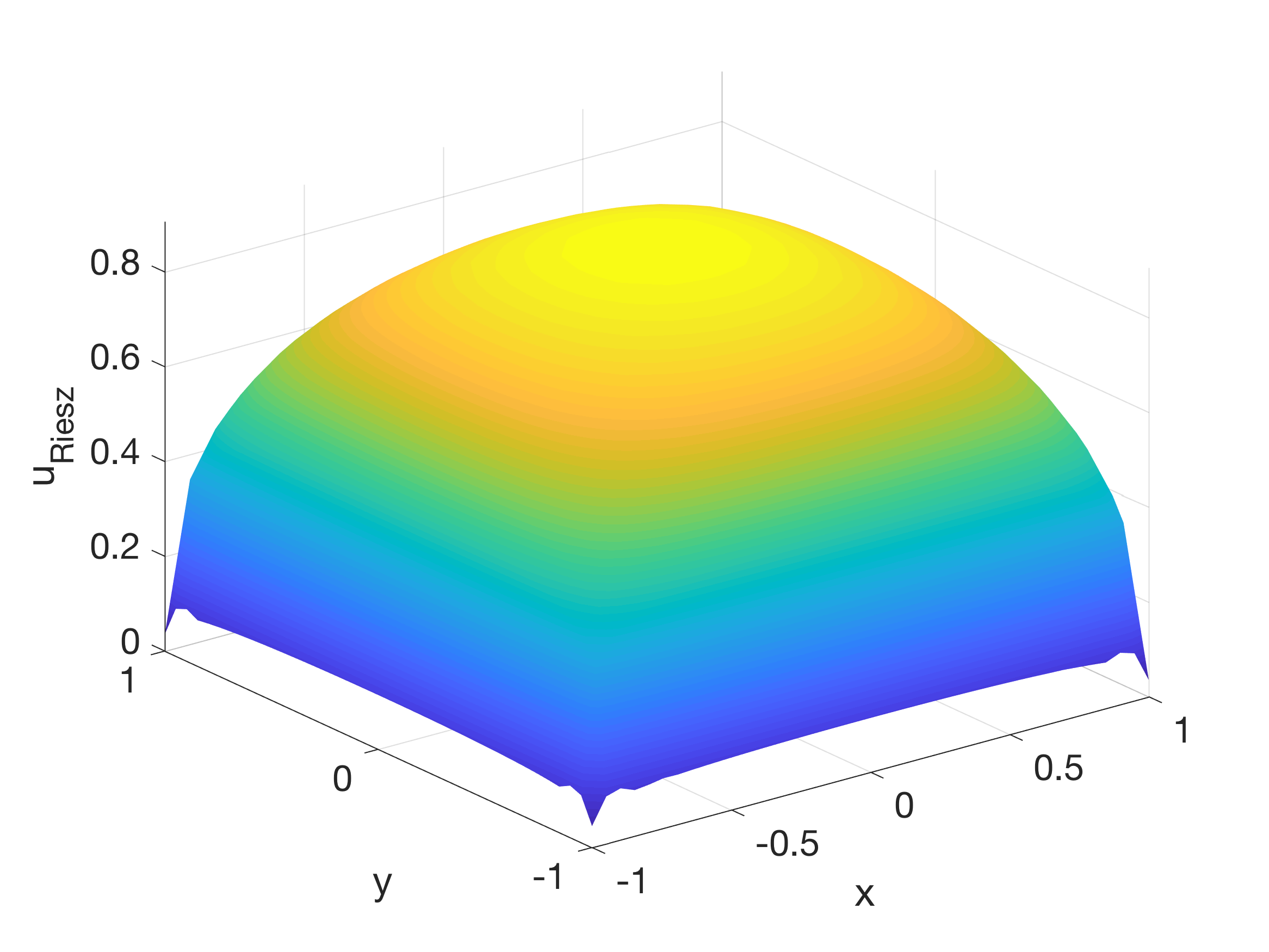}
  \includegraphics[width=0.25\textwidth]{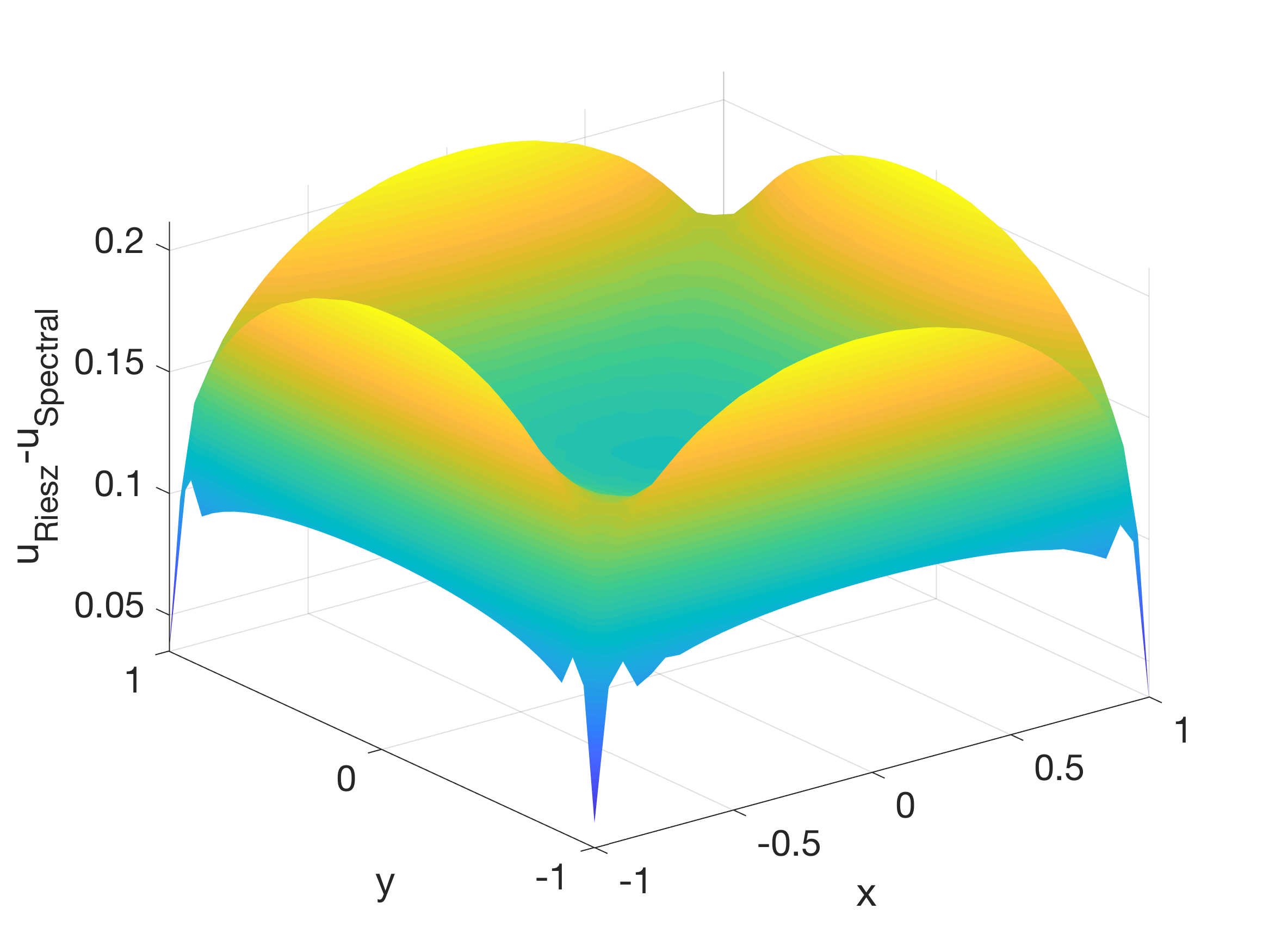}
\end{minipage}
 }\\
\subfloat[Solutions $u$ associated with $f=1$ and $\alpha = 1.5$ in the square domain using the spectral definition (using SEM) (\emph{left}) and the Riesz definition (using AFEM) (\emph{center}), and the difference between $u_{\text{Riesz}}$ and $u_{\text{spectral}}$ for this case (\emph{right}).]{
 \begin{minipage}[]{\textwidth}\centering
 \includegraphics[width=0.25\textwidth]{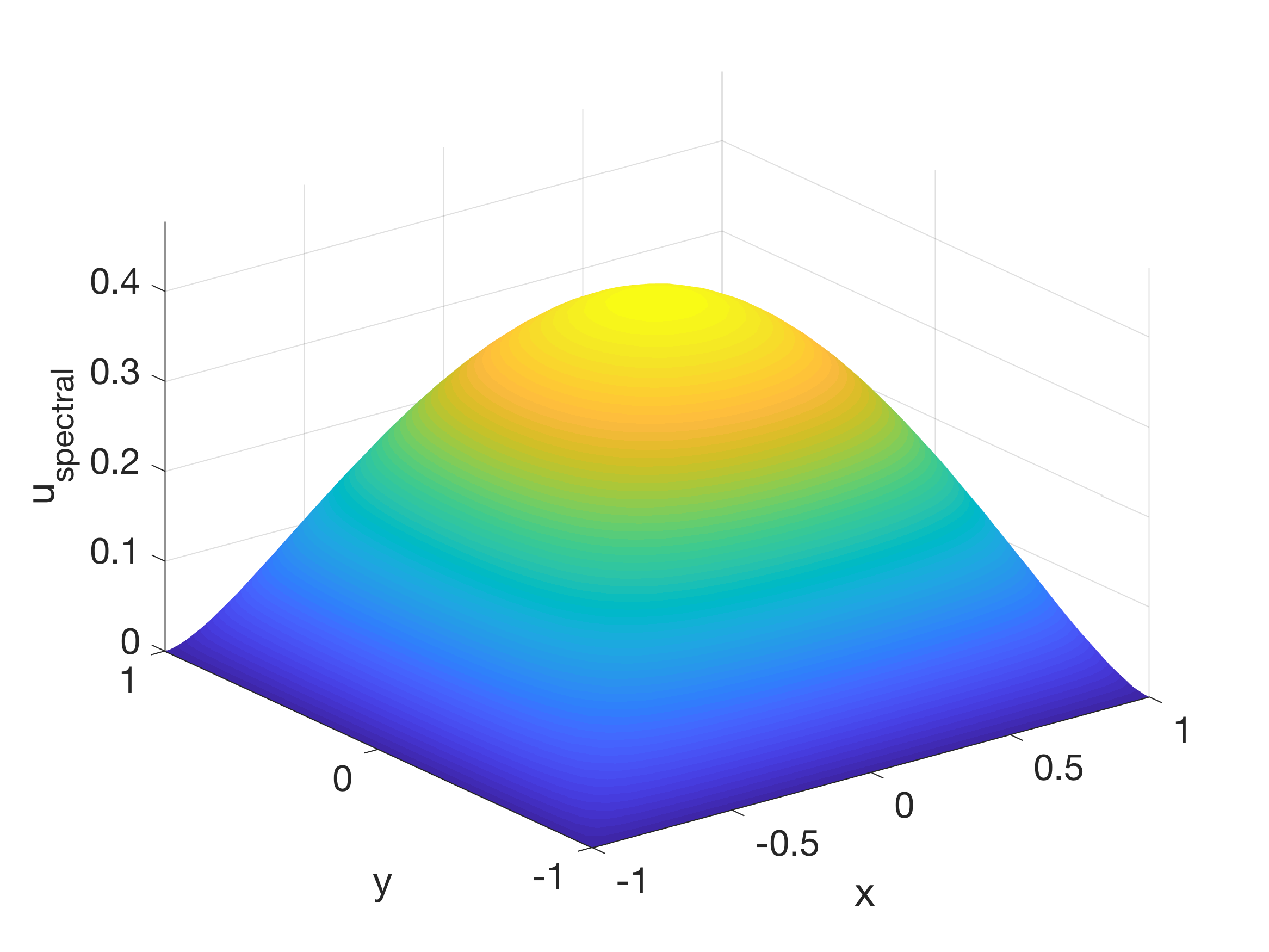}
  \includegraphics[width=0.25\textwidth]{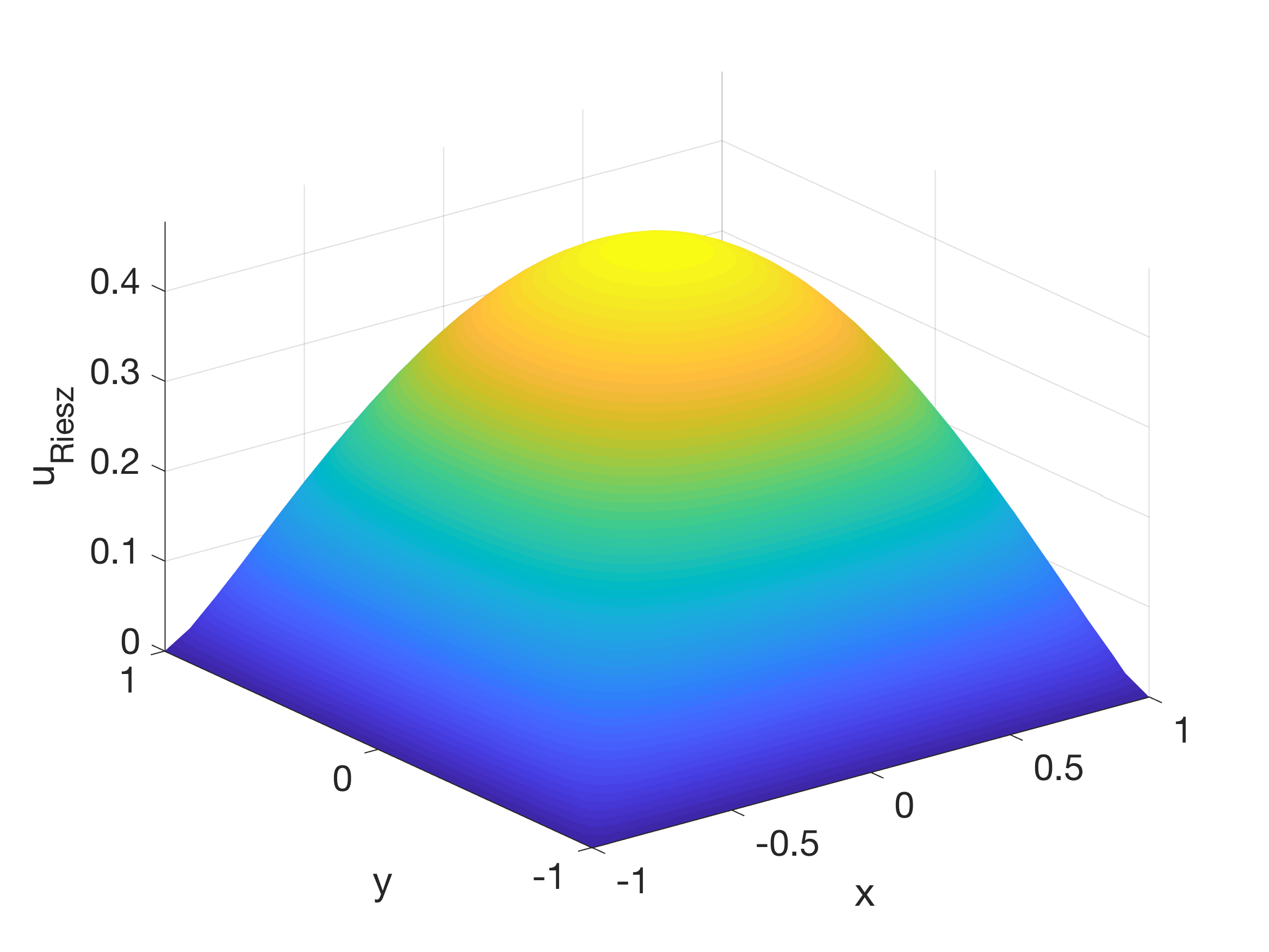}
    \includegraphics[width=0.25\textwidth]{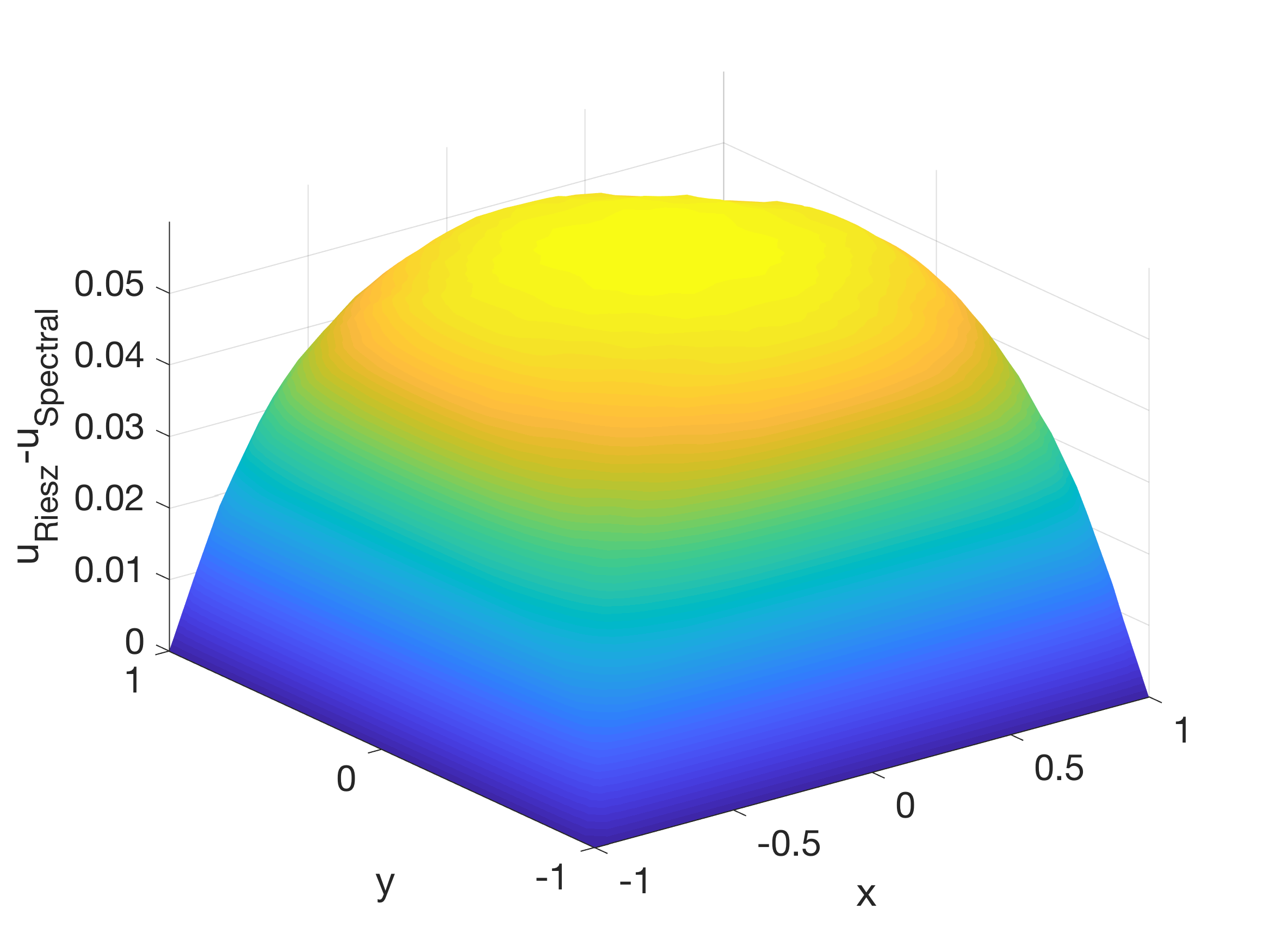}
\end{minipage}
 }
 \caption{ \label{square12} {Solutions and differences} between $u_{\text{Riesz}}$ and $u_{\text{spectral}}$ on the square domain for $\alpha = 0.5$ {\color{forest}and $1.5$.}}
 \end{figure}

\begin{figure}[ht!]
 \centering
 \subfloat[Solutions $u$ associated with $f=\sin(\pi x)\sin(\pi y)$ and $\alpha = 0.5$ in the square domain using the spectral definition (using SEM)  (\emph{left}) and the Riesz definition (using AFEM) (\emph{center}), and the difference between $u_{\text{Riesz}}$ and $u_{\text{spectral}}$ for this case (\emph{right}).]{
 \begin{minipage}[]{\textwidth}\centering
 \includegraphics[width=0.3\textwidth]{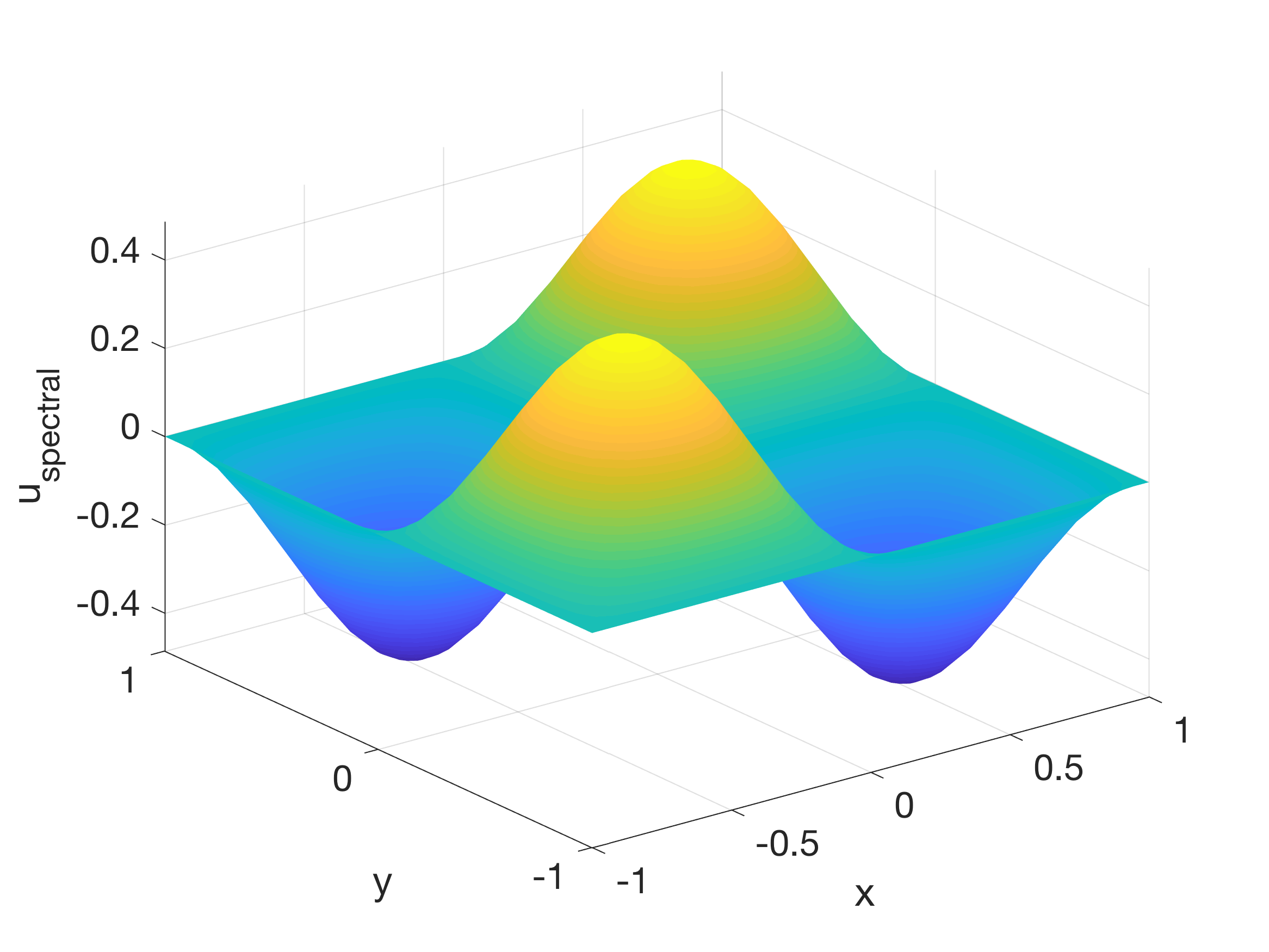}
  \includegraphics[width=0.3\textwidth]{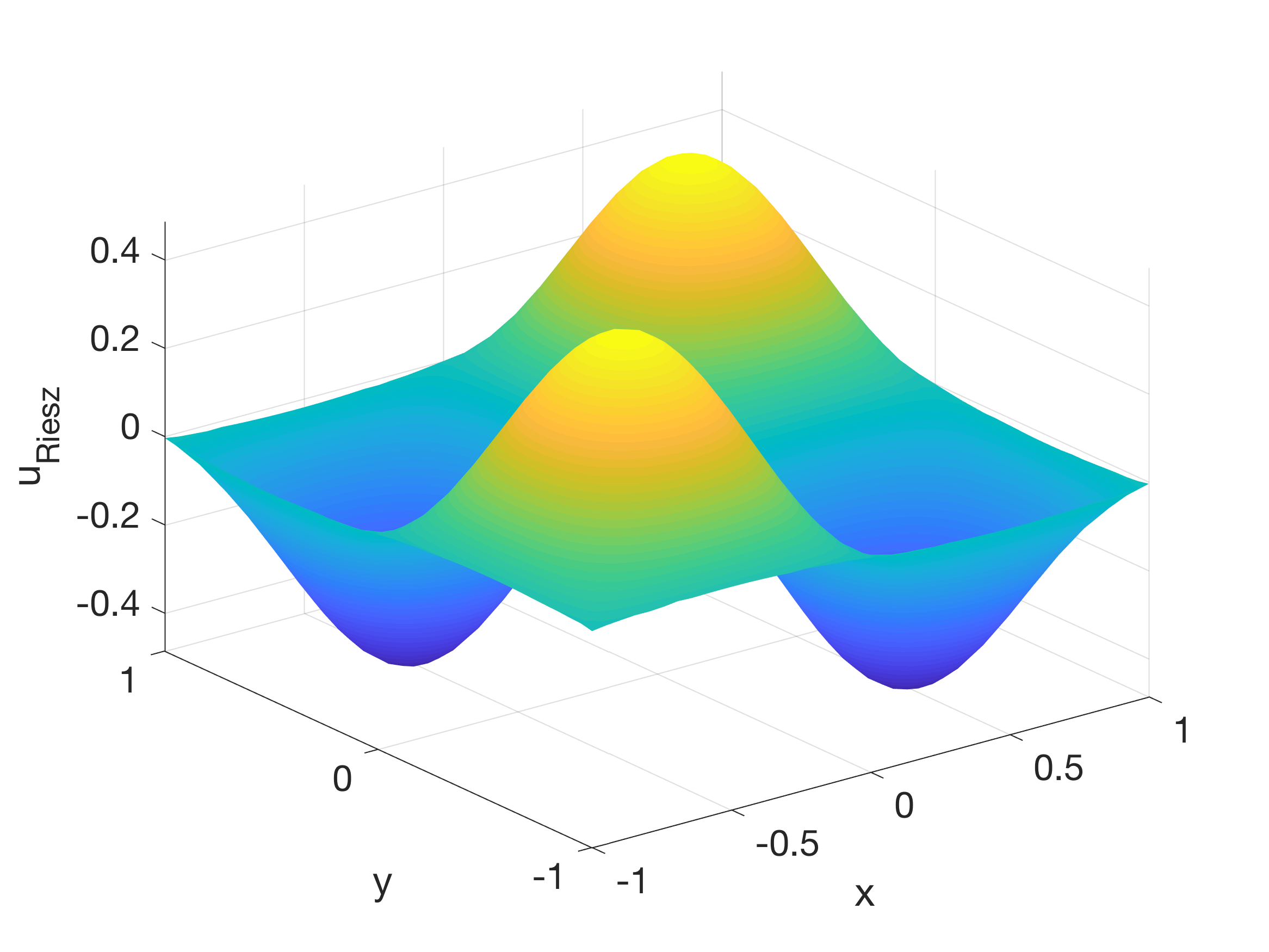}
  \includegraphics[width=0.3\textwidth]{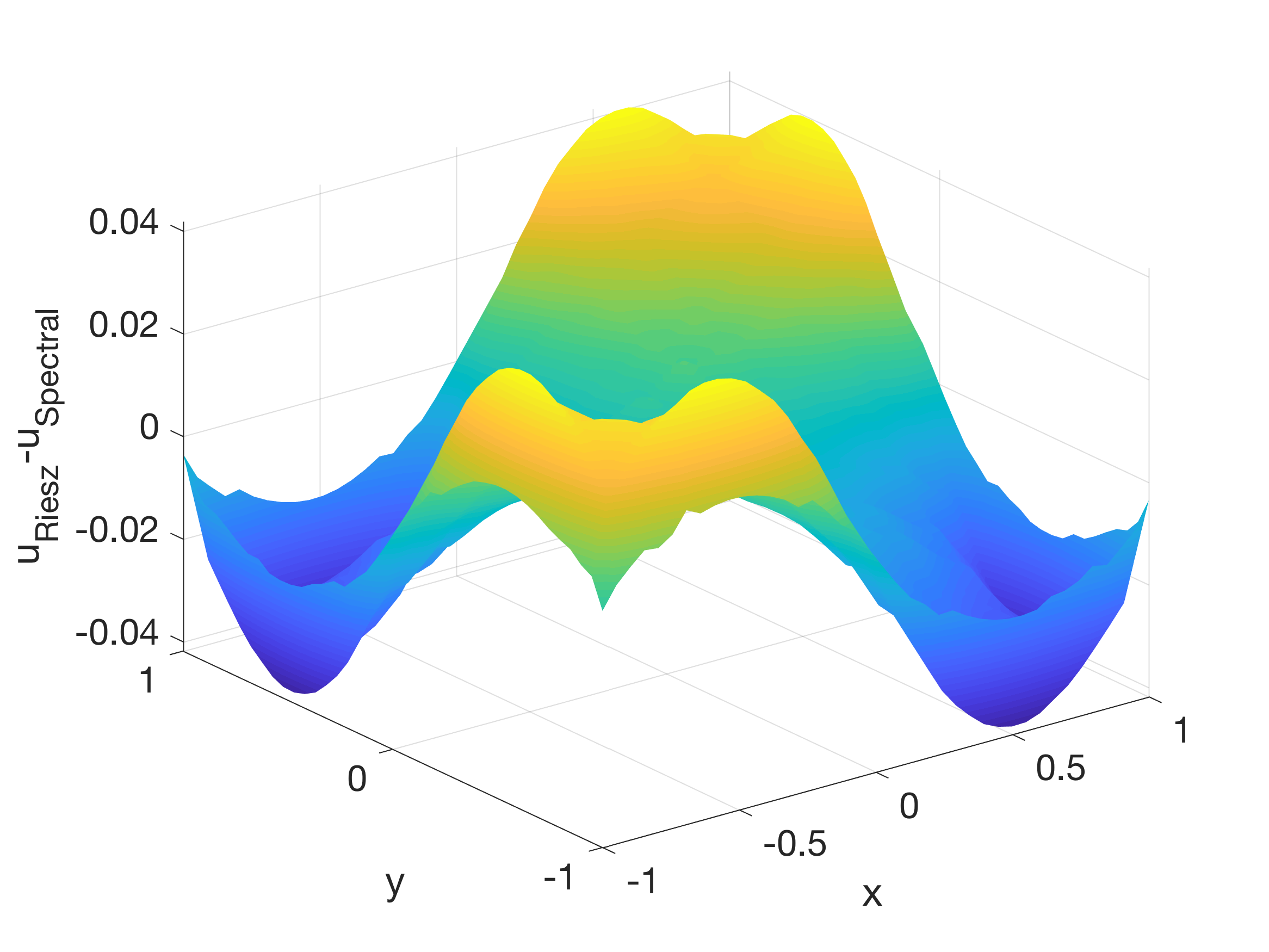}
\end{minipage}
 }\\
\subfloat[Solutions $u$ associated with $f=\sin(\pi x)\sin(\pi y)$ and $\alpha = 1.5$ in the square domain using the spectral definition (using SEM) (\emph{left}) and the Riesz definition (using AFEM) (\emph{center}), and the difference between $u_{\text{Riesz}}$ and $u_{\text{spectral}}$ for this case (\emph{right}).]{
 \begin{minipage}[]{\textwidth}\centering
 \includegraphics[width=0.3\textwidth]{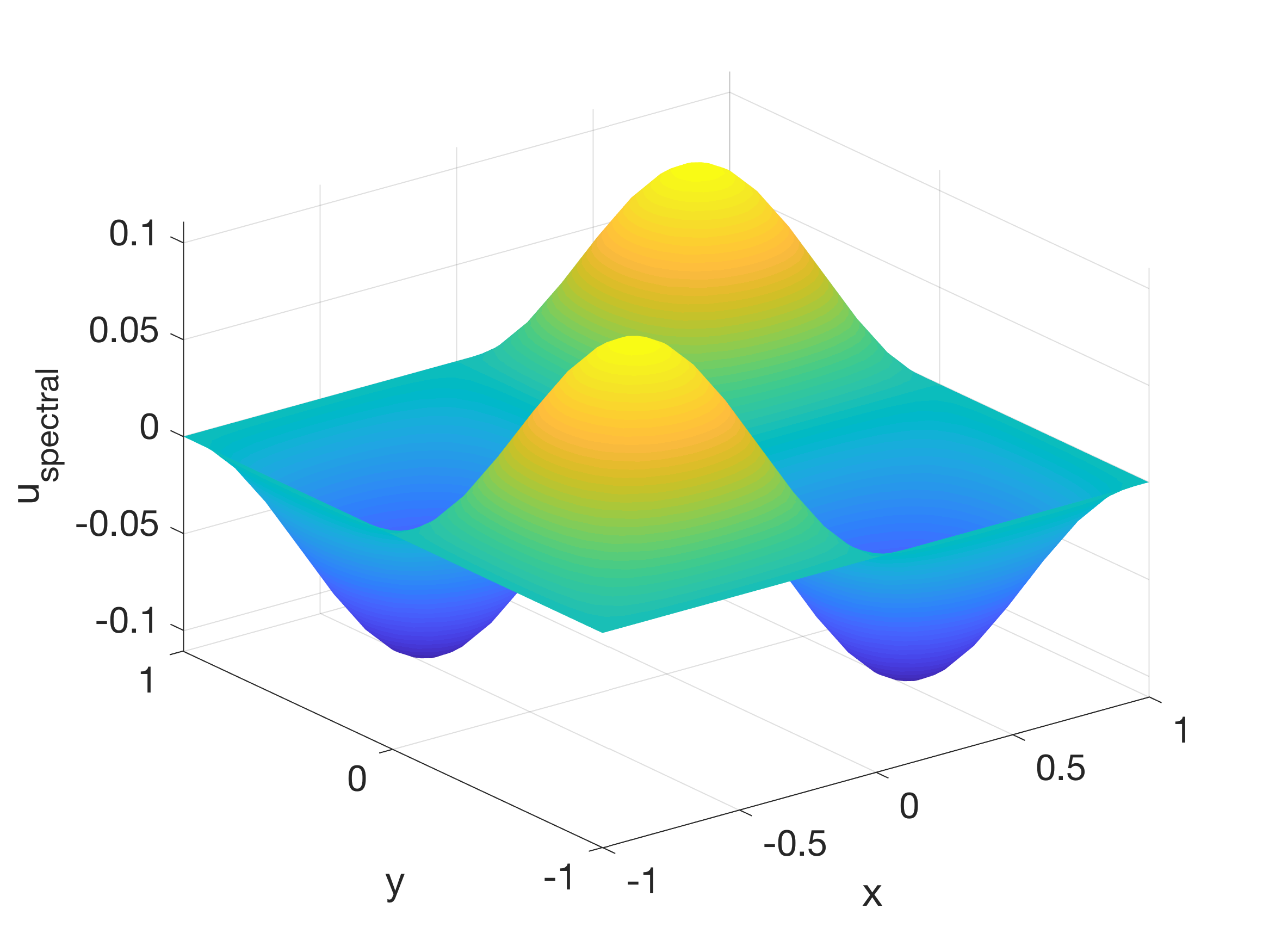}
  \includegraphics[width=0.3\textwidth]{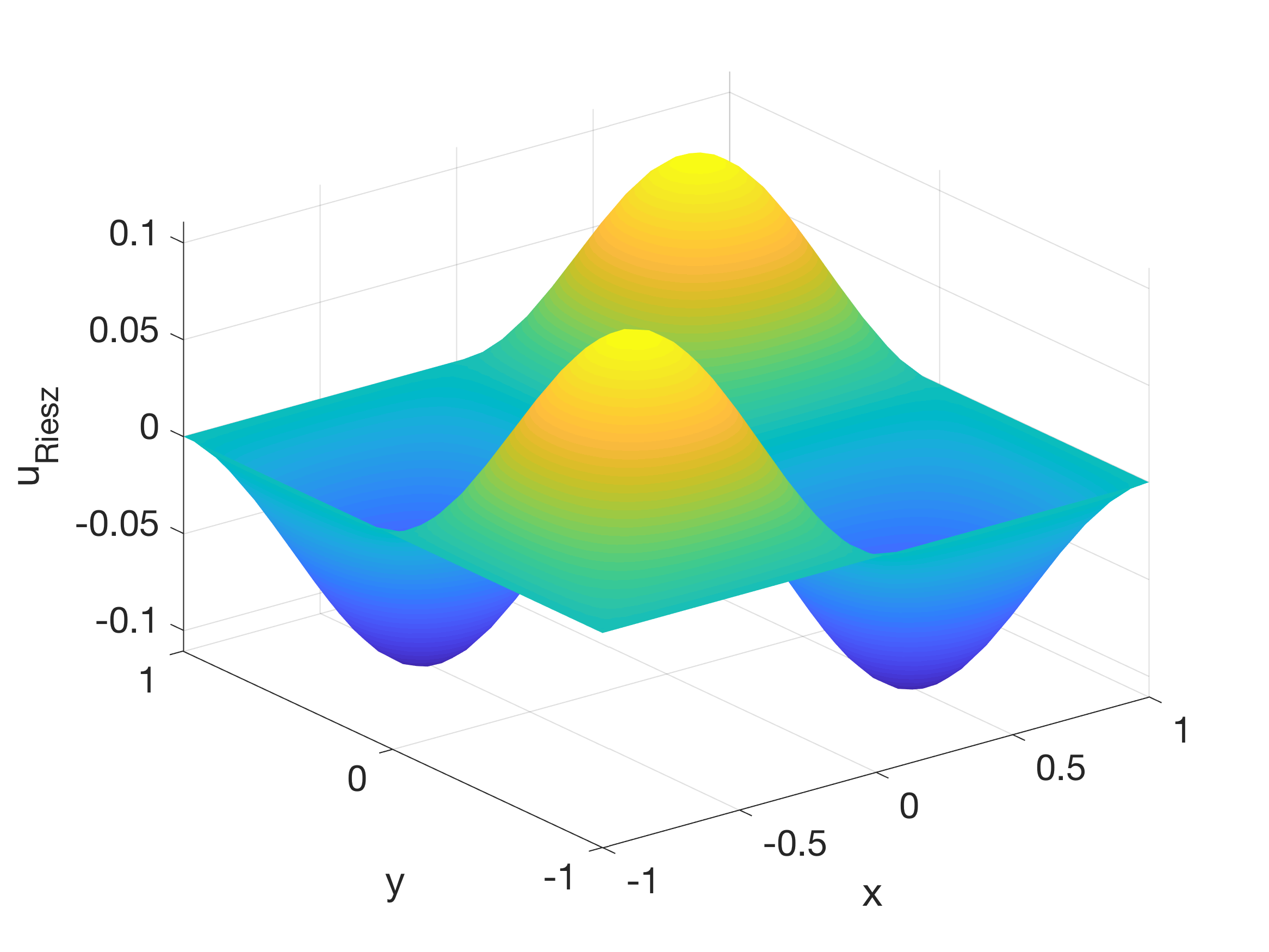}
    \includegraphics[width=0.3\textwidth]{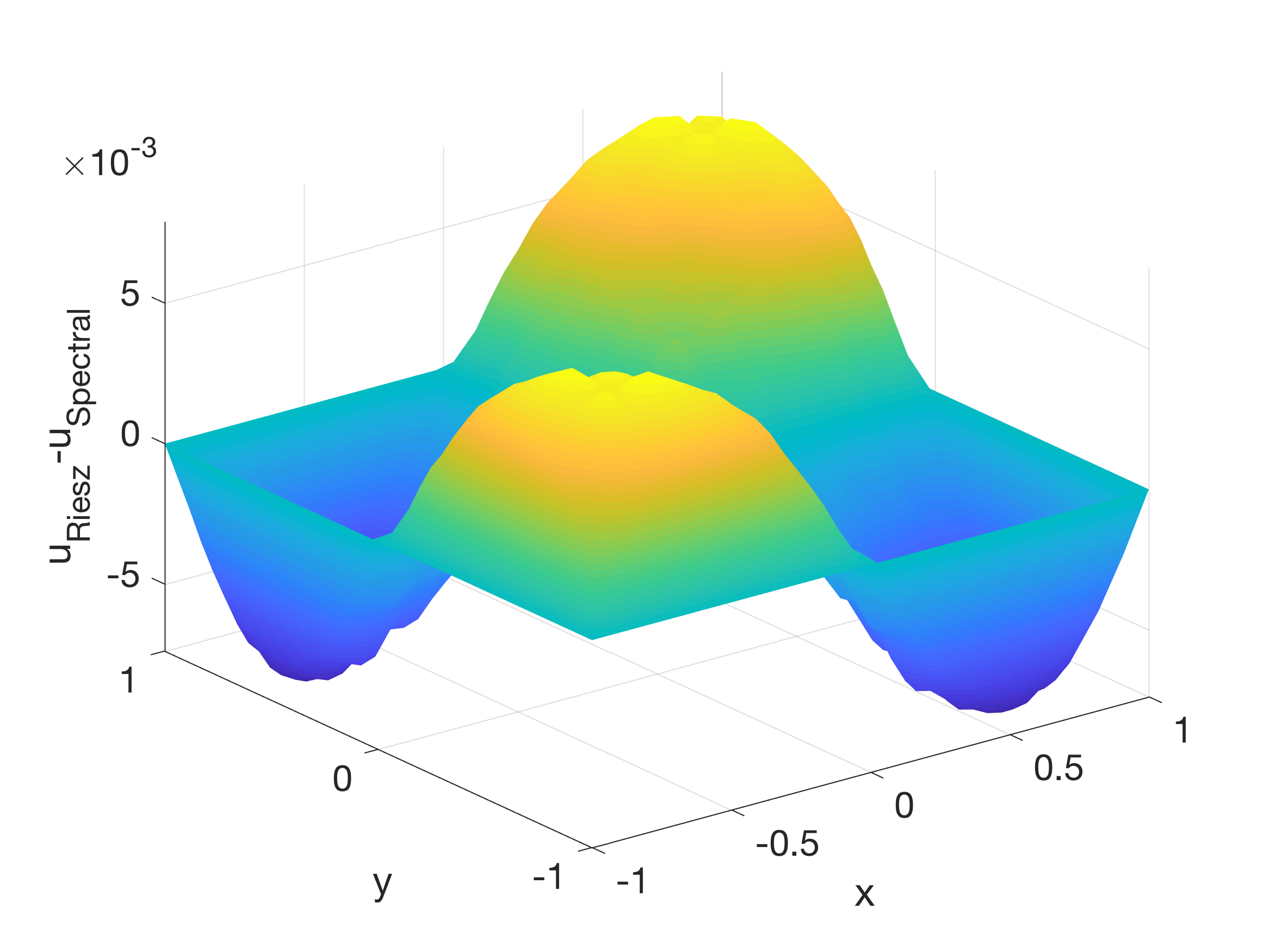}
\end{minipage}
 }
 \caption{\label{square34} {Solutions and differences} between $u_{\text{Riesz}}$ and $u_{\text{spectral}}$ on the square domain for $\alpha = 0.5$ {\color{forest} and $1.5$.}}
 \end{figure}
 
   \begin{figure}[ht!]
 \centering
\includegraphics[height=.2\textheight]{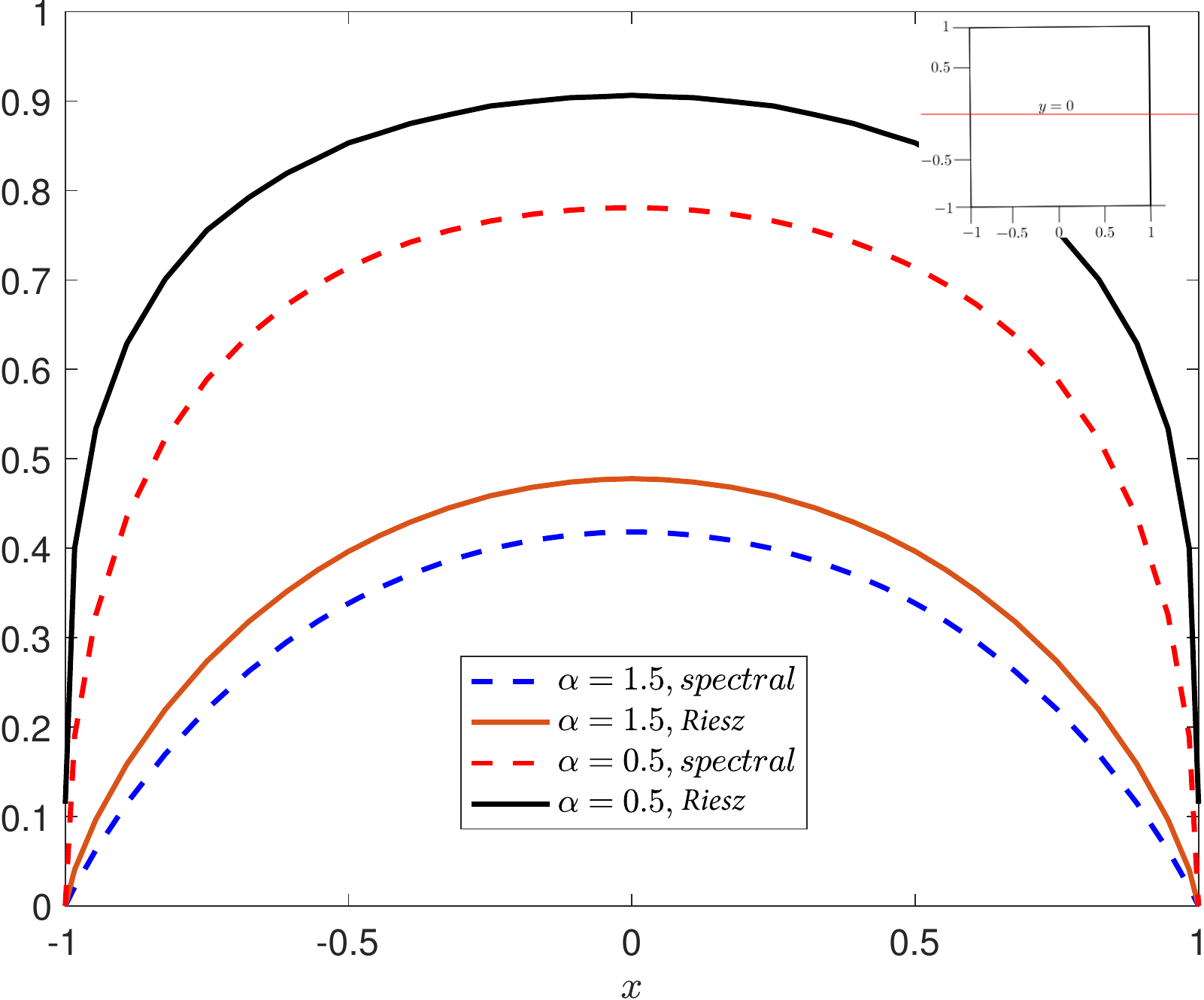}
\includegraphics[height=.2\textheight]{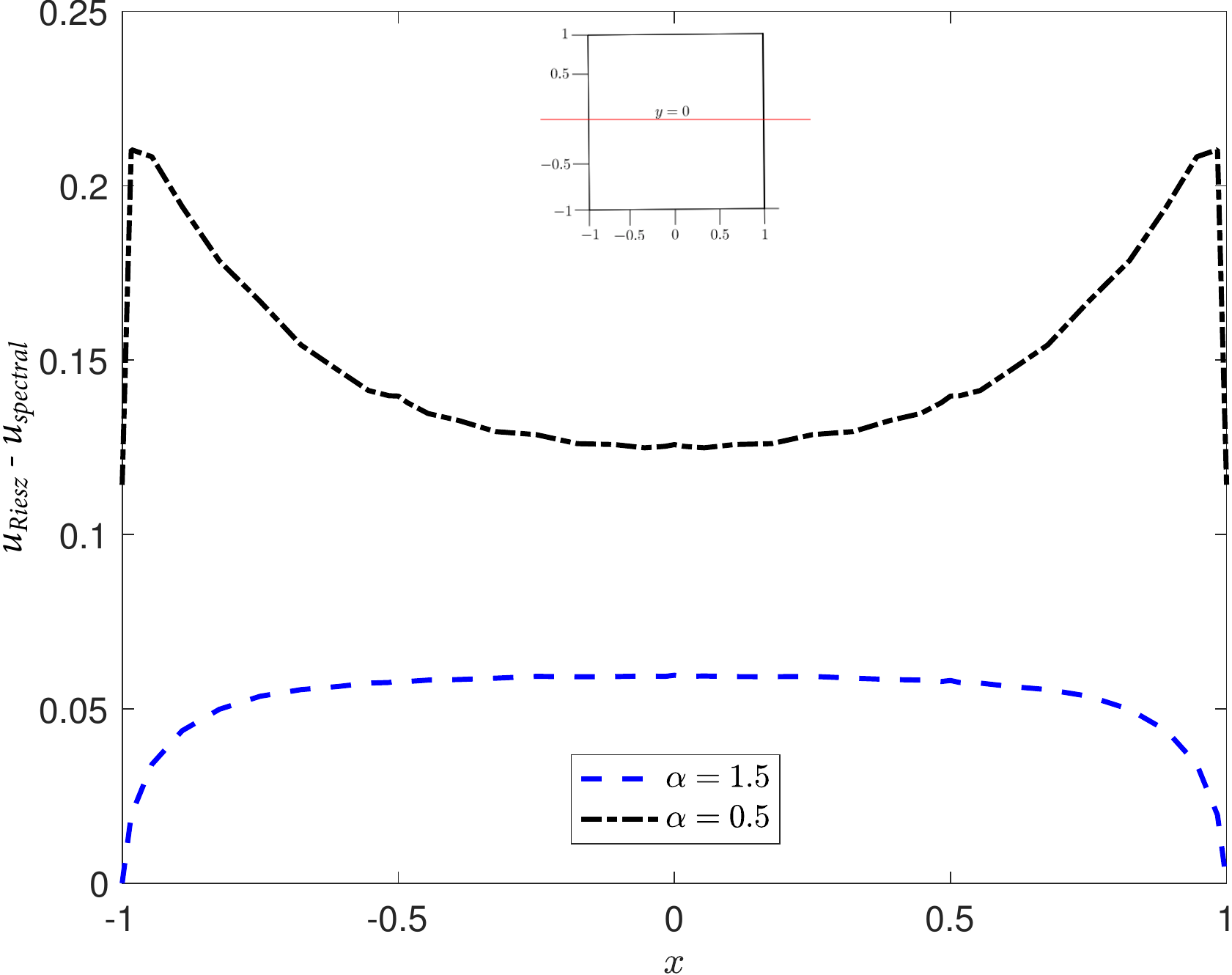}\\
\includegraphics[height=.2\textheight]{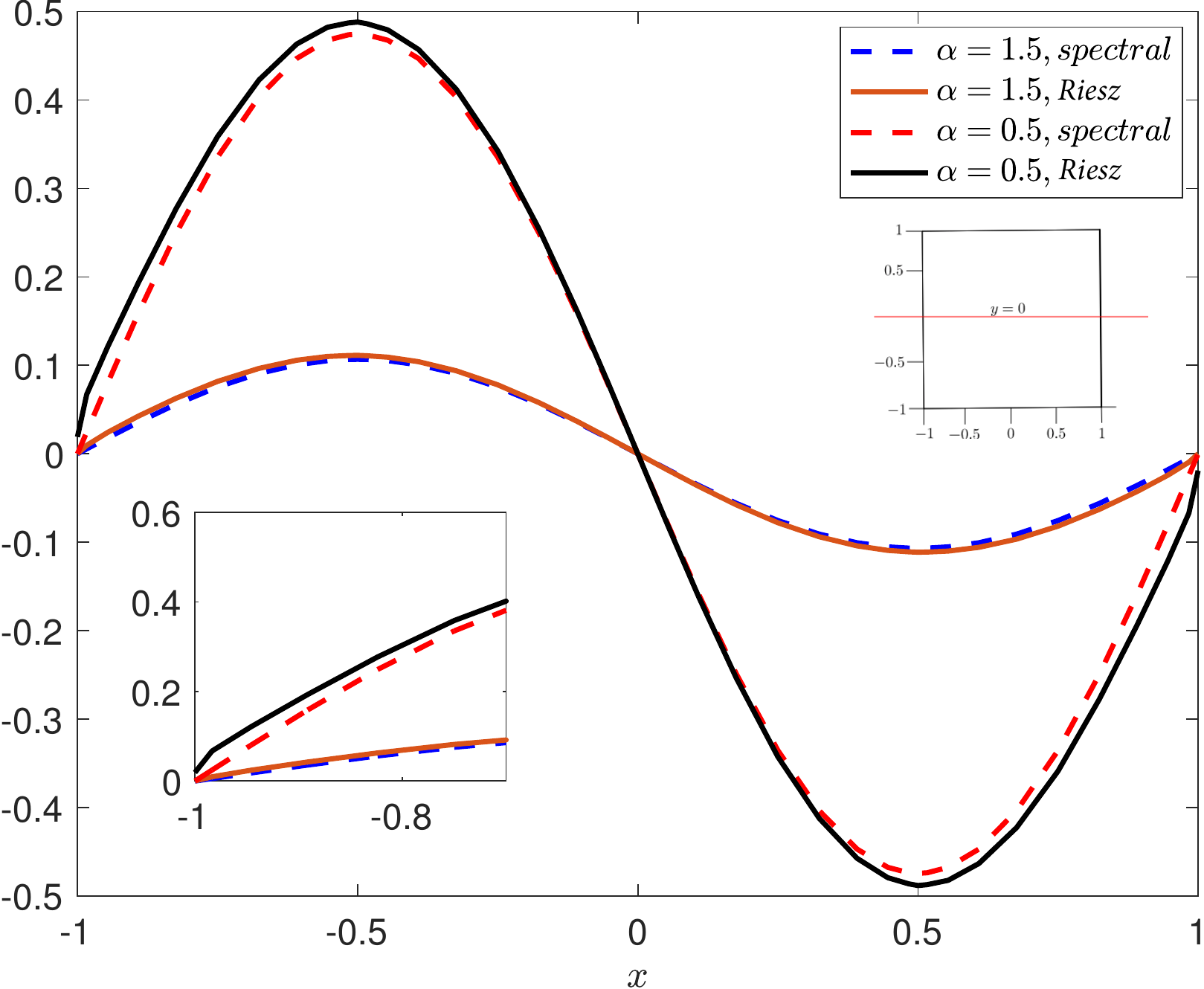}
\includegraphics[height=.2\textheight]{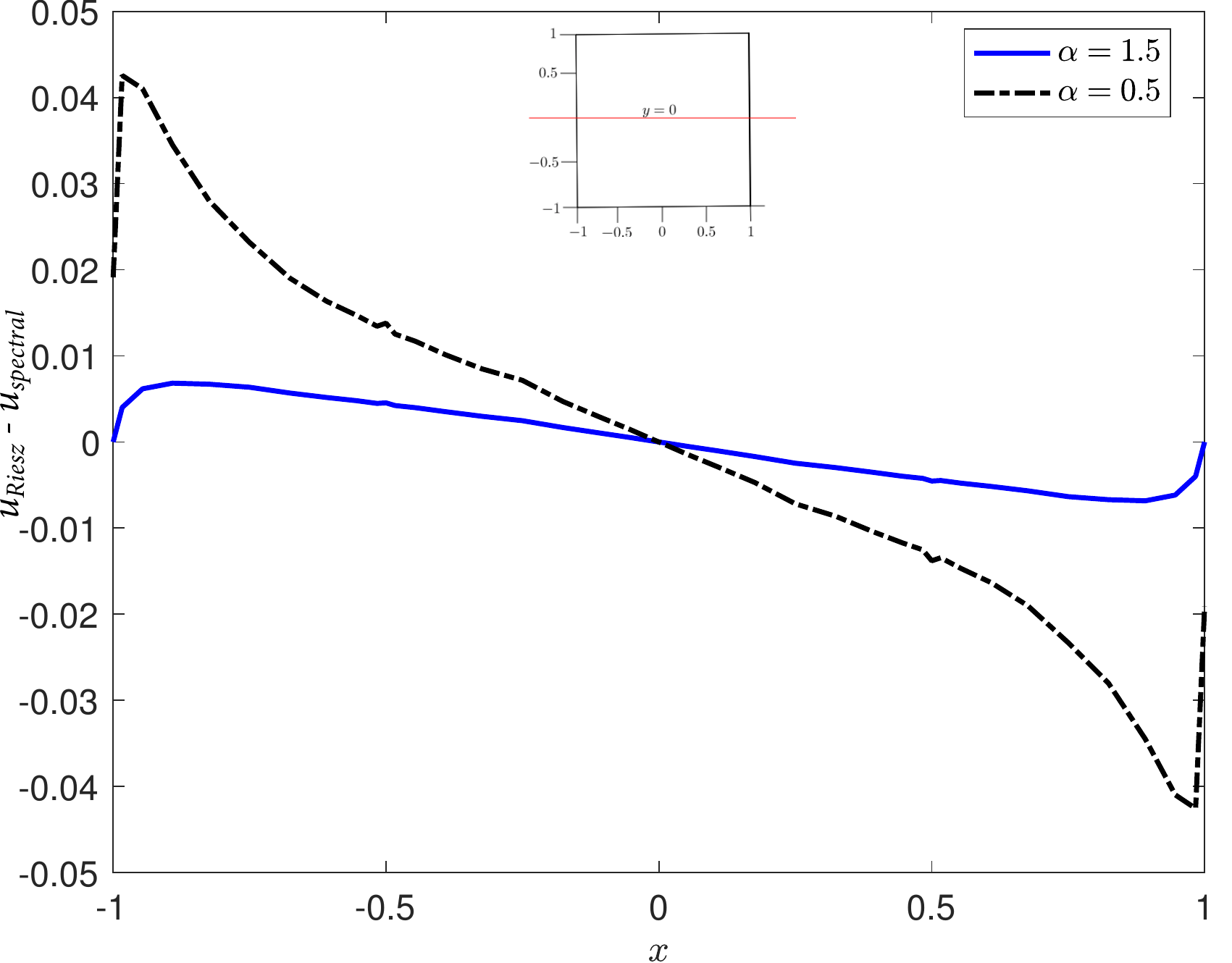}
\caption{\label{squareslicesf1y0} {Slices along the line $y=0$} in the square domain with both $\alpha = 0.5$ and $\alpha = 1.5$. (\emph{top left}) Plots of the solutions for the Riesz and spectral formulations with $f = 1$ and (\emph{top right}) plot of the differences $u_{\text{Riesz}}-u_{\text{spectral}}$ along the line $y = 0$ with $f = 1$. (\emph{bottom left}) Plots of the solutions for the Riesz and spectral formulations with $f = \sin(\pi x)\sin(\pi y)$ and (\emph{bottom right}) plot of the differences $u_{\text{Riesz}}-u_{\text{spectral}}$ along the line $y=0$ with $f = \sin(\pi x)\sin(\pi y)$. Some small wiggles that appear in the difference plots are due to the computation of the differences on two different meshes, even though the solutions are sufficiently converged and stable.}
\end{figure}

\begin{figure}[ht!]
\centering
\includegraphics[width=0.6\textwidth]{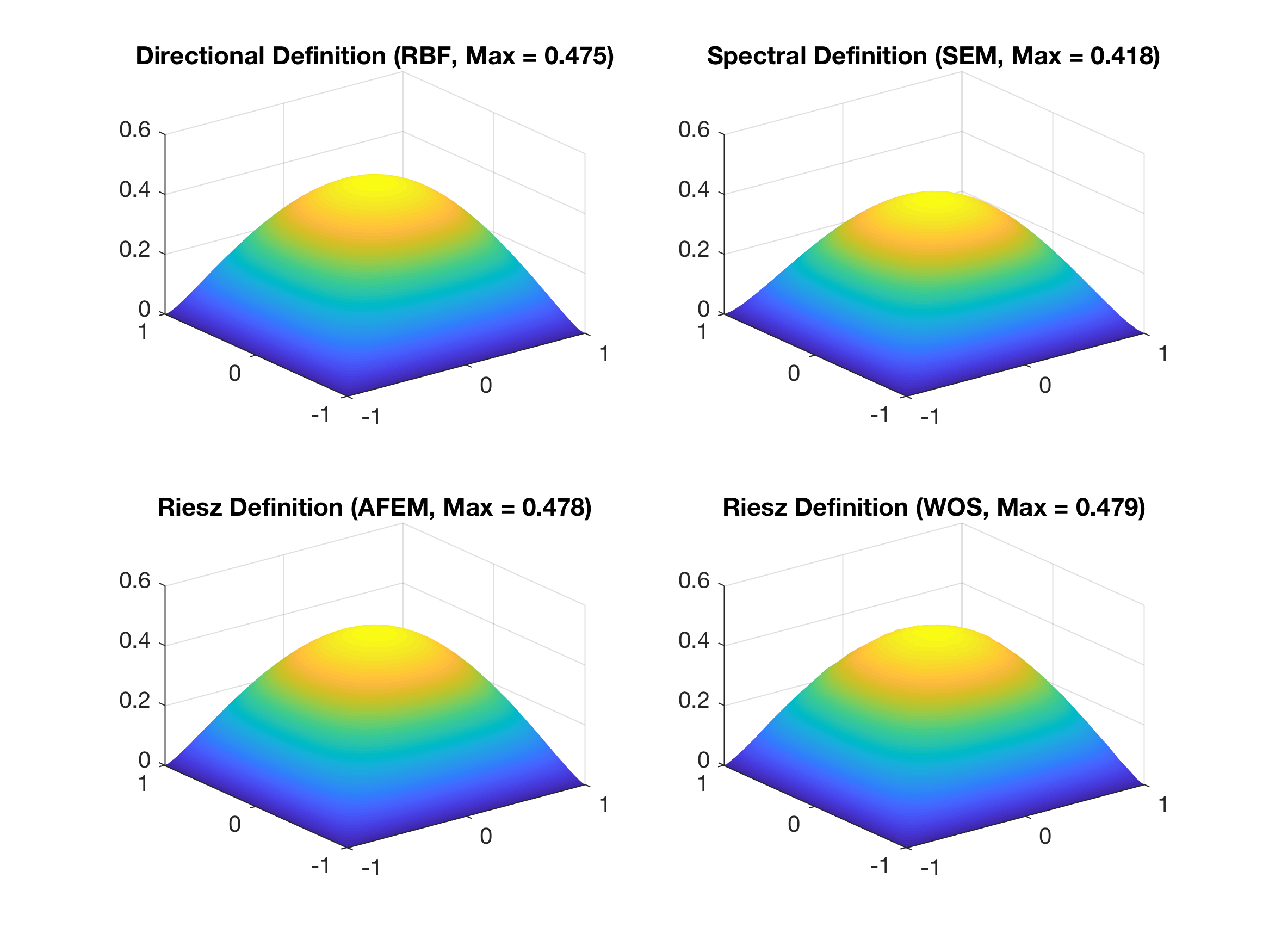}
\caption{
\color{blue}
\label{cmp-squ-const} Square, $f(x)=1$, and $g(x)=0$: 
Comparison, for $\alpha = 1.5$, of $u_{\text{Riesz}}$ and $u_{\text{spectral}}$, using three methods to compute the Riesz solution and one method to compute the spectral solution. 
\emph{Top left}:
The Riesz solution obtained using the RBF collocation method based on the directional representation
\emph{Top right}:
The spectral solution obtained using the SEM.
\emph{Bottom left}:
The Riesz solution obtained using the AFEM.
\emph{Bottom right}:
The Riesz solution obtained using the WOS method. 
The only solution with a significant difference is the spectral solution (\emph{top right}); all other solutions are equivalent up to numerical error.}
\end{figure}

\FloatBarrier

\subsection{Disk Domain}\label{2d:disk}

In this section, we solve the benchmark problems in Table \ref{2dbenchmarks} on the disk domain: $\Omega := \{ (x,y) | \sqrt{x^2 + y^2} \leq 1\}$.
We use the same numerical methods as in Section \ref{2d:square}, where we solved the benchmark problems on the square. The mesh  for the SEM used to compute the spectral solution was generated by transforming the square mesh onto the circular domain. The details of this transformation are included in \cite{SongXuKarniadakis2017}. The collocation points used for the RBF method and the adaptively refined meshes used to compute the Riesz solutions are included in Figure \ref{DiskMeshes}.

In the Figure \ref{disk12}, we again see that the Riesz solutions have oscillations near the boundary in the case with $\alpha = 0.5$, where the spectral solutions satisfy the boundary conditions. For Cases 3 and 4 with $f = \sin(\pi r^2)$, (see Table \ref{2dbenchmarks}), our observations are very similar to those of Figure \ref{square34}, so these cases are displayed in Appendix \ref{app:disk}, Figure \ref{disk34}. {\color{blue} As for the case $f \equiv 1$, the property $f \ge 0$ leads to the Riesz solution lying above the spectral solution \cite{MUSINA20161667}}. We also include one-dimensional slice plots of the Riesz and spectral solutions and their differences along the line $y = 0$ in Figures \ref{diskslicesf1y0}; the profiles are similar to those we saw in the square examples, where the Riesz solutions exhibit sharper boundary layers than the spectral solutions.

The comparisons with the directional definition solution using the RBF collocation method are similar to the square domain examples, and so we include the figure containing these plots in Appendix \ref{app:disk}.

\FloatBarrier

\begin{figure}[ht!]
 \centering
\subfloat[Solutions $u$ associated with $f=1$ and $\alpha = 0.5$ in the disk domain using the spectral definition (using SEM) (\emph{left}) and the Riesz definition (using AFEM) (\emph{center}), and the difference between $u_{\text{Riesz}}$ and $u_{\text{spectral}}$ for this case (\emph{right}).]{
 \begin{minipage}[]{\textwidth}\centering
 \includegraphics[width=0.3\textwidth]{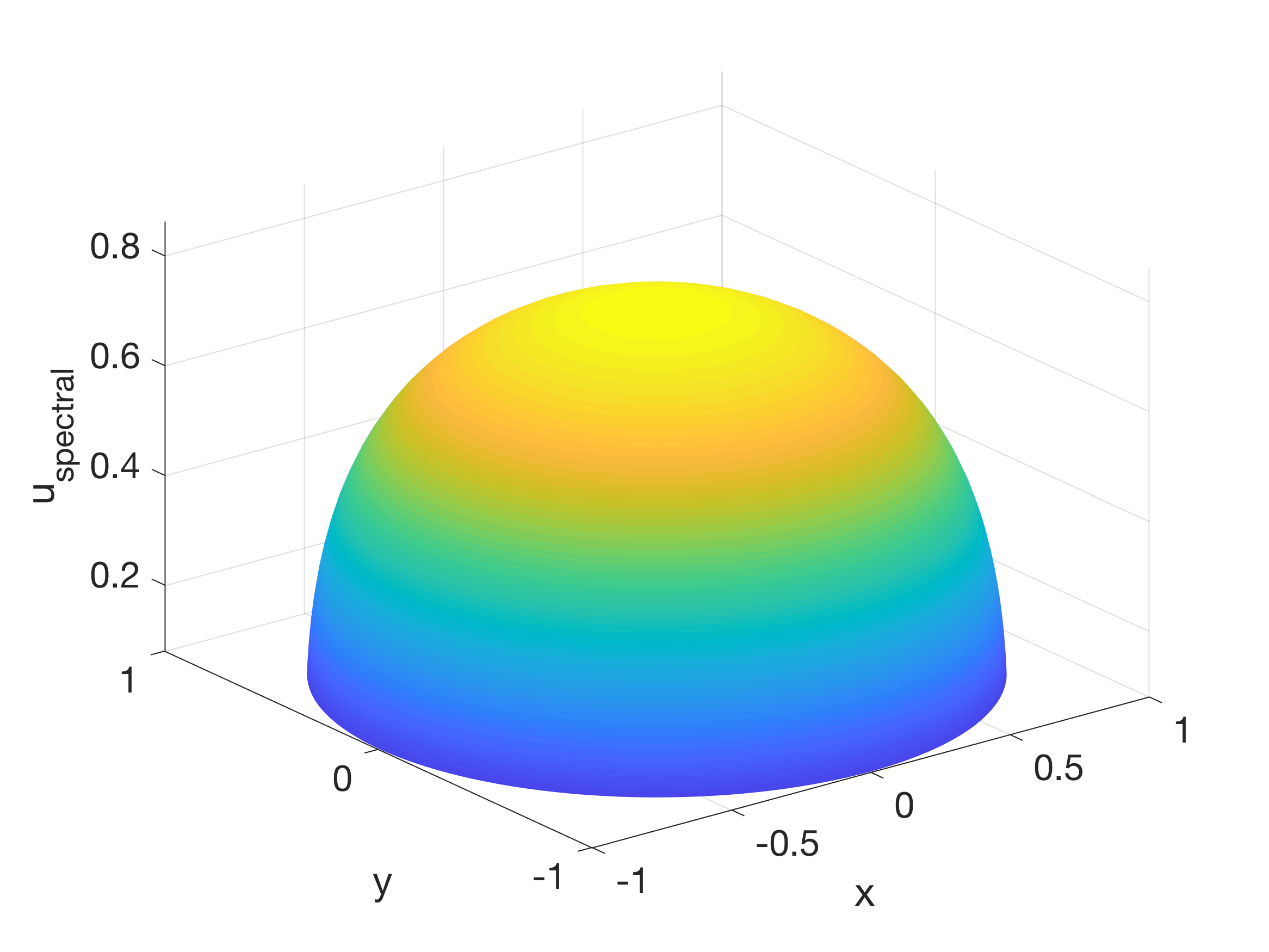}
  \includegraphics[width=0.3\textwidth]{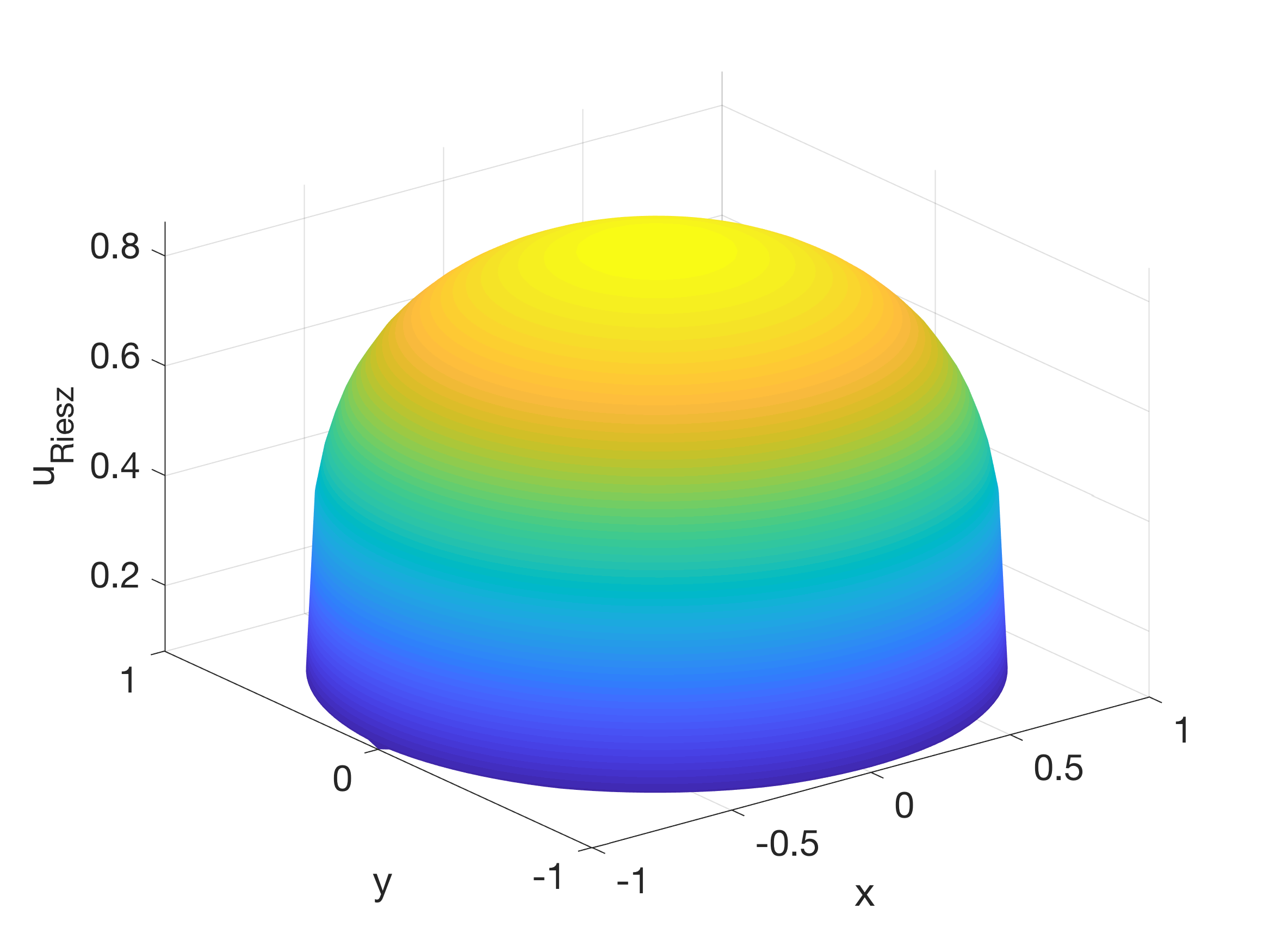}
  \includegraphics[width=0.3\textwidth]{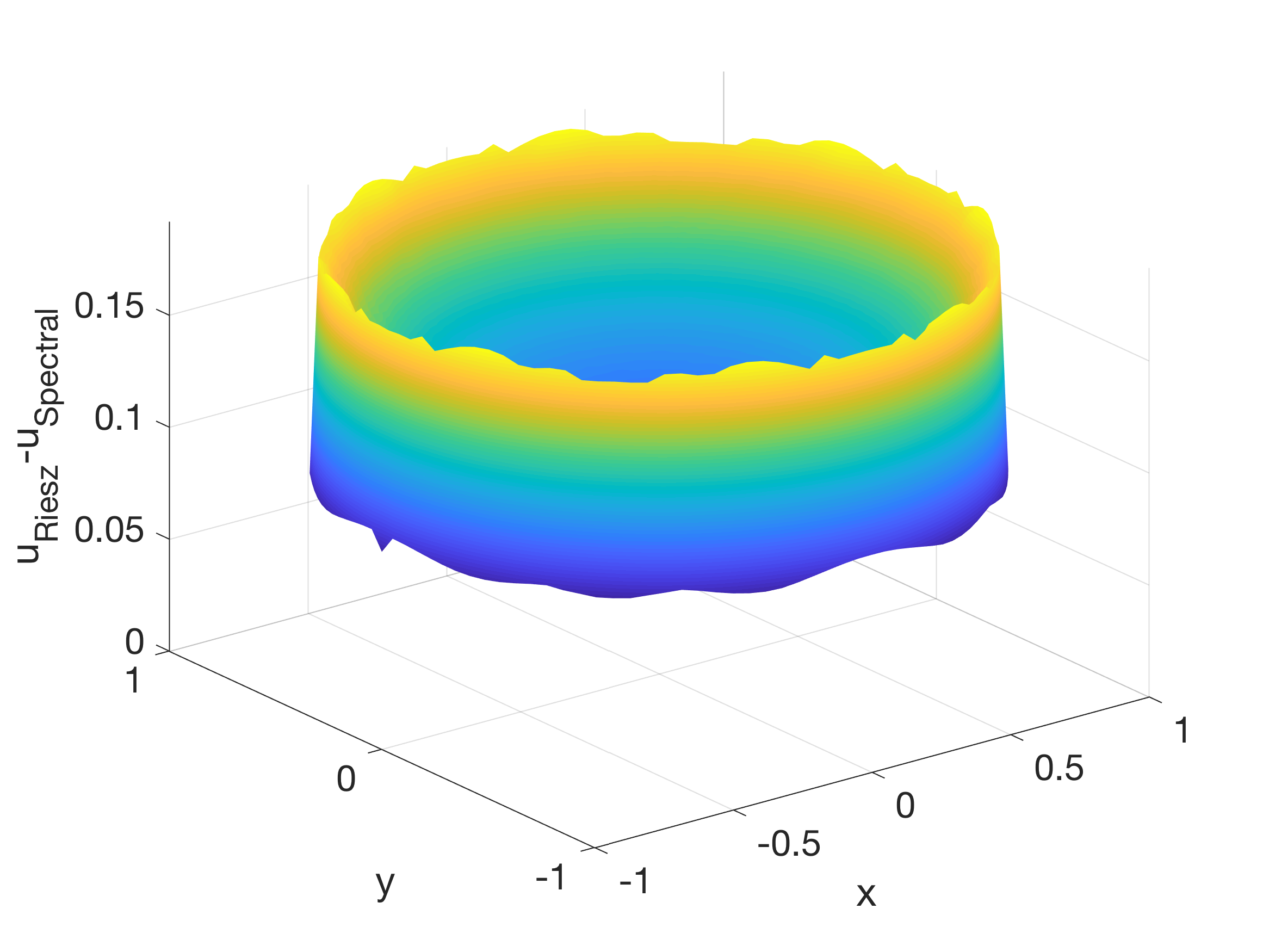}
\end{minipage}
 }\\
\subfloat[Solutions $u$ associated with $f=1$ and $\alpha = 1.5$ in the disk domain using the spectral definition (using SEM) (\emph{left}) and the Riesz definition (using AFEM) (\emph{center}), and the difference between $u_{\text{Riesz}}$ and $u_{\text{spectral}}$ for this case (\emph{right}).]{
 \begin{minipage}[]{\textwidth}\centering
 \includegraphics[width=0.3\textwidth]{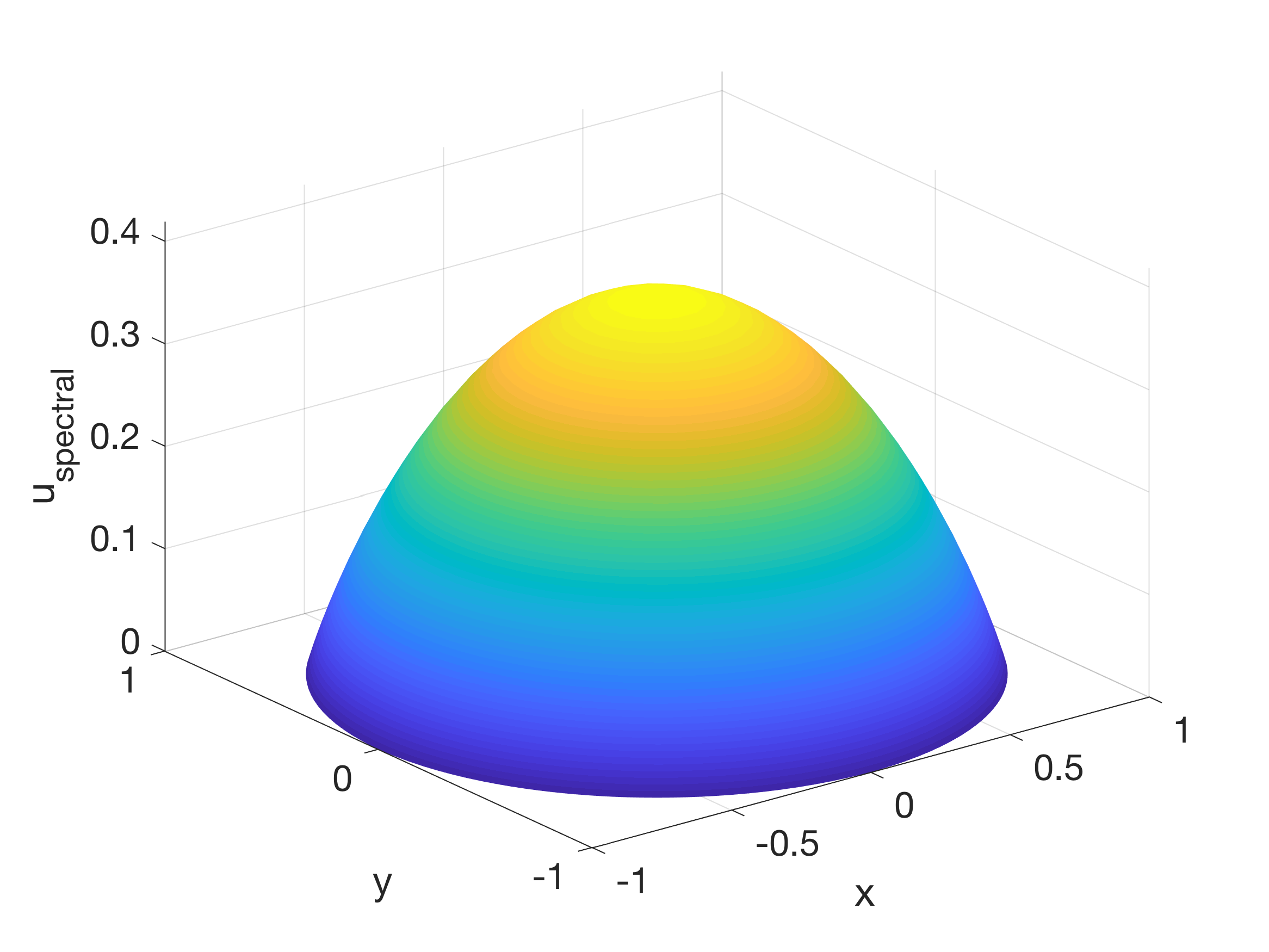}
  \includegraphics[width=0.3\textwidth]{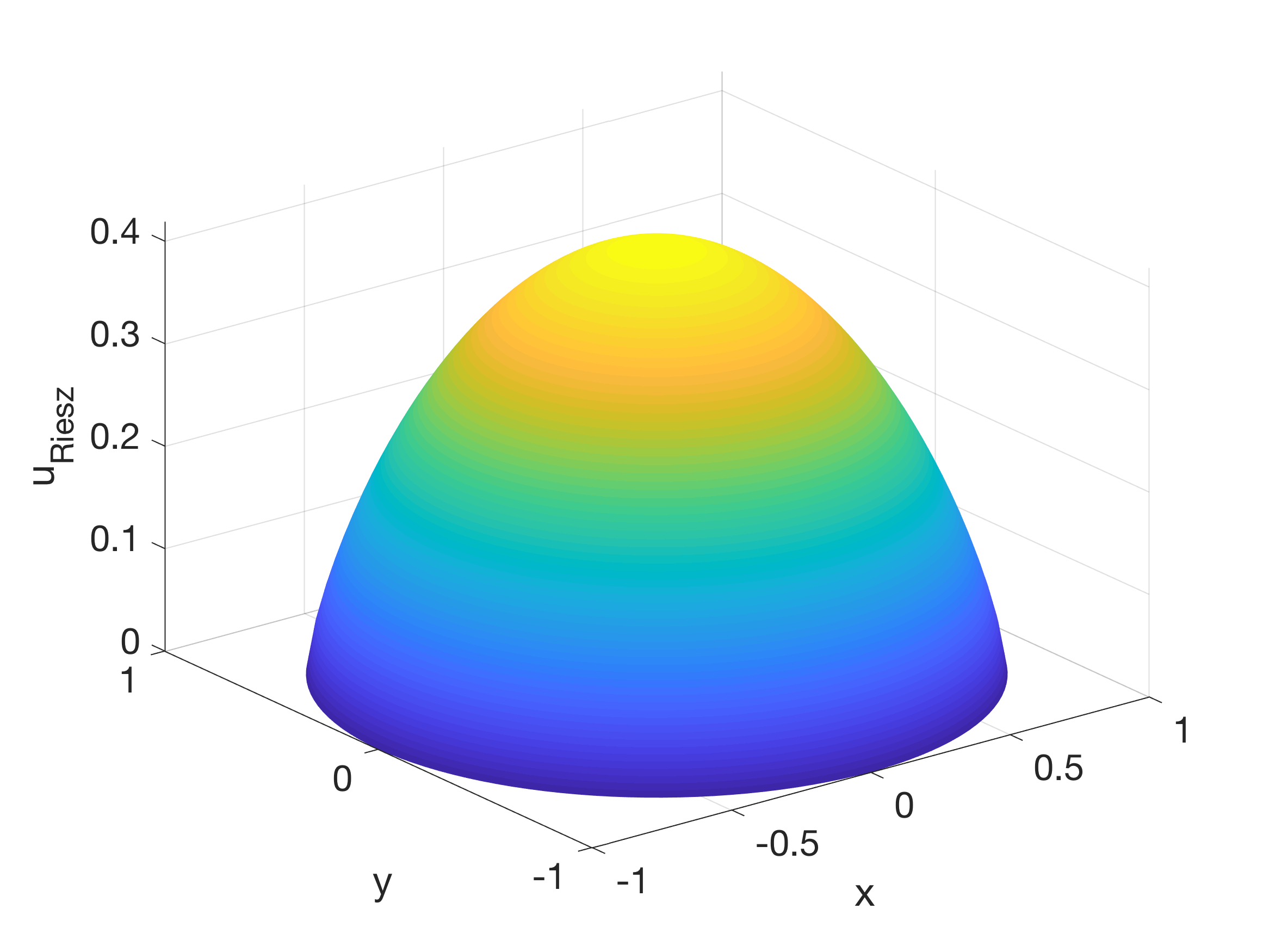}
  \includegraphics[width=0.3\textwidth]{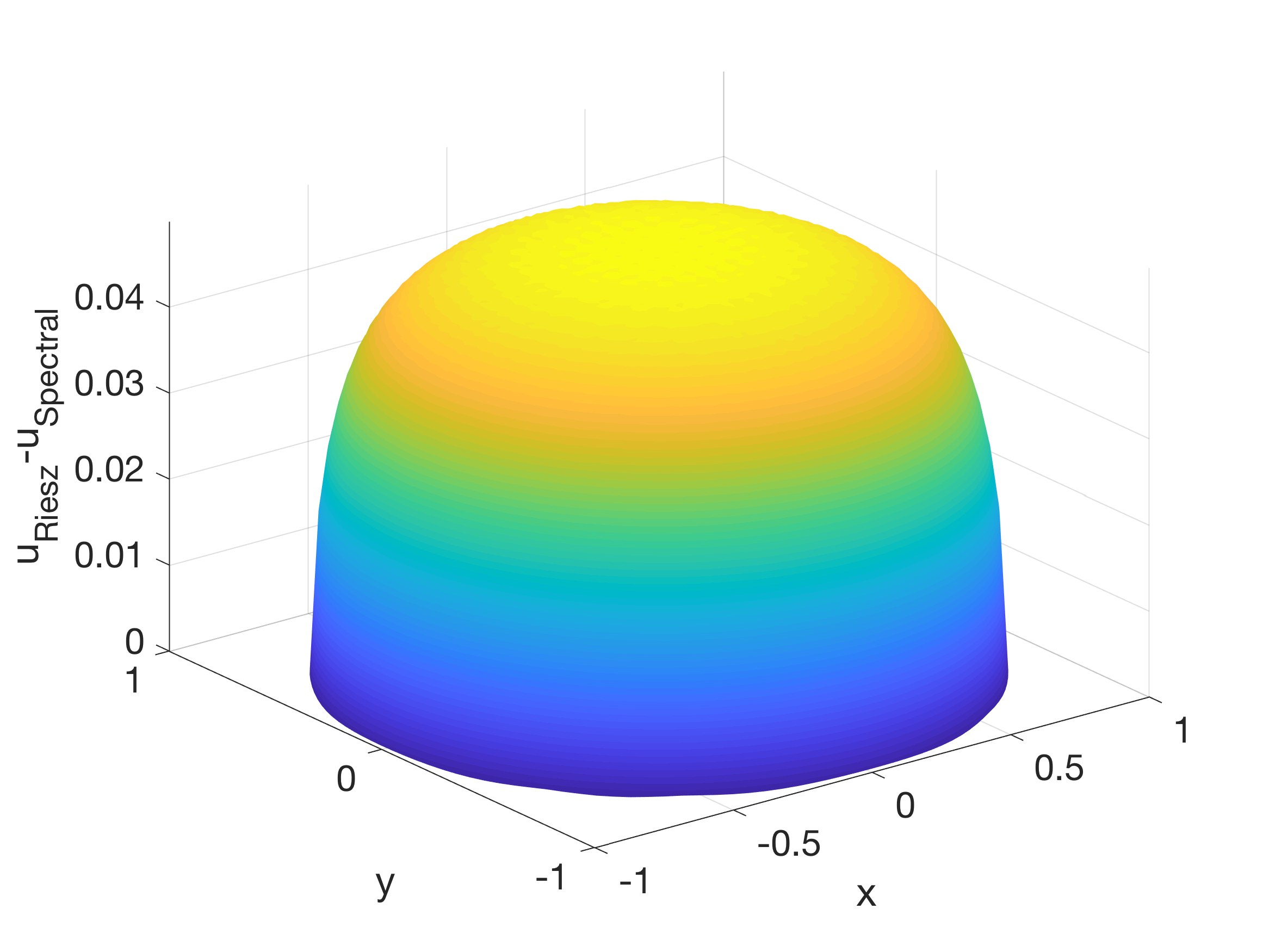}
\end{minipage}
 }
 \caption{ \label{disk12} {Solutions and differences} between $u_{\text{Riesz}}$ and $u_{\text{spectral}}$ on the disk domain for $f = 1$ and $\alpha = 0.5$ and $1.5$.}
 \end{figure}

  \begin{figure}[ht!]
 \centering
\includegraphics[height=.2\textheight]{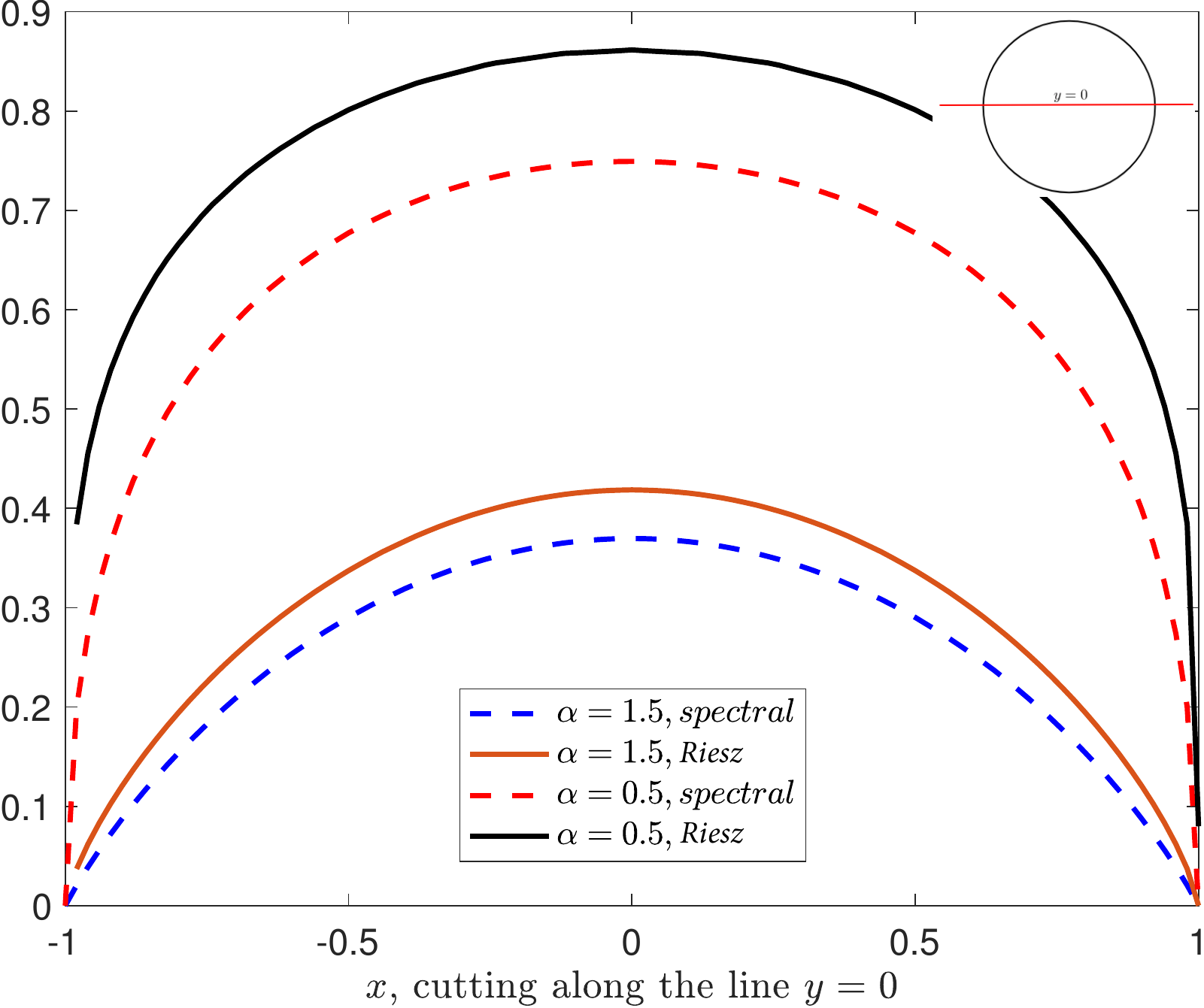}
\includegraphics[height=.2\textheight]{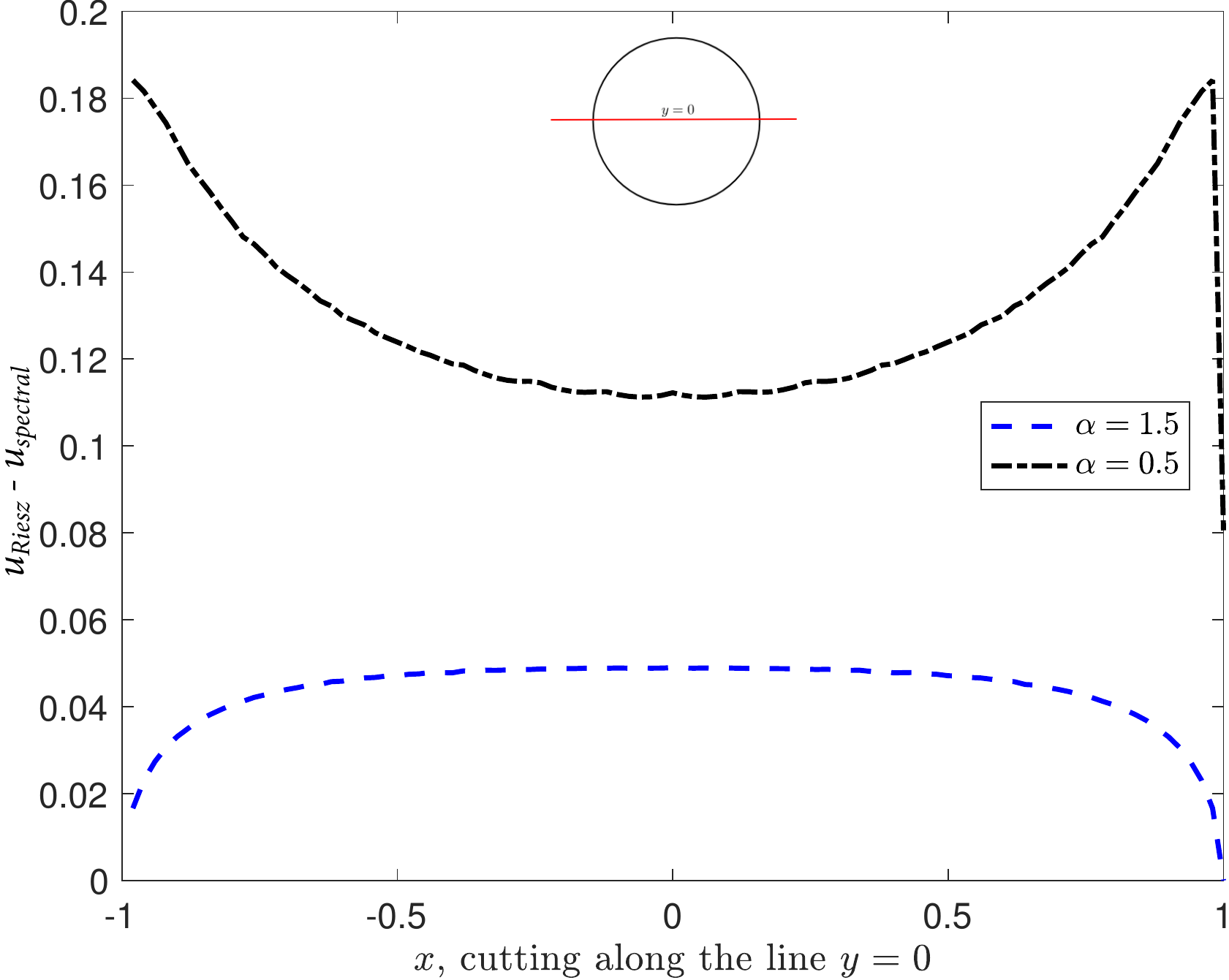}\\
\includegraphics[height=.2\textheight]{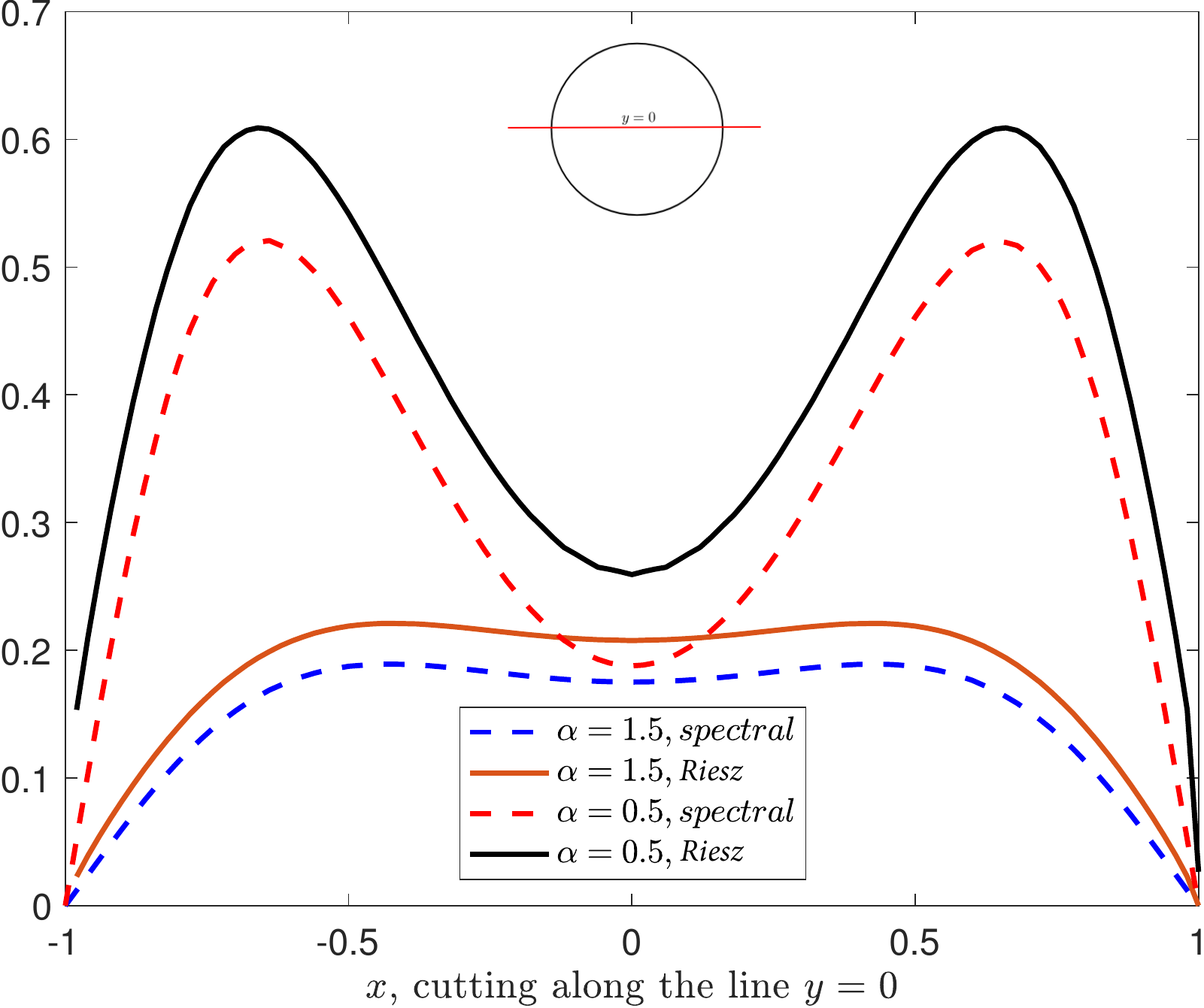}
\includegraphics[height=.2\textheight]{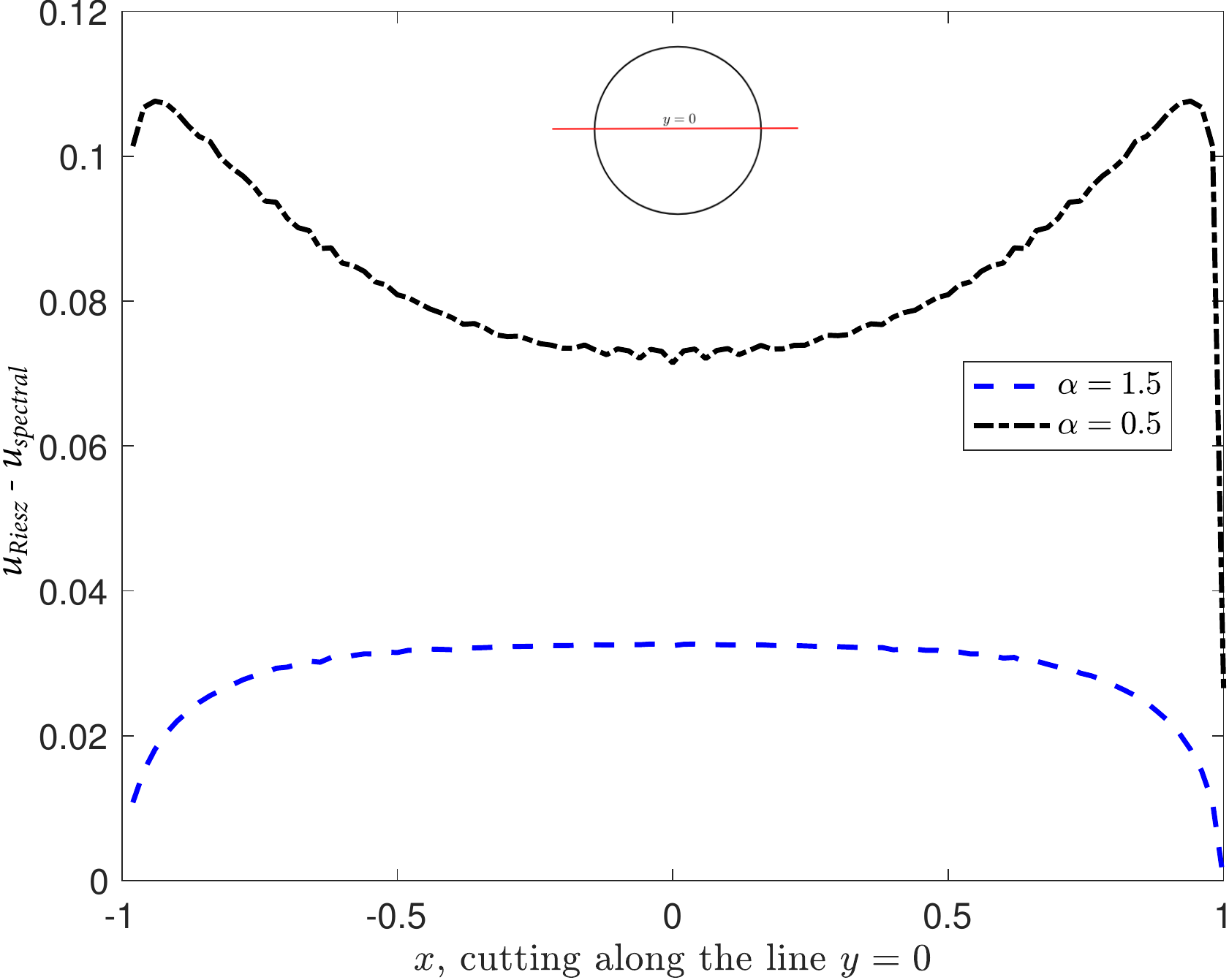}
\caption{\label{diskslicesf1y0} {Slices along the line $y=0$} in the unit disk domain with both $\alpha = 0.5$ and $\alpha = 1.5$. (\emph{top left}) Plots of the solutions for the Riesz and spectral formulations with $f = 1$ and (\emph{top right}) plot of the differences $u_{\text{Riesz}}-u_{\text{spectral}}$ along the line $y = 0$ and $f = 1$. (\emph{bottom left}) Plots of the solutions for the Riesz and spectral formulations with $f = \sin(\pi r^2)$ and (\emph{bottom right}) plot of the differences $u_{\text{Riesz}}-u_{\text{spectral}}$ along the line $y = 0$ and $f = \sin(\pi r^2)$. Some small wiggles that appear in the difference plots are due to the computation of the differences on two different meshes, even though the solutions are sufficiently converged and stable.}
\end{figure}

\subsection{L-Shaped Domain}\label{2d:Lshape}
In this section, we solve the four benchmark problems of Table \ref{2dbenchmarks} on the L-shaped domain: $\Omega := \{[-1,1]^2 \setminus [0,1)^2\}$, i.e., the square $[-1,1]^2$ with the upper right corner removed. We display the solutions from two viewing angles: from the points $(-1,-1)$ and $(1,1)$, so that all features are clearly visible.

We use the same numerical methods as in Section \ref{2d:square} and \ref{2d:disk}, where we solved the benchmark problems on the square and disk domains. The mesh for the SEM used to compute the spectral solution includes additional refinement near the boundary to ensure a converged numerical solution, and is plotted in Figure \ref{LshapeMeshes} in Appendix \ref{grids}. The collocation points used for the RBF method and the adaptively refined meshes used to compute the Riesz solutions are also included in Figure \ref{LshapeMeshes} in Appendix \ref{grids}.

Figure \ref{Lshape12} shows the Riesz and spectral solutions for Cases 1 and 2, where $\alpha = 0.5$ and $1.5$, respectively, and $f = 1$ at the two viewing angles, and Figure \ref{Lshape34} shows the solutions for Cases 3 and 4. We observe that the difference plots, particularly for the cases with $\alpha = 0.5$, exhibit a relatively minor spike near the inside corner of the domain. {\color{blue} Note that} this spike does not occur (or is at least much less pronounced) for the cases with $\alpha = 1.5$.

We also compute the directional definition solution on the L-shaped domain using the RBF collocation method. We show the comparison in Figure \ref{cmp-L-const} using the view from $(1,1)$, and the view from $(-1,-1)$ is included in Appendix \ref{app:Lshape}.

In this set of examples, we also plot the solutions and differences along the slices defined by the lines $y = x$ and $y = 1-x$ in Figure \ref{Lslicesf1yx} to further observe the behaviors near the corner. 
{\color{blue} Again, the fact that the Riesz solution lies above the spectral solution is consistent with $f \ge 0$ in the domain and the theoretical result of \cite{MUSINA20161667}. The spike} near the corner {\color{blue}is} intensified in the cases where $\alpha = 0.5$.

  \begin{figure}[ht!]
 \centering
\subfloat[Solutions $u$ associated with $f=1$ and $\alpha = 0.5$ in the L-shaped domain using the spectral definition (using SEM) (\emph{left}) and the Riesz definition (using AFEM) (\emph{center}), and the difference between $u_{\text{Riesz}}$ and $u_{\text{spectral}}$ for this case (\emph{right}).]{
 \begin{minipage}[]{\textwidth}\centering
 \includegraphics[width=0.25\textwidth]{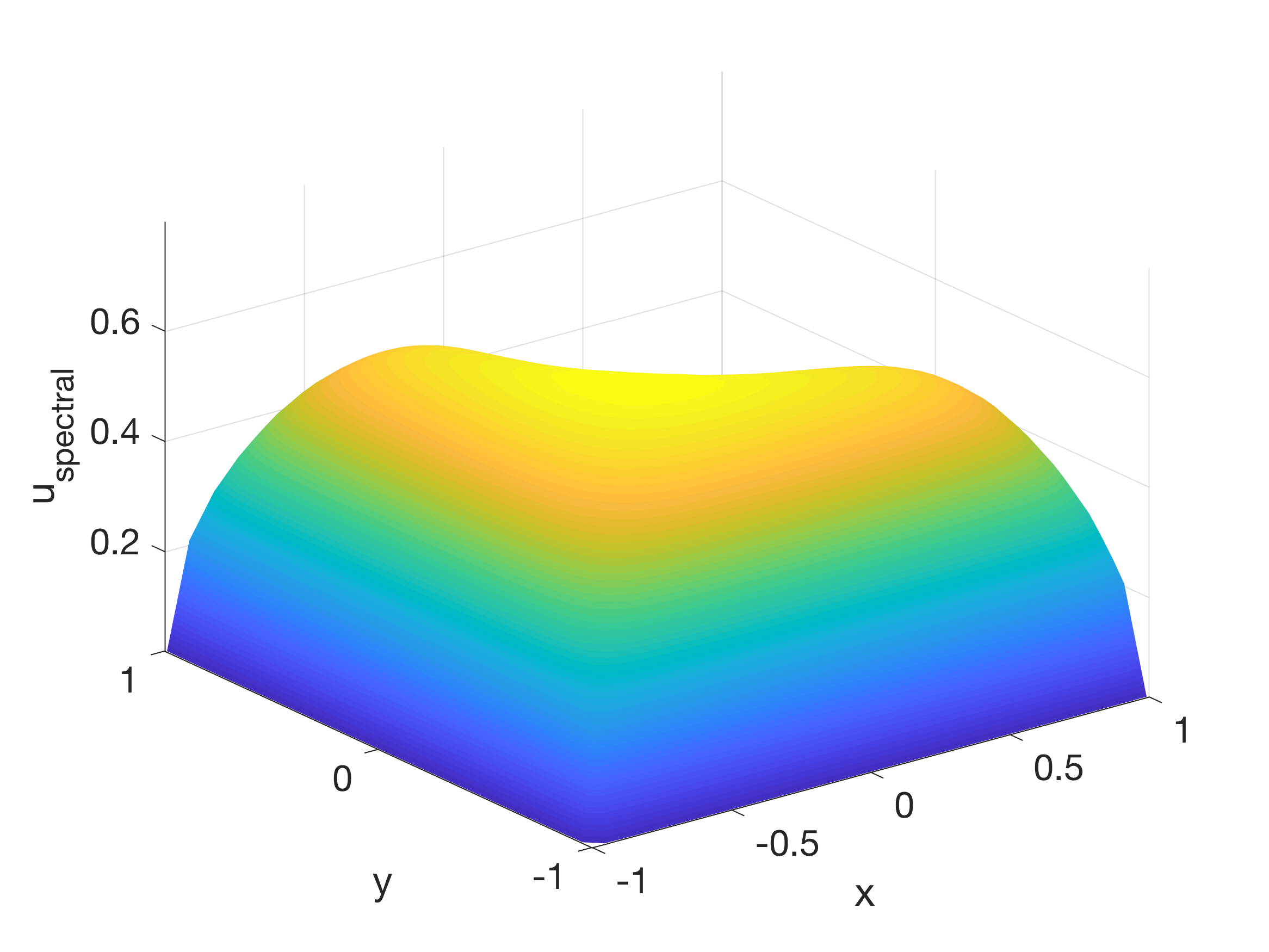}
  \includegraphics[width=0.25\textwidth]{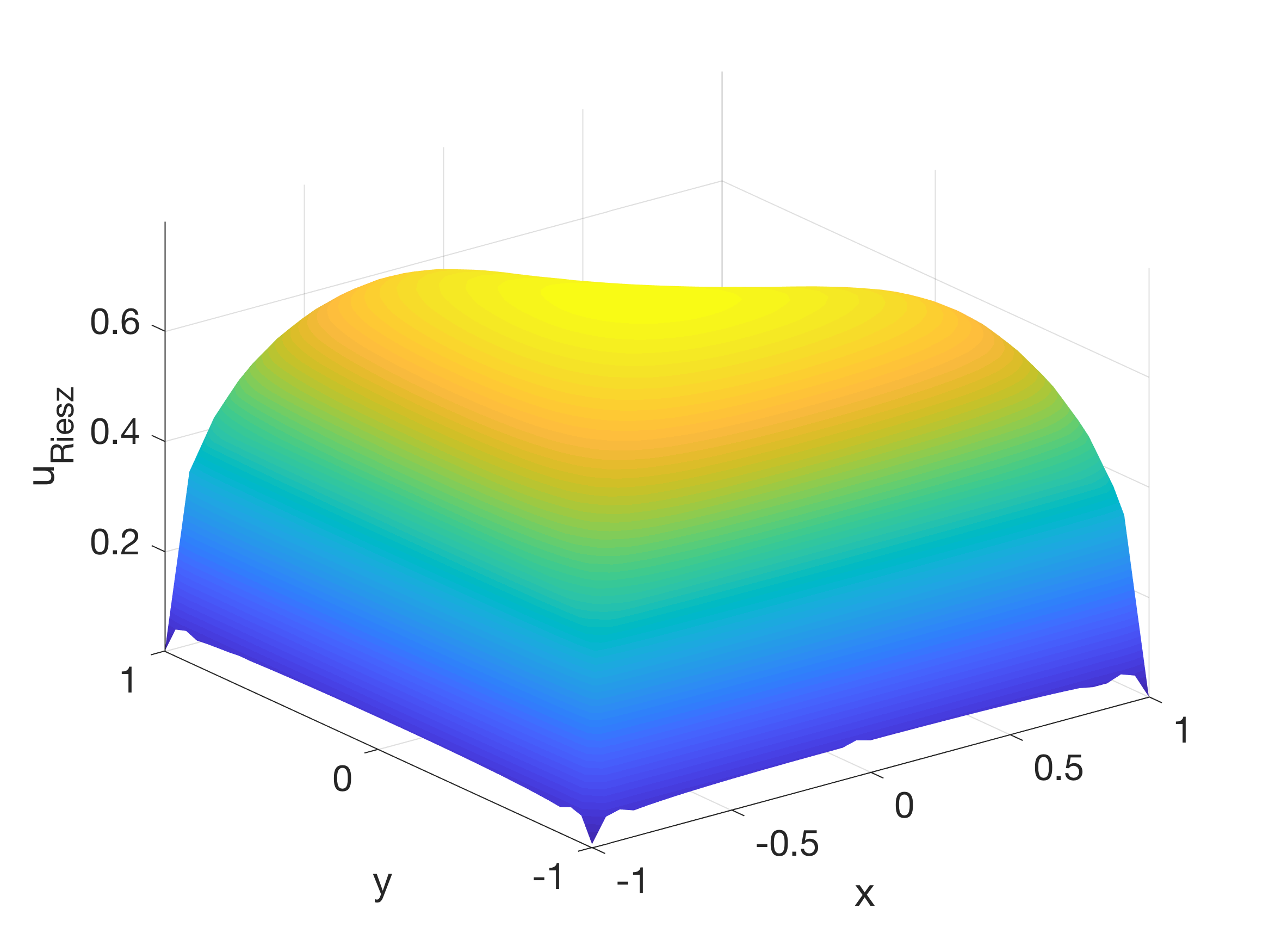}
  \includegraphics[width=0.25\textwidth]{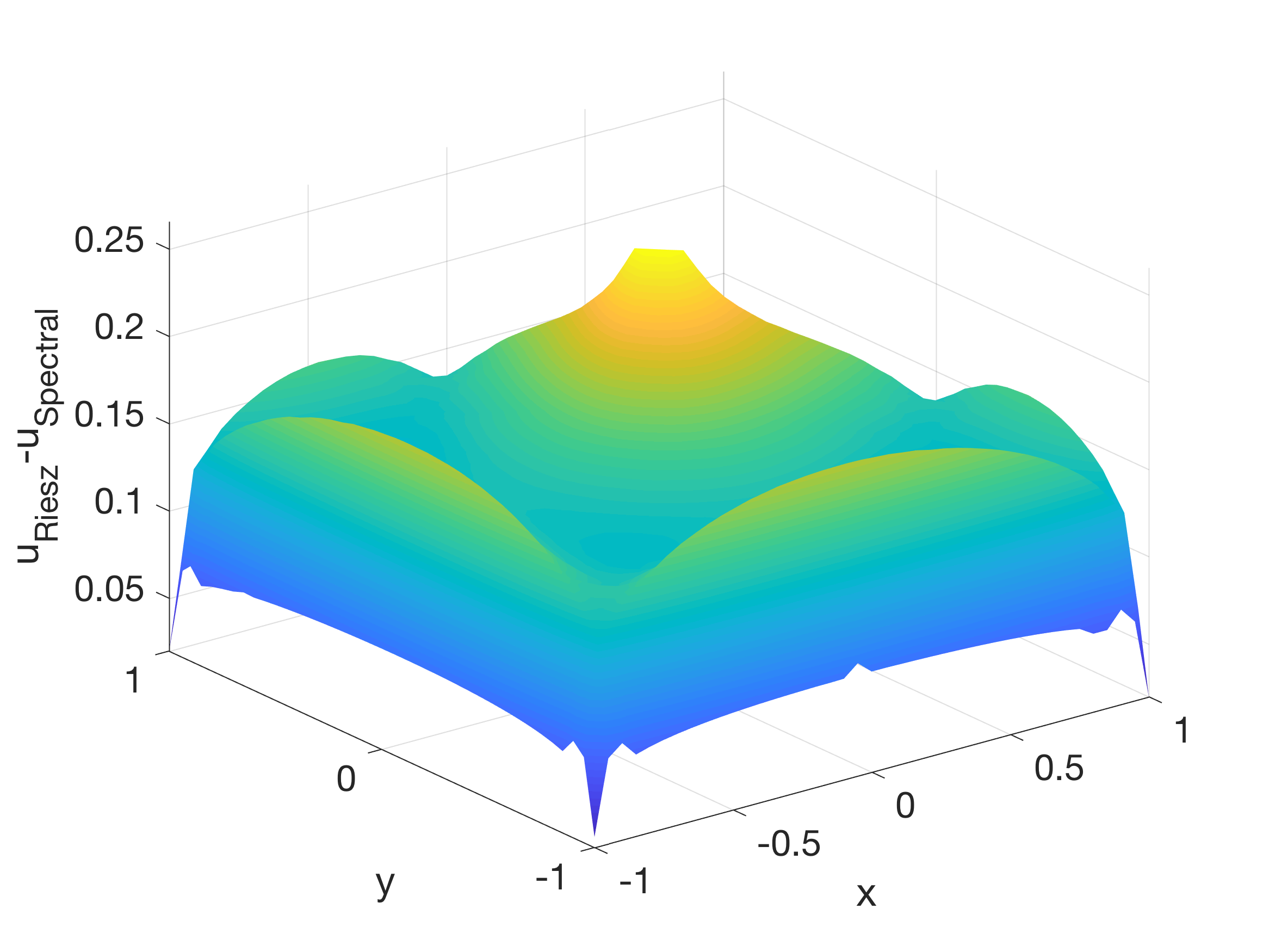}
\end{minipage}
 }\\
  \subfloat[Solutions $u$ associated with $f=1$ and $\alpha = 1.5$ in the L-shaped domain using the spectral definition (using SEM) (\emph{left}) and the Riesz definition (using AFEM) (\emph{center}), and the difference between $u_{\text{Riesz}}$ and $u_{\text{spectral}}$ for this case (\emph{right}).]{
 \begin{minipage}[]{\textwidth}\centering
 \includegraphics[width=0.25\textwidth]{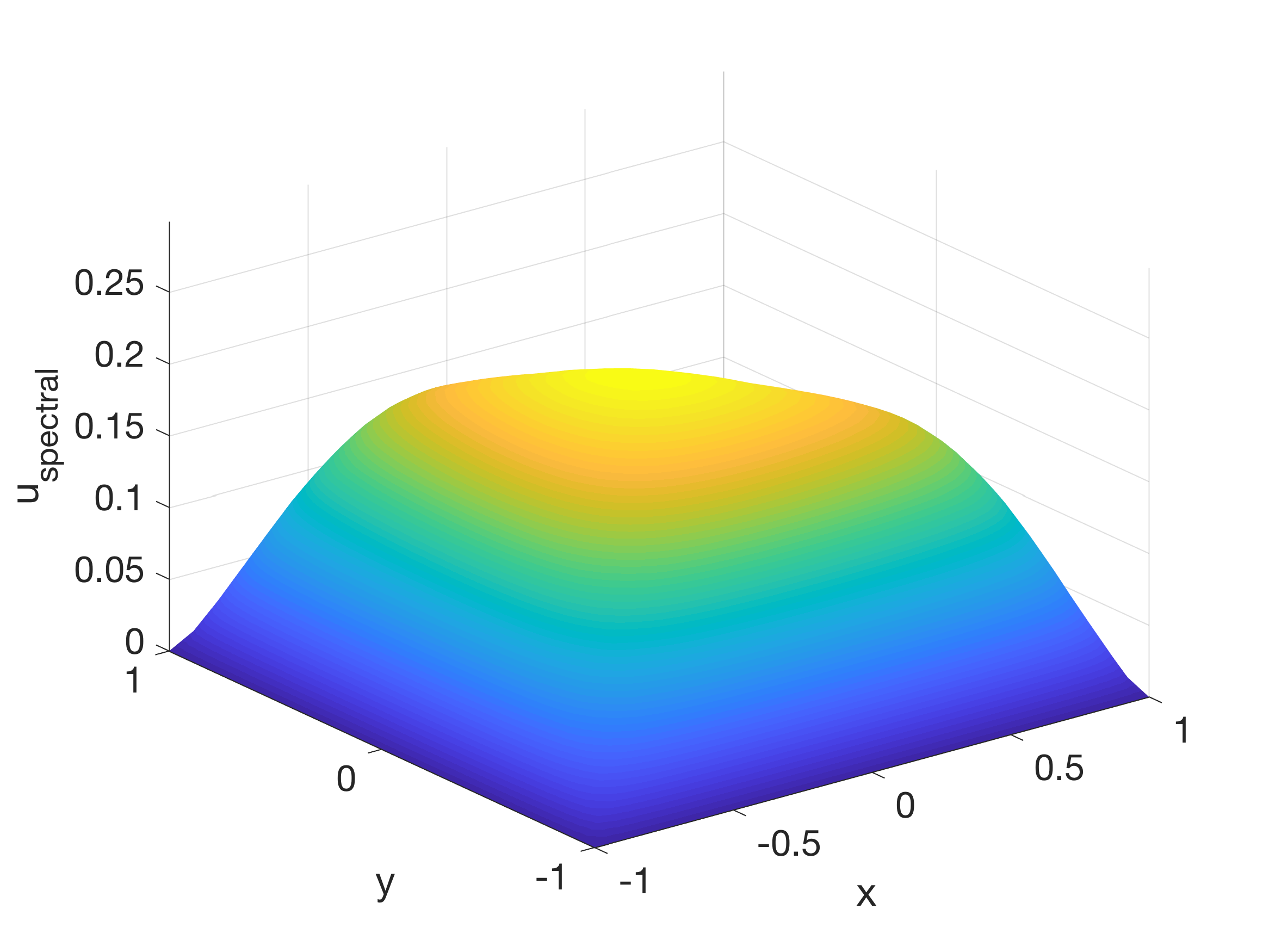}
  \includegraphics[width=0.25\textwidth]{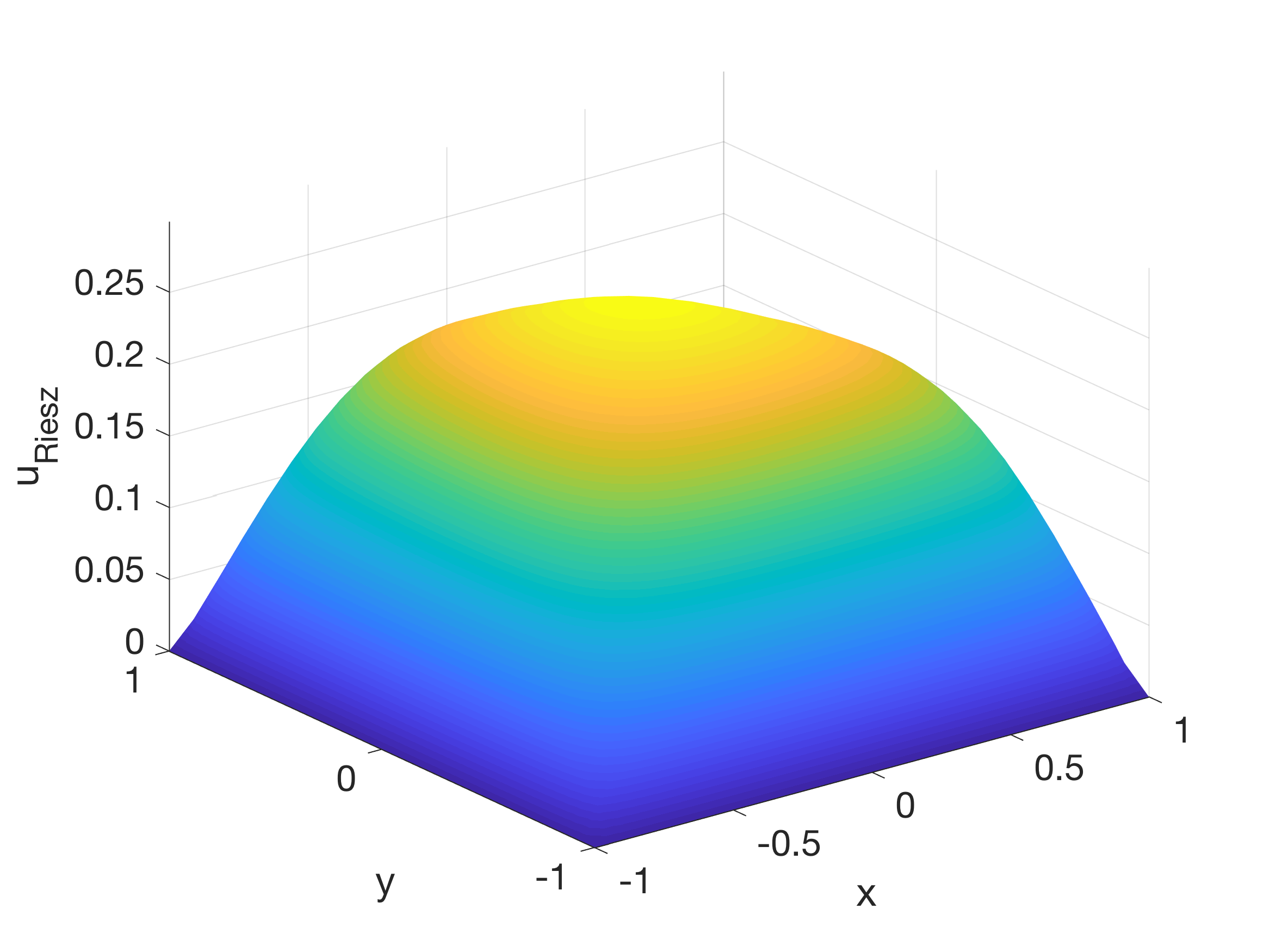}
  \includegraphics[width=0.25\textwidth]{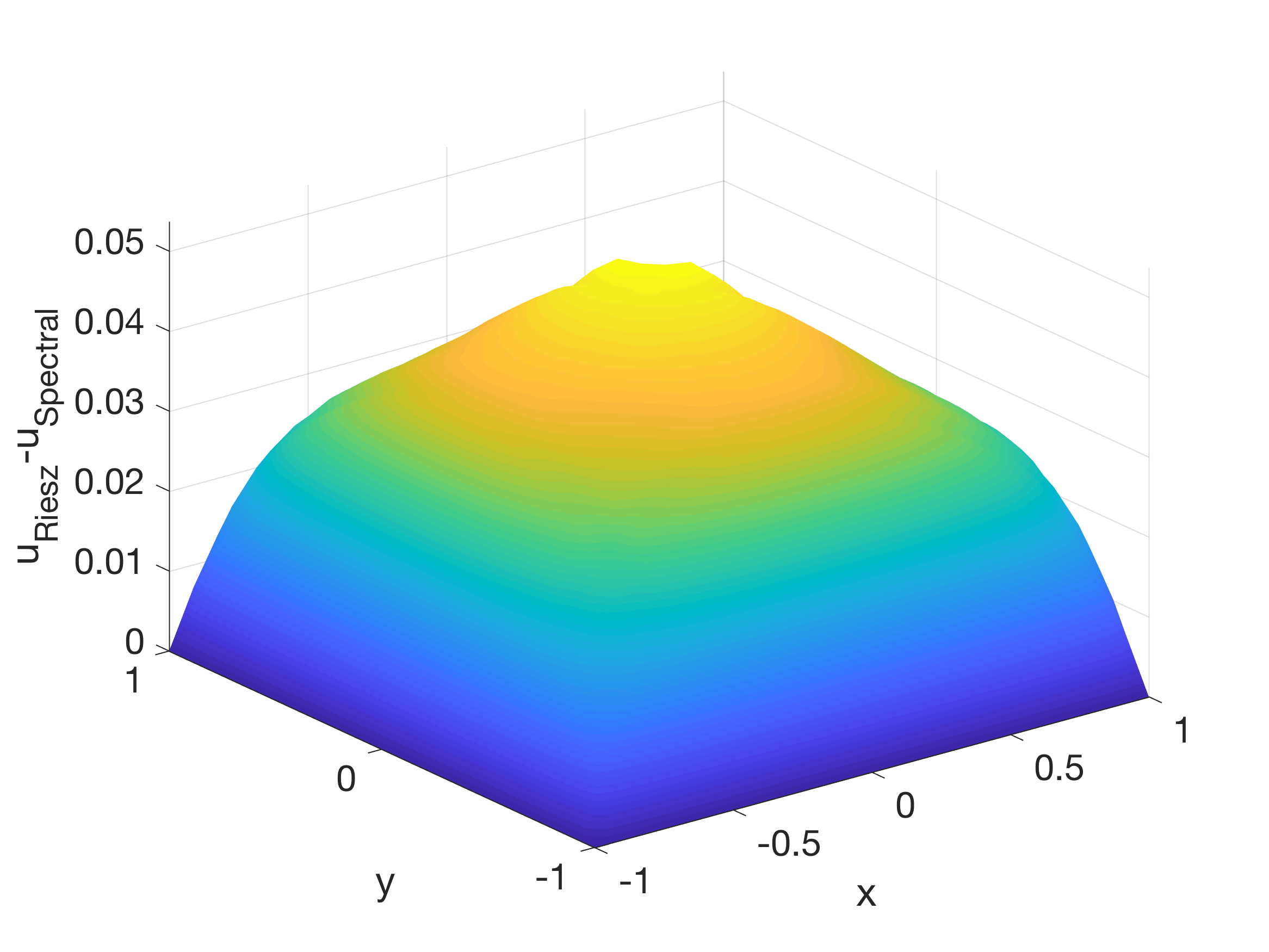}
\end{minipage}
 }\\
 \subfloat[Solutions $u$ associated with $f=1$ and $\alpha = 0.5$ in the L-shaped domain using the spectral definition (using SEM) (\emph{left}) and the Riesz definition (using AFEM) (\emph{center}), and the difference between $u_{\text{Riesz}}$ and $u_{\text{spectral}}$ for this case (\emph{right}).]{
 \begin{minipage}[]{\textwidth}\centering
 \includegraphics[width=0.25\textwidth]{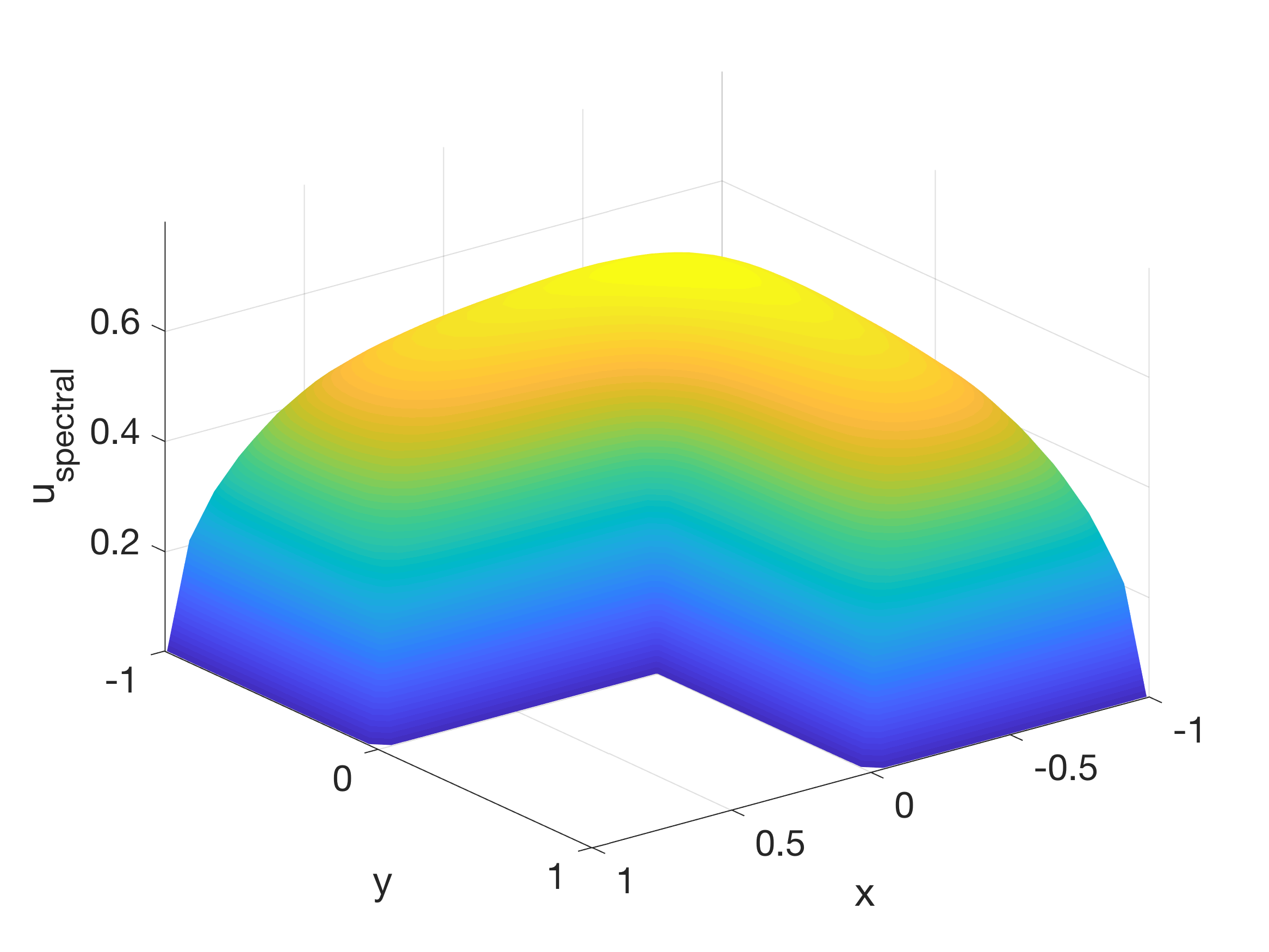}
  \includegraphics[width=0.25\textwidth]{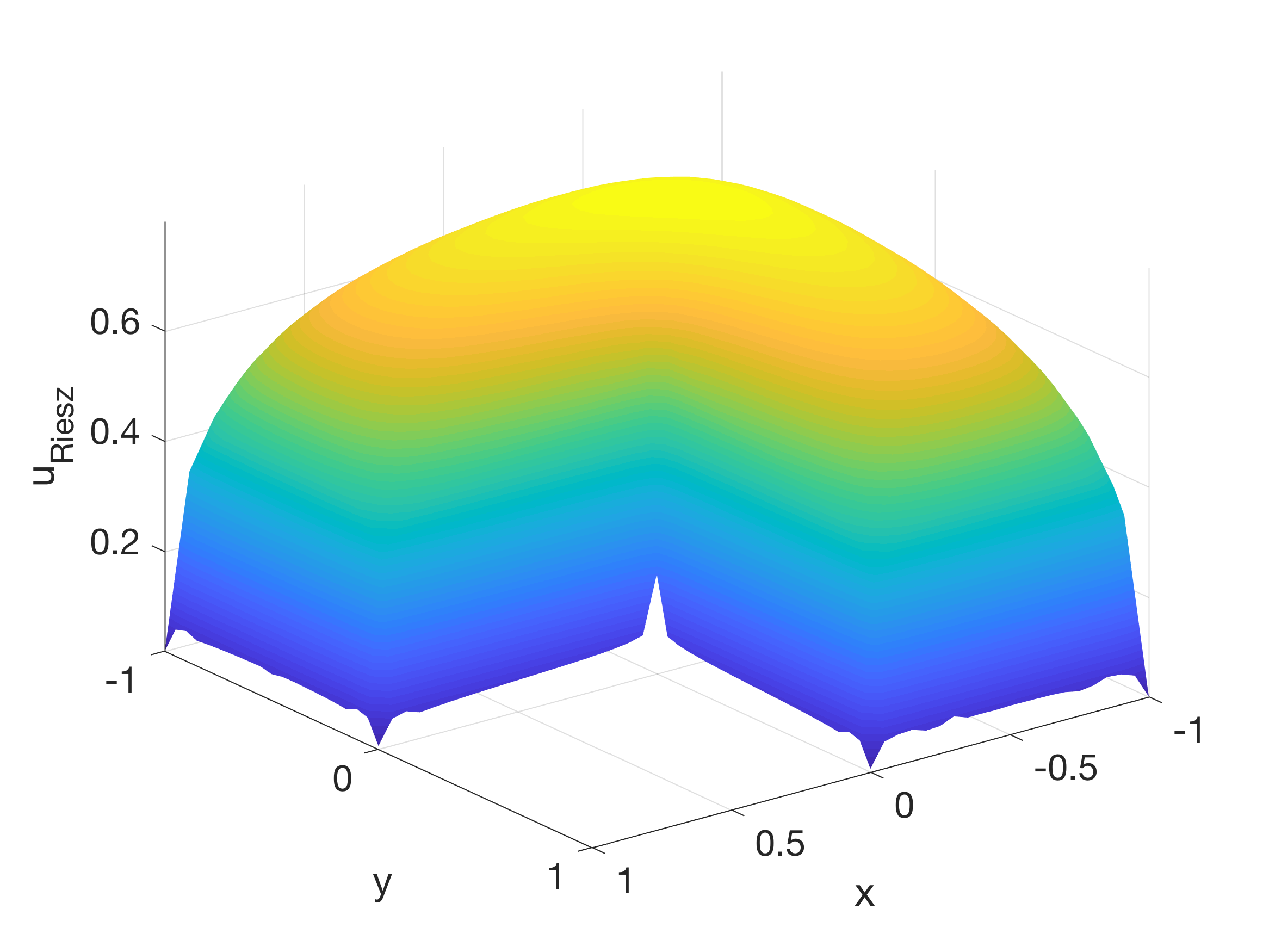}
  \includegraphics[width=0.25\textwidth]{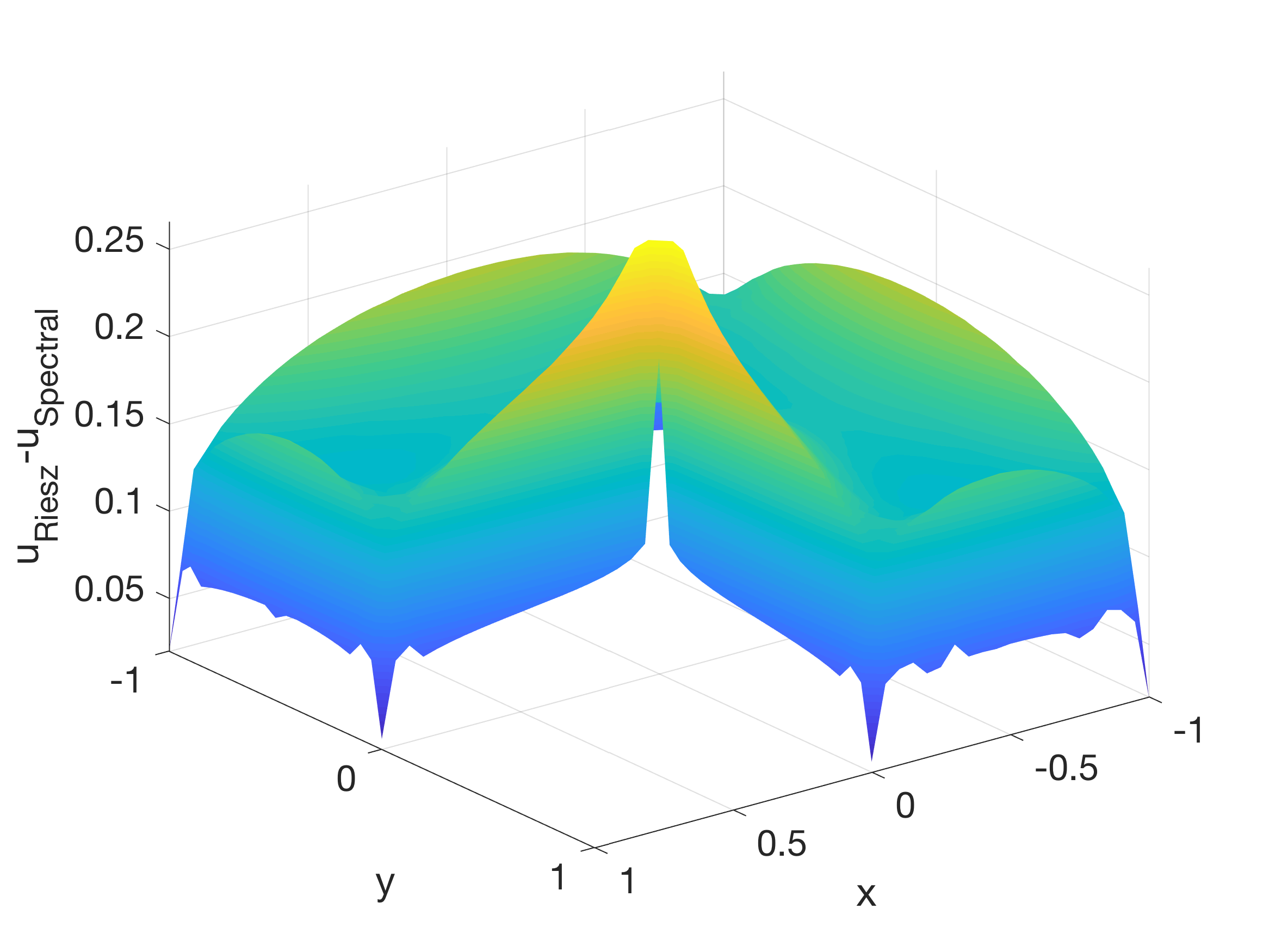}
\end{minipage}
 }\\
 \subfloat[Solutions $u$ associated with $f=1$ and $\alpha = 1.5$ in the L-shaped domain using the spectral definition (using SEM) (\emph{left}) and the Riesz definition (using AFEM) (\emph{center}), and the difference between $u_{\text{Riesz}}$ and $u_{\text{spectral}}$ for this case (\emph{right}).]{
 \begin{minipage}[]{\textwidth}\centering
 \includegraphics[width=0.25\textwidth]{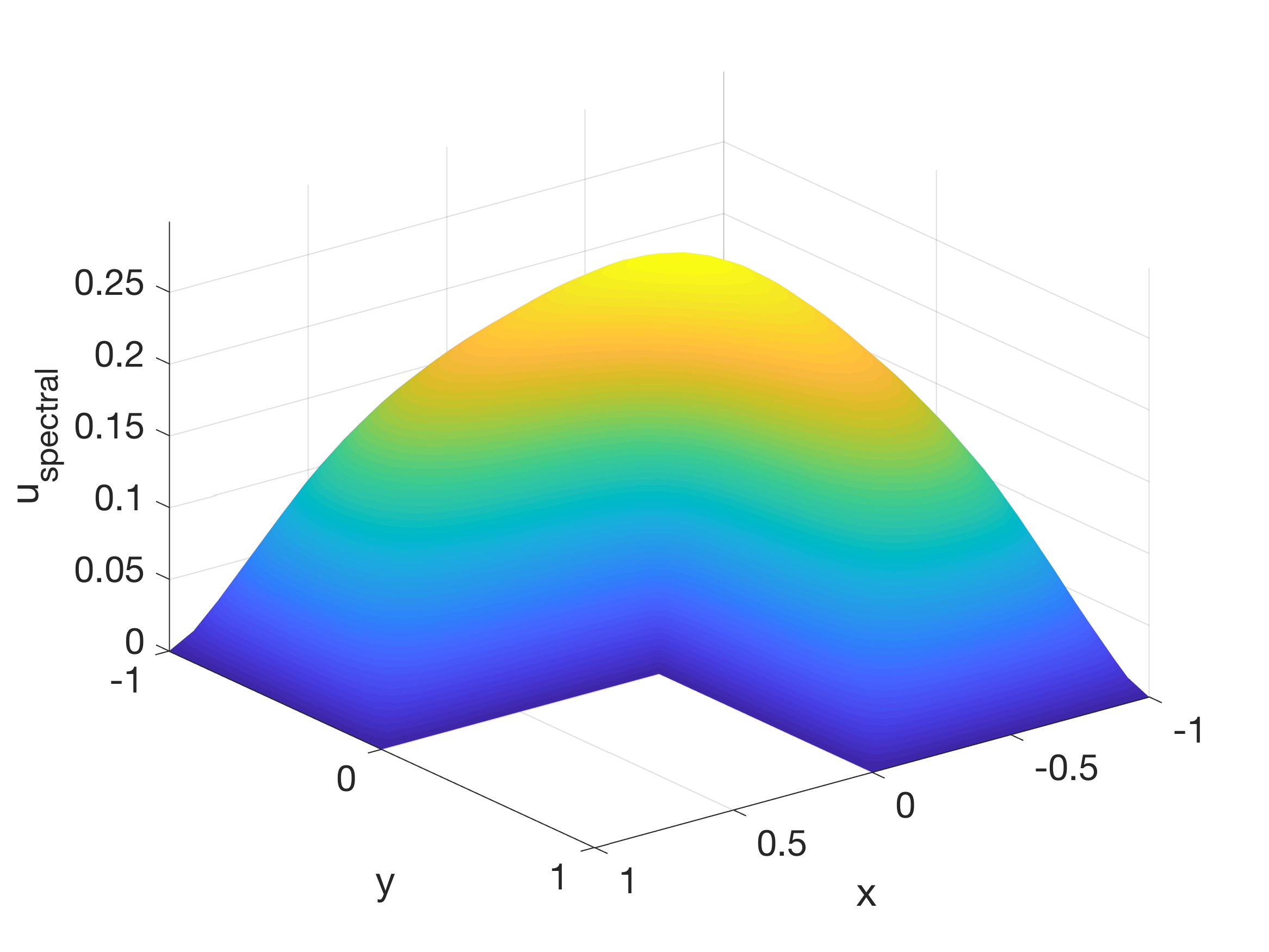}
  \includegraphics[width=0.25\textwidth]{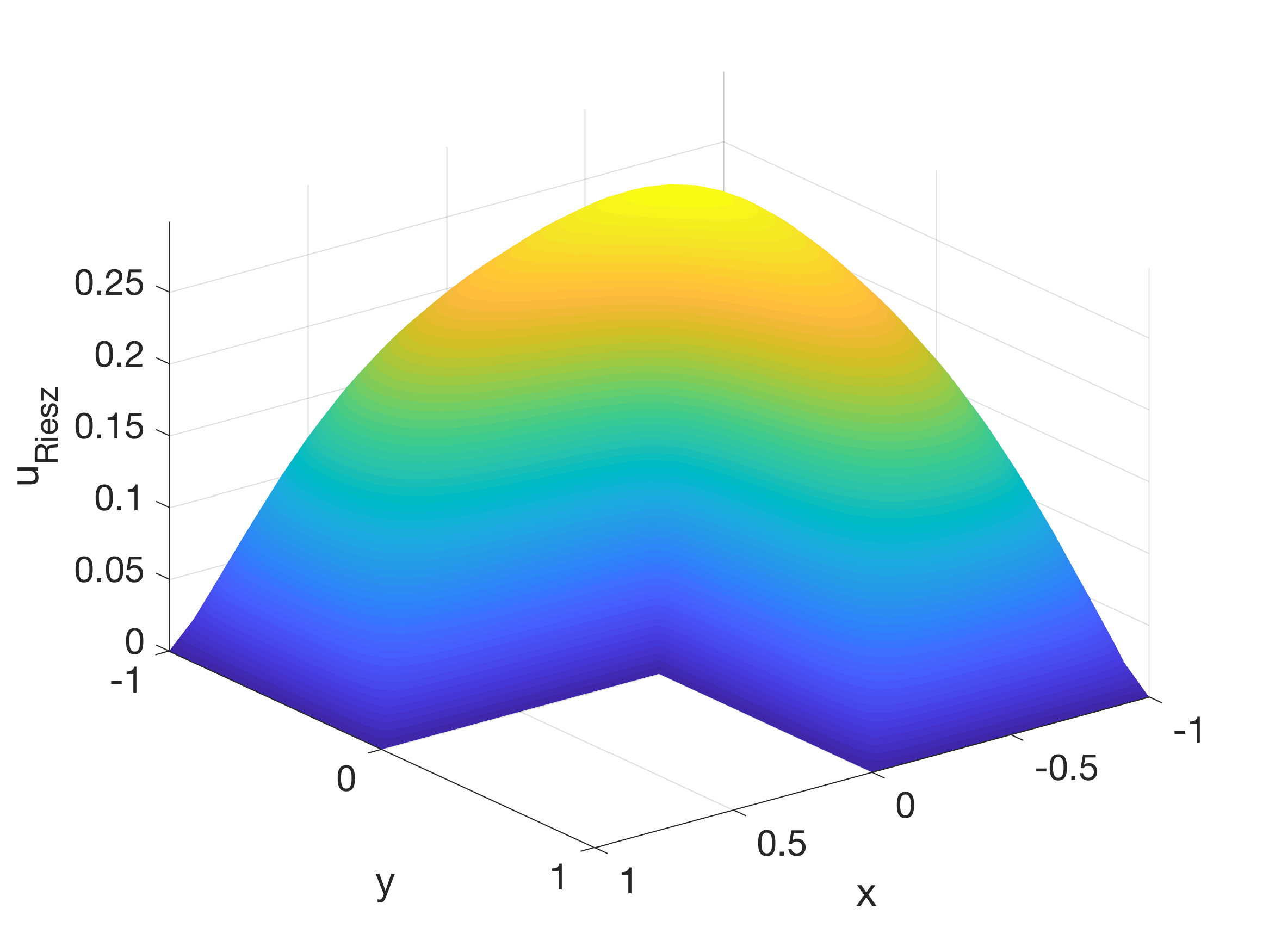}
  \includegraphics[width=0.25\textwidth]{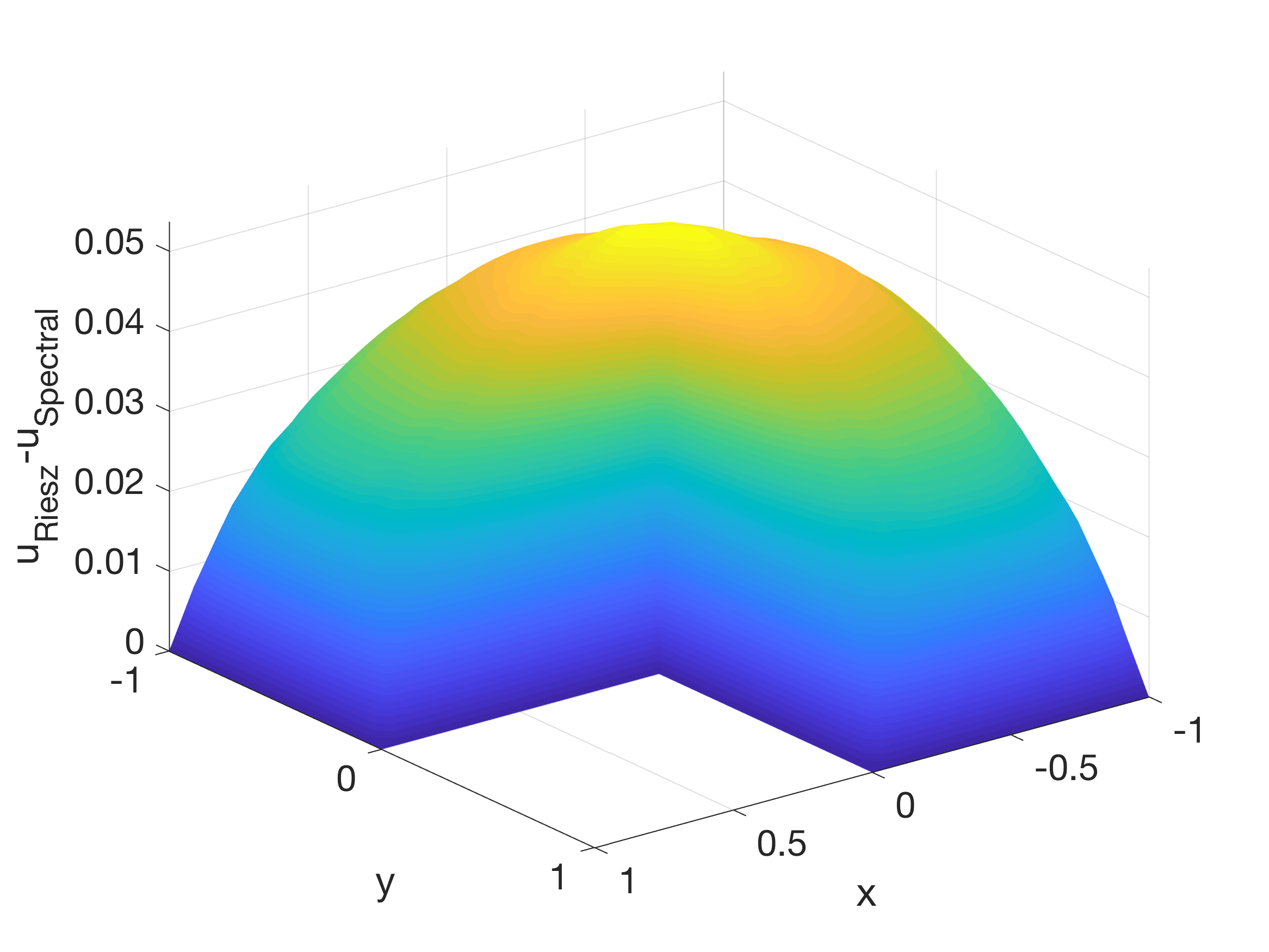}
\end{minipage}
 }
 \caption{\label{Lshape12} {Solutions and differences} between $u_{\text{Riesz}}$ and $u_{\text{spectral}}$ on the L-shaped domain for $\alpha = 0.5$ {\color{forest} and $1.5$.}}
 \end{figure}
 
   \begin{figure}[ht!]
 \centering
\subfloat[Solutions $u$ associated with $f=\sin(\pi x)\sin(\pi y)$ and $\alpha = 0.5$ in the L-shaped domain using the spectral definition (using SEM) (\emph{left}) and the Riesz definition (using AFEM) (\emph{center}), and the difference between $u_{\text{Riesz}}$ and $u_{\text{spectral}}$ for this case (\emph{right}).]{
 \begin{minipage}[]{\textwidth}\centering
 \includegraphics[width=0.3\textwidth]{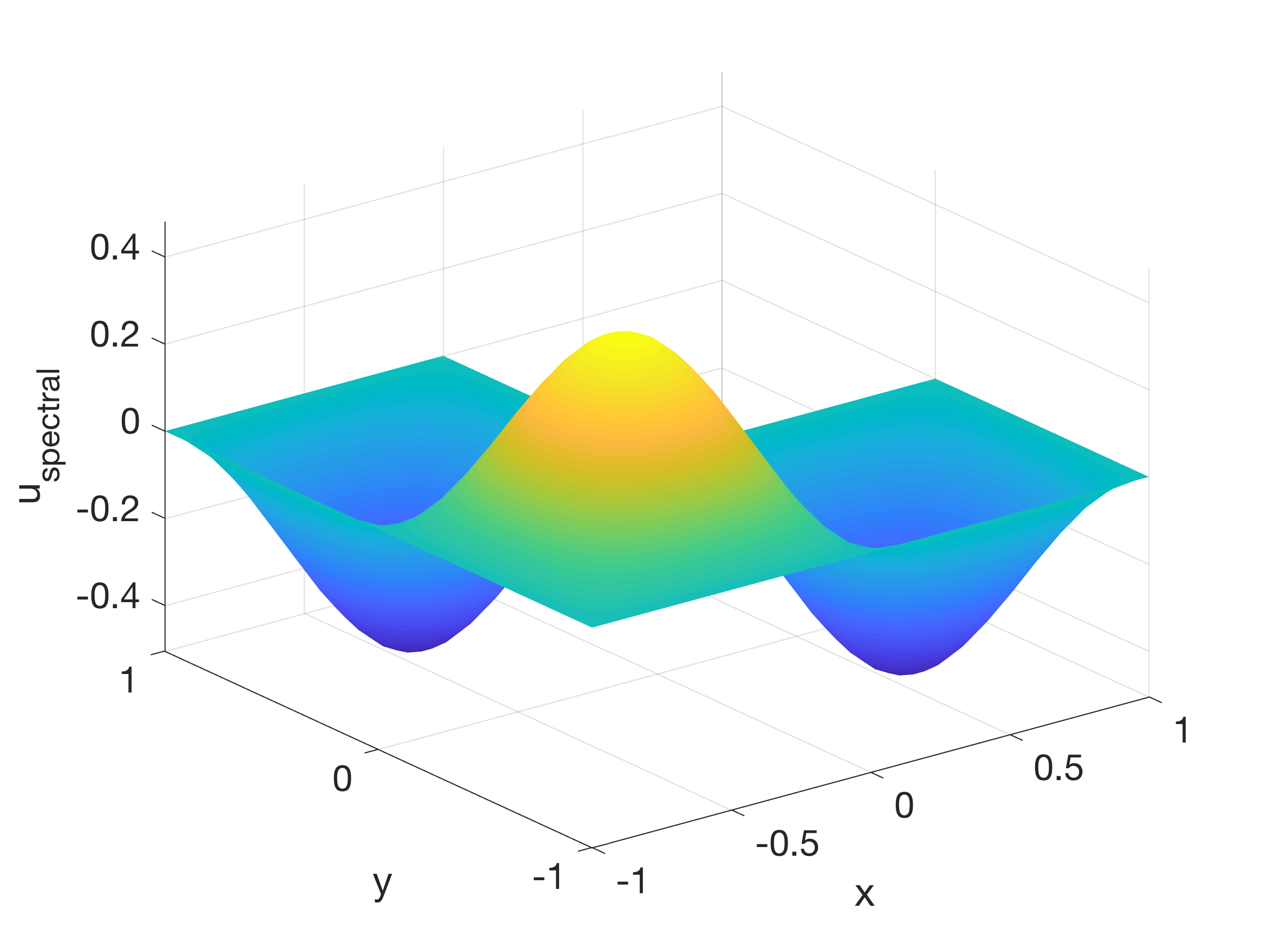}
  \includegraphics[width=0.3\textwidth]{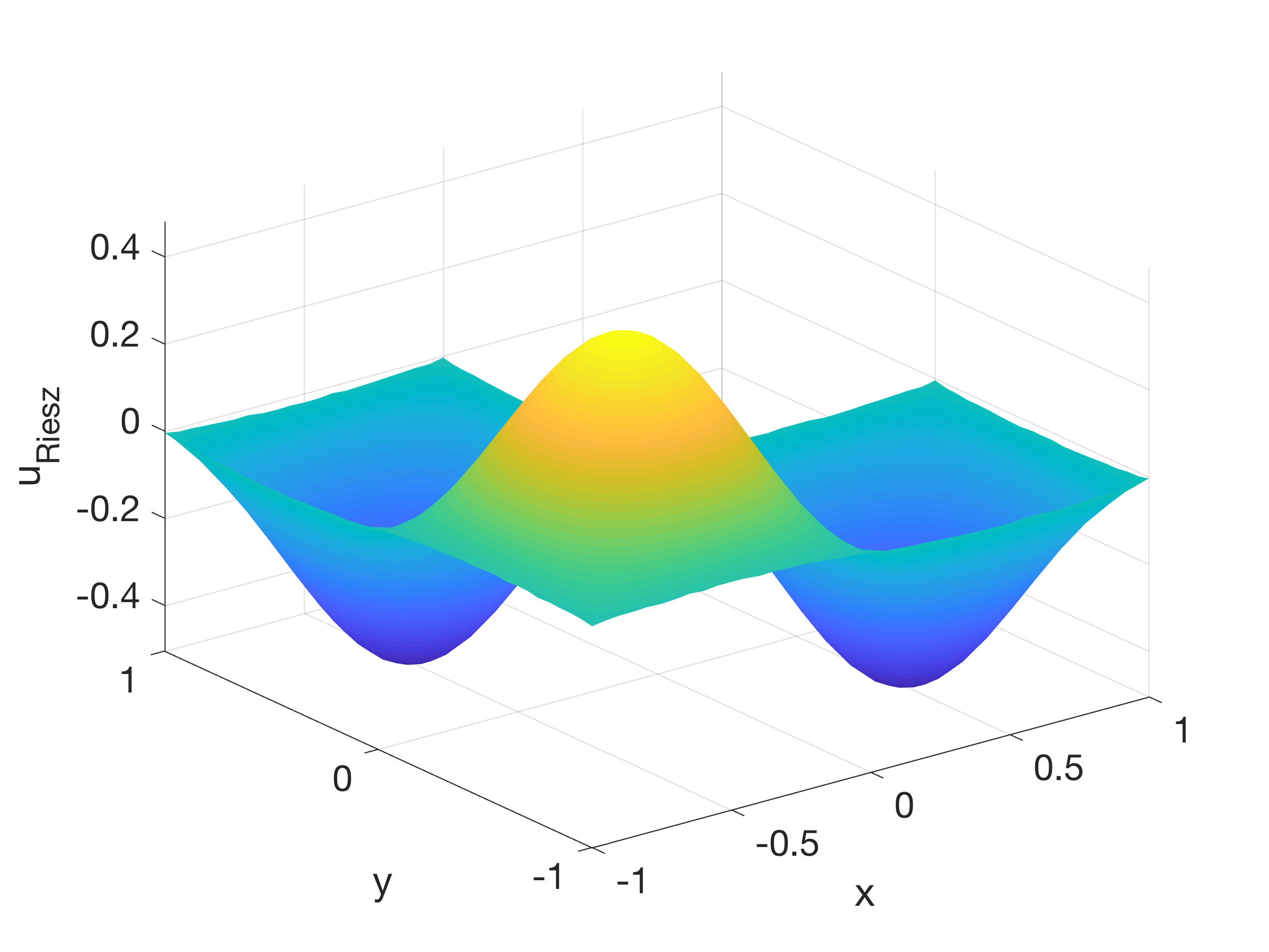}
  \includegraphics[width=0.3\textwidth]{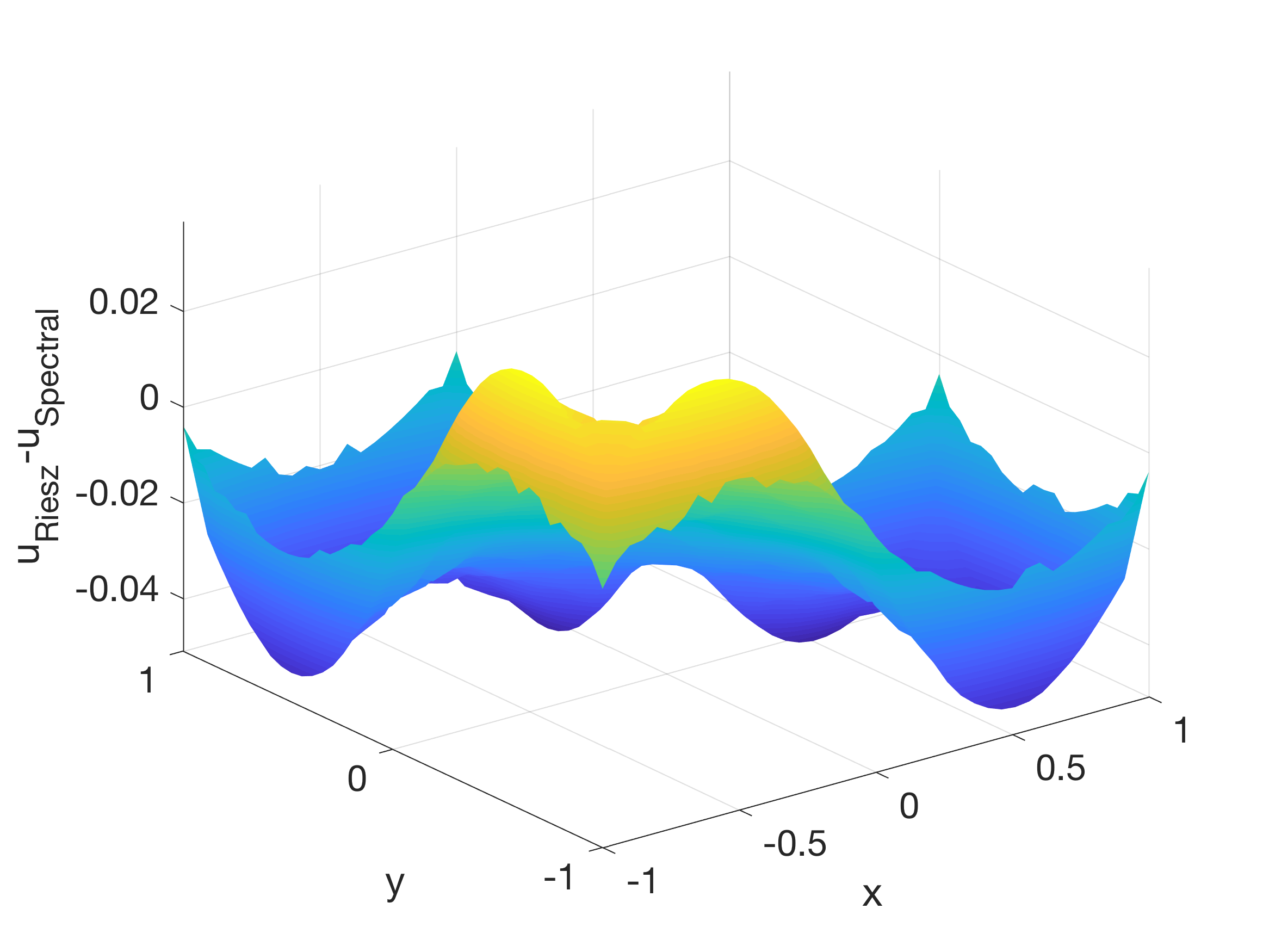}
\end{minipage}
 }\\
 \subfloat[Solutions $u$ associated with $f=\sin(\pi x)\sin(\pi y)$ and $\alpha = 1.5$ in the L-shaped domain using the spectral definition (using SEM) (\emph{left}) and the Riesz definition (using AFEM) (\emph{center}), and the difference between $u_{\text{Riesz}}$ and $u_{\text{spectral}}$ for this case (\emph{right}).]{
 \begin{minipage}[]{\textwidth}\centering
 \includegraphics[width=0.3\textwidth]{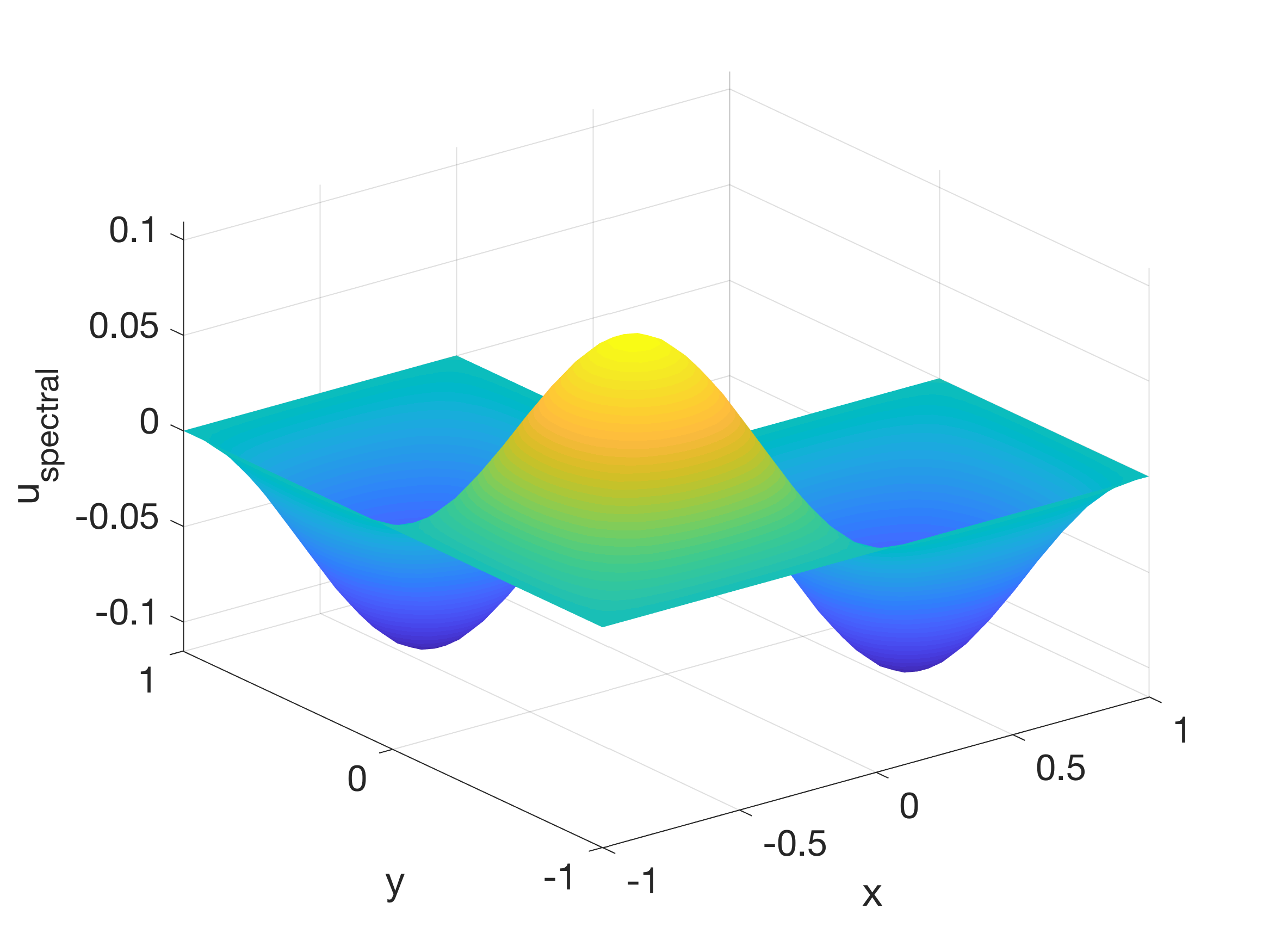}
  \includegraphics[width=0.3\textwidth]{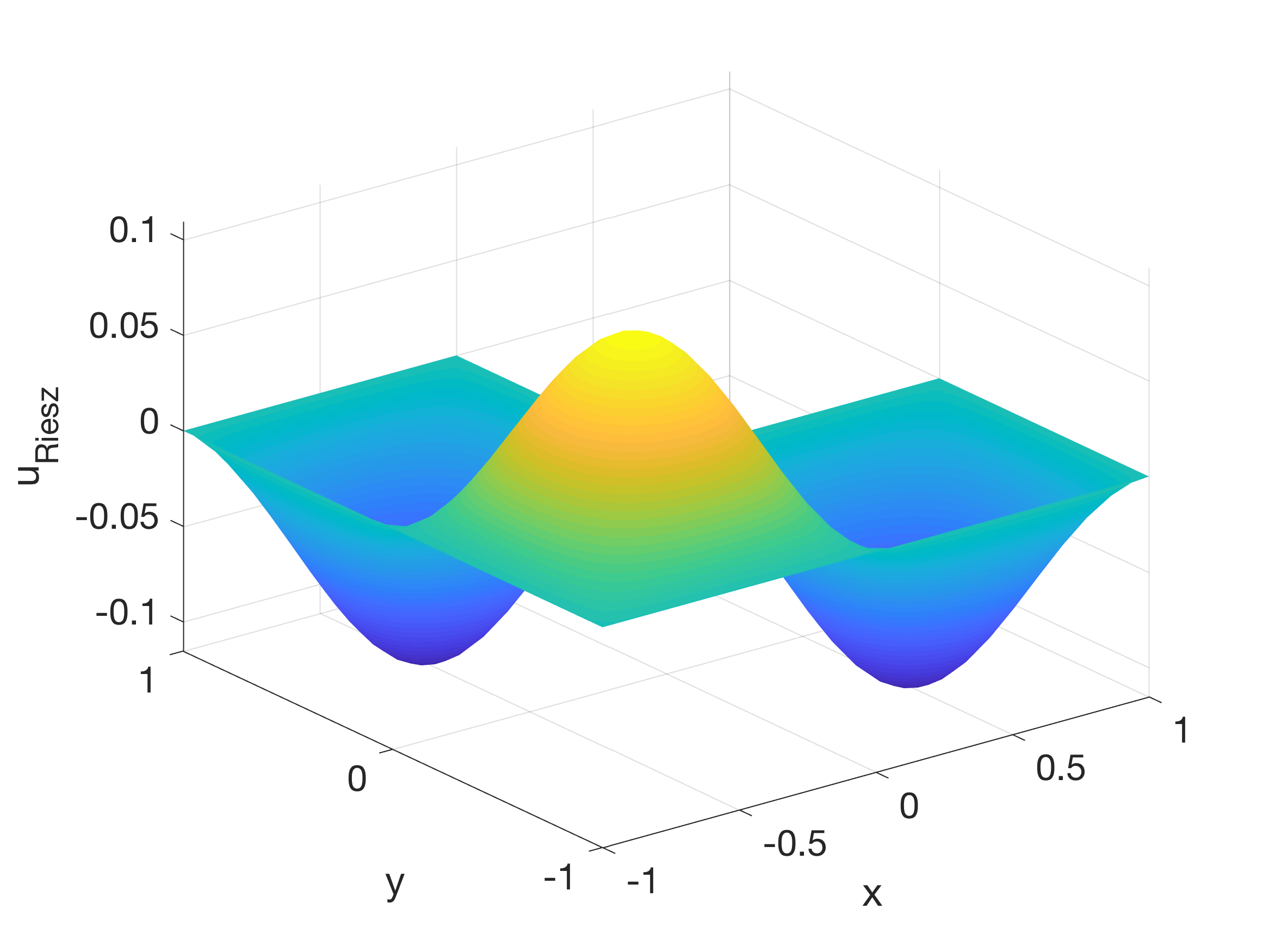}
  \includegraphics[width=0.3\textwidth]{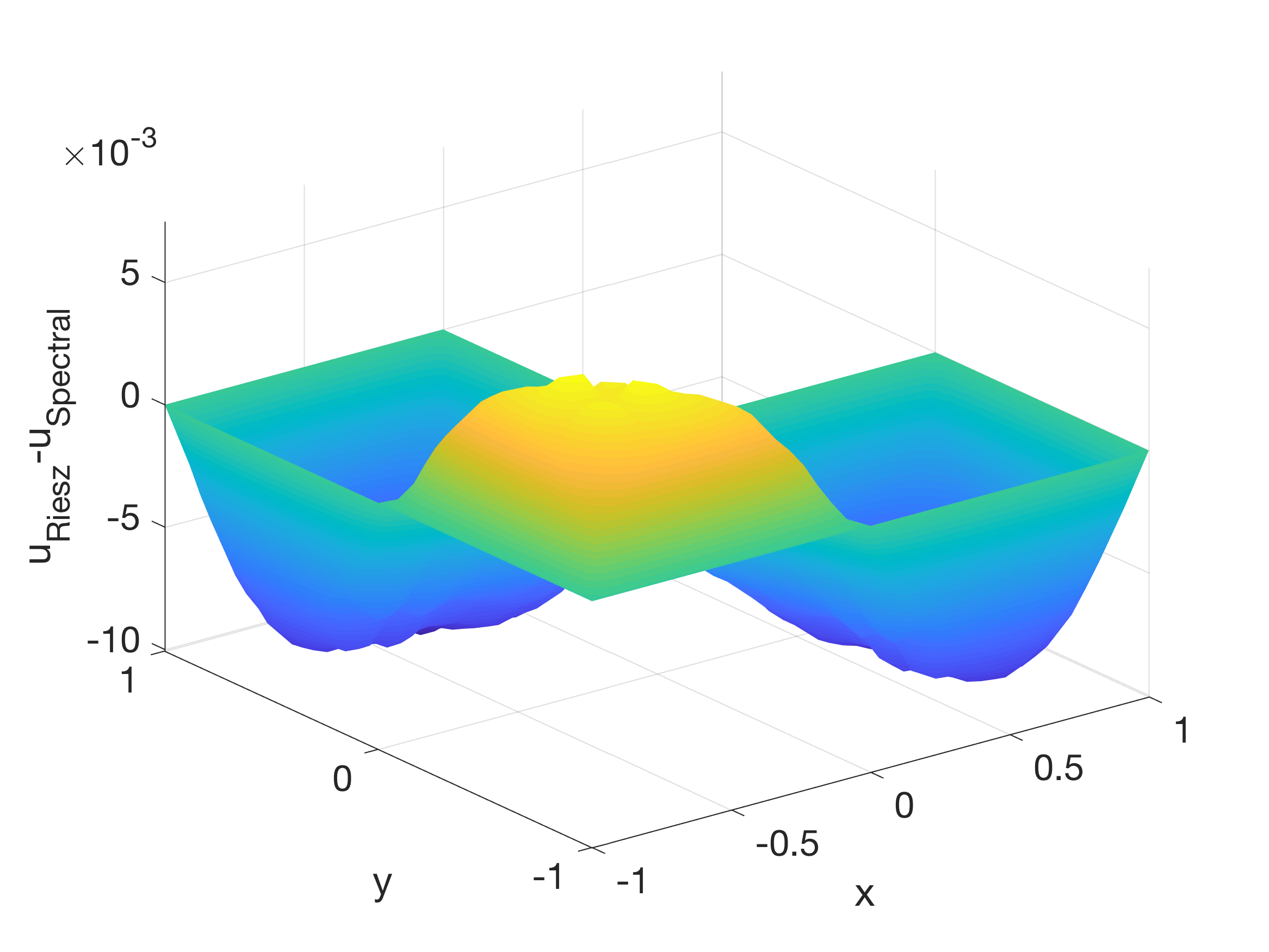}
\end{minipage}
 }\\
\subfloat[Solutions $u$ associated with $f=\sin(\pi x)\sin(\pi y)$ and $\alpha = 0.5$ in the L-shaped domain using the spectral definition (using SEM) (\emph{left}) and the Riesz definition (using AFEM) (\emph{center}), and the difference between $u_{\text{Riesz}}$ and $u_{\text{spectral}}$ for this case (\emph{right}).]{
 \begin{minipage}[]{\textwidth}\centering
 \includegraphics[width=0.3\textwidth]{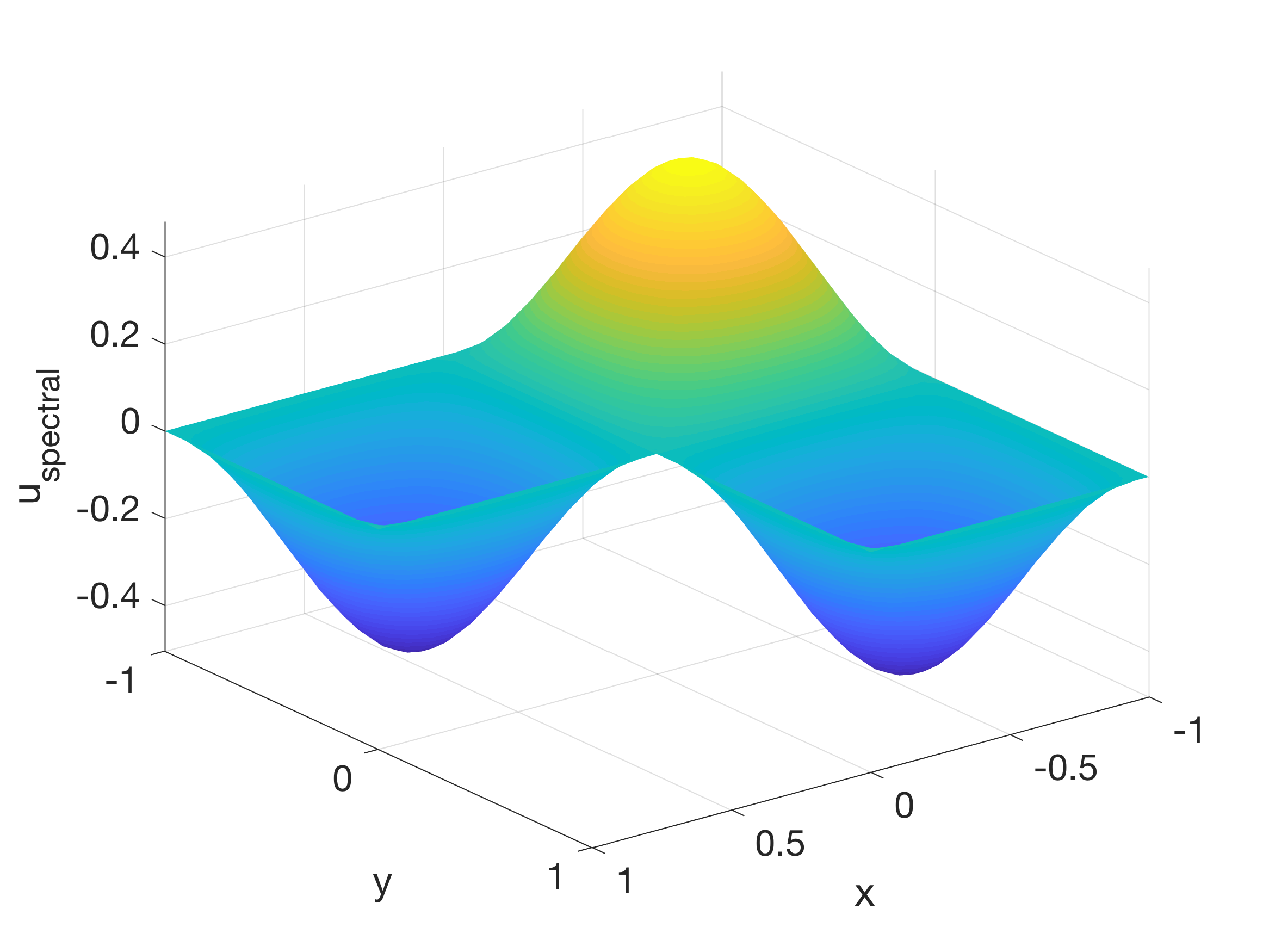}
  \includegraphics[width=0.3\textwidth]{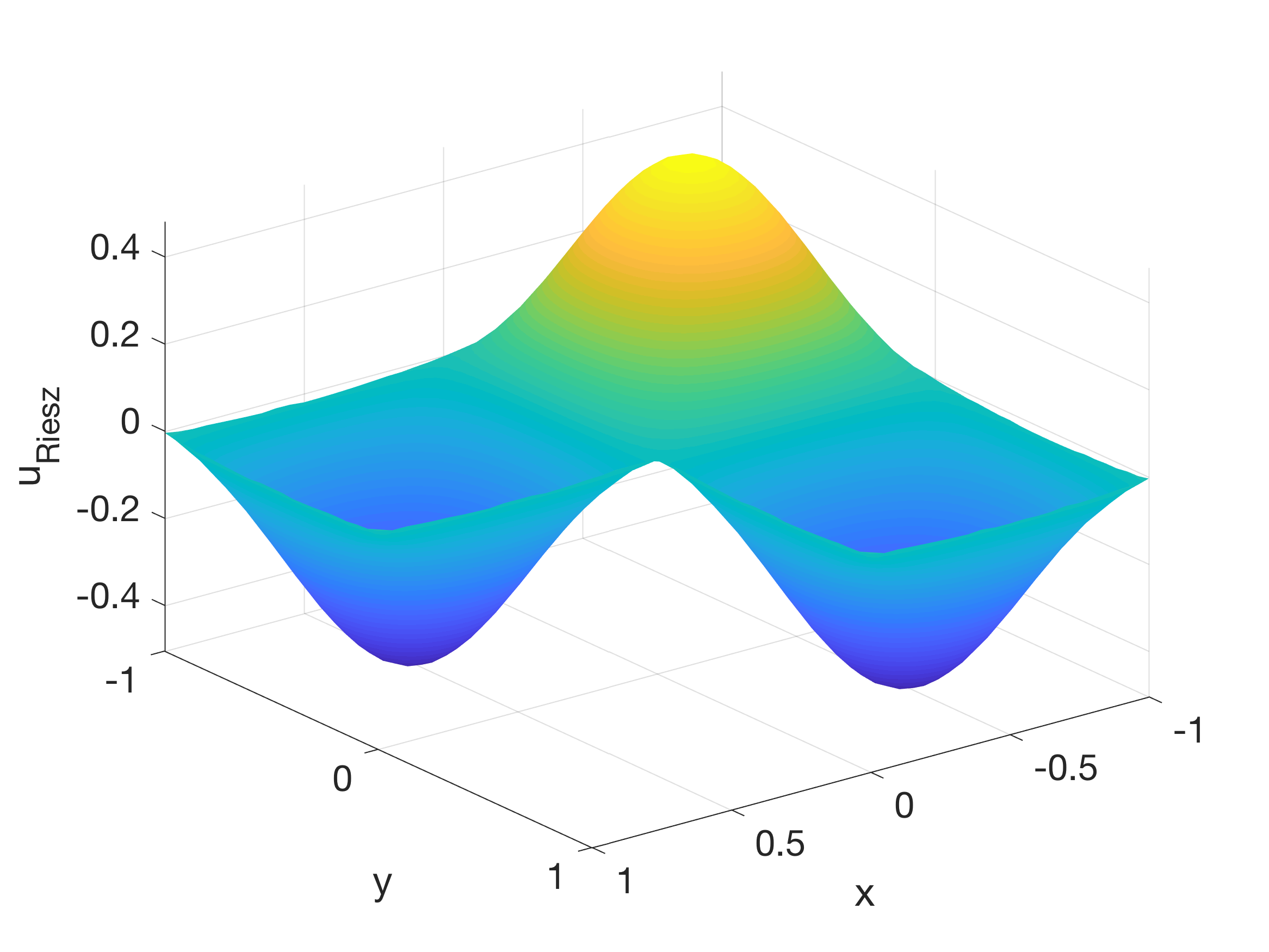}
  \includegraphics[width=0.3\textwidth]{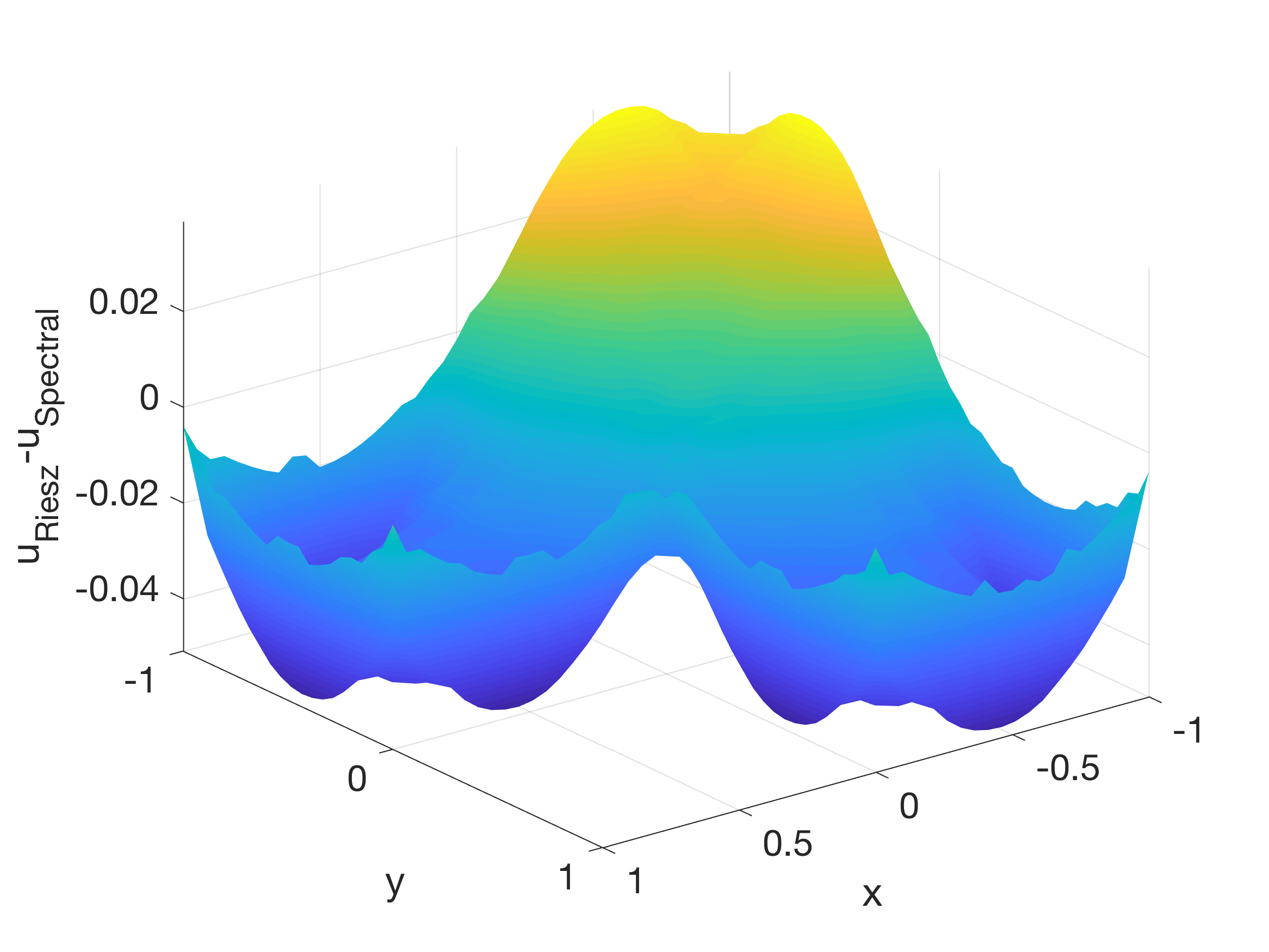}
\end{minipage}
 }\\
 \subfloat[Solutions $u$ associated with $f=\sin(\pi x)\sin(\pi y)$ and $\alpha = 1.5$ in the L-shaped domain using the spectral definition (using SEM) (\emph{left}) and the Riesz definition (using AFEM) (\emph{center}), and the difference between $u_{\text{Riesz}}$ and $u_{\text{spectral}}$ for this case (\emph{right}).]{
 \begin{minipage}[]{\textwidth}\centering
 \includegraphics[width=0.3\textwidth]{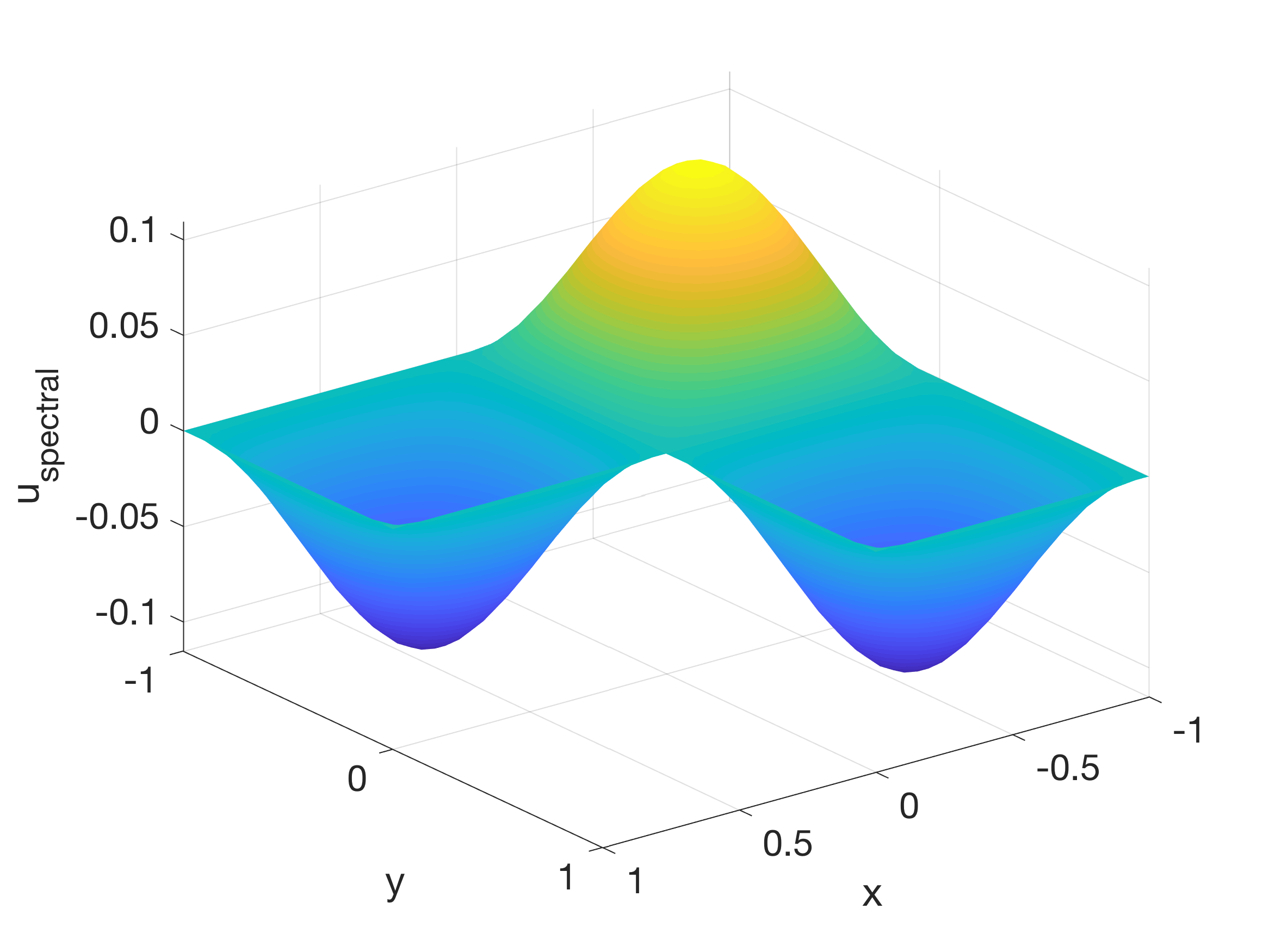}
  \includegraphics[width=0.3\textwidth]{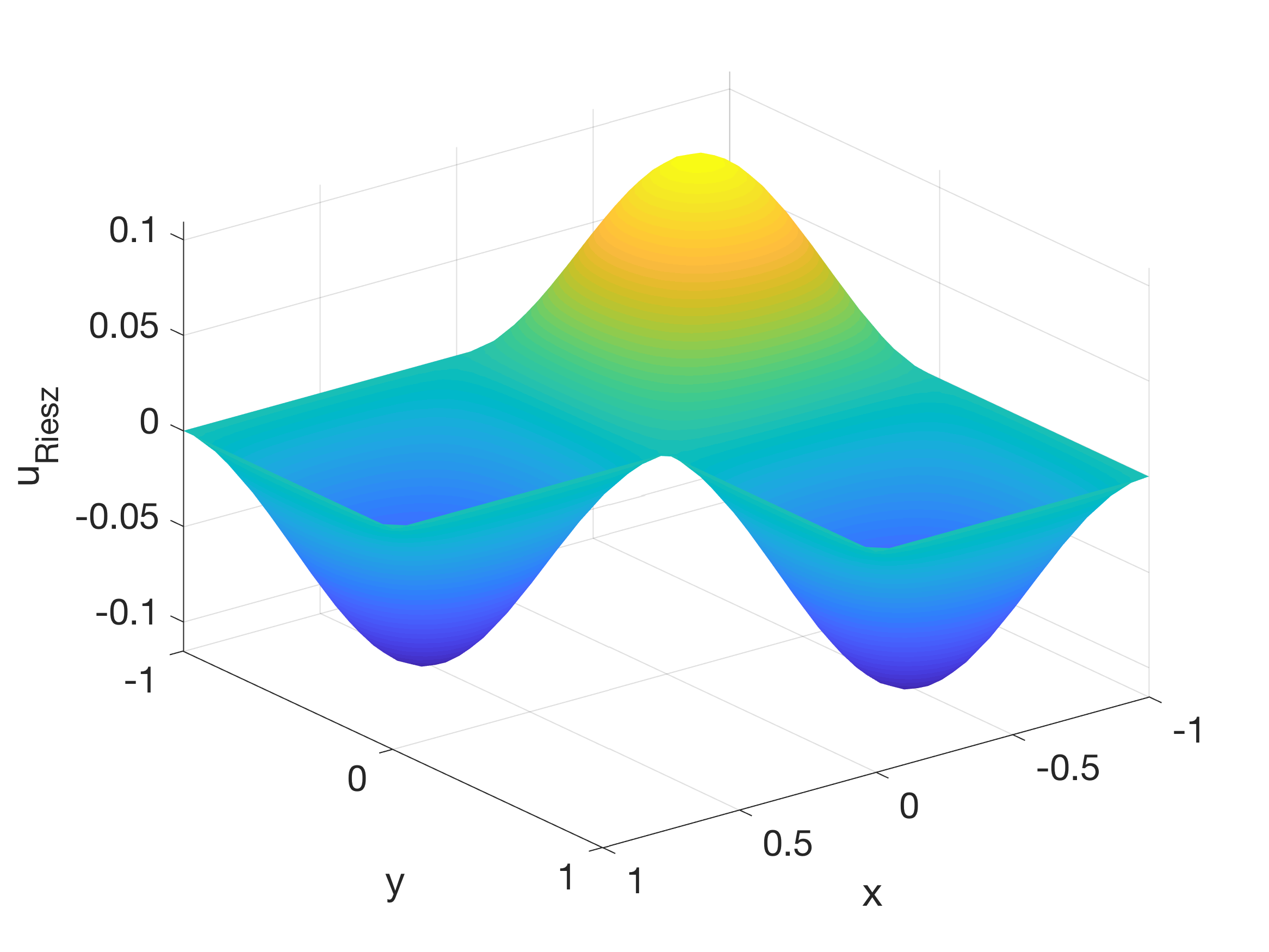}
  \includegraphics[width=0.3\textwidth]{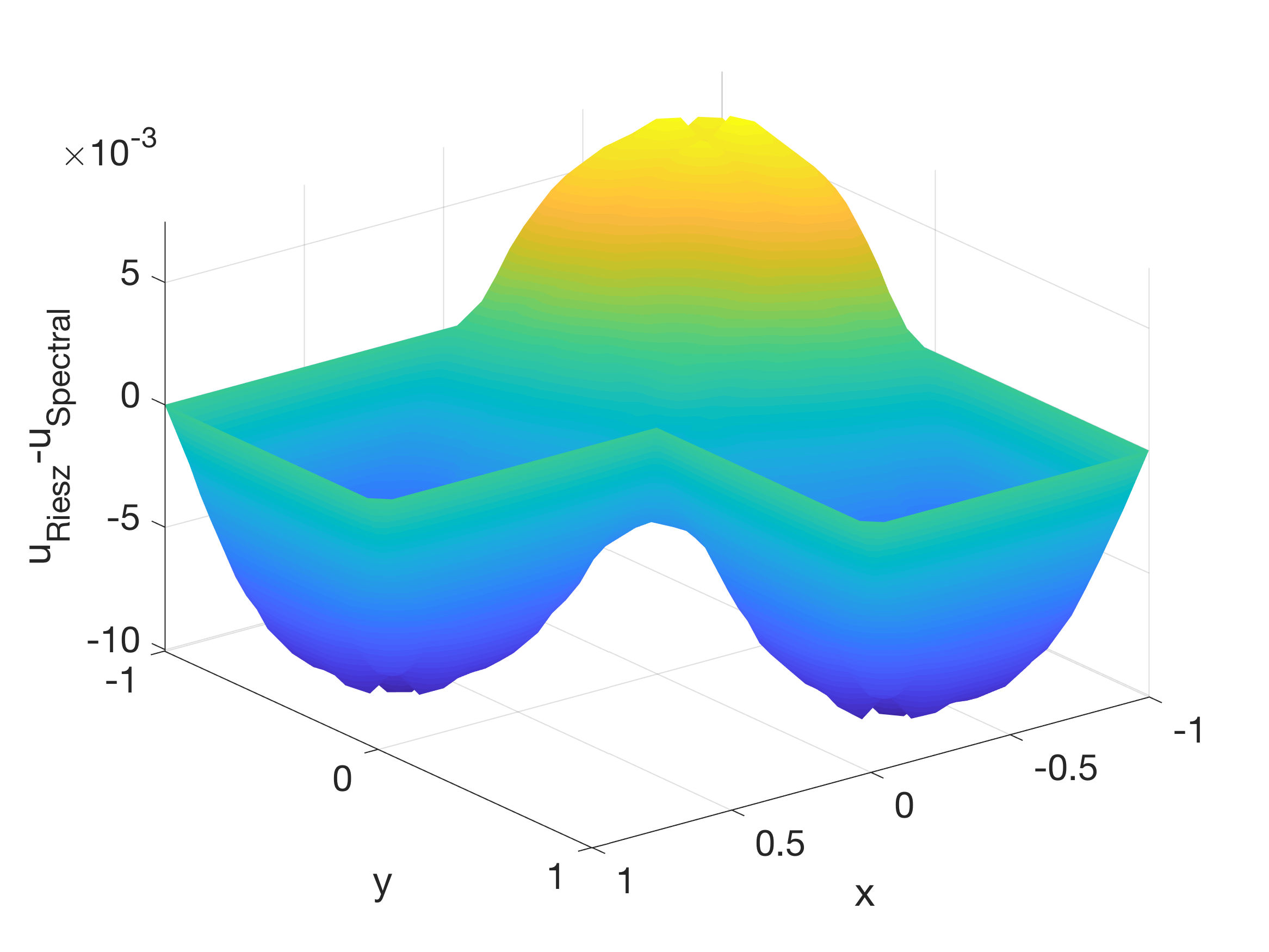}
\end{minipage}
 }
 \caption{\label{Lshape34} {Solutions and differences} between $u_{\text{Riesz}}$ and $u_{\text{spectral}}$ on the L-shaped domain for $\alpha = 0.5$ {\color{forest}and $1.5$.}}
 \end{figure}

    \begin{figure}[ht!]
 \centering
\includegraphics[height=.2\textheight]{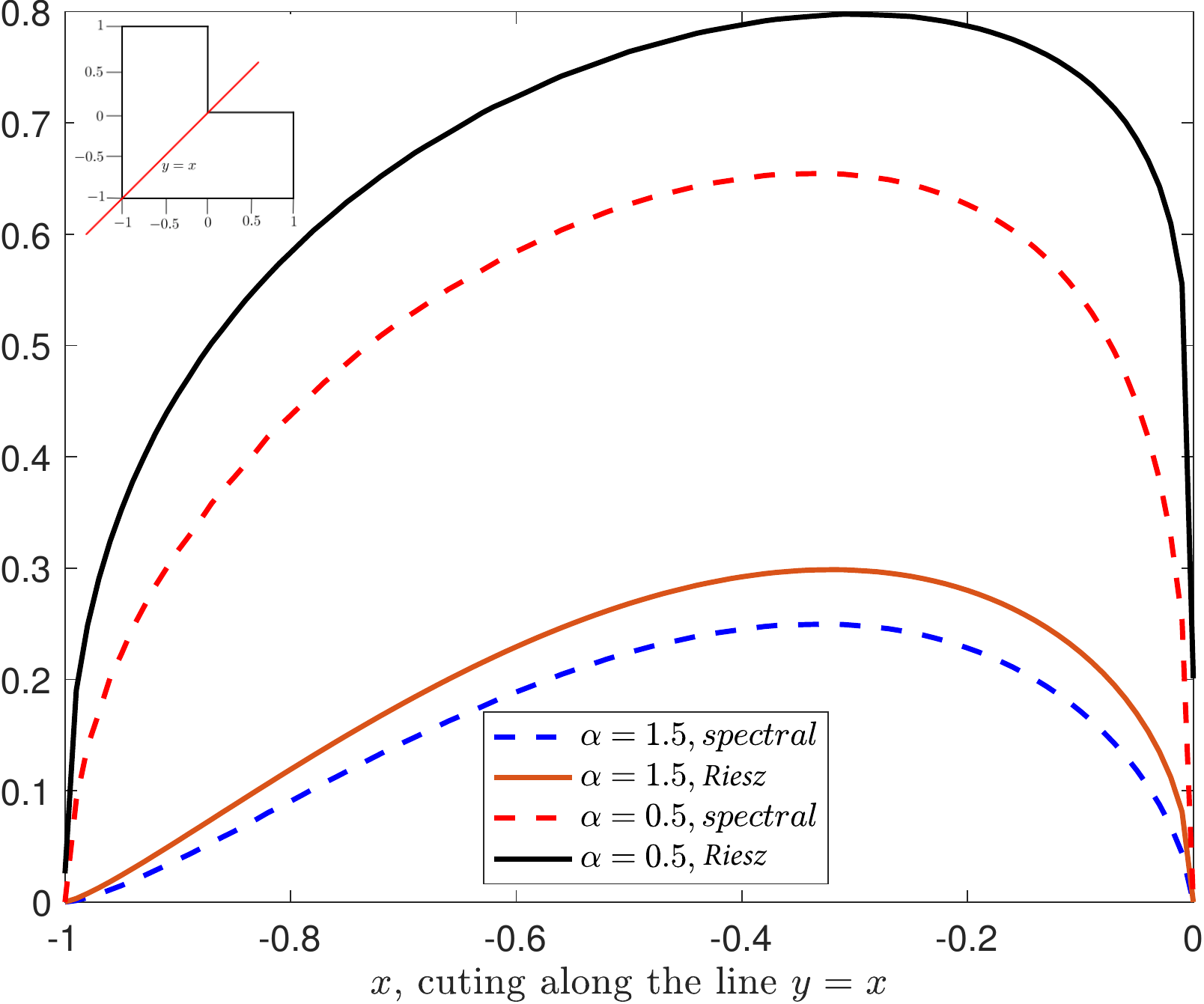}
\includegraphics[height=.2\textheight]{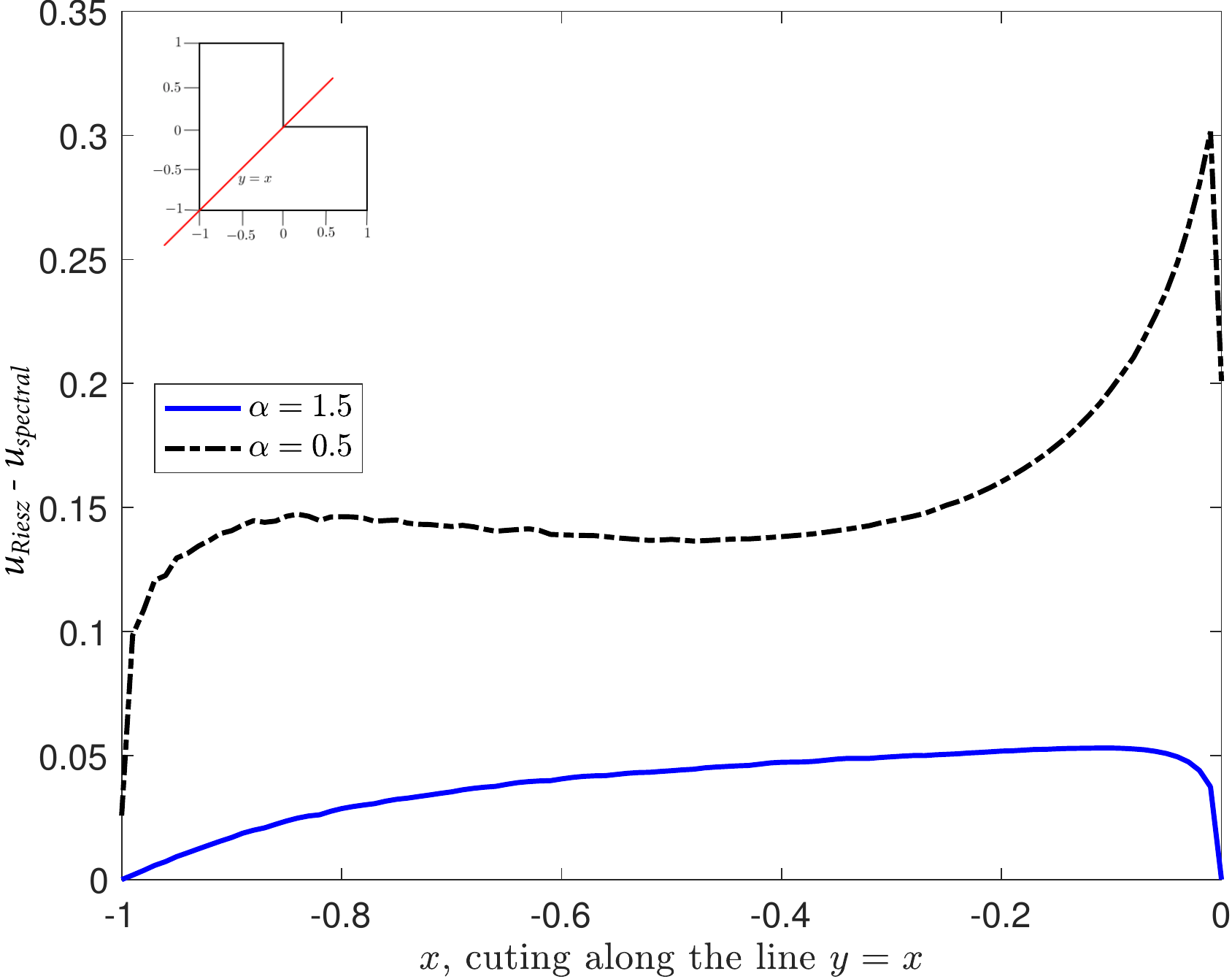}\\
\includegraphics[height=.2\textheight]{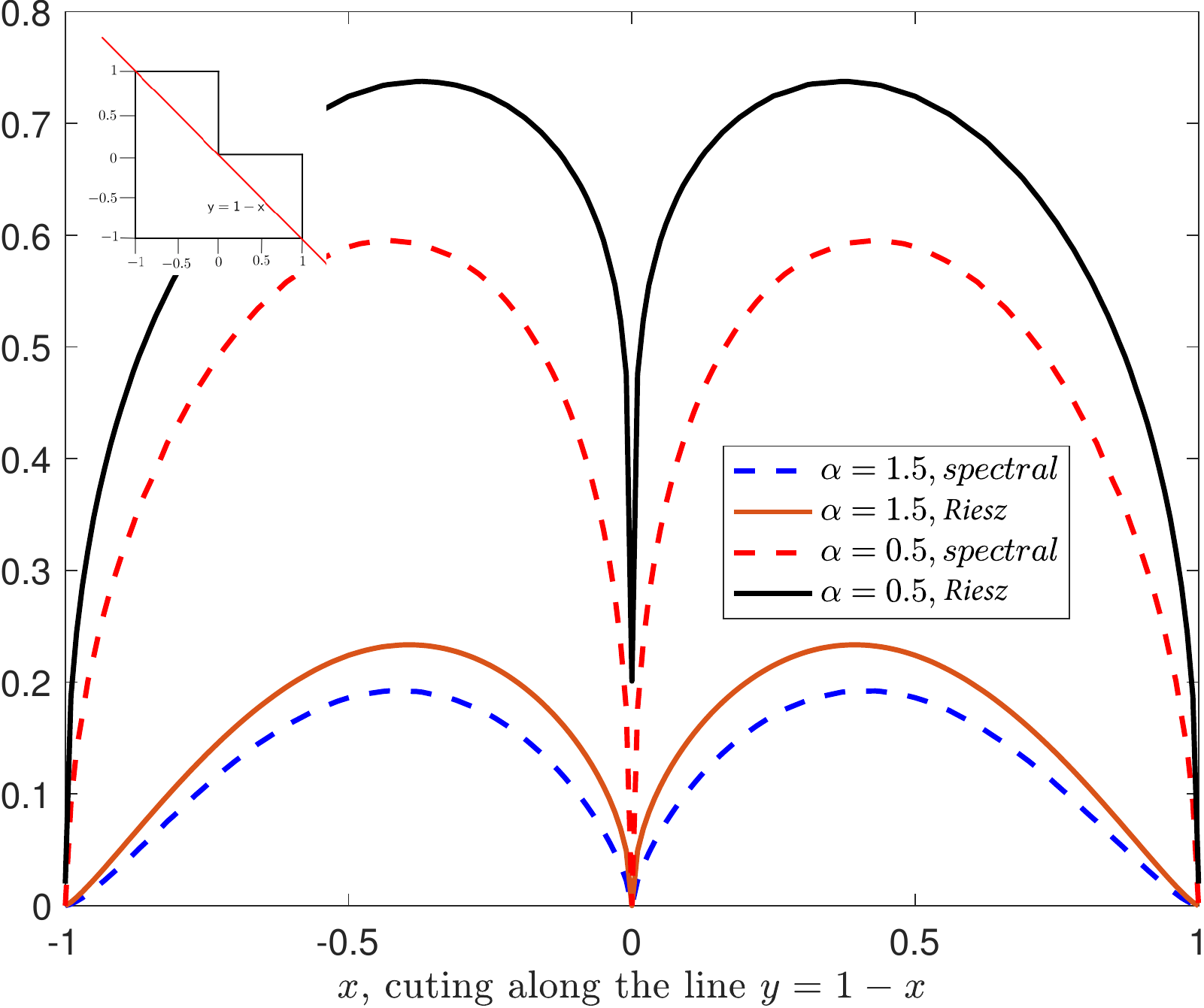}
\includegraphics[height=.2\textheight]{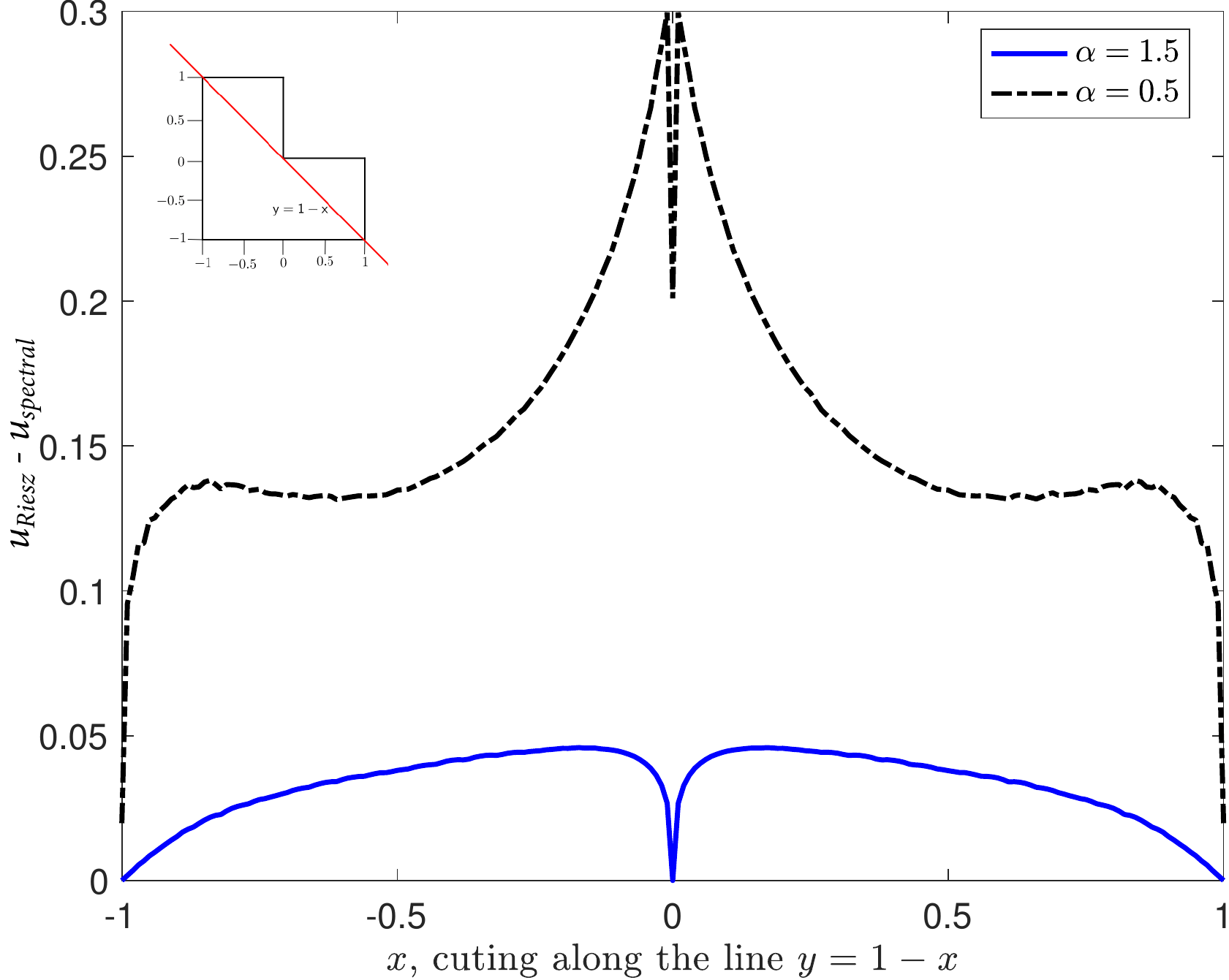}
\caption{\label{Lslicesf1yx} {Slices along the line $y=x$ and $y = 1-x$} in the L-shaped domain with $f = 1$ and both $\alpha = 0.5$ and $\alpha = 1.5$. (\emph{top left}) Plots of the solutions for the Riesz and spectral formulations along $y = x$ and (\emph{top right}) plot of the differences $u_{\text{Riesz}}-u_{\text{spectral}}$ along the line $y = x$. (\emph{bottom left}) Plots of the solutions for the Riesz and spectral formulations along $y = 1-x$ and (\emph{bottom right}) plot of the differences $u_{\text{Riesz}}-u_{\text{spectral}}$ along the line $y = 1-x$.}
\end{figure}

\FloatBarrier

\subsection{Computational Considerations}

Many of the numerical methods used earlier in this work have only recently been reported, and some require further development. Therefore, it is not reasonable to directly compare the computational complexities of these methods, since they are all at differing stages of development. Instead, in this section, we present an overview of the computational advantages and disadvantages of each method.

The AFEM of Ainsworth and Glusa \cite{AinsworthGlusa2017_TowardsEfficientFiniteElement,AinsworthGlusa2017_AspectsAdaptiveFiniteElement} for solving Riesz (or integral) fractional Poisson equations has been well-developed, and its overall computational complexity is $\mcO(n (\log n)^{2d})$ operations, including the assembly, computation of error indicators for the adaptive refinement, and solving the linear system. A detailed study of the complexity was included in \cite{AinsworthGlusa2017_AspectsAdaptiveFiniteElement}. At this time, the AFEM has been applied only to zero Dirichlet boundary conditions, although it may be possible to extend this method to nonzero Dirichlet conditions using a similar clustering approach as was mentioned in Section 3.1.1. As can be seen in the adaptively refined meshes shown in Appendix \ref{grids}, the AFEM captures the boundary singularity with a high degree of accuracy, and is a robust method for discretizing the Riesz fractional Laplacian.

Another approach we used to discretize the Riesz definition was the WOS algorithm (see Section \ref{sec:wos}), which can be applied to quasi-convex domains and zero or nonzero Dirichlet boundary conditions. In this method, the computational complexity is dominated by the Monte-Carlo simulation of many sample paths for each point in the bounded domain $\Omega$, although the complexity of our implementation scaled as the product of the number of sample paths multiplied by the number of points in the computational domain. Furthermore, the WOS computation of the solution at each point in $\Omega$ is independent of the computations for any other point, which means all of the walks-on-spheres can be done simultaneously. Therefore, WOS is an embarrassingly parallel algorithm. The WOS algorithm also has the advantage that no modification needs to be made when the Dirichlet boundary condition is nonzero other than inserting the appropriate function $g(x)$ into the Feynman Kac formula \eqref{FK-formula}. In \cite{kyprianou2016unbiasedwalk}, the convergence of the WOS algorithm is proved, and sample code for solving the fractional Poisson equation is provided.

To discretize the directional definition \eqref{def:directional_uniform}, we used the RBF collocation method of Section \ref{RBFM}. {\color{chocolate} An advantage of this approach is that it can be easily modified to handle the case where the measure $M(d\bm\theta)$ is non-uniform, as was done using another version of this method in \cite{pang2015space}. The RBF collocation method is applicable to problems with nonzero Dirichlet boundary conditions as long as the exterior boundary value decays at infinity. In addition to these advantages, there are some aspects in which further development is needed.} When collocation points are too close together, the method becomes unstable, which limits the level of accuracy this method {\color{blue} can achieve \cite{chen2014recent}}. Furthermore, for direct and iterative solvers, the linear system resulting from the RBF approach requires cubic and squared complexity to solve, respectively. {\color{chocolate} Currently, we} lack good preconditioners due to the complicated matrix structure formed by the Gr\"unwald-Letnikov scheme as well as Gaussian quadrature for the integral with respect to $\theta$.

For the spectral definition, there are many choices of numerical methods. In the comparisons above, we reported results computed using the SEM described in Section \ref{SEM}. Given a mesh of elements on a bounded domain $\Omega$, this method requires the computation of a large number of eigenvalues and eigenfunctions of the standard Laplacian. This is computationally expensive, and for larger eigenvalues, the computation of these eigenpairs becomes inaccurate. Furthermore, the resulting linear system is of cubic complexity. The advantage of this approach is its flexibility with respect to different types of boundary conditions and complex domains. Furthermore, if the SEM is applied to a time-dependent problem, e.g., the space-fractional heat equation, the eigenpairs need only be computed once at the beginning, and can be reused during each time step. To understand why this is the case, recall that the discretized form of the fractional Poisson equation is
\begin{align}
	A U_{\mcN} &= \hat{f},
\end{align}
where $\hat{f}$ is the load vector, $A_{\mcN}$ is the discrete counterpart of the spectral fractional Laplacian:
\begin{align}
	(A_{ij}) &= \sum_{n=1}^\mcN \lambda_n^{\alpha/2} (\tilde{\phi}_i,\tilde{\phi}_j),
\end{align}
and $\{\tilde{\phi}_i\}$ are the orthonormal Lagrange basis functions generated by the WGS procedure. Hence, $A$ is a diagonal matrix, so solving the linear system is simply a computation of the matrix-vector product $A^{-1} \hat{f}$. So this method is inexpensive in the context of time-dependent problems. For more details of the implementation and computational complexity for the SEM, see \cite{SongXuKarniadakis2017}.

\section{Nonzero Boundary Conditions} \label{sec:nonzerobcs}

\hrule
\vspace{1em}
\noindent \textbf{Section Overview}\\[0.2em]
\indent Depending on the fractional Laplacian definition, nonzero boundary conditions can lead to a significant increase in the computational cost of solving our benchmark equations. Most importantly, modifications to the definition itself may be necessary, particularly in the case of the spectral fractional Laplacian, as discussed in Section \ref{spectral}. We demonstrate how our numerical methods may be adapted for this case. We also present computationally feasible approaches to solving the Riesz fractional Poisson equation with nonzero Dirichlet boundary conditions, and we make comparisons between the solutions to some inhomogeneous benchmark problems. We note some qualitative differences in the solutions for the inhomogeneous case from our observations of the solutions to the homogeneous problems.

We are unable to compute solutions to the fractional Neumann problem involving the Riesz definition for two reasons. Firstly, because the formulation of the nonlocal Neumann condition is a subject of some controversy, as there are multiple formulations in the literature with no consensus \cite{Dipierro14,Song,Bogdan,GuanMa2006,barles_neumann_half_space}. Secondly, numerical approaches have not yet been developed for these fractional Neumann problems. It is more straightforward to define Neumann boundary conditions for the spectral fractional Laplacian, since this definition only requires local boundary conditions, as discussed above. However, the points we make in the following section can be understood fully using only nonzero Dirichlet boundary conditions.
\vspace{1em}
\hrule
\vspace{2em}

\subsection{Spectral Definition}
In this section, we follow up on the discussion of the different (equivalent) formulations of the inhomogeneous spectral fractional Laplacian in Section \ref{spectral} by presenting some numerical comparisons involving nonzero Dirichlet boundary conditions. We demonstrate numerically that the approach of Antil et al. \cite{AntilPfeffererRogovs}, the heat semigroup formulation of Cusimano et al. \cite{Cusimano2017}, and our nonharmonic lifting approach are all equivalent ways of describing the inhomogeneous spectral fractional Laplacian. As these formulations require distinct numerical approaches, we also compare these approaches in Section \ref{comparison_APR_HeatSG}.

\subsubsection{Numerical Comparison of \color{chocolate} Nonharmonic Lifting and 
{\color{chocolate} Harmonic Lifting Methods}}\label{lifting_weak_compare}
In Section \ref{sec:nonharmonic_lifting}, we described the nonharmonic lifting approach for discretizing the inhomogeneous spectral fractional Laplacian. In this section, we compare the results of {the \color{chocolate}nonharmonic} lifting method with the 
{\color{chocolate} harmonic lifting method of Antil, Pfefferer, and Rogovs\cite{AntilPfeffererRogovs}, described in Section \ref{sec:harmonic_lifting}. In our figures, we frequently refer to the latter as the ``APR'' method.}
{\color{chocolate} Recall that this method allows for lifting the boundary data by functions $v$ that solve $-\Delta v = 0$ in a \emph{very weak} variational form \eqref{variational_very_weak}; however, in all of the examples we consider, the boundary data is smooth enough that we can lift by standard harmonic functions. }

{
\color{chocolate}
For the harmonic lifting method, as explained in Section \ref{sec:inhomogeneous_spectral},  any method used to discretize the homogeneous spectral fractional Laplacian can be applied after the harmonic lifting. In our examples, we use the spectral element method described in Section \ref{SEM} in order to discretize $(-\Delta)^{\alpha/2}u(x)$. 
}
{\color{blue} {\color{chocolate} For the nonharmonic lifting method, we also apply} the spectral element method (or the discrete eigenfunction method in the case $d = 1$) of Section \ref{SEM}, where the numerical solution of the fractional Poisson problem is approximated by a function in the space $(\mathbb{P}_N \cap H_0^1)(\Omega)$. We consider the fractional Poisson equation \eqref{fracPoisson} with nonzero Dirichlet boundary $u(x)\big|_{\partial \Omega} = g(x)$. Let $u = w + v$, where $w$ is an unknown function and $w\big|_{\partial \Omega} = 0$, and $v$ is a  function chosen to satisfy $v \big|_{\partial \Omega}  = g$. Hence this function $v$ is not unique.  
Assuming that $\partial \Omega$ is smooth, $g$ must belong to the space $H^{1/2}(\partial \Omega)$, since the solution $u \in H^1_0(\Omega)$ and has trace equal to $g$. Then, following the approach of Section \ref{sec:nonharmonic_lifting}, the weak form of the inhomogeneous fractional Poisson problem can be written as follows: Find $w \in (\mathbb{P}_N \cap H_0^1)(\Omega)$, such that
\begin{align}
	((-\Delta)^{\alpha/2}w,\phi) &= (f,\phi) - \left(\nabla v, \nabla((-\Delta)^{\alpha/2-1}\phi)\right)
\end{align}
for all $\phi \in (\mathbb{P}_N \cap H_0^1)(\Omega)$.}

{\color{blue}We solve this problem in the domain $\Omega = (-1,1)$, where} we test two cases with different forcing terms, $f = x$ and $f = -x$ with boundary conditions $u(-1) = -1$ and $u(1) = +1$. We have exact solutions in the case $\alpha = 2$: $u = \frac{1}{6}x^3 + \frac{5}{6}x$ with the forcing term $f = -x$ (\emph{Case I}), and $u = -\frac{1}{6}x^3 + \frac{7}{6} x$ with the forcing term $f = x$ (\emph{Case II}). We choose the lifting functions $v = x$ and $v = x^3$ for our numerical tests. We plot the differences of the numerical solutions between the {\color{chocolate} nonharmonic lifting} method and the {\color{chocolate} harmonic lifting method} of \cite{AntilPfeffererRogovs} in Figure \ref{liftingdiff} for different values of $\alpha$ and different lifting functions $v(x)$.  
We can see from Figure \ref{liftingdiff} that the differences are on the order of machine precision, demonstrating that these approaches yield equivalent solutions.

\begin{figure}[ht!]
\centering
\includegraphics[width=.45\textwidth]{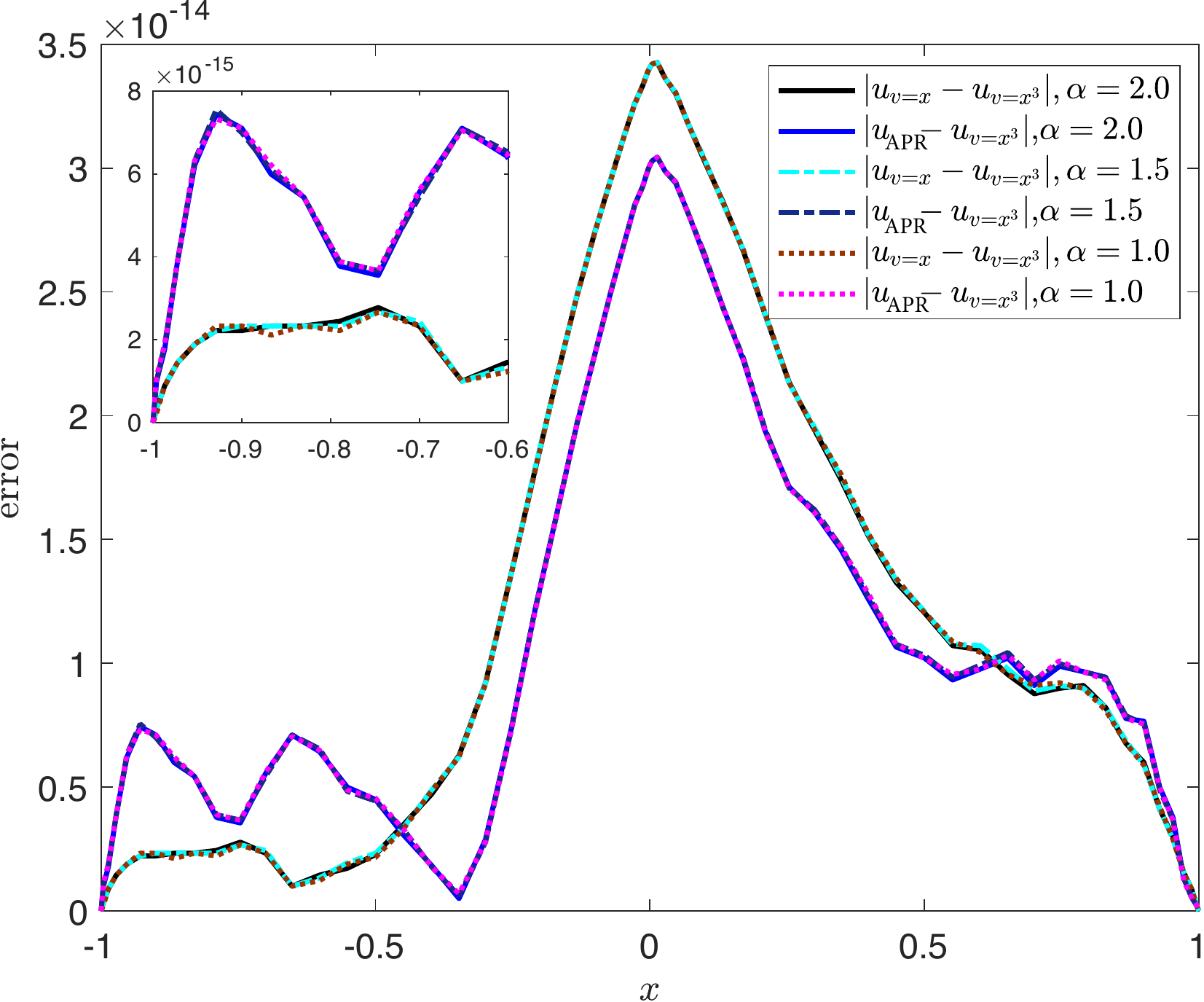}
\includegraphics[width=.45\textwidth]{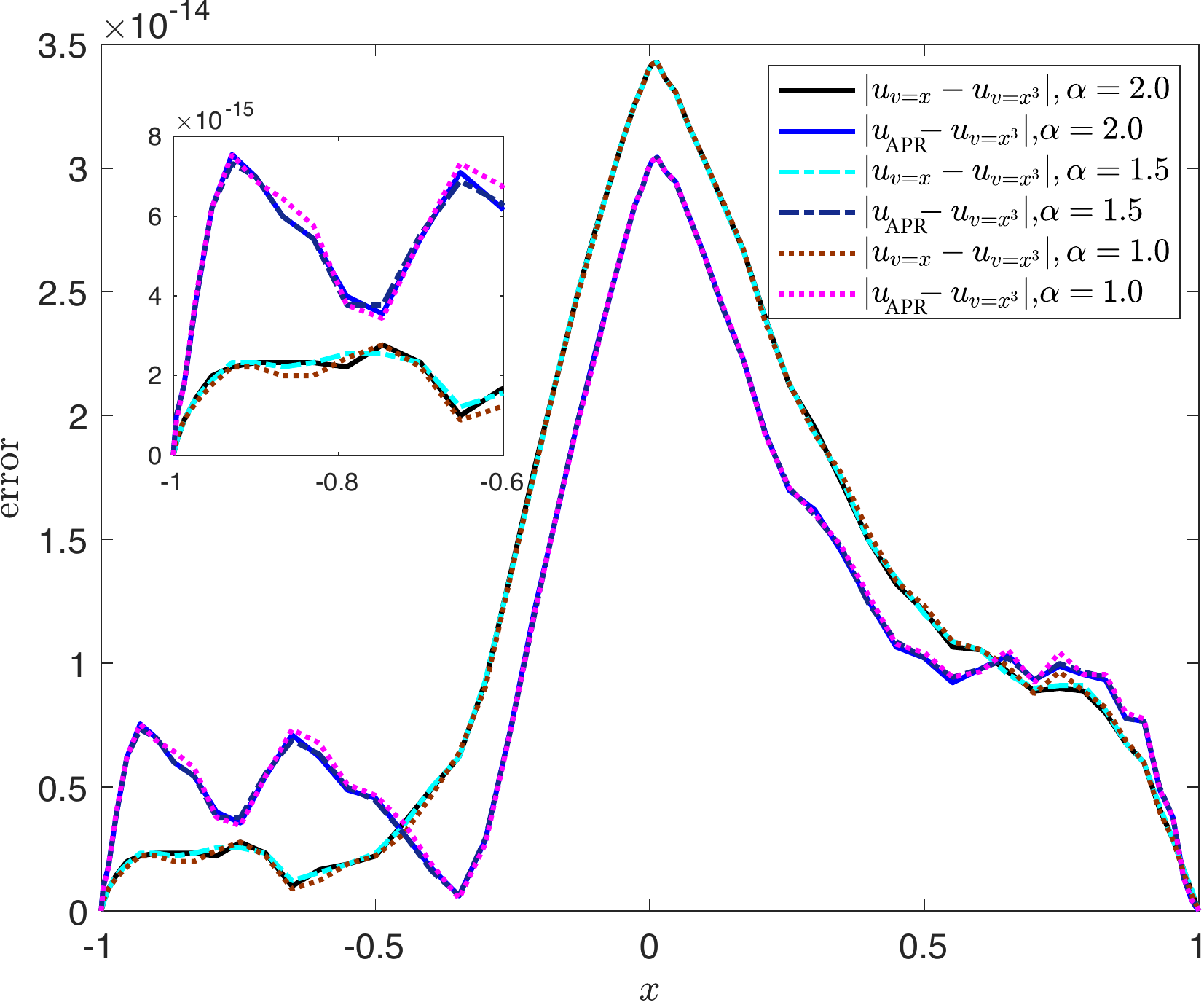}
\caption{\label{liftingdiff}Differences between the {APR \color{chocolate} harmonic lifting method} \cite{AntilPfeffererRogovs} and {\color{chocolate} nonharmonic} lifting method using (\emph{left}) $f = x$ and (\emph{right}) $f = -x$ with different fractional orders $\alpha$. The black solid lines, dashed cyan lines, and dashed brown lines represent the differences between the two versions of the lifting method with $v = x$ and $v = x^3$ for different values of $\alpha$. These differences turn out to be very similar regardless of the values of $\alpha$. We also plot the differences (for different $\alpha$'s) between the very weak solution, $u_A$, and the lifting solution with $v =x^3$ denoted $u_{v = x^3}$.}
\end{figure}

\FloatBarrier

\subsubsection{Numerical Comparison of the {\color{chocolate} Harmonic Lifting} and Heat Semigroup Approaches}\label{comparison_APR_HeatSG}

In this section, we compute the spectral fractional Laplacian of $u = \cos(\pi x)\sin(\pi y)$ on the domain $\Omega = [0,1]^2$ using both the {\color{chocolate} harmonic lifting method} \cite{AntilPfeffererRogovs} as well as the heat semigroup formulation of Cusimano et al. \cite{Cusimano2017}, {\color{chocolate} which we abbreviate as ``Heat SG" in our figures}. The idea of this section is to give an indication of the computational cost of computing $(-\Delta)^{\alpha/2} u(x)$ using the different (equivalent) formulations. 
{\color{chocolate} Once again,} we use the SEM described in Section \ref{SEM} in order to discretize the {\color{chocolate} harmonic lifting (APR)} formulation of $(-\Delta)^{\alpha/2}u(x)$. For the heat SG formulation, we use a quadrature rule to approximate the (truncated) integral \eqref{cusimano_def1}, and we use the same SEM of Section \ref{SEM} along with the Euler method to solve the heat equation \eqref{heatsg_heateqn}. We give more details of these computations below.

The heat semigroup approach uses the formulation of \eqref{cusimano_def1} and \eqref{heatsg_heateqn}, as discussed in Section \ref{sec:heat_semigroup}. The integration \eqref{cusimano_def1} over $(0,\infty)$ is truncated to the interval $(0,T)$ and is approximated using the quadrature formula
\begin{align}
	\mcL_{\Omega,0}^{\alpha/2} u(x_i) &:= \sum_{j=1}^{N_t} (e^{t_j \Delta_\mcB}u(x_i) - u(x_i)) \beta_j = \sum_{j=1}^{N_t} (w_h(x_i,t_j) - w_h(x_i,0))\beta_j,
\end{align}
where $\beta_j := \frac{1}{\Gamma(-\alpha/2)} \int_{t_j - \Delta t/2}^{t_j + \Delta t/2} \frac{dt}{t^{1+\alpha/2}}$. The $w_h$ represents the SEM approximation of the solution $w(x,t)$ to \eqref{heatsg_heateqn}. Equation \eqref{heatsg_heateqn} is discretized in space using the SEM of Section \ref{SEM}, and the time discretization is done using a first order Euler method. The solution $w_h(x_i,t_j)$ is computed by evaluating the polynomial expansion produced by the SEM at $x_i$ at time step $j$.
For the case with nonzero boundary condition $u\big|_{\partial \Omega} = g$, we use a lifting function $v$ that satisfies the integer order equation
\begin{align}
\begin{split}
	-\Delta v &= 0, \hspace{10pt} x \in \Omega, \\
	v &= g, \hspace{10pt} x \in \partial \Omega.
\end{split}
\end{align}
Then we subtract the harmonic lifting function $v$ from $u$ and proceed as above to calculate $\mcL_{\Omega,0}^{\alpha/2} (u-v)$.

In Figure \ref{apr_heatsg_compare}, we plot $(-\Delta)^{\alpha/2}u(x)$ with $\alpha = 0.5$ and $\alpha = 0.8$ for both the {\color{chocolate} harmonic lifting} and Heat SG formulations, both in the square $[0,1]^2$ and along the slice $y = 0.46$. We also plot the differences between these approximations along the same slice, $y = 0.46$. In these examples, we use the SEM with a single element and expansion order $N = 20$, both in the {\color{chocolate} harmonic lifting} discretization and in the space-discretization of the heat equation \eqref{heatsg_heateqn} associated with the heat SG formulation. For the time-discretization of the heat equation \eqref{heatsg_heateqn}, we used the forward Euler method with time step size 6e-5 and $200,000$ iterations. The integral \eqref{cusimano_def1} was approximated with 30 quadrature points and the integration was truncated to the interval $(0,T=1)$.

\begin{figure}[ht!]
\centering
\includegraphics[width=\textwidth]{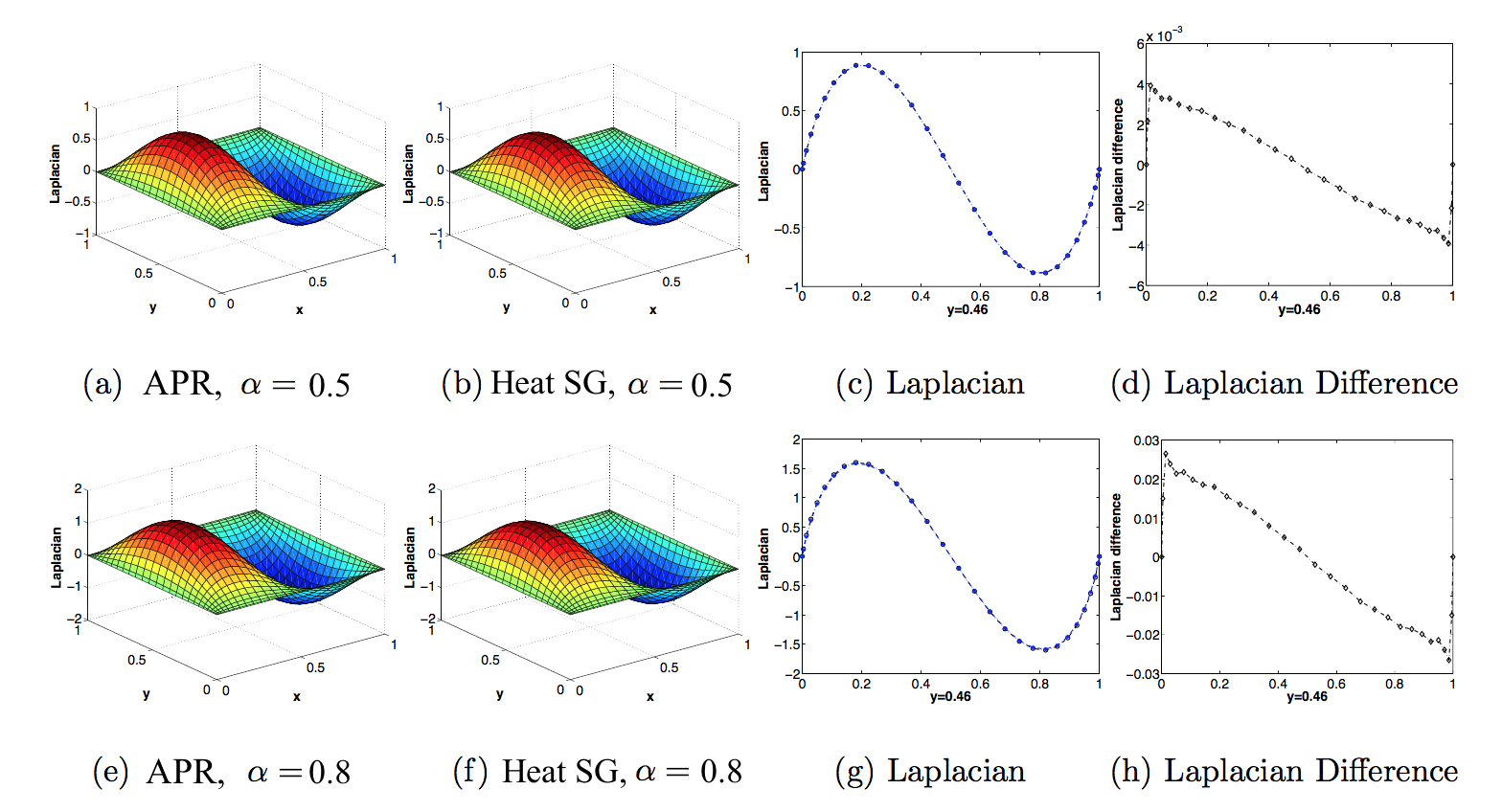}
\caption{$\bm{\alpha = 0.5}$: Plots of $(-\Delta)^{\alpha/2}u(x)$ for $u(x) = \cos(\pi x)\sin(\pi y)$ using (a) the {\color{chocolate} harmonic lifting} (APR) formulation \cite{AntilPfeffererRogovs} and (b) the heat SG formulation \cite{Cusimano2017} on the square domain $[0,1]^2$. (c) Plots of the solutions along the slice $y = 0.46$, where the discretizations appear on top of each other. (d) Difference of the two discretizations plotted along the slice $y = 0.46$. $\bm{\alpha = 0.8}$: Plots of $(-\Delta)^{\alpha/2}u(x)$ for $u(x) = \cos(\pi x)\sin(\pi y)$ using (e) the {\color{chocolate} harmonic lifting}  (APR) formulation and (f) the heat SG formulation on the square domain $[0,1]^2$. (g) Plots of the solutions along the slice $y = 0.46$, where the discretizations appear on top of each other. (h) Difference of the two discretizations plotted along the slice $y = 0.46$. 
{\color{magenta}In subplots (d) and (h), the difference between the two numerical solutions arrives at a maximum near the boundaries, but is zero exactly at the boundaries because the boundary condition is strongly enforced. This difference will decrease and converge to zero as the accuracy each method is increased, by increasing the integration time and decreasing the time step size in the Heat SG method and increasing the number of eigenfunctions and fineness of the mesh in the SEM discretization used in the APR method.}
\label{apr_heatsg_compare}}
\end{figure}

{
\color{blue}
Although theoretical estimates for the convergence of the SEM in section \ref{SEM} are currently under development, it is still possible to have a heuristic discussion comparing the efficiency of using the SEM together with the APR/harmonic lifting formulation of \cite{AntilPfeffererRogovs} vs. using the heat semigroup formulation of \cite{Cusimano2017} to discretize the inhomogeneous spectral fractional Laplacian.
We remark for the heat semigroup formulation, one must solve the (standard) heat equation for many time steps and use the solution to compute the integral \eqref{cusimano_def1} over $t \in (0,\infty)$. The advantage of this approach is that many robust methods exist to discretize the heat equation with arbitrary boundary conditions in high dimensions. On the other hand, discretizing according to the APR formulation \cite{AntilPfeffererRogovs} using the SEM requires the computation of $N$ eigenvalues and eigenfunctions on each element of the domain, which has complexity $N^3$.  Therefore, for time-\emph{independent} fractional equations, the heat SG formulation may offer a faster discretization. However, in order to solve a time-\emph{dependent} equation, such as a fractional heat equation, the heat semigroup formulation would require repeatedly solving a standard heat equation for long time \emph{at each time step}, leading to significantly higher computational complexity. {\color{chocolate} In contrast,} the eigenpairs of the SEM need only be computed once, and can be re-used at each time step. Therefore, the APR with SEM formulation can be expected to be more efficient for time-dependent problems. 
}

\subsection{Comparison of Spectral, Directional, and Riesz Solutions}\label{NonzeroComparison}

Now we can compare the solutions of the inhomogeneous spectral, directional, and Riesz fractional Laplacians. Consider the following equation with nonzero Dirichlet boundary conditions.
\begin{align}
\label{inhom_poisson}
\begin{split}
	(-\Delta)^{\alpha/2} u &= 1, \hspace{10pt} x \in \Omega := [-1,1] \times [-1,1], \\
	u(x) = g(x) :&= \exp(-|x|^2), \hspace{10pt} x \in \partial \Omega \ \text{or} \ \mbbR^d \setminus \Omega,
\end{split}
\end{align}
where $\alpha$ is chosen to be $1.5$. The boundary condition is posed on $\partial \Omega$ for the spectral fractional Laplacian, and it is posed on $\mbbR^d \setminus \Omega$ for the Riesz and directional definitions.

We use the method of \cite{AntilPfeffererRogovs} to solve \eqref{inhom_poisson} using the spectral fractional Laplacian, the WOS method for the Riesz definition, and the RBF collocation method for the directional definition. We again find that the directional and Riesz solutions are equivalent up to numerical error, as shown in Figure \ref{inhom_solns}. We also observe that the spectral solution is of greater magnitude than the other solutions in this example, contrary to the observations we made in the zero boundary condition case. This indicates that the relative magnitudes of the solutions for different definitions relies on the boundary condition. In Figure \ref{inhom_slices}, we plot the solutions and differences along the line $y = 0$, where this property can be more easily observed.
 
\begin{figure}[ht!]
\centering
\includegraphics[width=.3\textwidth]{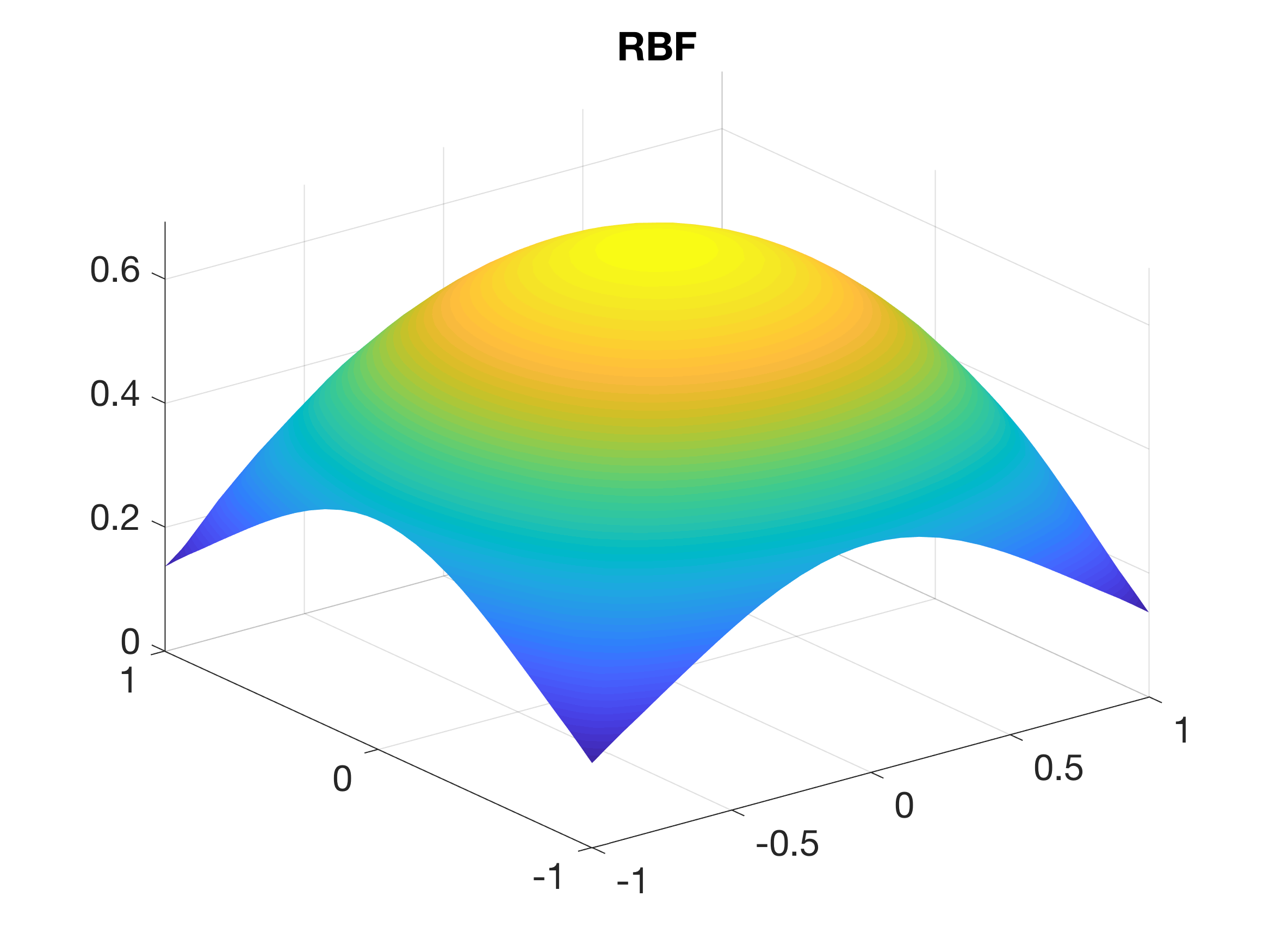}
\includegraphics[width=.3\textwidth]{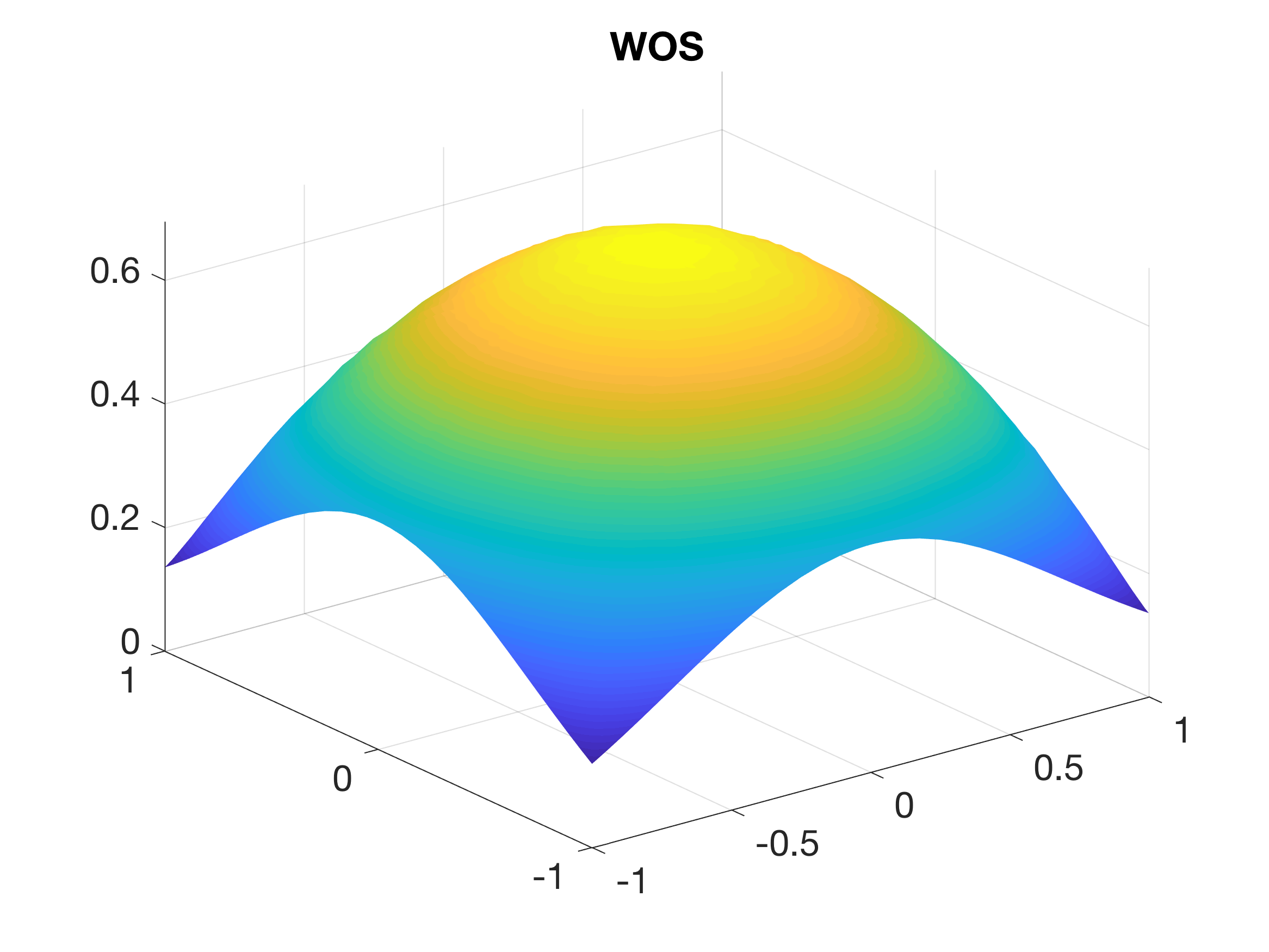}
\includegraphics[width=.3\textwidth]{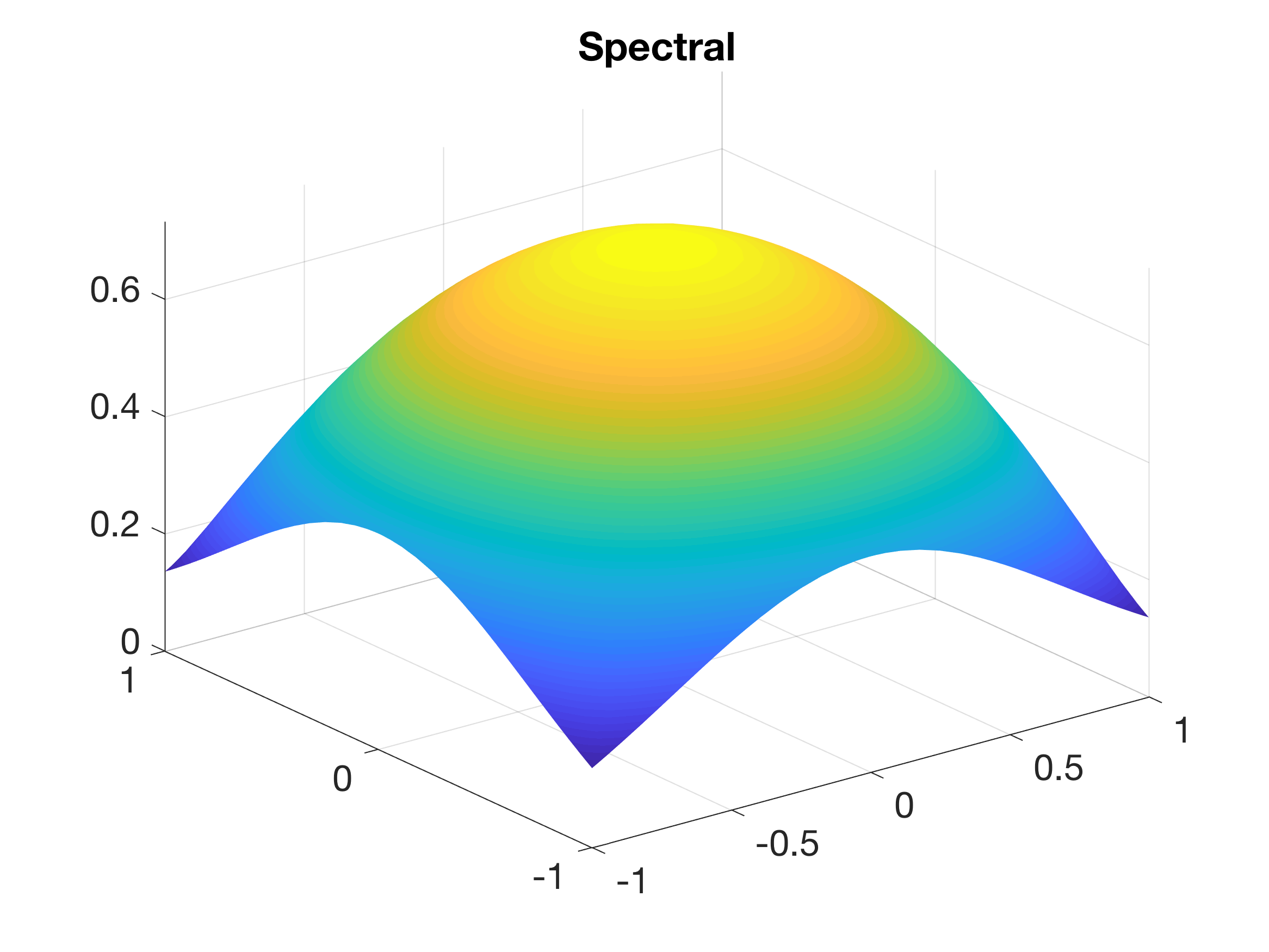}
\caption{\label{inhom_solns} {Solutions to the inhomogeneous fractional Poisson equation}: (\emph{left}) RBF solution corresponding to the directional definition, (\emph{center}) WOS solution corresponding to the Riesz definition, and (\emph{right}) SEM solution corresponding to the spectral definition.}
\end{figure}

\begin{figure}[ht!]
\centering
\includegraphics[width=.3\textwidth]{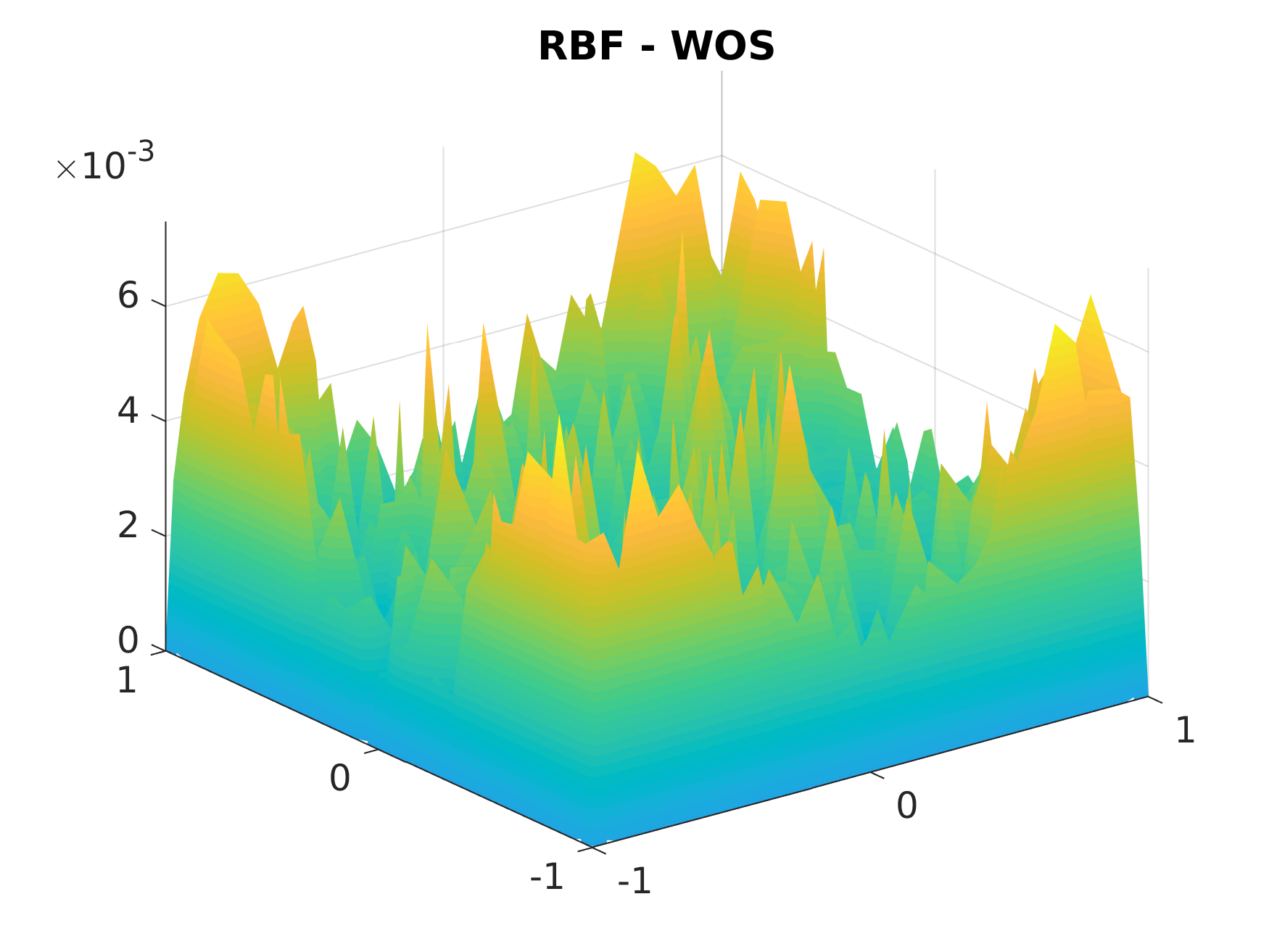}
\includegraphics[width=.3\textwidth]{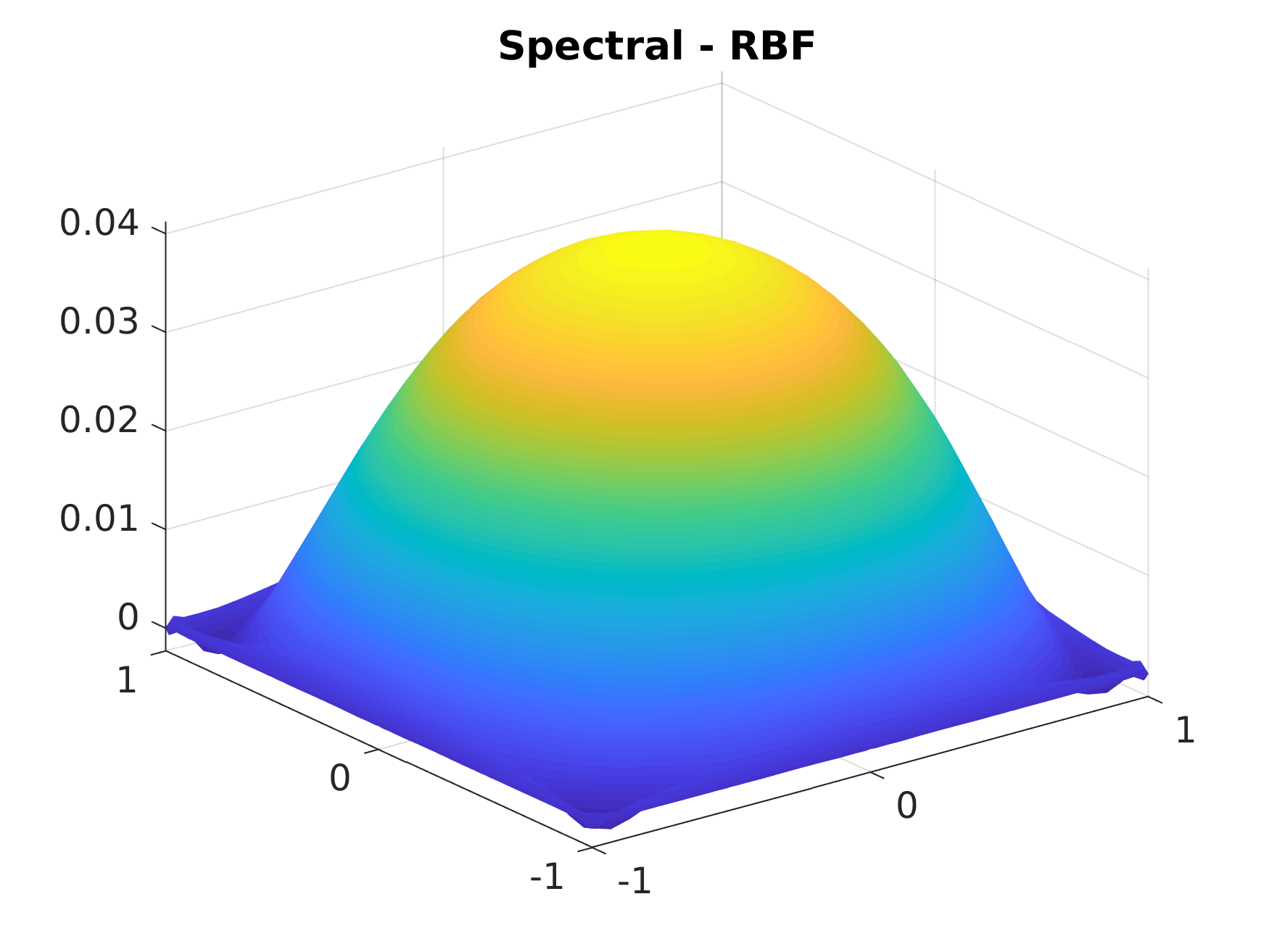}
\includegraphics[width=.3\textwidth]{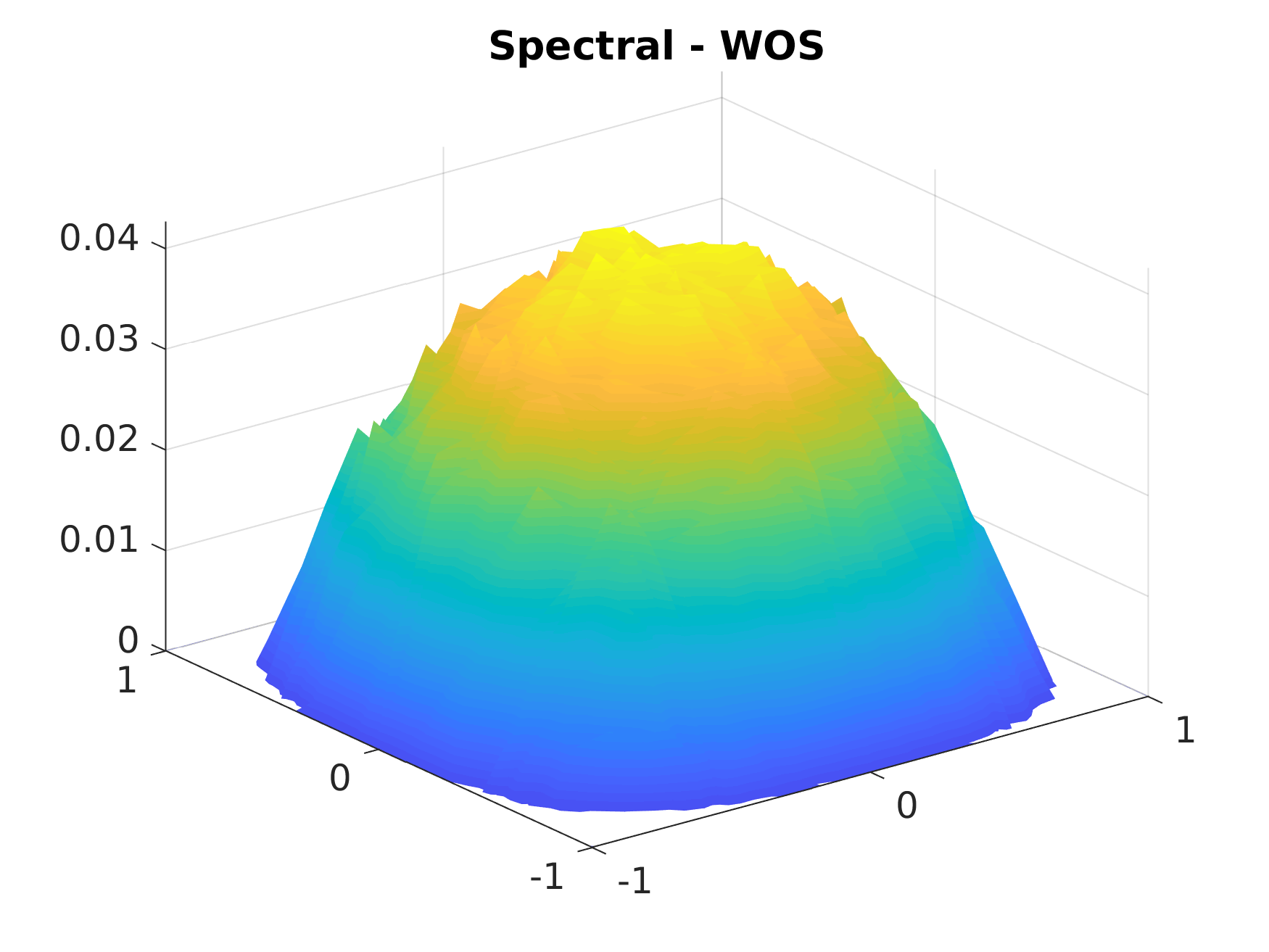}
\caption{\label{inhom_solns} {Differences of the solutions to the inhomogeneous fractional Poisson equation}: (\emph{left}) the difference between the RBF and WOS solution demonstrates the close similarity between these solutions, as the directional and Riesz definitions are equivalent; (\emph{center}) the difference between the spectral and RBF solutions; (\emph{right}) the difference between the spectral and WOS solutions. In these examples, the source function is $f = 1$ and the BC is $g = \exp(-|x|^2)$, and $\alpha = 1.5$. Furthermore, the spectral solution has greater magnitude than the directional and Riesz solutions, which is in contrast with the results for zero boundary conditions.}
\end{figure}

\begin{figure}[ht!]
\centering
\includegraphics[width=.45\textwidth]{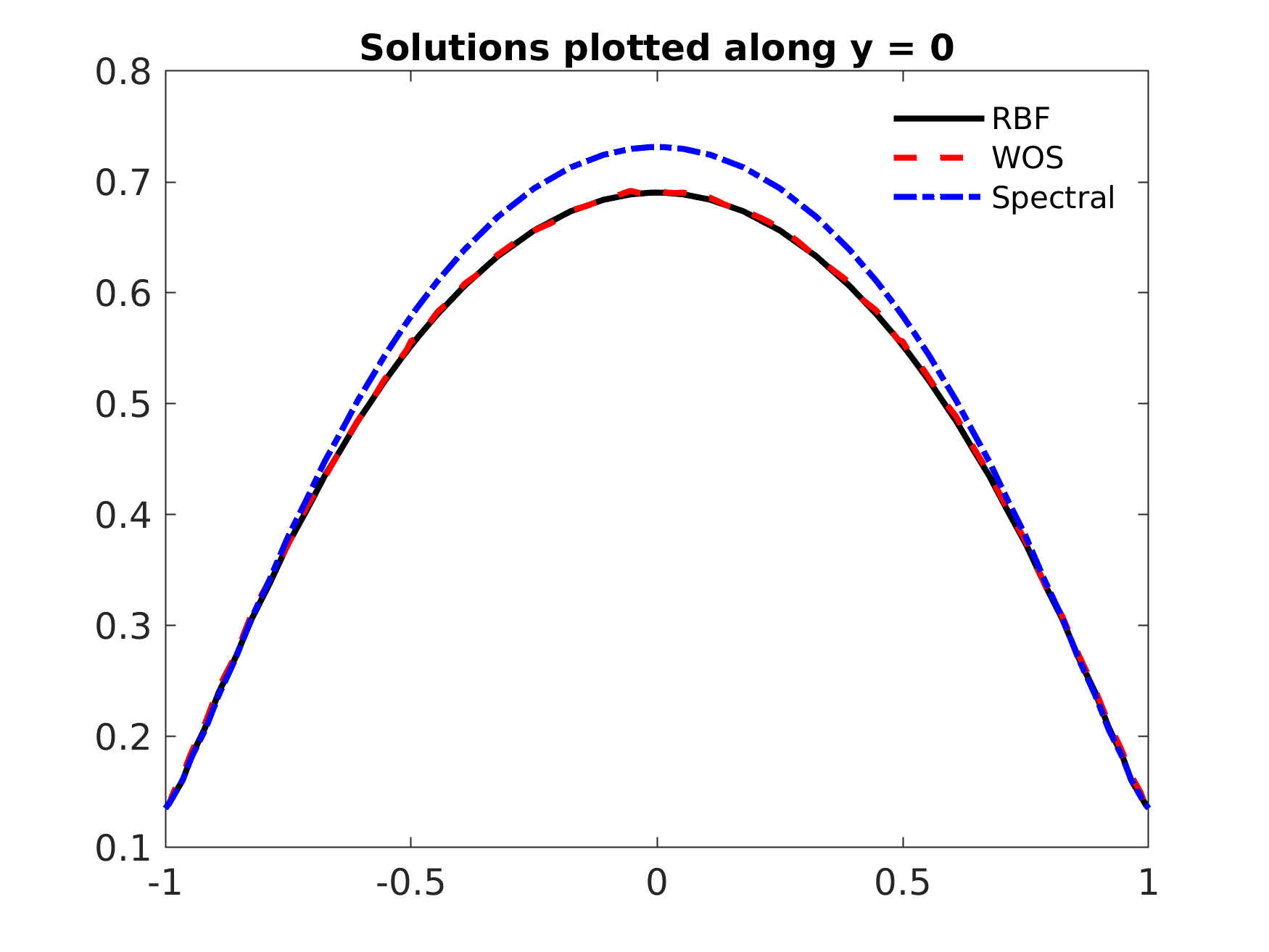}
\includegraphics[width=.45\textwidth]{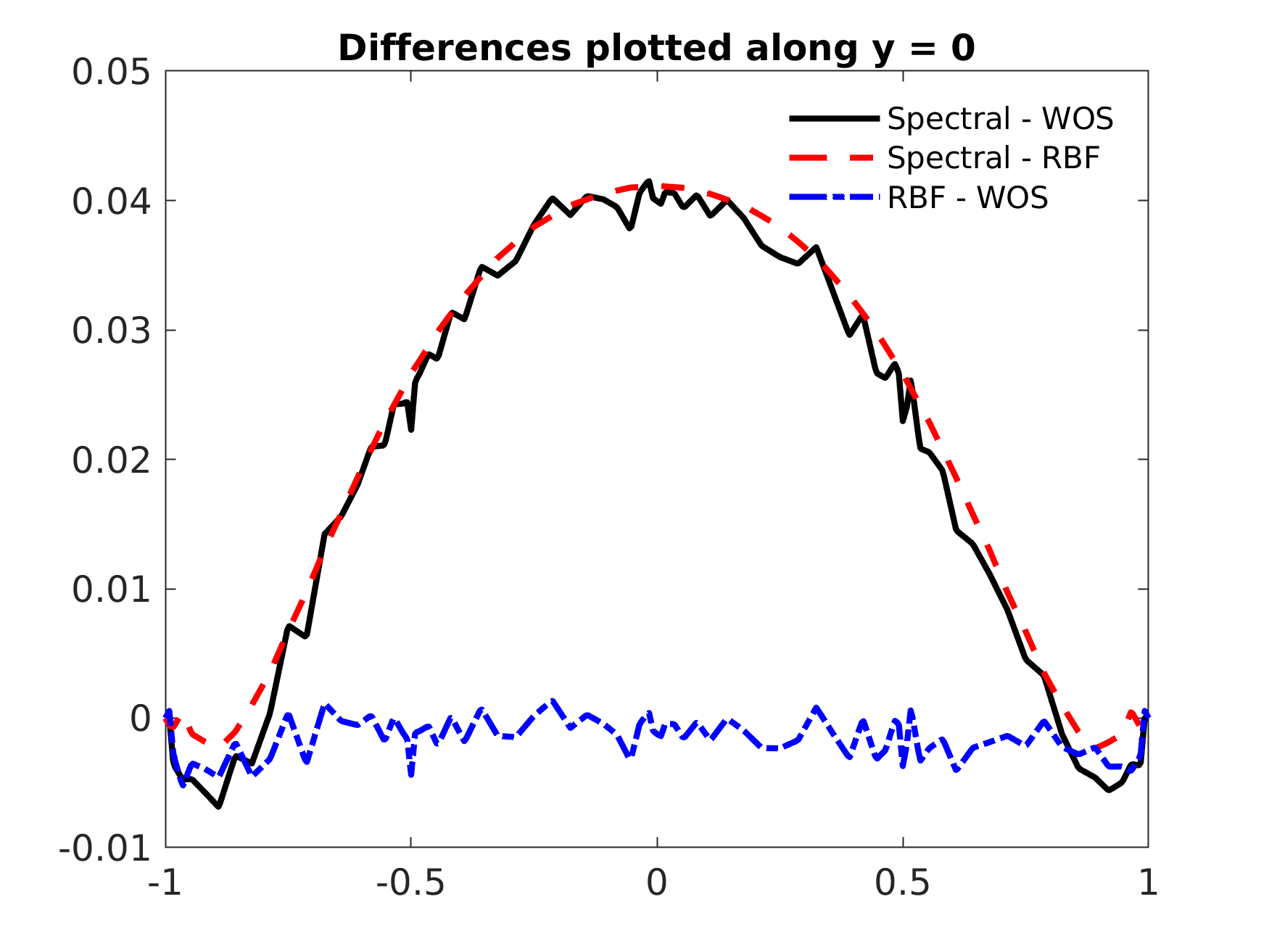}
\caption{\label{inhom_slices} {Plots of solutions and differences along the line $y = 0$}: (\emph{left}) We plot the RBF, WOS, and spectral solutions along the line $y = 0$, and (\emph{right}) the differences between the solutions. The oscillations are due to the WOS method and are expected since this method is based on the Feynman-Kac approach.}
\end{figure}

The influence of the truncation parameter on the solution accuracy for the RBF collocation method
is shown in Table \ref{inhomogeneous_truncation_square_table}. 
The solution in this case is converged when the truncation parameter $K_2 = 6000$, 
due to the rapid decay of the exterior condition $g(x)=\exp(-|x|^2)$

\begin{table}[htbp!]
\begin{tabular}{|p{2in}|p{1.5in}|p{1.5in}|}
\hline
\textbf{Truncation parameter $K_2$} & \textbf{$L_2$ difference from previous solution} & \textbf{$L_2$ difference from reference solution} \\ \hline
1000                                & N/A                                              & 1.06e+0                                     \\ \hline
1200                                & 5.41e-1                                          & 5.19e-1                                     \\ \hline
1600                                & 4.05e-1                                          & 1.14e-1                                     \\ \hline
2000                                & 9.27e-2                                          & 2.15e-2                                     \\ \hline
3000                                & 2.13e-2                                          & 1.50e-4                                     \\ \hline
4000                                & 1.50e-4                                          & 2.63e-7                                     \\ \hline
6000 (reference solution)                    & 2.63e-7                                          & 0                                           \\ \hline
\end{tabular}
\caption{\textit{Influence of truncation parameter $K_2$ on the RBF collocation numerical solution for inhomogeneous boundary condition on the unit square $[-1,1]\times[-1,1]$.} We consider the 2D fractional Poisson problem with the forcing term $f(x)=1$ and the boundary condition $g(x)=\exp(-|x|^2)$. 
The collocation points are on a 41 $\times$ 41 regular grid, including 1521 domain points and 160 boundary points. The RBF shape parameter is 0.05 and the spatial step size $h$ is set to be 0.001. We increase the truncation parameter from $K_2=1000$ to $K_2=6000$ until the numerical solution becomes sufficiently consistent with respect to $K_2$.}
\label{inhomogeneous_truncation_square_table}
\end{table}

\section{Summary and Discussion \label{conclusion}}

In this work, we {\color{chocolate} followed} a{ \color{chocolate} joint theoretical and computational} approach to examining the different characteristics of {\color{chocolate} different fractional Laplacians} and solutions of related fractional Poisson equations formulated on bounded domains. {\color{chocolate}This included} the spectral and horizon-based nonlocal definitions of the fractional Laplacian as well as various formulations of the Riesz fractional Laplacian. {\color{chocolate}We made numerical comparisons using different methods and high levels of refinement in order to compare solutions to both one- and two-dimensional benchmark problems formulated with different fractional Laplacians. We surveyed relevant numerical methods for performing these computations and detailed the implementation of the methods used in this work, and a new radial basis function collocation method was presented. We discussed relative advantages of the computational approaches we implemented and identified directions for future development. We also outlined the theoretical derivation of each fractional Laplacian definition in $\mbbR^d$ and contrasted the different approaches to restricting these definitions to bounded domains. Connections were made between fractional Laplacians with different types of boundary conditions with their associated stochastic processes. We surveyed relevant regularity results for the fractional Poisson problems, and discussed the well-posedness of the considered equations.}

The {\color{chocolate}nonlocality of the fractional Poisson problems} presented significant computational challenges even when posed with zero Dirichlet boundary conditions, and nonzero boundary conditions further compounded these difficulties. We discussed recently-proposed approaches for approximating the inhomogeneous fractional Laplacians \cite{AntilPfeffererRogovs,Cusimano2017} and compared these {\color{chocolate}approaches} to nonharmonic lifting of the equation to a homogeneous reformulation. We found that {\color{chocolate}all considered methods} resulted in equivalent solutions to the inhomogeneous benchmark problems, and we proved this equivalence analytically {\color{chocolate}for the first time.}

{\color{chocolate}The numerical methods used in this work are not representative of the breadth of research on this topic, and the types of methods considered reflect the desire of the authors to maintain a focus on fundamental questions (such as nonzero boundary conditions) related to the fractional Laplacian rather than a comprehensive survey of numerical methods. For example, finite-difference approaches to fractional Laplacians and fractional diffusion, such as 
\cite{huang2014numerical, li2018numerical, meerschaert_bcs, kelly2018anomalous}, have not been discussed at length despite being of classical importance and an area of active development. However, the discussion of the varied types of methods implemented and developed for this work, which we believe to be state-of-the art at the time of this writing, has significant value for aiding researchers simulating fractional models in choosing an appropriate numerical method.}

{\color{chocolate}Using these numerical methods, we were able to make several interesting observations.} We found that the size of the domain has a significant effect on the evolution of the numerical solutions as the fractional order $\alpha$ was changed, as discussed in Sec. \ref{intro}. We also emphasized the singular behavior of the Riesz fractional Poisson equation solutions near the boundary, which sharply contrasted with the smooth behavior of the spectral solutions in the same locations. We observed in our examples that given a source function $f$ with no boundary singularity, we can expect that the solution to the Riesz fractional Poisson equation will have a boundary singularity, and the solution to the spectral fractional Poisson equation will be smooth at the boundaries. This observation should be considered when choosing which definition to use to fit data most appropriately. The analytical perspective on this boundary regularity issue was discussed in detail in Sections \ref{background} and \ref{sec:regularity}.

Although the study of the fractional Laplacian is far from complete, this work can serve as a starting point for researchers using these operators to model systems {\color{chocolate}exhibiting anomalous transport phenomena.}

\pagebreak
\global\pdfpageattr\expandafter{\the\pdfpageattr/Rotate 90}
\newgeometry{left=0.5in, bottom=0.5in, top=0.5in, right=0.5in}

\begin{sidewaysfigure}[ht!]
\centering
\resizebox{\textheight}{!}{
\begin{tikzpicture}[node distance = 5mm and 5mm,
  double/.style={very thick, draw, anchor=text, rectangle split,rectangle split parts=2},
  triple/.style={very thick, draw, anchor=text, rectangle split,rectangle split parts=3},
  terminal/.style={very thick, draw=black, top color=white, bottom color=black!20, anchor=text, rectangle, minimum size=6mm},
  myarrow/.style={very thick, -latex},
	]
  \node (spectlabel) [terminal,align=center] at (10,2) {\textbf{Spectral Definition} \\ \textbf{on} \\ \textbf{Bounded Domains}};
  
      \node(homoneu)[double,align=center,below=3cm,at=(spectlabel.south)] {\textbf{Zero Neumann}
    \nodepart{second}
      Balakrishnan formula \cite{bonito2017,bonito2015}, Sec. \ref{homo_representations} \\
      {\color{forest}Harmonic Lifting +} FEM \cite{AntilPfeffererRogovs}, Sec. \ref{sec:harmonic_lifting} \\
      Heat Semigroup \cite{Cusimano2017}, Sec. \ref{Spectral} \\
      SEM \cite{SongXuKarniadakis2017}, Sec. \ref{SEM}
  };
  
    \node(nond)[double,align=center,left=3mm,at=(homoneu.west)] {\textbf{Nonzero Dirichlet}
   	\nodepart{second}
	      {\color{forest}Harmonic Lifting +} FEM \cite{AntilPfeffererRogovs}, Sec. \ref{sec:harmonic_lifting} \\
		Heat Semigroup \cite{Cusimano2017}, Sec. \ref{Spectral} \\
	      {\color{forest}Nonharmonic Lifting}, Sec. \ref{sec:nonharmonic_lifting} \& \ref{SEM}
	};

  \node(homod)[double,align=center,left=3mm,at=(nond.west)] {\textbf{Zero Dirichlet}
  	\nodepart{second}
	   Balakrishnan formula \cite{bonito2017,bonito2015}, Sec. \ref{homo_representations} \\
	   {\color{forest}Harmonic Lifting +} FEM \cite{AntilPfeffererRogovs}, Sec. \ref{sec:harmonic_lifting} \\
	   Heat Semigroup \cite{Cusimano2017}, Sec. \ref{Spectral} \\
	   SEM \cite{SongXuKarniadakis2017}, Sec. \ref{SEM} \\
	   \tikz{\node(silvcaff)[double,align=center] {Extension Method \cite{CaffarelliSilvestre2007_ExtensionProblemRelatedToFractionalLaplacian}, Sec. \ref{Spectral}
	     \nodepart{second}
	     Hybrid FE-SM \cite{AinsworthGlusa2017_HybridFiniteElementSpectral}, Sec. \ref{Spectral} \\
	     FEM \cite{Nochetto2015} \\
	     H-P FEM \cite{Gatto2015}};} 
  };
  
  \node(nonneu)[double,align=center,right=3mm,at=(homoneu.east)] {\textbf{Nonzero Neumann} 
        \nodepart{second}
        {\color{forest}Harmonic Lifting + } FEM \cite{AntilPfeffererRogovs}, Sec. \ref{sec:harmonic_lifting} \\
        Heat Semigroup \cite{Cusimano2017}, Sec. \ref{Spectral}};
  
      \node(robin)[double,align=center,right=3mm,at=(nonneu.east)] {\textbf{Robin}
    \nodepart{second}
      Heat Semigroup \cite{Cusimano2017}, Sec. \ref{Spectral}
  };

  \draw[myarrow,black] (spectlabel) -- (homod);
  \draw[myarrow,black] (spectlabel) -- (homoneu);
  \draw[myarrow,black] (spectlabel) -- (robin);
  \draw[myarrow,black] (spectlabel) -- (nond);
  \draw[myarrow,black] (spectlabel) -- (nonneu);

\end{tikzpicture}
}
\caption{\label{spectral_methods} Numerical methods {\color{chocolate} that are focused on} or developed in this work for solving equations involving the spectral fractional Laplacian. {\color{chocolate} This is by no means an exhaustive list of all possible methods for the spectral fractional Laplacian.}}
\end{sidewaysfigure}
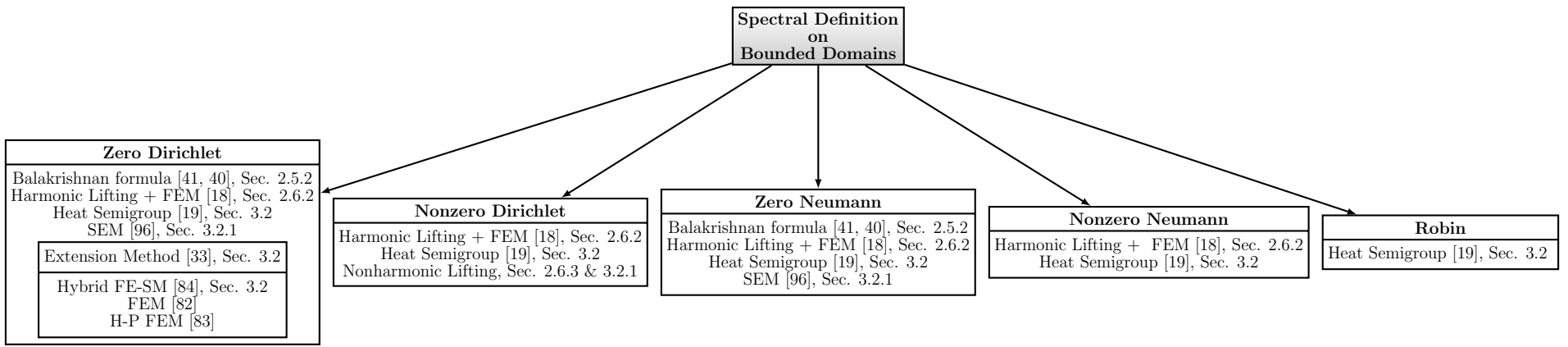

\restoregeometry
\pagebreak

\begin{figure}
\centering
\begin{tikzpicture}[node distance = 5mm and 5mm,
  double/.style={very thick, draw, anchor=text, rectangle split,rectangle split parts=2},
  triple/.style={very thick, draw, anchor=text, rectangle split,rectangle split parts=3},
  terminal/.style={very thick, draw=black, top color=white, bottom color=black!20, anchor=text, rectangle, minimum size=6mm},
  myarrow/.style={very thick, -latex},
	]
  \node (rieszlabel) [terminal,align=center] at (10,2) {\textbf{Riesz Definition} \\ \textbf{on} \\ \textbf{Bounded Domains}};

    \node(nond)[double,align=center,below=3.15cm,right=0.2cm,at=(rieszlabel.east)] {\textbf{Nonzero Dirichlet}
   	\nodepart{second}
	      RBF Collocation, Sec. \ref{RBFM} \\
	      Walk on Spheres \cite{kyprianou2016unbiasedwalk}, Sec. \ref{sec:wos}
	      };

  \node(homod)[double,align=center,below=3.15cm,left=0.2cm,at=(rieszlabel.west)] {\textbf{Zero Dirichlet}
  	\nodepart{second}
	AFEM \cite{AinsworthGlusa2017_TowardsEfficientFiniteElement}, Sec. \ref{sec:afem} \\
      	RBF Collocation, Sec. \ref{RBFM} \\
      	Walk on Spheres \cite{kyprianou2016unbiasedwalk}, Sec. \ref{sec:wos}
  };

  \draw[myarrow,black] (rieszlabel) -- (nond);
  \draw[myarrow,black] (rieszlabel) -- (homod);

\end{tikzpicture}
\caption{\label{Riesz_methods} Numerical methods {\color{chocolate} that are focused on} or developed in this work for solving equations involving the Riesz fractional Laplacian. {\color{chocolate} This is by no means an exhaustive list of all possible methods for the Riesz fractional Laplacian.}}
\end{figure}
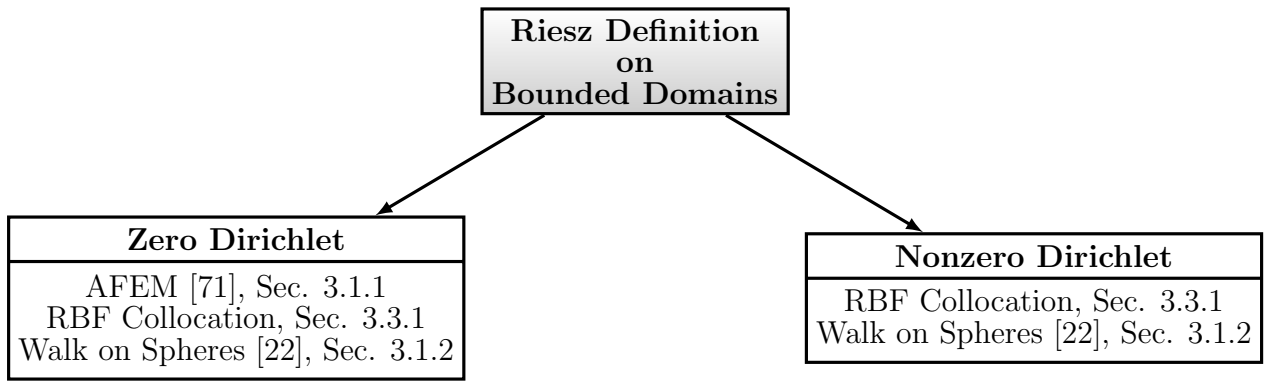

{\pagebreak[4]\global\pdfpageattr\expandafter{\the\pdfpageattr/Rotate 0}}%
\pagebreak

\section{Acknowledgements}
This work was supported by the OSD/ARO/MURI on ``Fractional PDEs for Conservation Laws and Beyond: Theory, Numerics and Applications" (W911NF-15-1-0562).

W. Cai was supported by an NSF grant (DMS-1764187).

G. Pang was supported by the postdoctoral fellowship at Beijing Computational Science Research Center and also by the National Natural Science Foundation of China (11701025).

{\color{chocolate} M. Gulian was supported by an NSF Graduate Research Fellowship. }

Sandia National Laboratories is a multimission laboratory managed and operated by National Technology and Engineering Solutions of Sandia, LLC, a wholly owned subsidiary of Honeywell International, Inc., for the U.S. Department of Energy’s National Nuclear Security Administration under contract {DE-NA0003525}. This paper describes objective technical results and analysis. Any subjective views or opinions that might be expressed in the paper do not necessarily represent the views of the U.S. Department of Energy or the United States Government.

\section{References}
\bibliographystyle{elsarticle-num}
\bibliography{ref}

\begin{thebibliography}{100}
\expandafter\ifx\csname url\endcsname\relax
  \def\url#1{\texttt{#1}}\fi
\expandafter\ifx\csname urlprefix\endcsname\relax\def\urlprefix{URL }\fi
\expandafter\ifx\csname href\endcsname\relax
  \def\href#1#2{#2} \def\path#1{#1}\fi

\bibitem{pozrikidis_book}
C.~Pozrikidis, \href{https://doi.org/10.1201/b19666}{The fractional
  {L}aplacian}, CRC Press, Boca Raton, FL, 2016.
\newline\urlprefix\url{https://doi.org/10.1201/b19666}

\bibitem{bucur2016nonlocal}
C.~Bucur, E.~Valdinoci, Nonlocal diffusion and applications, Vol.~20, Springer,
  2016.

\bibitem{meerschaert_sikorskii}
M.~Meerschaert, A.~Sikorskii, Stochastic {M}odels for {F}ractional {C}alculus,
  De Gruyter, 2011.

\bibitem{vazquez2017mathematical}
J.~L. V{\'a}zquez, The mathematical theories of diffusion: {N}onlinear and
  fractional diffusion, in: Nonlocal and Nonlinear Diffusions and Interactions:
  New Methods and Directions, Springer, 2017, pp. 205--278.

\bibitem{Kwasnicki2017}
M.~Kwa\'snicki, {T}en equivalent definitions of the fractional {L}aplace
  operator, Fractional Calculus and Applied Analysis 20~(1) (2017) 7 -- 51.
\newblock \href {https://doi.org/10.1515/fca-2017-0002}
  {\path{doi:10.1515/fca-2017-0002}}.

\bibitem{Yamamoto2012}
M.~Yamamoto, Asymptotic expansion of solutions to the dissipative equation with
  fractional {L}aplacian, SIAM Journal on Mathematical Analysis 44~(6) (2012)
  3786--3805.
\newblock \href {https://doi.org/10.1137/120873200}
  {\path{doi:10.1137/120873200}}.

\bibitem{Constantin1999}
P.~Constantin, J.~Wu, Behavior of solutions of {2D} quasi-geostrophic
  equations, SIAM Journal on Mathematical Analysis 30~(5) (1999) 937--948.
\newblock \href {https://doi.org/10.1137/S0036141098337333}
  {\path{doi:10.1137/S0036141098337333}}.

\bibitem{Akagi2015}
G.~Akagi, G.~Schimperna, A.~Segatti, Fractional {C}ahn-{H}illiard,
  {A}llen-{C}ahn and porous medium equations, Journal of Differential Equations
  261~(6) (2016) 2935 -- 2985.
\newblock \href {https://doi.org/https://doi.org/10.1016/j.jde.2016.05.016}
  {\path{doi:https://doi.org/10.1016/j.jde.2016.05.016}}.

\bibitem{AinsworthMao2017}
M.~Ainsworth, Z.~Mao, Analysis and {A}pproximation of a {F}ractional
  {C}ahn--{H}illiard {E}quation, SIAM Journal on Numerical Analysis 55~(4)
  (2017) 1689--1718.
\newblock \href {https://doi.org/10.1137/16M1075302}
  {\path{doi:10.1137/16M1075302}}.

\bibitem{MR3672018}
M.~Ainsworth, Z.~Mao,
  \href{https://doi.org/10.1016/j.chaos.2017.05.022}{Well-posedness of the
  {C}ahn-{H}illiard equation with fractional free energy and its {F}ourier
  {G}alerkin approximation}, Chaos Solitons \& Fractals 102 (2017) 264--273.
\newblock \href {https://doi.org/10.1016/j.chaos.2017.05.022}
  {\path{doi:10.1016/j.chaos.2017.05.022}}.
\newline\urlprefix\url{https://doi.org/10.1016/j.chaos.2017.05.022}

\bibitem{dePablo2011}
A.~de~Pablo, F.~Quirós, A.~Rodríguez, J.~L. Vázquez,
  \href{http://www.sciencedirect.com/science/article/pii/S0001870810003130}{A
  fractional porous medium equation}, Advances in Mathematics 226~(2) (2011)
  1378 -- 1409.
\newblock \href {https://doi.org/10.1016/j.aim.2010.07.017}
  {\path{doi:10.1016/j.aim.2010.07.017}}.
\newline\urlprefix\url{http://www.sciencedirect.com/science/article/pii/S0001870810003130}

\bibitem{Laskin2000}
N.~Laskin,
  \href{http://www.sciencedirect.com/science/article/pii/S0375960100002012}{Fractional
  quantum mechanics and {L}\'evy path integrals}, Physics Letters A 268~(4)
  (2000) 298 -- 305.
\newblock \href {https://doi.org/10.1016/S0375-9601(00)00201-2}
  {\path{doi:10.1016/S0375-9601(00)00201-2}}.
\newline\urlprefix\url{http://www.sciencedirect.com/science/article/pii/S0375960100002012}

\bibitem{treebycox2010}
B.~E. Treeby, B.~T. Cox, Modeling power law absorption and dispersion for
  acoustic propagation using the fractional {L}aplacian, The Journal of the
  Acoustical Society of America 127~(5) (2010) 2741--2748.
\newblock \href {https://doi.org/10.1121/1.3377056}
  {\path{doi:10.1121/1.3377056}}.

\bibitem{ChenHolm2004}
W.~Chen, S.~Holm, Fractional {L}aplacian time-space models for linear and
  nonlinear lossy media exhibiting arbitrary frequency power-law dependency,
  The Journal of the Acoustical Society of America 115~(4) (2004) 1424--1430.
\newblock \href {https://doi.org/10.1121/1.1646399}
  {\path{doi:10.1121/1.1646399}}.

\bibitem{nochetto_three}
A.~Bonito, J.~P. Borthagaray, R.~H. Nochetto, E.~Otarola, A.~J. Salgado,
  Numerical methods for fractional diffusion, Computing and Visualization in
  Science (2018) 1--28.

\bibitem{duo_wang_zhang}
S.~Duo, H.~Wang, Y.~Zhang,
  \href{http://aimsciences.org//article/id/c567bcab-960b-4959-bf6d-649bf174dc2f}{A
  comparative study on nonlocal diffusion operators related to the fractional
  {L}aplacian}, Discrete \& Continuous Dynamical Systems - B 24 (2019) 231.
\newblock \href {https://doi.org/10.3934/dcdsb.2018110}
  {\path{doi:10.3934/dcdsb.2018110}}.
\newline\urlprefix\url{http://aimsciences.org//article/id/c567bcab-960b-4959-bf6d-649bf174dc2f}

\bibitem{margenov}
R.~{\v{C}}iegis, V.~Starikovi{\v{c}}ius, S.~Margenov, R.~Kriauzien{\.{e}}, {A}
  comparison of {A}ccuracy and {E}fficiency of {P}arallel {S}olvers for
  {F}ractional {P}ower {D}iffusion {P}roblems, in: R.~Wyrzykowski, J.~Dongarra,
  E.~Deelman, K.~Karczewski (Eds.), Parallel Processing and Applied
  Mathematics, Springer International Publishing, Cham, 2018, pp. 79--89.

\bibitem{AntilPfeffererRogovs}
H.~Antil, J.~Pfefferer, S.~Rogovs, Fractional operators with inhomogeneous
  boundary conditions: {A}nalysis, control, and discretization.
\newblock \href {http://arxiv.org/abs/1703.05256} {\path{arXiv:1703.05256}}.

\bibitem{Cusimano2017}
N.~Cusimano, F.~del Teso, L.~Gerardo-Giorda, G.~Pagnini, Discretizations of the
  spectral fractional {L}aplacian on general domains with {D}irichlet,
  {N}eumann, and {R}obin boundary conditions, SIAM Journal on Numerical
  Analysis 56~(3) (2018) 1243--1272.
\newblock \href {https://doi.org/10.1137/17M1128010}
  {\path{doi:10.1137/17M1128010}}.

\bibitem{Song}
R.~Song, Z.~Vondracek, Potential theory of subordinate killed {B}rownian motion
  in a domain, Probab. Theory Relat. fields 125 (2003) 578--592.

\bibitem{GuanMa2006}
Q.-Y. Guan, Z.-M. Ma, Reflected symmetric $\alpha$-stable processes and the
  regional fractional {L}aplacian, Probab. Theory Relat. Fields 134 (2006)
  649--694.

\bibitem{kyprianou2016unbiasedwalk}
A.~E. Kyprianou, A.~Osojnik, T.~Shardlow, Unbiased ``walk-on-spheres'' {M}onte
  {C}arlo methods for the fractional {L}aplacian, IMA Journal of Numerical
  Analysis 38~(3) (2018) 1550--1578.
\newblock \href {https://doi.org/10.1093/imanum/drx042}
  {\path{doi:10.1093/imanum/drx042}}.

\bibitem{MCS2016APPNM}
Z.~Mao, S.~Chen, J.~Shen, Efficient and accurate spectral method using
  generalized {J}acobi functions for solving {R}iesz fractional differential
  equations, Appl. Numer. Math. 106 (2016) 165--181.
\newblock \href {https://doi.org/10.1016/j.apnum.2016.04.002}
  {\path{doi:10.1016/j.apnum.2016.04.002}}.

\bibitem{MUSINA20161667}
R.~Musina, A.~I. Nazarov,
  \href{http://www.sciencedirect.com/science/article/pii/S0294144915000761}{On
  fractional {L}aplacians -- 2}, Annales de l'Institut Henri Poincare (C) Non
  Linear Analysis 33~(6) (2016) 1667 -- 1673.
\newblock \href {https://doi.org/https://doi.org/10.1016/j.anihpc.2015.08.001}
  {\path{doi:https://doi.org/10.1016/j.anihpc.2015.08.001}}.
\newline\urlprefix\url{http://www.sciencedirect.com/science/article/pii/S0294144915000761}

\bibitem{simon}
M.~Reed, B.~Simon, Methods of {M}odern {M}athematical {P}hysics I: {F}unctional
  {A}nalysis, {Revised and enlarged edition} Edition, Academic Press, 1980.

\bibitem{rudin}
W.~Rudin, {F}unctional {A}nalysis, 2nd Edition, McGraw-Hill, 1991.

\bibitem{ethier2009markov}
S.~N. Ethier, T.~G. Kurtz, {M}arkov {P}rocesses: {C}haracterization and
  {C}onvergence, Vol. 282, John Wiley \& Sons, 2009.

\bibitem{stein}
E.~M. Stein, Singular {I}ntegrals and {D}ifferentiability {P}roperties of
  {F}unctions, Princeton University Press, 1971.

\bibitem{landkof}
N.~Landkof, Foundations of {M}odern {P}otential {T}heory, Springer-Verlag,
  1972.

\bibitem{joshi}
F.~Friedlander, M.~Joshi, Introduction to {T}he {T}heory of {D}istributions,
  2nd Edition, Cambridge University Press, 1999.

\bibitem{hormander}
L.~Hormander, The {A}nalysis of {L}inear {P}artial {D}ifferential {O}perators
  I: {D}istribution {T}heory and {F}ourier {A}nalysis, 2nd Edition,
  Springer-Verlag, 2003.

\bibitem{samko1993fractional}
S.~Samko, A.~Kilbas, O.~Mari{\v{c}}ev, Fractional integrals and derivatives,
  Gordon and Breach Science Publ., 1993.

\bibitem{CaffarelliSilvestre2007_ExtensionProblemRelatedToFractionalLaplacian}
L.~Caffarelli, L.~Silvestre, An extension problem related to the fractional
  {L}aplacian, Communications in Partial Differential Equations 32~(8) (2007)
  1245--1260.

\bibitem{Molchanov_Ostrovskii}
S.~Molchanov, E.~Ostrovskii, {S}ymmetric {S}table {P}rocesses as {T}races of
  {D}egenerate {D}iffusion {P}rocesses, Theory Probab. Appl. 14~(1) (1968)
  128–131.

\bibitem{deblassie_exit_time}
R.~DeBlassie, {T}he {F}irst {E}xit {T}ime of a {T}wo-{D}imensional {S}ymmetric
  {S}table {P}rocess from a {W}edge, Ann. Probab. 18 (1990) 1034--1070.

\bibitem{stinga_thesis}
P.~Stinga, Fractional powers of second order partial differential operators:
  extension problem and regularity theory, Ph.D. thesis, Universidad Autonoma
  de Madrid (2010).

\bibitem{StingaTorrea2010_ExtensionProblemHarnacksInequalityFractionalOperators}
P.~R. Stinga, J.~L. Torrea, Extension problem and {H}arnack's inequality for
  some fractional operators, Communications in Partial Differential Equations
  35~(11) (2010) 2092--2122.

\bibitem{stinga_5_nonlocal}
{\'O}.~Ciaurri, L.~Roncal, P.~R. Stinga, J.~L. Torrea, J.~L. Varona, Nonlocal
  discrete diffusion equations and the fractional discrete {L}aplacian,
  regularity and applications, Advances in Mathematics 330 (2018) 688--738.

\bibitem{balakrishnan1960}
A.~V. Balakrishnan,
  \href{http://projecteuclid.org/euclid.pjm/1103038401}{Fractional powers of
  closed operators and the semigroups generated by them}, Pacific J. Math. 10
  (1960) 419--437.
\newline\urlprefix\url{http://projecteuclid.org/euclid.pjm/1103038401}

\bibitem{bonito2015}
A.~Bonito, J.~E. Pasciak, Numerical approximation of fractional powers of
  elliptic operators, Math. Comp. 84~(295) (2015) 2083--2110.
\newblock \href {https://doi.org/10.1090/S0025-5718-2015-02937-8}
  {\path{doi:10.1090/S0025-5718-2015-02937-8}}.

\bibitem{bonito2017}
A.~Bonito, W.~Lei, J.~E. Pasciak, On {S}inc {Q}uadrature {A}pproximations of
  {F}ractional {P}owers of {R}egularly {A}ccretive {O}perators.
\newblock \href {http://arxiv.org/abs/1709.06619} {\path{arXiv:1709.06619}}.

\bibitem{meerschaert_tempered}
M.~Meerschaert, F.~Sabzikar, J.~Chen, {T}empered {F}ractional {C}alculus,
  Journal of Computational Physics 293 (2015) 14--28.

\bibitem{Koponen1995}
I.~Koponen, Analytic approach to the problem of convergence of truncated
  {L}\'evy flights towards the {G}aussian stochastic process, Phys. Rev. E 52
  (1995) 1197--1199.
\newblock \href {https://doi.org/10.1103/PhysRevE.52.1197}
  {\path{doi:10.1103/PhysRevE.52.1197}}.

\bibitem{deng_tempered_Laplacian}
Z.~Zheng, W.~Deng, G.~E. Karniadakis, A {R}iesz basis {G}alerkin method for the
  tempered fractional {L}aplacian.
\newblock \href {http://arxiv.org/abs/1709.10415} {\path{arXiv:1709.10415}}.

\bibitem{feynman_kac_original}
M.~Kac, On {D}istributions of certain {W}iener functionals, Transactions of the
  American Mathematical Society 65 (1949) 1--13.

\bibitem{brosamler_original}
G.~Brosamler, A {P}robabilistic solution of the {N}eumann problem, Math. Scand.
  38 (1967) 137--147.

\bibitem{lions_sznitman}
P.~L. Lions, A.~S. Sznitman, Stochastic differential equations with reflecting
  boundary conditions, Communications on Pure and Applied Mathematics 37~(4)
  (1984) 511--537.
\newblock \href {https://doi.org/10.1002/cpa.3160370408}
  {\path{doi:10.1002/cpa.3160370408}}.

\bibitem{hsu_reflecting_bm}
E.~P. Hsu, Reflecting {B}rownian motion, boundary local time and the {N}eumann
  problem, Dissertation Abstracts International Part B: Science and Engineering
  45~(6) (1984).

\bibitem{defterli2015}
{Defterli, O. and D'Elia, M. and Du, Q. and Gunzburger, M. and Lehoucq, R. and
  Meerschaert, M. M.}, {Fractional diffusion on bounded domains}, {Fractional
  Calculus and Applied Analysis} {18}~(2) (2015) 342--360.

\bibitem{pang2013gauss}
G.~Pang, W.~Chen, K.~Sze, {Gauss--Jacobi-type quadrature rules for fractional
  directional integrals}, Computers and Mathematics with Applications 66~(5)
  (2013) 597--607.

\bibitem{MEERSCHAERT2006181}
M.~M. Meerschaert, J.~Mortensen, S.~W. Wheatcraft,
  \href{http://www.sciencedirect.com/science/article/pii/S0378437105012070}{Fractional
  vector calculus for fractional advection–dispersion}, Physica A:
  Statistical Mechanics and its Applications 367 (2006) 181 -- 190.
\newblock \href {https://doi.org/https://doi.org/10.1016/j.physa.2005.11.015}
  {\path{doi:https://doi.org/10.1016/j.physa.2005.11.015}}.
\newline\urlprefix\url{http://www.sciencedirect.com/science/article/pii/S0378437105012070}

\bibitem{kelly2018anomalous}
J.~F. Kelly, C.-G. Li, M.~M. Meerschaert, Anomalous diffusion with ballistic
  scaling: A new fractional derivative, Journal of Computational and Applied
  Mathematics 339 (2018) 161--178.

\bibitem{yang2010numerical}
Q.~Yang, F.~Liu, I.~Turner, Numerical methods for fractional partial
  differential equations with {R}iesz space fractional derivatives, Applied
  Mathematical Modelling 34~(1) (2010) 200--218.

\bibitem{meerschaert1999multidimensional}
M.~M. Meerschaert, D.~A. Benson, B.~B{\"a}umer, Multidimensional advection and
  fractional dispersion, Physical Review E 59~(5) (1999) 5026.

\bibitem{chen_pang_2016}
W.~Chen, G.~Pang,
  \href{http://www.sciencedirect.com/science/article/pii/S0021999116000048}{A
  new definition of fractional {L}aplacian with application to modeling
  three-dimensional nonlocal heat conduction}, Journal of Computational Physics
  309 (2016) 350 -- 367.
\newblock \href {https://doi.org/10.1016/j.jcp.2016.01.003}
  {\path{doi:10.1016/j.jcp.2016.01.003}}.
\newline\urlprefix\url{http://www.sciencedirect.com/science/article/pii/S0021999116000048}

\bibitem{kolokoltsov_book}
V.~N. Kolokoltsov, {M}arkov {P}rocesses, {S}emigroups and {G}enerators, Vol.~38
  of De Gruyter Studies in Mathematics, Walter de Gruyter \& Co., Berlin, 2011.

\bibitem{privault_book}
N.~Privault, Potential theory in classical probability, in: Quantum potential
  theory, Vol. 1954 of Lecture Notes in Math., Springer, Berlin, 2008, pp.
  3--59.
\newblock \href {https://doi.org/10.1007/978-3-540-69365-9_2}
  {\path{doi:10.1007/978-3-540-69365-9_2}}.

\bibitem{Freidlin}
M.~Freidlin, Functional {I}ntegration and {P}artial {D}ifferential {E}quations,
  Vol. 109 of Annals of Mathematics Studies, Princeton University Press,
  Princeton, NJ, 1985.
\newblock \href {https://doi.org/10.1515/9781400881598}
  {\path{doi:10.1515/9781400881598}}.

\bibitem{Bencherif2009}
A.~Bencherif-Madani, E.~Pardoux, A probabilistic formula for a {P}oisson
  equation with {N}eumann boundary condition, Stochastic Analysis and
  Applications 27~(4) (2009) 739--746.
\newblock \href {https://doi.org/doi:10.1080/07362990902976520}
  {\path{doi:doi:10.1080/07362990902976520}}.

\bibitem{hermann_physics}
R.~Hermann, Fractional {C}alculus: {A}n {I}ntroduction for {P}hysicists, 2nd
  Edition, World Scientific, 2014, 500pp.

\bibitem{stinga_1_torus}
L.~Roncal, P.~R. Stinga, Fractional {L}aplacian on the torus, Communications in
  Contemporary Mathematics 18~(03) (2016) 1550033.

\bibitem{stinga_2_transference}
L.~Roncal, P.~R. Stinga, Transference of fractional {L}aplacian regularity, in:
  Special functions, partial differential equations, and harmonic analysis,
  Springer, 2014, pp. 203--212.

\bibitem{vazquez2014recent}
J.~L. V{\'a}zquez, Recent progress in the theory of nonlinear diffusion with
  fractional {L}aplacian operators, arXiv preprint arXiv:1401.3640 (2014).

\bibitem{grubb_spectral}
G.~Grubb, Regularity of spectral fractional {D}irichlet and {N}eumann problems,
  Mathematische Nachrichten 289~(7) (2016) 831–844.

\bibitem{gunzburger_neumann}
Q.~Du, M.~Gunzburger, R.~Lehoucq, K.~Zhou, Analysis and {A}pproximation of
  {N}onlocal {D}iffusion {P}roblems with {V}olume {C}onstraints, SIAM Review 54
  (2012) 667--696.

\bibitem{Dipierro14}
S.~Dipierro, X.~Ros-Oton, E.~Valdinoci, Nonlocal problems with {N}eumann
  boundary conditions, Rev. Mat. Iberoam 33 (2017) 377--416.

\bibitem{meerschaert_bcs}
B.~Baeumer, M.~Kov\'acs, M.~M. Meerschaert, H.~Sankaranarayanan, Boundary
  conditions for fractional diffusion, Journal of Computational and Applied
  Mathematics 336 (2018) 408 -- 424.
\newblock \href {https://doi.org/https://doi.org/10.1016/j.cam.2017.12.053}
  {\path{doi:https://doi.org/10.1016/j.cam.2017.12.053}}.

\bibitem{Kelly2sided}
J.~F. Kelly, H.~Sankaranarayanan, M.~M. Meerschaert, Boundary conditions for
  two-sided fractional diffusion, Journal of Computational Physics 376 (2019)
  1089 -- 1107.
\newblock \href {https://doi.org/https://doi.org/10.1016/j.jcp.2018.10.010}
  {\path{doi:https://doi.org/10.1016/j.jcp.2018.10.010}}.

\bibitem{Acosta}
G.~Acosta, J.~Borthagaray, {A fractional {L}aplace equation: {R}egularity of
  solutions and finite element approximations}, SIAM J. Numer. Anal. 55 (2017)
  472--495.

\bibitem{rosoton_regularity}
X.~Ros-Oton, J.~Serra, {The {D}irichlet problem for the fractional {L}aplacian:
  {R}egularity up to the boundary}, Journal de Mathematiques Pures et
  Appliquees 101 (2014) 275--302.

\bibitem{AinsworthGlusa2017_TowardsEfficientFiniteElement}
M.~Ainsworth, C.~Glusa, Towards an efficient finite element method for the
  integral fractional {L}aplacian on polygonal domains, in: Contemporary
  computational mathematics---a celebration of the 80th birthday of {I}an
  {S}loan. {V}ol. 1, 2, Springer, Cham, 2018, pp. 17--57.

\bibitem{AinsworthGlusa2017_AspectsAdaptiveFiniteElement}
M.~Ainsworth, C.~Glusa,
  \href{http://www.sciencedirect.com/science/article/pii/S0045782517305996}{Aspects
  of an adaptive finite element method for the fractional {L}aplacian: {A}
  priori and a posteriori error estimates, efficient implementation and
  multigrid solver}, Computer Methods in Applied Mechanics and Engineering 327
  (2017) 4 -- 35.
\newblock \href {https://doi.org/10.1016/j.cma.2017.08.019}
  {\path{doi:10.1016/j.cma.2017.08.019}}.
\newline\urlprefix\url{http://www.sciencedirect.com/science/article/pii/S0045782517305996}

\bibitem{Grubb2015_FractionalLaplaciansDomainsDevelopment}
G.~Grubb, {Fractional {L}aplacians on domains, a development of H{\"o}rmander's
  theory of $\mu$-transmission pseudodifferential operators}, Advances in
  Mathematics 268 (2015) 478--528.

\bibitem{borthagaray2018finite}
J.~P. Borthagaray, L.~M. Del~Pezzo, S.~Mart{\'\i}nez, Finite element
  approximation for the fractional eigenvalue problem, Journal of Scientific
  Computing 77~(1) (2018) 308--329.

\bibitem{antil2019external}
H.~Antil, D.~Verma, M.~Warma, External optimal control of fractional parabolic
  {PDE}s, arXiv preprint arXiv:1904.07123 (2019).

\bibitem{acosta2018finite}
G.~Acosta, J.~P. Borthagaray, N.~Heuer, Finite element approximations of the
  nonhomogeneous fractional {D}irichlet problem, IMA Journal of Numerical
  Analysis 39~(3) (2018) 1471--1501.

\bibitem{getoor_mean_exit}
R.~Getoor, {F}irst {P}assage {T}imes for {S}ymmetric {S}table {P}rocesses in
  {S}pace, Transactions of the American Mathematical Society 101 (1961) 75--90.

\bibitem{acosta_bersetche_borthagaray}
G.~Acosta, F.~M. Bersetche, J.~P. Borthagaray, A {S}hort {FE} implementation
  for a 2{D} {H}omogeneous {D}irichlet problem of a {F}ractional {L}aplacian,
  Comput. Math. Appl. 74~(4) (2017) 784--816.
\newblock \href {https://doi.org/10.1016/j.camwa.2017.05.026}
  {\path{doi:10.1016/j.camwa.2017.05.026}}.

\bibitem{chen_meerschaert_2012}
Z.-Q. Chen, M.~M. Meerschaert, E.~Nane,
  \href{http://www.sciencedirect.com/science/article/pii/S0022247X12002971}{Space–time
  fractional diffusion on bounded domains}, Journal of Mathematical Analysis
  and Applications 393~(2) (2012) 479 -- 488.
\newblock \href {https://doi.org/10.1016/j.jmaa.2012.04.032}
  {\path{doi:10.1016/j.jmaa.2012.04.032}}.
\newline\urlprefix\url{http://www.sciencedirect.com/science/article/pii/S0022247X12002971}

\bibitem{gale2013extension}
J.~E. Gal{\'e}, P.~J. Miana, P.~R. Stinga, Extension problem and fractional
  operators: semigroups and wave equations, Journal of Evolution Equations
  13~(2) (2013) 343--368.

\bibitem{stinga_4_caccioppoli}
L.~A. Caffarelli, P.~R. Stinga, Fractional elliptic equations, {C}accioppoli
  estimates and regularity, in: Annales de l'Institut Henri Poincare (C) Non
  Linear Analysis, Vol.~33, Elsevier, 2016, pp. 767--807.

\bibitem{Nochetto2015}
R.~H. Nochetto, E.~Ot\`arola, A.~J. Salgado, A {PDE} approach to fractional
  diffusion in general domains: {A} priori error analysis, Foundations of
  Computational Mathematics 15~(3) (2015) 733--791.

\bibitem{Gatto2015}
P.~Gatto, J.~S. Hesthaven, Numerical approximation of the fractional
  {L}aplacian via hp-finite elements, with an application to image denoising,
  J. Sci. Comput. 65~(1) (2015) 249--270.
\newblock \href {https://doi.org/10.1007/s10915-014-9959-1}
  {\path{doi:10.1007/s10915-014-9959-1}}.

\bibitem{AinsworthGlusa2017_HybridFiniteElementSpectral}
M.~Ainsworth, C.~Glusa, \href{https://doi.org/10.1137/17M1144696}{Hybrid finite
  element--spectral method for the fractional {L}aplacian: approximation theory
  and efficient solver}, SIAM J. Sci. Comput. 40~(4) (2018) A2383--A2405.
\newblock \href {https://doi.org/10.1137/17M1144696}
  {\path{doi:10.1137/17M1144696}}.
\newline\urlprefix\url{https://doi.org/10.1137/17M1144696}

\bibitem{VABISHCHEVICH2015289}
P.~N. Vabishchevich,
  \href{http://www.sciencedirect.com/science/article/pii/S0021999114007840}{Numerically
  solving an equation for fractional powers of elliptic operators}, Journal of
  Computational Physics 282 (2015) 289 -- 302.
\newblock \href {https://doi.org/https://doi.org/10.1016/j.jcp.2014.11.022}
  {\path{doi:https://doi.org/10.1016/j.jcp.2014.11.022}}.
\newline\urlprefix\url{http://www.sciencedirect.com/science/article/pii/S0021999114007840}

\bibitem{yosida1980}
K.~Yosida, {F}unctional {A}nalysis, {S}ixth Edition, Vol. 123 of Grundlehren
  der Mathematischen Wissenschaften [Fundamental Principles of Mathematical
  Sciences], Springer-Verlag, Berlin-New York, 1980.

\bibitem{Cont2004}
R.~Cont, P.~Tankov, Financial {M}odelling with {J}ump {P}rocesses, Chapman \&
  Hall/CRC Financial Mathematics Series, Chapman \& Hall/CRC, Boca Raton, FL,
  2004.

\bibitem{mamikon_thesis}
M.~Gulian, {D}iscovering and {S}olving {F}ractional-order {P}artial
  {D}ifferential {E}quations: {M}achine {L}earning and {M}onte {C}arlo
  {M}ethods, Ph.D. thesis, Brown University (2019).

\bibitem{gulian2018stochastic}
M.~Gulian, G.~Pang, {S}tochastic {S}olution of {E}lliptic and {P}arabolic
  {B}oundary {V}alue {P}roblems for the {S}pectral {F}ractional {L}aplacian,
  arXiv preprint arXiv:1812.01206 (2018).

\bibitem{stinga_3_keller}
P.~R. Stinga, B.~Volzone, Fractional semilinear {N}eumann problems arising from
  a fractional {K}eller--{S}egel model, Calculus of Variations and Partial
  Differential Equations 54~(1) (2015) 1009--1042.

\bibitem{guan_ma2005}
Q.-Y. Guan, Z.-M. Ma, Boundary problems for fractional {L}aplacians,
  Stochastics and Dynamics 05~(03) (2005) 385--424.
\newblock \href {https://doi.org/10.1142/S021949370500150X}
  {\path{doi:10.1142/S021949370500150X}}.

\bibitem{Warma2015}
M.~Warma, The fractional relative capacity and the fractional {L}aplacian with
  {N}eumann and {R}obin boundary conditions on open sets, Potential Analysis
  42~(2) (2015) 499--547.
\newblock \href {https://doi.org/10.1007/s11118-014-9443-4}
  {\path{doi:10.1007/s11118-014-9443-4}}.

\bibitem{kim2017boundary}
P.~Kim, R.~Song, Z.~Vondra{\v{c}}ek, On the boundary theory of subordinate
  killed {L}\'evy processes.
\newblock \href {http://arxiv.org/abs/1705.02595} {\path{arXiv:1705.02595}}.

\bibitem{bogdan2009potential}
K.~Bogdan, T.~Byczkowski, T.~Kulczycki, M.~Ryznar, R.~Song, Z.~Vondracek,
  Potential analysis of stable processes and its extensions, Springer Science
  \& Business Media, 2009.

\bibitem{chen2005two}
Z.-Q. Chen, R.~Song, Two-sided eigenvalue estimates for subordinate processes
  in domains, Journal of Functional Analysis 226~(1) (2005) 90--113.

\bibitem{SongXuKarniadakis2017}
F.~Song, C.~Xu, G.~E. Karniadakis, Computing fractional {L}aplacians on
  complex-geometry domains: {A}lgorithms and simulations, SIAM Journal of
  Scientific Computing 39~(4) (2017) A1320--{A}1344.

\bibitem{meerschaert2004vector}
M.~M. Meerschaert, J.~Mortensen, H.-P. Scheffler, {Vector Gr\"unwald formula
  for fractional derivatives}, {Fractional Calculus and Applied Analysis} 7~(1)
  (2004) 61--82.

\bibitem{dusiamreview}
Q.~Du, M.~Gunzburger, R.~B. Lehoucq, K.~Zhou, Analysis and approximation of
  nonlocal diffusion problems with volume constraints, SIAM Rev. 54~(4) (2012)
  667--696.
\newblock \href {https://doi.org/10.1137/110833294}
  {\path{doi:10.1137/110833294}}.

\bibitem{muller1956}
M.~E. Muller, Some {C}ontinuous {M}onte {C}arlo methods for the {D}irichlet
  problem, Ann. Math. Statist. 27~(3) (1956) 569--589.
\newblock \href {https://doi.org/10.1214/aoms/1177728169}
  {\path{doi:10.1214/aoms/1177728169}}.

\bibitem{cai2013}
C.~Yan, W.~Cai, X.~Zeng, A parallel method for solving {L}aplace equations with
  {D}irichlet data using local boundary integral equations and random walks,
  SIAM Journal on Scientific Computing 35~(4) (2013) B868--B889.
\newblock \href {https://doi.org/10.1137/120875004}
  {\path{doi:10.1137/120875004}}.

\bibitem{cai2017}
Y.~Zhou, W.~Cai, E.~Hsu, Computation of the local time of reflecting {B}rownian
  motion and the probabilistic representation of the {N}eumann problem, Commun.
  Math. Sci. 15~(1) (2017) 237--259.
\newblock \href {https://doi.org/10.4310/CMS.2017.v15.n1.a11}
  {\path{doi:10.4310/CMS.2017.v15.n1.a11}}.

\bibitem{Zhou2016}
Y.~Zhou, W.~Cai, Numerical {S}olution of the {R}obin problem of {L}aplace
  {E}quations with a {F}eynman--{K}ac {F}ormula and {R}eflecting {B}rownian
  motions, Journal of Scientific Computing 69~(1) (2016) 107--121.
\newblock \href {https://doi.org/10.1007/s10915-016-0184-y}
  {\path{doi:10.1007/s10915-016-0184-y}}.

\bibitem{boggio1905sulle}
T.~Boggio, Sulle funzioni di {G}reen d’ordinem, Rendiconti del Circolo
  Matematico di Palermo (1884-1940) 20~(1) (1905) 97--135.

\bibitem{blumenthal1961distribution}
R.~Blumenthal, R.~Getoor, D.~Ray, On the distribution of first hits for the
  symmetric stable processes, Transactions of the American Mathematical Society
  99~(3) (1961) 540--554.

\bibitem{pang2015space}
G.~Pang, W.~Chen, Z.~Fu, Space-fractional advection--dispersion equations by
  the {K}ansa method, Journal of Computational Physics 293 (2015) 280--296.

\bibitem{chen2014recent}
W.~Chen, Z.-J. Fu, C.-S. Chen, Recent advances in radial basis function
  collocation methods, Springer, 2014.

\bibitem{dyda2012fractional}
B.~Dyda, Fractional calculus for power functions and eigenvalues of the
  fractional {L}aplacian, Fractional Calculus and Applied Analysis 15~(4)
  (2012) 536--555.

\bibitem{bucur2015some}
C.~Bucur, Some observations on the {G}reen function for the ball in the
  fractional {L}aplace framework, Communications on Pure \& Applied Analysis 15
  (2016) 657.
\newblock \href {https://doi.org/10.3934/cpaa.2016.15.657}
  {\path{doi:10.3934/cpaa.2016.15.657}}.

\bibitem{DElia2013}
M.~D'Elia, M.~Gunzburger, The fractional {L}aplacian operator on bounded
  domains as a special case of the nonlocal diffusion operator, Comput. Math.
  Appl. 66~(7) (2013) 1245--1260.
\newblock \href {https://doi.org/10.1016/j.camwa.2013.07.022}
  {\path{doi:10.1016/j.camwa.2013.07.022}}.

\bibitem{burkovska2018regularity}
O.~Burkovska, M.~Gunzburger, Regularity and approximation analyses of nonlocal
  variational equality and inequality problems, arXiv preprint arXiv:1804.10282
  (2018).

\bibitem{MK2017SINUM}
Z.~Mao, G.~Karniadakis, A spectral method (of exponential convergence) for
  singular solutions of the diffusion equation with general two-sided
  fractional derivative, SIAM Journal on Numerical Analysis 56~(1) (2018)
  24--49.
\newblock \href {https://doi.org/10.1137/16M1103622}
  {\path{doi:10.1137/16M1103622}}.

\bibitem{Bogdan}
K.~Bogdan, K.~Burdzy, Z.~Chen, {C}ensored {S}table {P}rocess, Probab. Theory
  Relat. fields 127 (2003) 89--152.

\bibitem{barles_neumann_half_space}
G.~Barles, E.~Chasseigne, C.~Georgelin, E.~R. Jakobsen, On {N}eumann type
  problems for nonlocal equations set in a half space, Trans. Amer. Math. Soc.
  366 (2014) 4873--4917.
\newblock \href {https://doi.org/10.1090/S0002-9947-2014-06181-3}
  {\path{doi:10.1090/S0002-9947-2014-06181-3}}.

\bibitem{huang2014numerical}
Y.~Huang, A.~Oberman, Numerical methods for the fractional {L}aplacian: A
  finite difference-quadrature approach, SIAM Journal on Numerical Analysis
  52~(6) (2014) 3056--3084.

\bibitem{li2018numerical}
C.~Li, A.~Chen, Numerical methods for fractional partial differential
  equations, International Journal of Computer Mathematics 95~(6-7) (2018)
  1048--1099.

\bibitem{ding_trace_theorem}
Z.~Ding, A proof of the trace theorem of {S}obolev spaces on {L}ipschitz
  domains, Proc. Amer. Math. Soc. 124~(2) (1996) 591--600.
\newblock \href {https://doi.org/10.1090/S0002-9939-96-03132-2}
  {\path{doi:10.1090/S0002-9939-96-03132-2}}.

\end{thebibliography}

\begin{appendices}

\section{Sobolev Spaces and the Trace Theorem}\label{sobolev_spaces}
In this appendix, we assemble the commonly-used Sobolev spaces for the fractional Laplacian. These spaces are discussed in \cite{Grubb2015_FractionalLaplaciansDomainsDevelopment,grubb_spectral}, but we have more closely followed the exposition and notation of \cite{AntilPfeffererRogovs}.
\begin{definition}[\cite{AntilPfeffererRogovs}]
\label{sobdef:integral}
For $0 < s < 1$, we define the fractional Sobolev space
\begin{align}
	H^{s}(\Omega) &:= \left\{ u \in L^2(\Omega) : \int_\Omega \int_\Omega \frac{|u(x) - u(y)|^2}{|x-y|^{n+2s}} dx dy < \infty \right\},
\end{align}
{\color{blue}with the semi-norm and norm
\begin{align}
	|u|_{H^{s}(\Omega)}^2 &:= \int_\Omega \int_\Omega \frac{|u(x) - u(y)|^2}{|x-y|^{n+2s}} dx dy, \label{seminorm}\\
	\|u\|_{H^{s}(\Omega)} &:= \left(\|u\|^2_{L^2(\Omega)} + |u|_{H^{s}(\Omega)}^2 \right)^{1/2}.
\end{align}}
\end{definition}
\begin{definition}[\cite{ding_trace_theorem}]
\label{trace_definition}
For any $u \in C^\infty(\Omega)$, define the trace operator $\gamma\big|_{\partial \Omega}$ by
\begin{align}
	\gamma\big|_{\partial \Omega}u(x) &= u(x), \hspace{10pt} x \in \partial \Omega.
\end{align}
\end{definition}

\begin{theorem}{Trace Theorem \cite{ding_trace_theorem}.}\label{trace_theorem}
Let $\Omega$ be a bounded, simply connected Lipschitz domain and $1/2 < s < 3/2$. Then the trace operator $\gamma\big|_{\partial \Omega}$ is a bounded linear operator from $H^s(\Omega)$ to $H^{s-\frac{1}{2}}(\partial \Omega)$.
\end{theorem}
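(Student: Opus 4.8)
The plan is to establish the \emph{a priori} estimate $\|\gamma u\|_{H^{s-1/2}(\partial\Omega)} \le C\|u\|_{H^s(\Omega)}$ for $u \in C^\infty(\overline\Omega)$, and then to extend $\gamma$ by continuity, invoking the density of $C^\infty(\overline\Omega)$ in $H^s(\Omega)$ for a bounded Lipschitz domain. First I would localize: one chooses a finite atlas of boundary charts in which, after a rigid motion, $\partial\Omega$ is the graph $\{x_n = \varphi(x')\}$ of a Lipschitz function $\varphi$, with $\Omega$ lying locally above the graph, together with a subordinate partition of unity $\{\chi_j\}$. The estimate then reduces to a model estimate for each $\chi_j u$. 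The bi-Lipschitz straightening map $(x',x_n)\mapsto(x', x_n - \varphi(x'))$ flattens the boundary to the hyperplane $\{y_n=0\}$, converting the local problem to one on the half-space $\mathbb{R}^n_+$ with trace taken on $\mathbb{R}^{n-1}$.

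On the half-space I would compose a bounded Sobolev extension operator $E:H^s(\mathbb{R}^n_+)\to H^s(\mathbb{R}^n)$ with the whole-space trace, so that it suffices to bound the trace on $\{x_n=0\}$ of $v = Eu \in H^s(\mathbb{R}^n)$. Using the Fourier transform in all variables and writing $\widehat{\gamma v}(\xi') = c\int_{\mathbb{R}}\hat v(\xi',\xi_n)\,d\xi_n$, Cauchy--Schwarz against the weight $(1+|\xi|^2)^{\pm s/2}$, followed by the scaling substitution $\xi_n = (1+|\xi'|^2)^{1/2}\eta$, yields
\begin{equation}
\int_{\mathbb{R}}\frac{d\xi_n}{(1+|\xi'|^2+\xi_n^2)^{s}} = C_s\,(1+|\xi'|^2)^{\frac12 - s}, \qquad C_s = \int_{\mathbb{R}}\frac{d\eta}{(1+\eta^2)^s},
\end{equation}
where $C_s$ is finite precisely because $s > 1/2$. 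Multiplying by $(1+|\xi'|^2)^{s-\frac12}$ and integrating over $\xi'$ collapses the weight and gives
\begin{equation}
\|\gamma v\|^2_{H^{s-1/2}(\mathbb{R}^{n-1})} \le C_s\int_{\mathbb{R}^n}(1+|\xi|^2)^{s}\,|\hat v(\xi)|^2\,d\xi = C_s\,\|v\|^2_{H^s(\mathbb{R}^n)},
\end{equation}
which is the desired bound on the model half-space.

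The main obstacle is the transfer back through the chart maps. Composition with a bi-Lipschitz map preserves the Gagliardo seminorm \eqref{seminorm}, and hence the $H^\sigma$ norm, cleanly only for $0 \le \sigma \le 1$; for $s \in (1,3/2)$ this is genuinely delicate, since a Lipschitz chart carries only one a.e.-defined derivative. I would handle this by writing $s = 1 + \sigma$ with $\sigma \in (0,1/2)$ and reducing the estimate for $u$ to the trace estimate, in the range $\sigma \in (0,1)$ already proven, applied to $u$ together with its first-order tangential derivatives, which exist a.e.\ on $\partial\Omega$ by Rademacher's theorem since the boundary is Lipschitz. This is exactly where the two endpoints of the stated range enter: the ceiling $s < 3/2$ keeps $\sigma < 1/2$ so that a single differentiation suffices and the Lipschitz regularity of the charts is adequate (pushing to $s \ge 3/2$ would force $\sigma \ge 1$ and require a $C^{1,1}$ or smoother boundary), while the floor $s > 1/2$ is forced by the convergence of $C_s$ and is what guarantees that the trace exists at all. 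Summing the localized estimates over the partition of unity and passing to the limit along smooth approximants then completes the argument.
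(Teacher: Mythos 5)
The paper does not actually prove this theorem; it is quoted from the cited reference with no internal argument, so there is no in-paper proof to compare against. Judged on its own terms, your proposal follows the classical route and is sound up to and including the exponent $s=1$: the localization, the graph flattening, the extension to $\mathbb{R}^n$, and the Fourier/Cauchy--Schwarz computation producing $C_s=\int_{\mathbb{R}}(1+\eta^2)^{-s}\,d\eta<\infty$ exactly for $s>1/2$ are all correct, and the bi-Lipschitz invariance of the Gagliardo seminorm \eqref{seminorm} for orders in $[0,1]$ legitimately transfers the half-space estimate back to $\Omega$ in that range.

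The gap is in the range $1<s<3/2$. You write $s=1+\sigma$ with $\sigma\in(0,1/2)$ and propose to apply the already-established trace estimate to $u$ together with its first-order tangential derivatives. This cannot work as stated: those derivatives lie only in $H^{s-1}(\Omega)=H^{\sigma}(\Omega)$ with $\sigma<1/2$, i.e.\ strictly below the threshold $\sigma>1/2$ at which the trace operator is even defined — the very floor your computation of $C_s$ identifies. Rademacher's theorem supplies a.e.-defined tangential directions on $\partial\Omega$, but it does not supply a trace of an $H^{\sigma}(\Omega)$ function with $\sigma<1/2$. A second, related obstruction is hidden in the chart transfer: $\nabla(u\circ\Phi)=(D\Phi)^{T}\,(\nabla u)\circ\Phi$ multiplies an $H^{s-1}$ function by the merely $L^\infty$ entries of the Jacobian of a Lipschitz chart, and bounded measurable functions are not in general pointwise multipliers of $H^{\sigma}$ for $\sigma>0$, so even the claim that the flattened function lies in $H^{s}$ of the half-space is not free for $s>1$. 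The standard repairs are different in kind: either one estimates directly the order-$\bigl(s-\tfrac12\bigr)$ Gagliardo seminorm of $x'\mapsto u(x',\varphi(x'))$ via tangential difference quotients of the trace itself (not traces of derivatives), or one invokes the fact that multiplication by the characteristic function of a Lipschitz hypograph is bounded on $H^{t}(\mathbb{R}^n)$ precisely for $|t|<1/2$ — which is where the ceiling $s-1<1/2$, i.e.\ $s<3/2$, genuinely enters — as in Costabel's or Ding's treatments. Your identification of \emph{where} the two endpoints of the range come from is correct, but the mechanism you give for the upper half of the range does not close.
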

This range for $s$, $1/2 < s < 3/2$, is sufficient for the discussion of traces in this article. Now we can define the subspace $H_0^s(\Omega)$ of $H^s(\Omega)$.
\begin{definition}\label{sobdef:H0}
For $s > 1/2$, 
\begin{align}
	H_0^s(\Omega) &:= \left\{ u \in H^s(\Omega) : \gamma\big|_{\partial\Omega}u(x) = 0\right\}.
\end{align}
\end{definition}
In the case {\color{blue}$s = 1/2$}, we must define another fractional Sobolev space, denoted $H_{00}^{1/2}(\Omega)$.
\begin{definition}[\cite{AntilPfeffererRogovs}]
\label{sobdef:00}
\begin{align}
	H_{00}^{1/2}(\Omega) &:= \left\{ u \in H^{1/2}(\Omega) : \int_\Omega \frac{u^2(x)}{\text{dist}(x,\partial\Omega)} dx < \infty \right\}.
\end{align}
The associated norm is defined
\begin{align}
	\|u\|_{H_{00}^{1/2}(\Omega)} &:= \left(\|u\|_{H^{1/2}(\Omega)}^2 + \int_\Omega \frac{u^2(x)}{\text{dist}(x,\partial \Omega)} dx \right)^{1/2}.
\end{align}
\end{definition}

We define the space {\color{blue}$\mbbH^{s}(\Omega)$}, which is used to characterize the regularity properties of the spectral fractional Laplacian.
\begin{definition}[\cite{AntilPfeffererRogovs}]
\label{sobdef:boldh}
{\color{blue}For any $s \geq 0$,
\begin{align}
	\mbbH^{s}(\Omega) &:= \left\{ u = \sum_{k=1}^\infty u_k \phi_k \in L^2(\Omega) : \|u\|_{H^{s}(\Omega)}^2 := \sum_{k=1}^\infty \lambda_k^{s} u_k^2 < \infty \right\}
\end{align}}
where $(\lambda_k,\phi_k)$ are the eigenpairs of the integer Laplacian $-\Delta$ on $\Omega$ with zero Dirichlet boundary conditions.
\end{definition}
As noted in Ref. \cite{AntilPfeffererRogovs}, the following relationship exists between the fractional Sobolev spaces presented in Defs. \ref{sobdef:integral}, \ref{sobdef:H0}, \ref{sobdef:00}, and \ref{sobdef:boldh}{\color{blue}
\begin{align}
	\mbbH^{s}(\Omega) = \begin{cases} H^{s}(\Omega) = H_0^{s}(\Omega) & \text{if} \ 0 < s < 1/2, \\
		H_{00}^{1/2}(\Omega) & \text{if} \ s = 1/2, \\
		H_0^{s}(\Omega) & \text{if} \ 1/2 < s < 1.
	\end{cases}
\end{align}}

{\color{blue}
\begin{definition}
The space $\mathbb{H}^{-s}(\Omega)$, for $s \geq 0$, is defined to be the dual space of $\mathbb{H}^s(\Omega)$.
\end{definition}
}

\begin{definition}
$H_{loc}^{s}(\Omega) = \{ u \in H^{s}(K) \ \text{for all compact sets} \ K \subseteq \overline{\Omega}\}.$
\end{definition}

\vfill
\break

\section{Grids}\label{grids}
Here, we include the meshes used for the two-dimensional numerical comparisons of Sec. \ref{sec:num_comp}.

 \begin{figure}[ht!]
 \centering
\subfloat[1600 RBF collocation points with shape parameter $0.3$ used for RBF collocation method (\emph{left}) and mesh used for SEM method (\emph{right}).]{
 \begin{minipage}[]{\textwidth}\centering
 \includegraphics[width=0.25\textwidth]{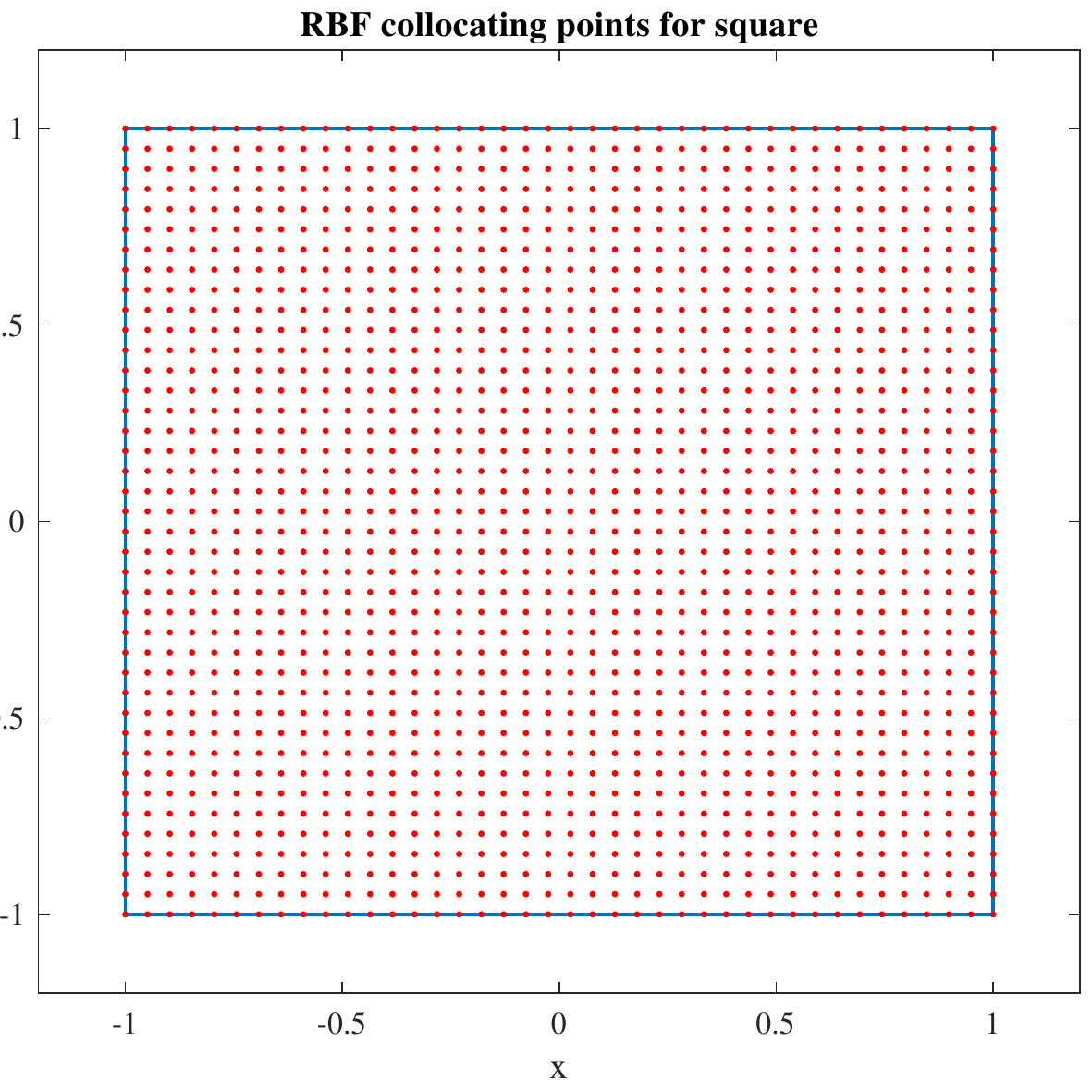} \hspace{0.3cm}
  \includegraphics[width=0.25\textwidth]{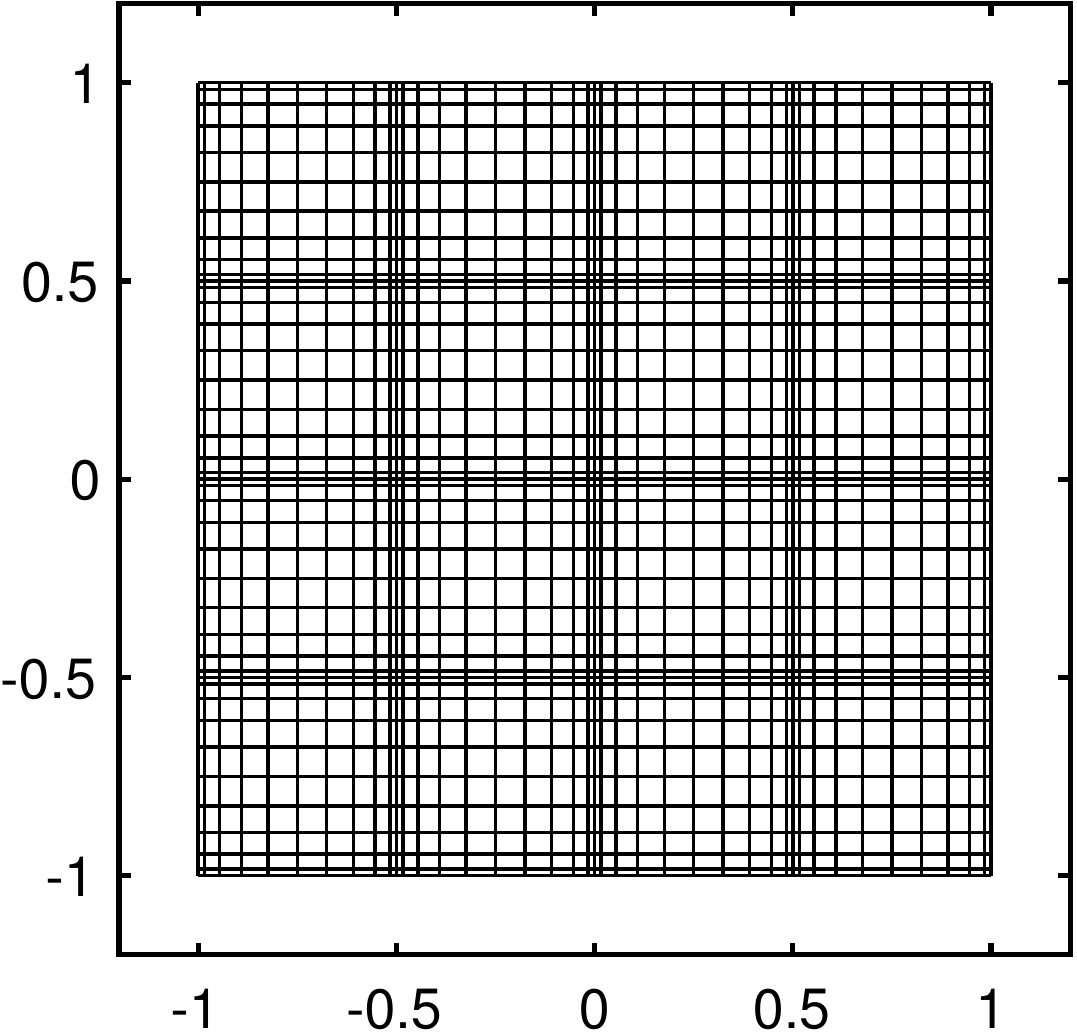}
\end{minipage}
 }\\
\subfloat[Adaptively refined meshes for finite element solution to Riesz fractional Poisson equation corresponding to the constant source term \(f=1\) for  \(\alpha=0.5\) \emph{(left)} and for \(\alpha=1.5\) \emph{(right)}.]{
 \begin{minipage}[]{\textwidth}\centering
 \includegraphics[scale=1]{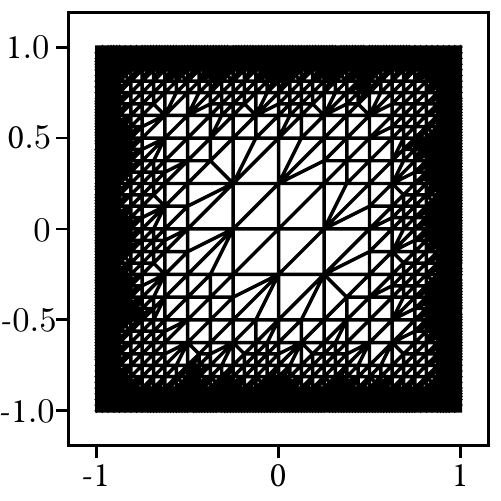}
 \includegraphics[scale=1]{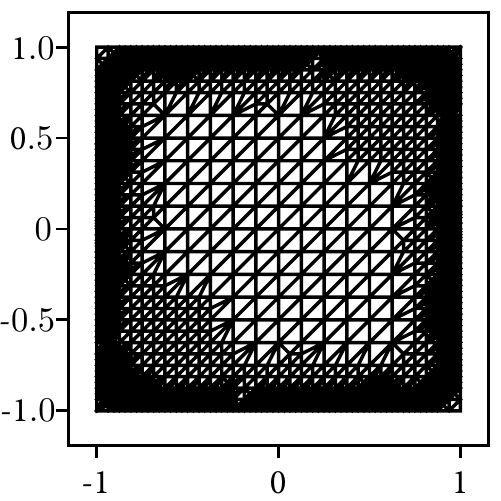}
\end{minipage}
 }\\
 \subfloat[Adaptively refined meshes for finite element solution to Riesz fractional Poisson equation corresponding to the source term $f = \sin(\pi x)\sin(\pi y)$ for  \(\alpha=0.5\) \emph{(left)} and for \(\alpha=1.5\) \emph{(right)}.]{
 \begin{minipage}[]{\textwidth}\centering
\includegraphics[scale=1]{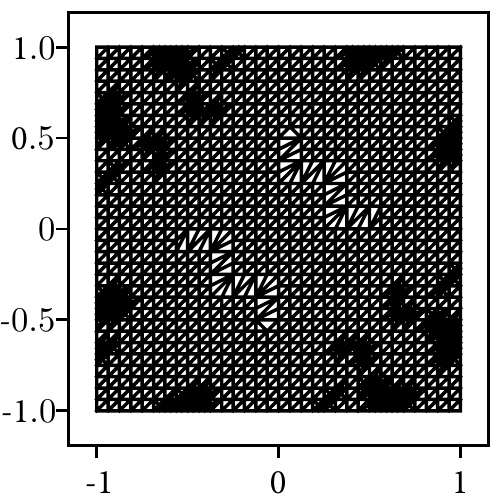}
  \includegraphics[scale=1]{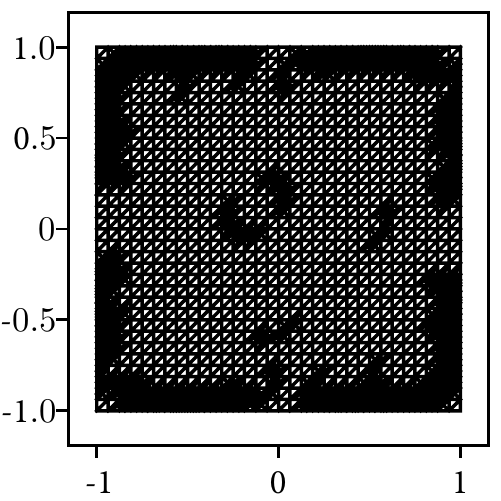}
\end{minipage}
 }
 \caption{\label{squaremeshes}  {Comparisons on the square}: Meshes and collocation points used for each numerical method in computing the solutions of the directional (\emph{top left}), spectral (\emph{top right}), and Riesz (\emph{center and bottom}) fractional Poisson equations.}
 \end{figure}


  \begin{figure}[ht!]
 \centering
\subfloat[{Comparison on the unit disk}: distribution of 1965 RBF collocation points with shape parameter $0.3$ used to compute the solution to the directional fractional Poisson equation (\emph{left}) and mesh for SEM used to compute the solution to the spectral fractional Poisson equation (\emph{right}).]{
 \begin{minipage}[]{\textwidth}\centering
 \includegraphics[width=0.3\textwidth]{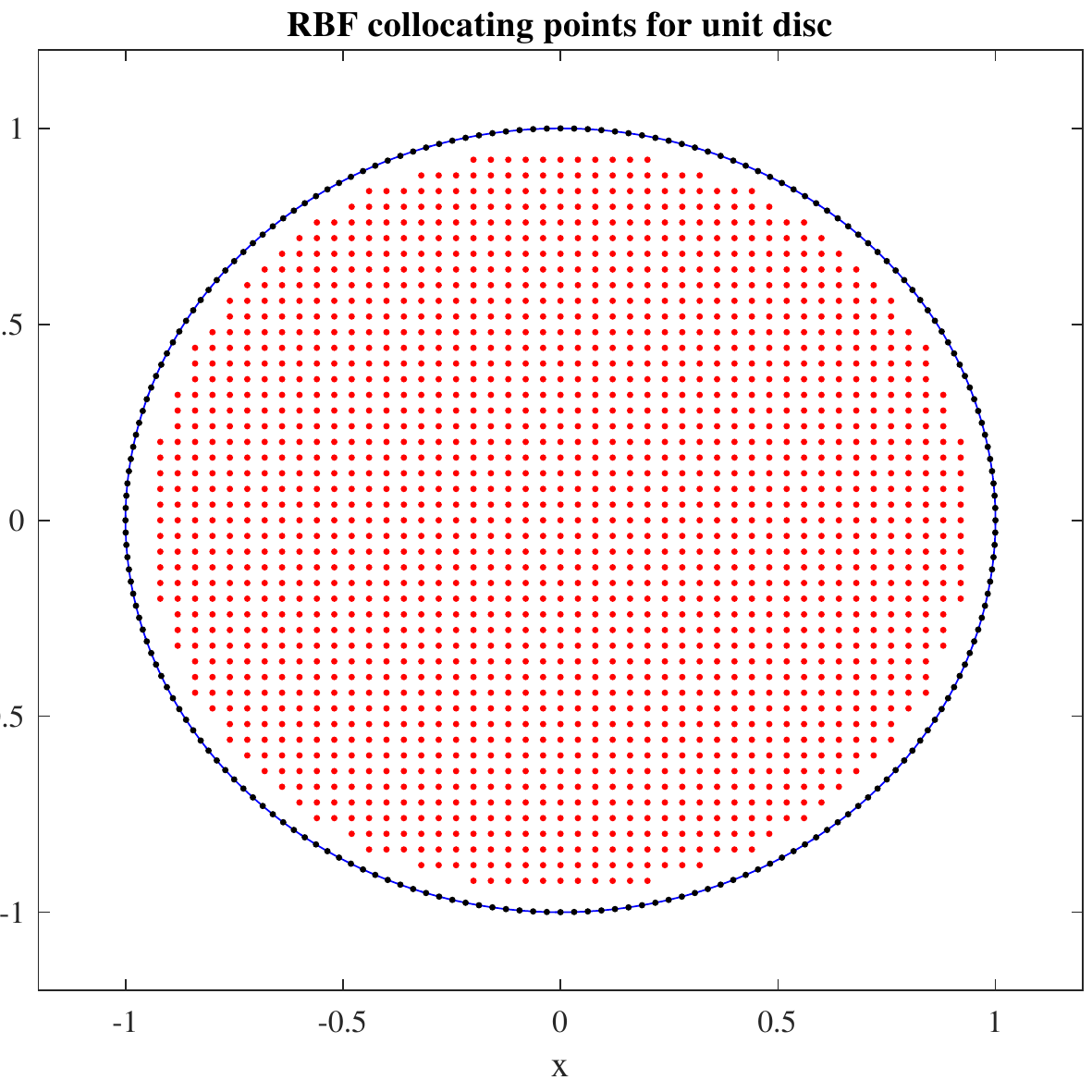} \hspace{0.5cm}
  \includegraphics[width=0.3\textwidth]{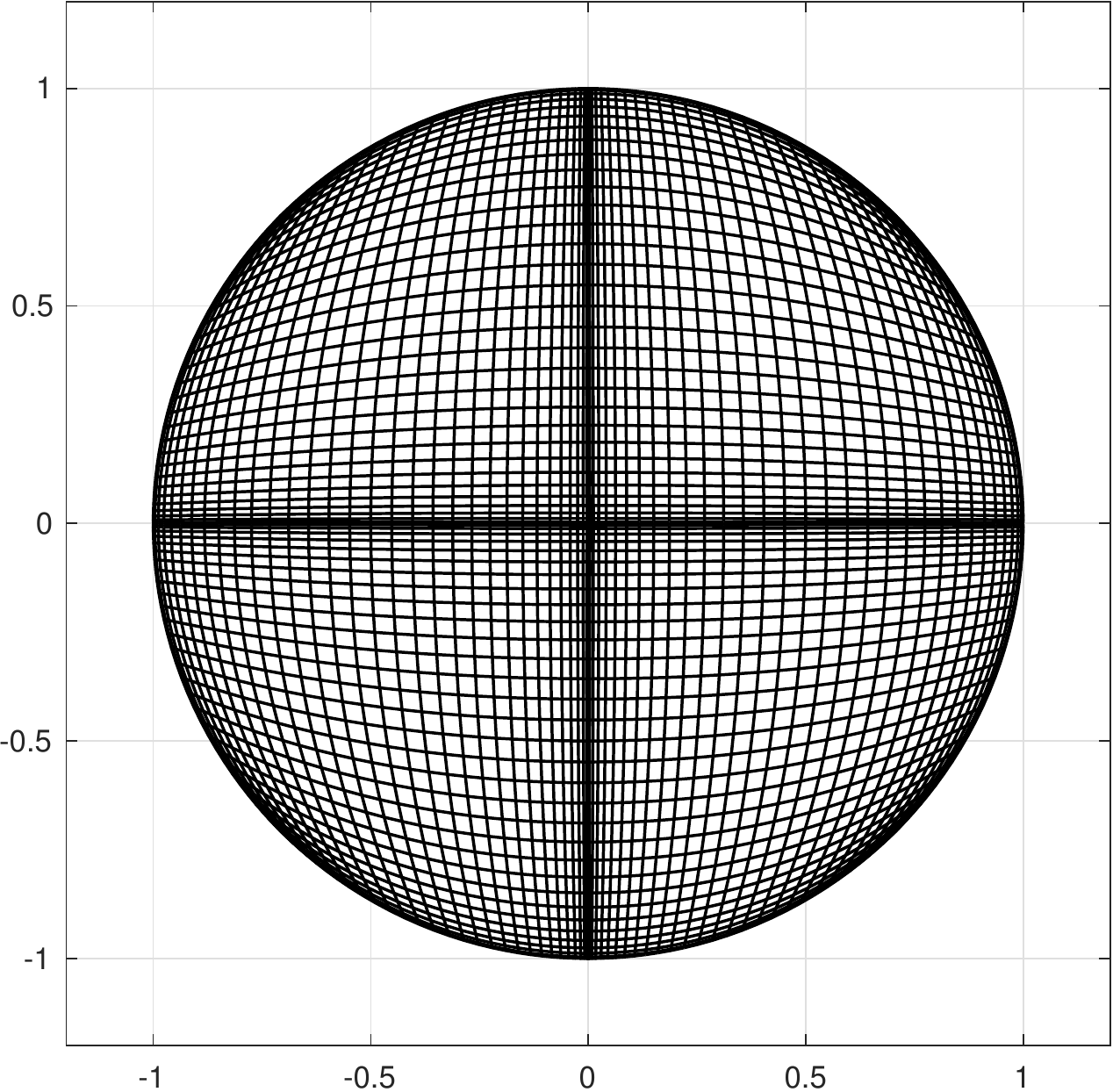}
\end{minipage}
 }\\
\subfloat[Adaptively refined meshes for finite element solution to Riesz fractional Poisson equation corresponding to the constant source term $f = 1$ for  \(\alpha=0.5\) \emph{(left)} and for \(\alpha=1.5\) \emph{(right)}.]{
 \begin{minipage}[]{\textwidth}\centering
  \includegraphics[scale=1.2]{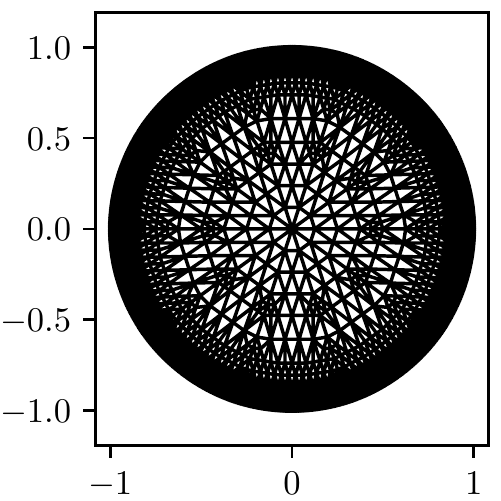} \hspace{0.3cm}
  \includegraphics[scale=1.2]{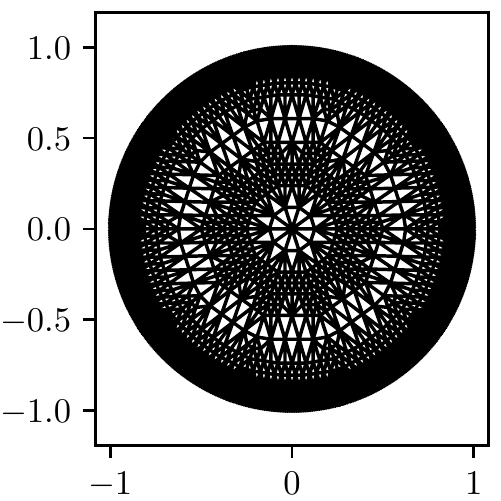}
\end{minipage}
 }\\
 \subfloat[Adaptively refined meshes for finite element solution to Riesz fractional Poisson equation corresponding to the source term $f = \sin(\pi x)\sin(\pi y)$ for  \(\alpha=0.5\) \emph{(left)} and for \(\alpha=1.5\) \emph{(right)}.]{
 \begin{minipage}[]{\textwidth}\centering
 \includegraphics[scale=1.2]{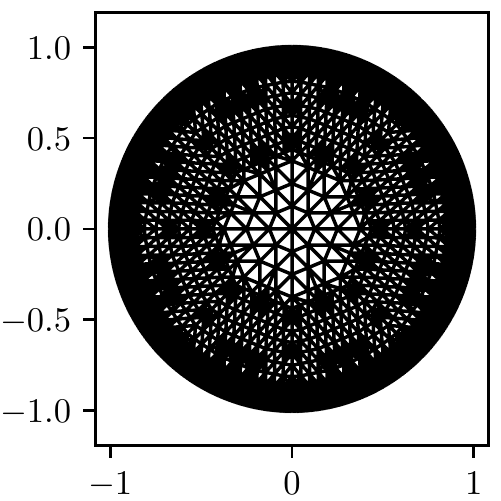}
  \includegraphics[scale=1.2]{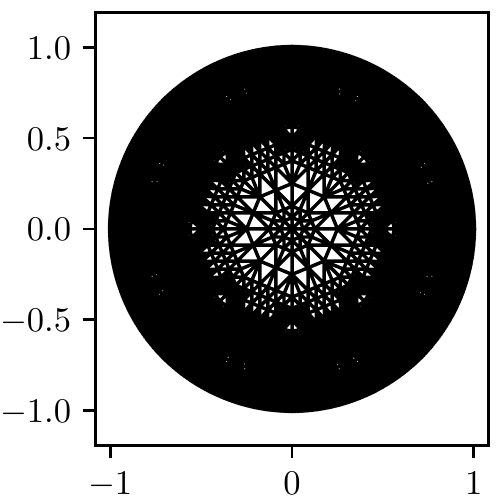}
\end{minipage}
 }
 \caption{\label{DiskMeshes}  {Comparisons on the disk}: Meshes and collocation points used for each numerical method in computing the solutions of the directional (\emph{top left}), spectral (\emph{top right}), and Riesz (\emph{center and bottom}) fractional Poisson equations.}
 \end{figure}

 
  \begin{figure}[ht!]
 \centering
\subfloat[Distribution of 1976 RBF collocating points using shape parameter $0.2$ for the RBF collocation, which is used to compute the solution to the directional fractional Poisson equatio (\emph{left}), and the mesh used in the SEM to compute the solution to the spectral fractional Poisson equation for both $\alpha = 0.5$ and $1.5$ (\emph{right}). The same points and mesh are used regardless of the definition of the source function $f$.]{
 \begin{minipage}[]{\textwidth}\centering
 \includegraphics[width=0.3\textwidth]{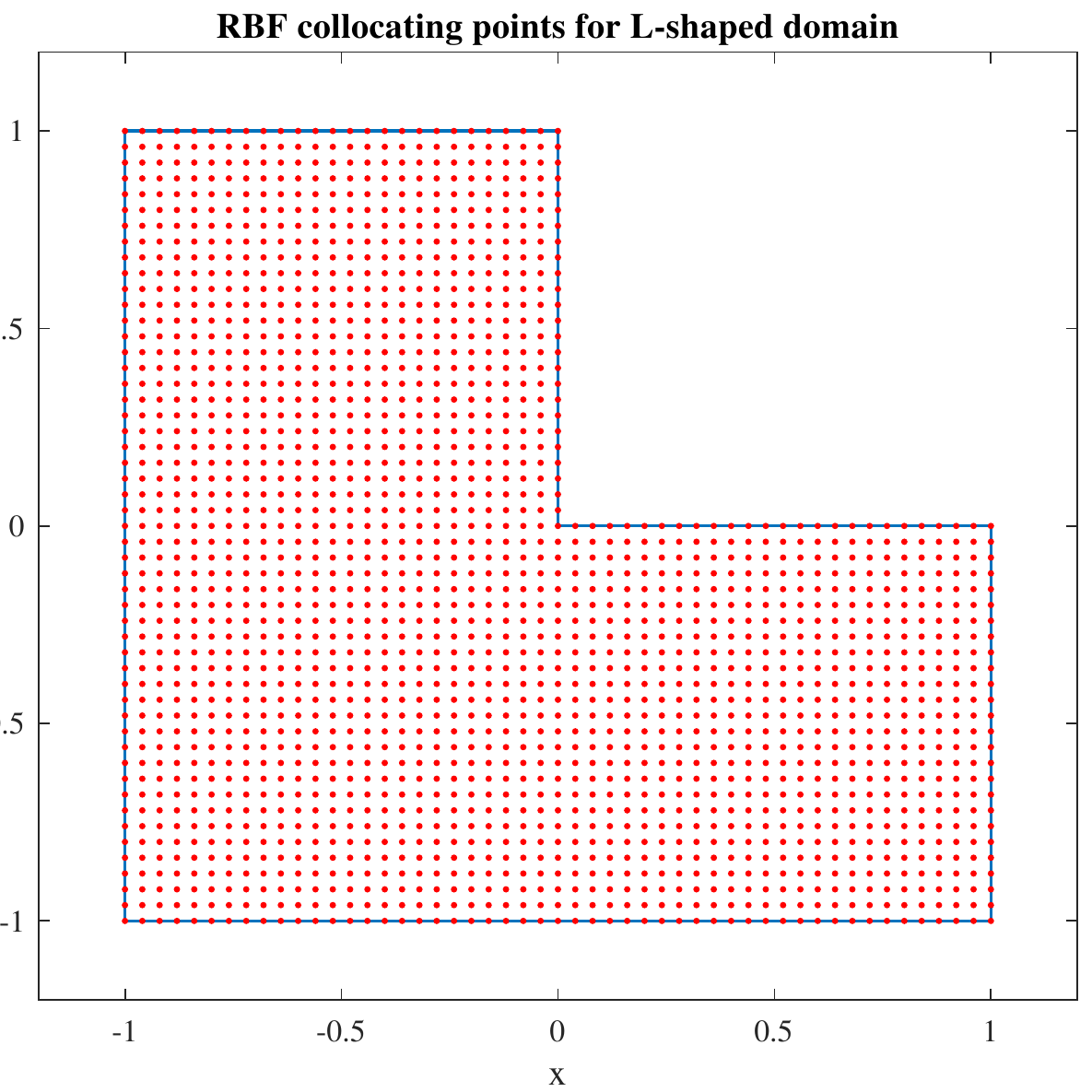}
  \includegraphics[width=0.3\textwidth]{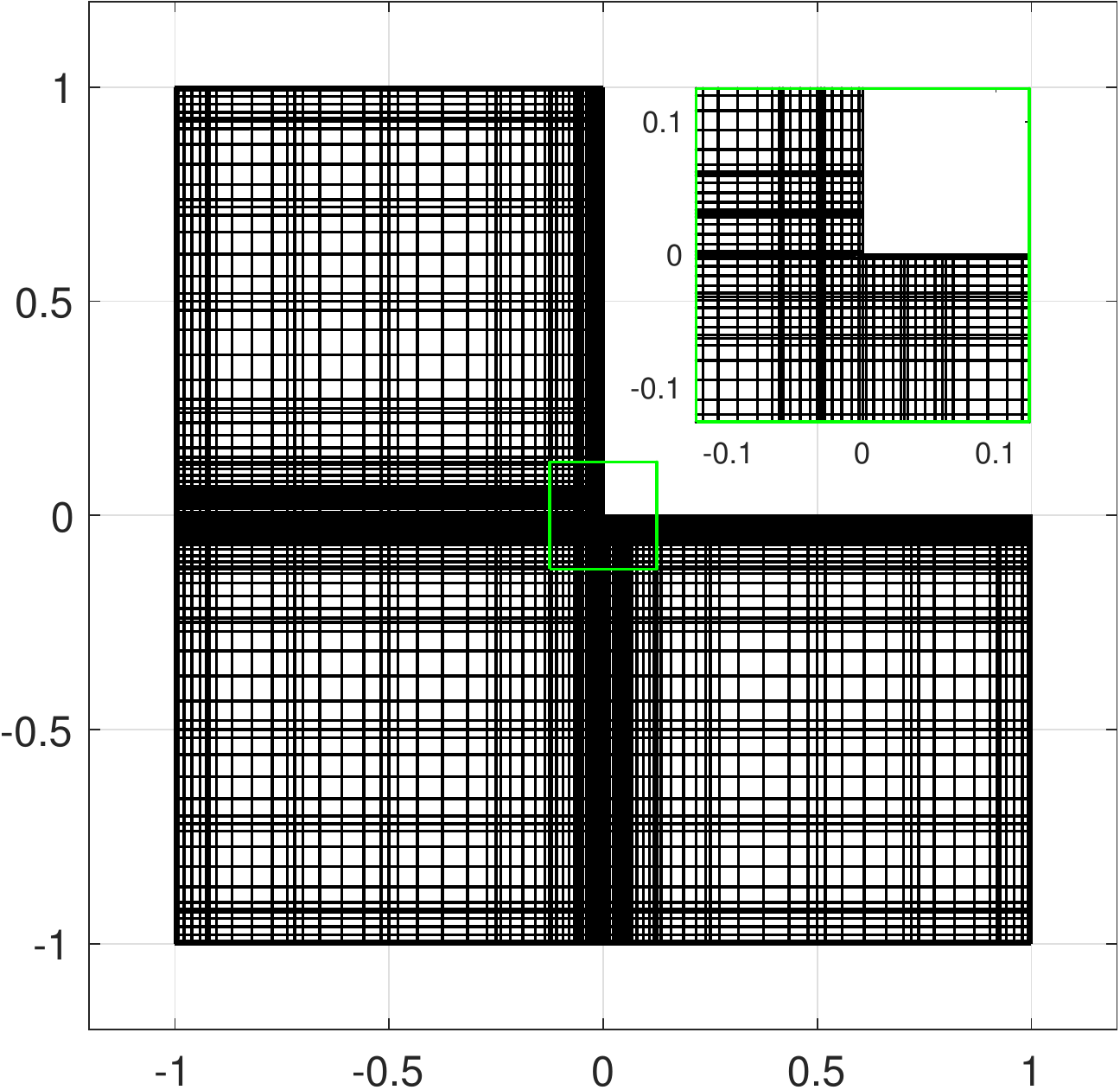}
\end{minipage}
 }\\
\subfloat[Adaptively refined meshes for finite element solution to Riesz fractional Poisson equation corresponding to the constant source term \(f=1\) for  \(\alpha=0.5\) \emph{(left)} and for \(\alpha=1.5\) \emph{(right)}.]{
 \begin{minipage}[]{\textwidth}\centering
  \includegraphics[scale=1.2]{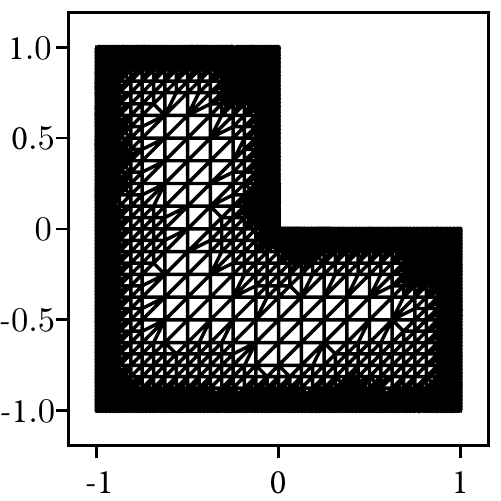}
  \includegraphics[scale=1.2]{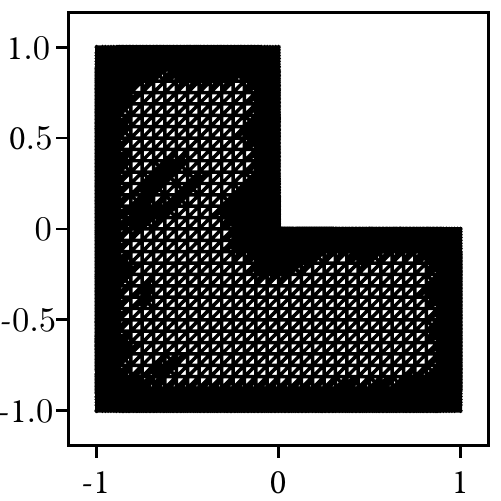}
\end{minipage}
 }\\
 \subfloat[Adaptively refined meshes for finite element solution to Riesz fractional Poisson equation corresponding to the source term $f = \sin(\pi x)\sin(\pi y)$ for  \(\alpha=0.5\) \emph{(left)} and for \(\alpha=1.5\) \emph{(right)}.]{
 \begin{minipage}[]{\textwidth}\centering
  \includegraphics[scale=1.2]{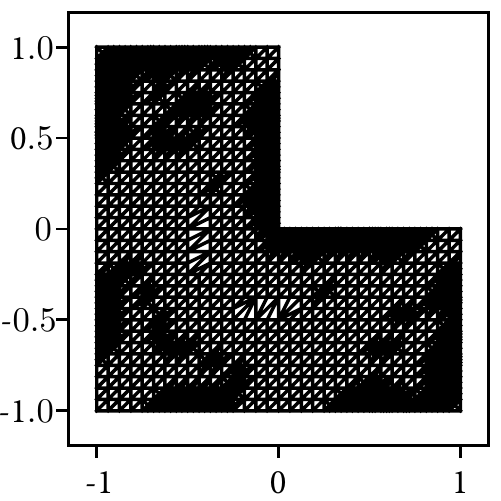}
  \includegraphics[scale=1.2]{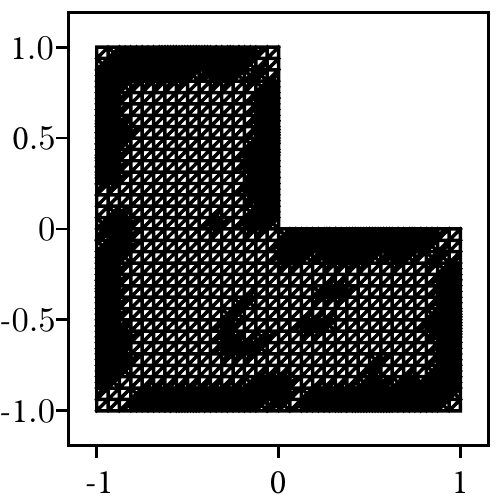}
\end{minipage}
 }
 \caption{\label{LshapeMeshes}  {Comparisons on the L-shaped domain}: Meshes and collocation points used for each numerical method in computing the solutions of the directional (\emph{top left}), spectral (\emph{top right}), and Riesz (\emph{center and bottom}) fractional Poisson equations.}
 \end{figure}
 
 \FloatBarrier

\section{Additional Disk Comparisons\label{app:disk}}

 \begin{figure}[ht!]
 \centering
 \subfloat[Solutions $u$ associated with $f=\sin(\pi r^2)$ and $\alpha = 0.5$ in the disk domain using the spectral definition (using SEM) (\emph{left}) and the Riesz definition (using AFEM) (\emph{center}), and the difference between $u_{Riesz}$ and $u_{spectral}$ for this case (\emph{right}).]{
 \begin{minipage}[]{\textwidth}\centering
 \includegraphics[width=0.25\textwidth]{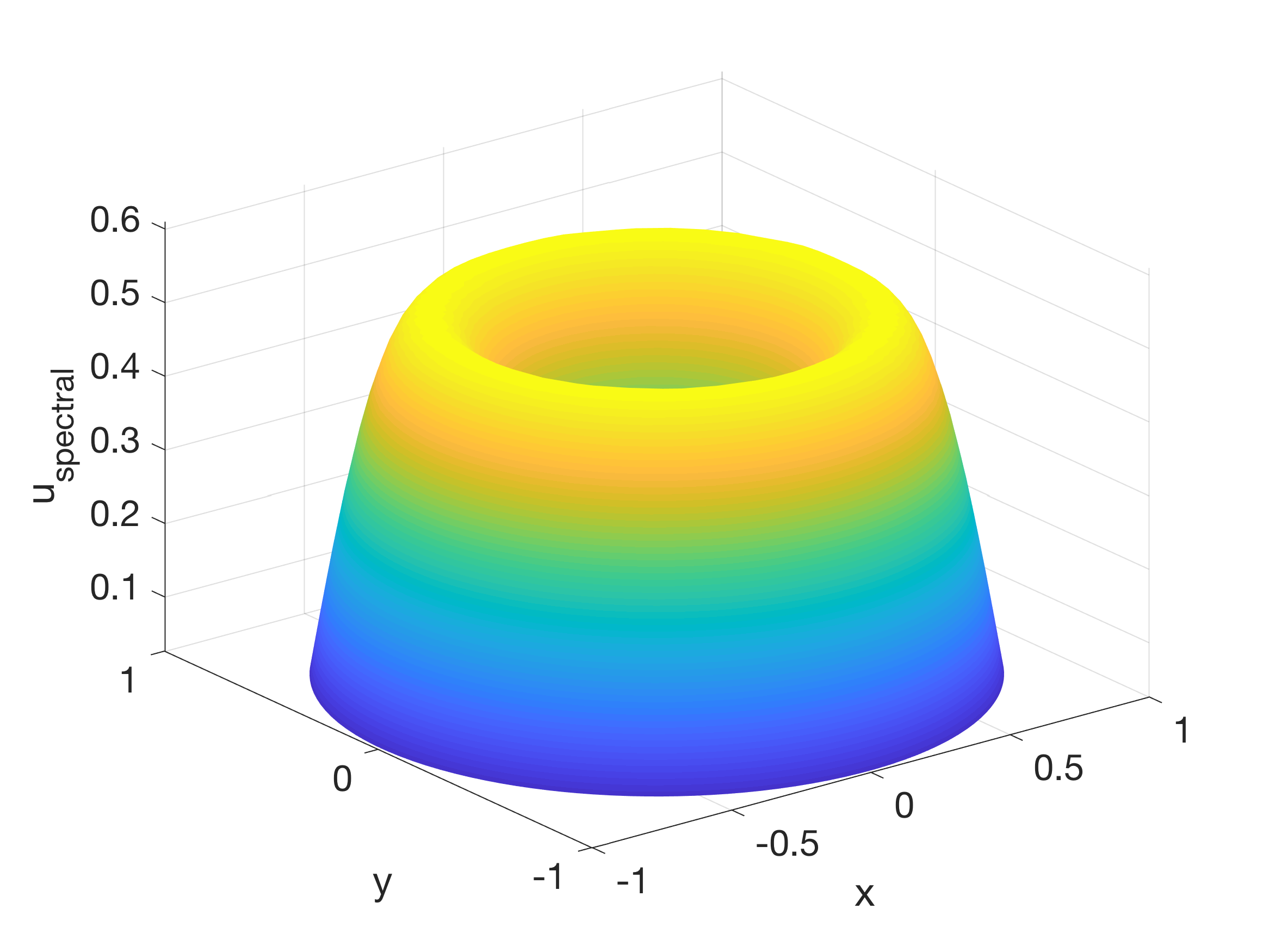}
  \includegraphics[width=0.25\textwidth]{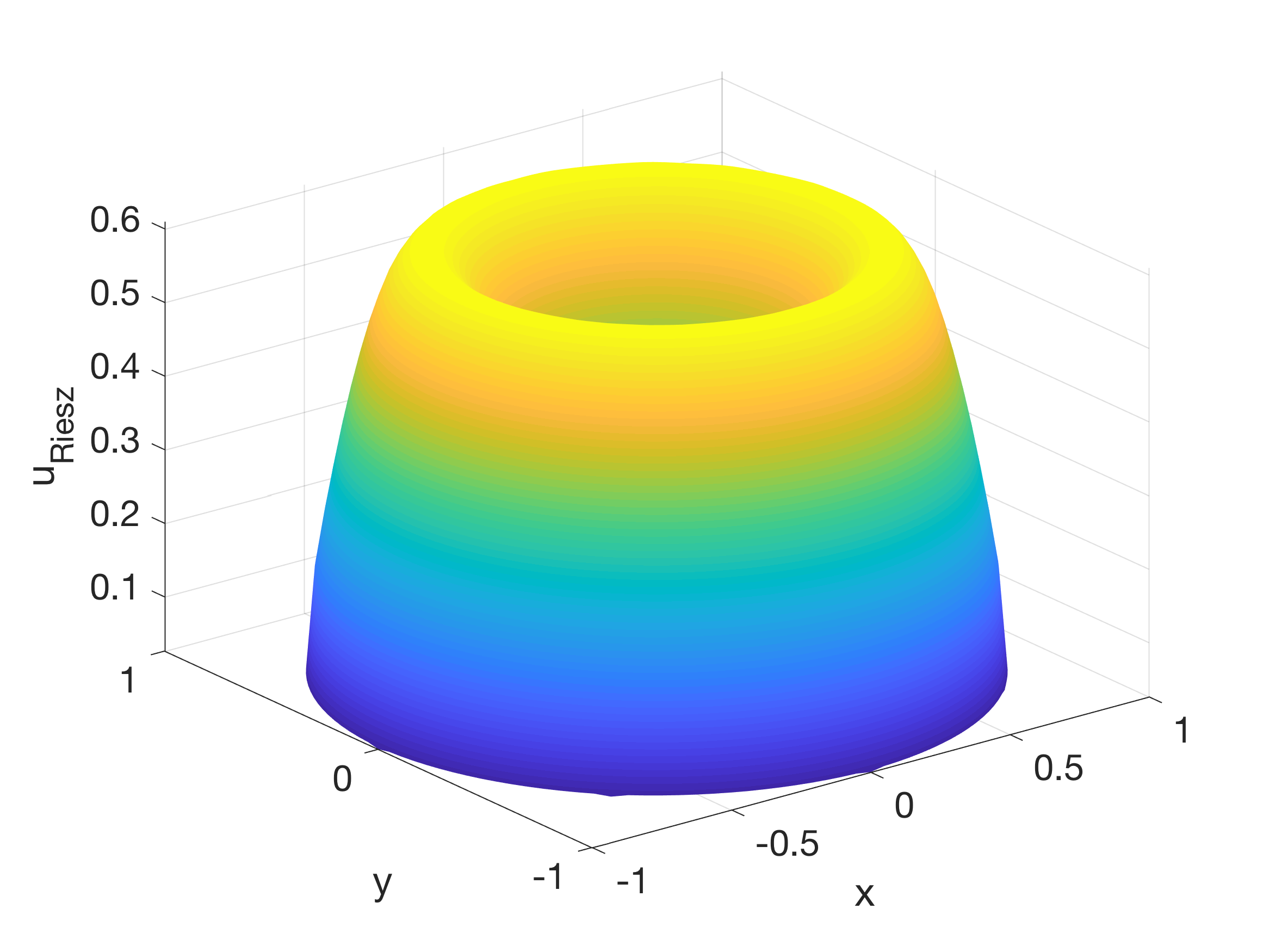}
  \includegraphics[width=0.25\textwidth]{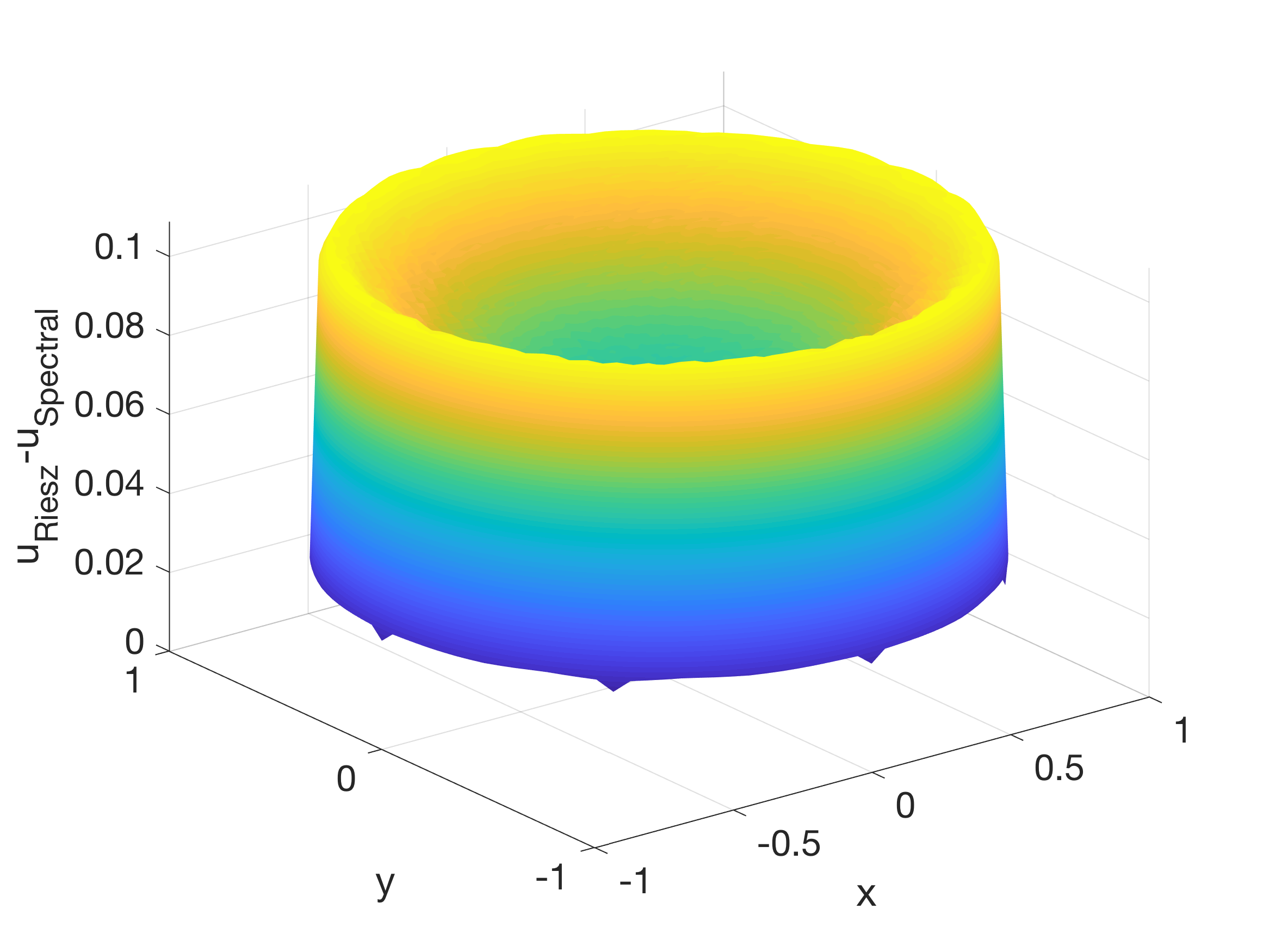}
\end{minipage}
 }\\
\subfloat[Solutions $u$ associated with $f=\sin(\pi r^2)$ and $\alpha = 1.5$ in the disk domain using the spectral definition (using SEM) (\emph{left}) and the Riesz definition (using AFEM) (\emph{center}), and the difference between $u_{Riesz}$ and $u_{spectral}$ for this case (\emph{right}).]{
 \begin{minipage}[]{\textwidth}\centering
 \includegraphics[width=0.25\textwidth]{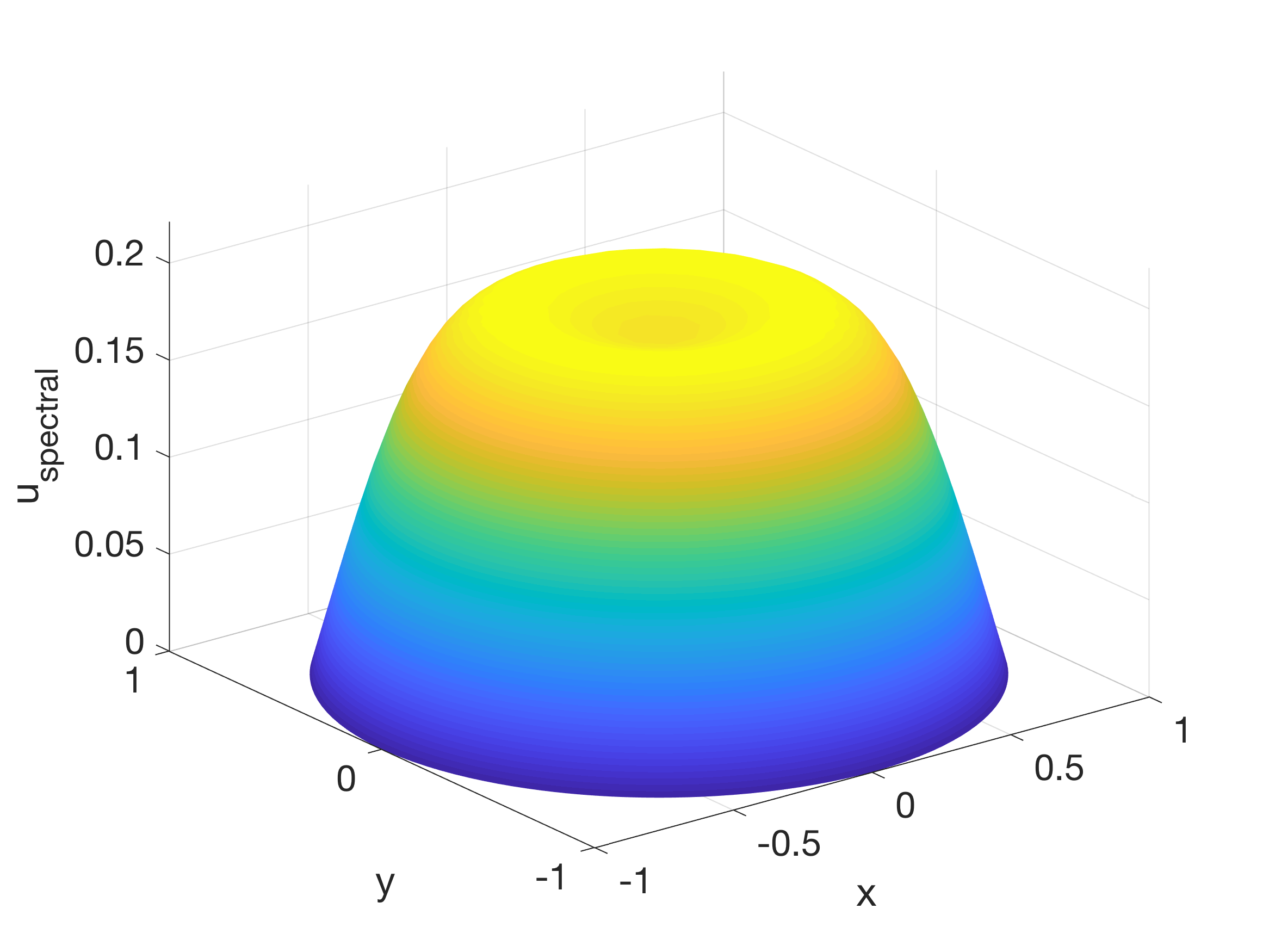}
  \includegraphics[width=0.25\textwidth]{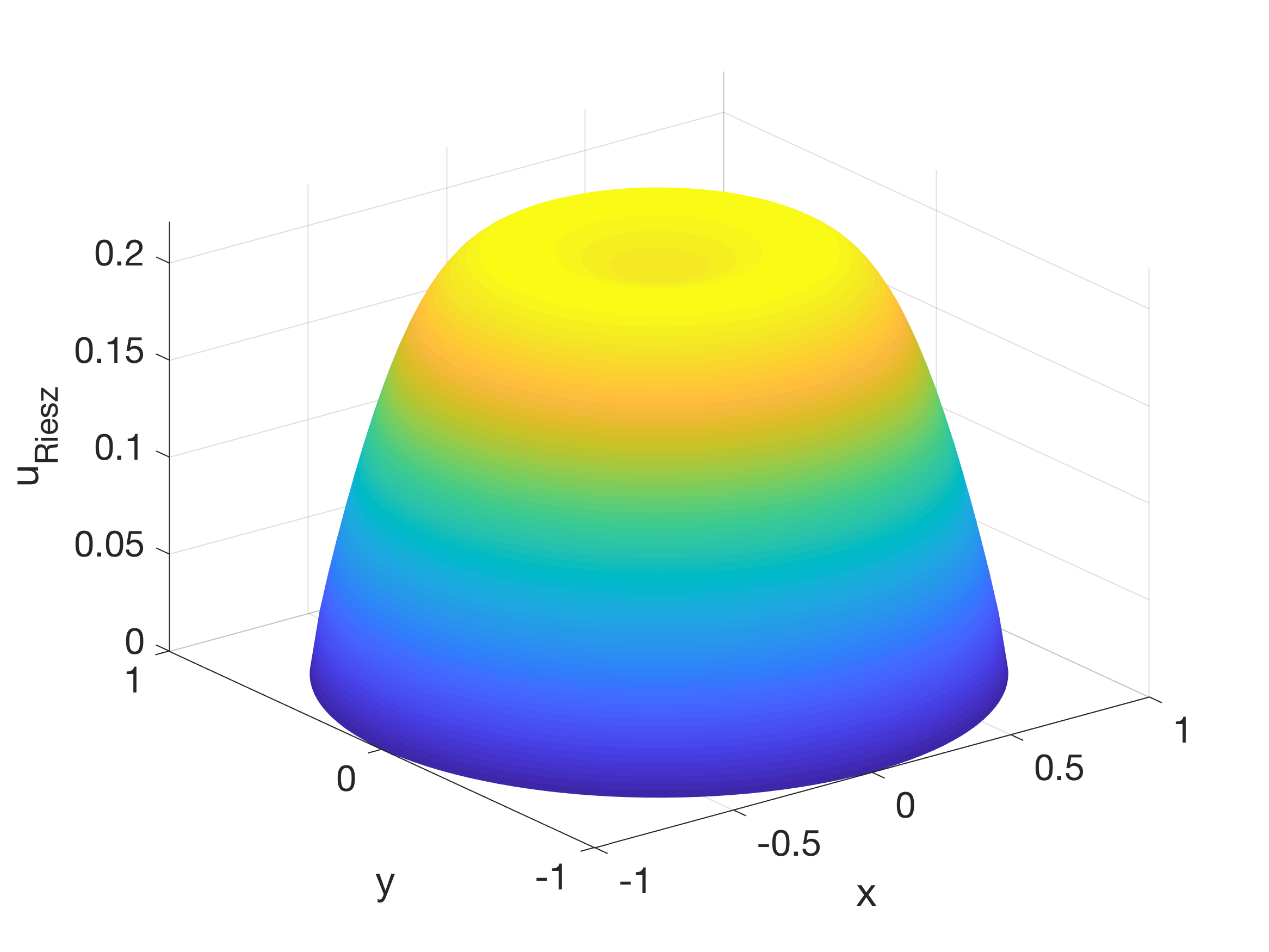}
  \includegraphics[width=0.25\textwidth]{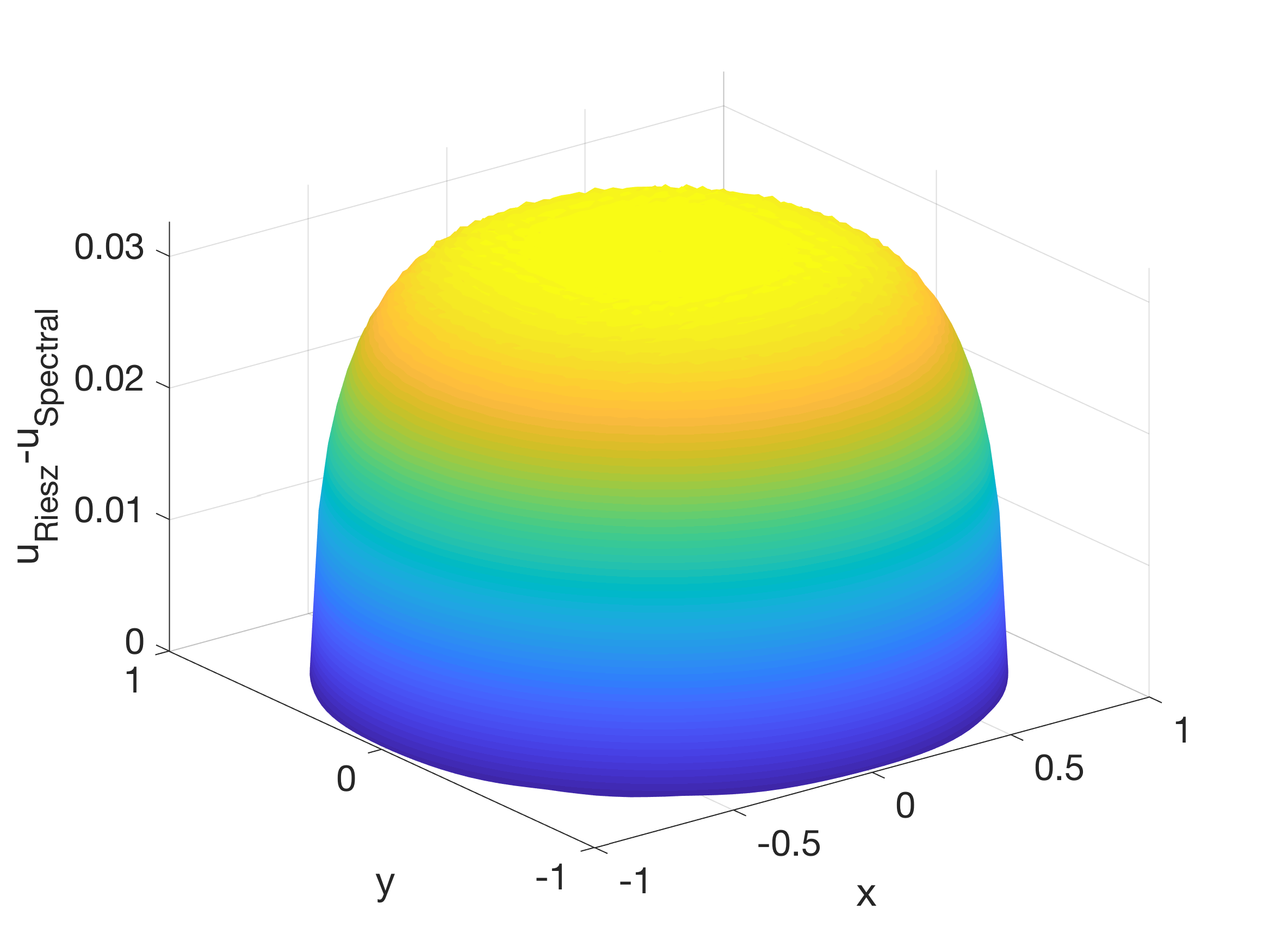}
\end{minipage}
 }

 \caption{ \label{disk34} {Solutions and differences} between $u_{Riesz}$ and $u_{spectral}$ on the disk domain for $\alpha = 1.5$.}
 \end{figure}
 
 \FloatBarrier

\begin{figure}[ht!]
\centering
\includegraphics[width=0.6\textwidth]{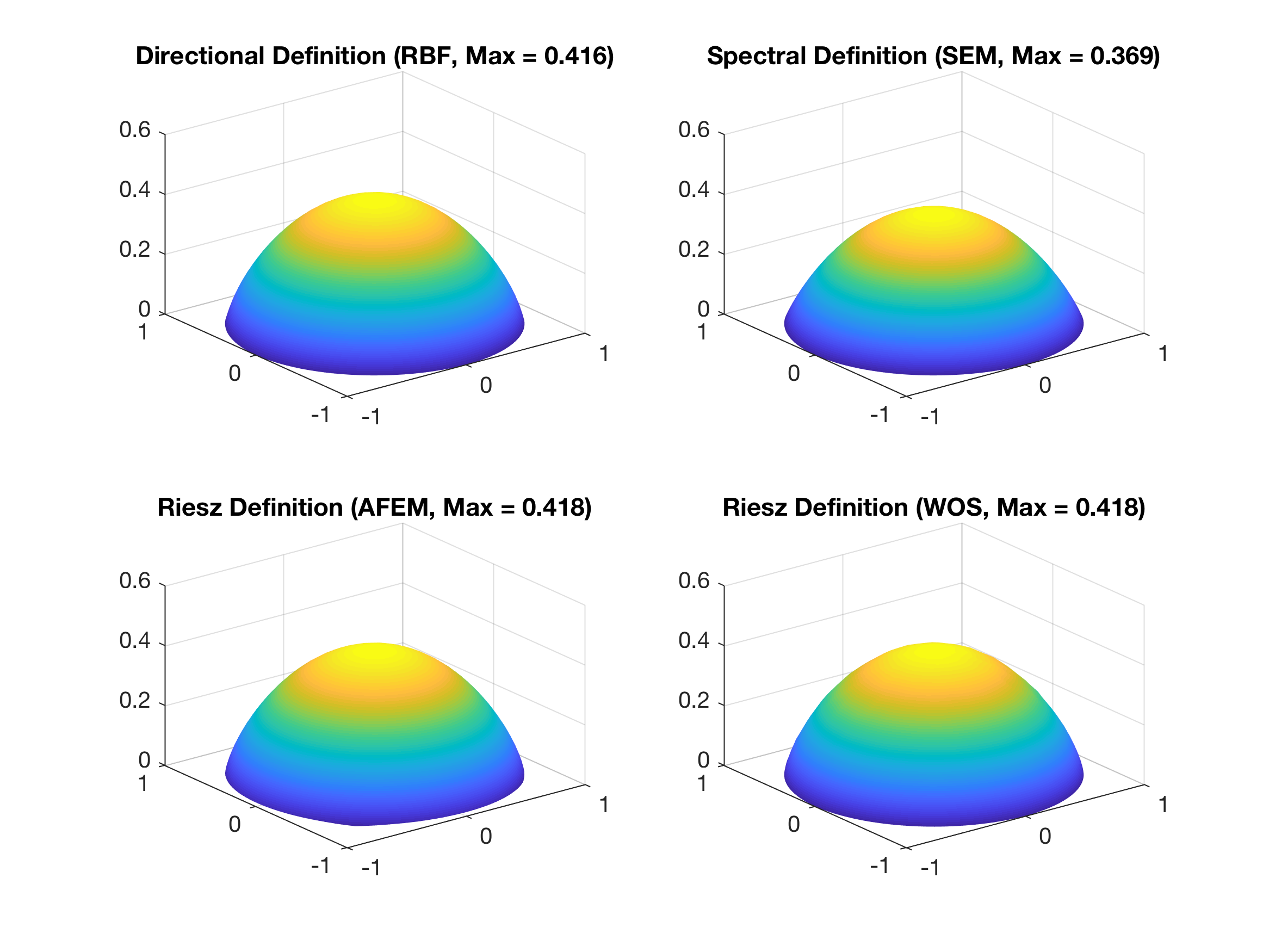}
\caption{\label{cmp-cir-const}
\color{blue}
Unit disk, $f=1$, and $g(x)=0$:
Comparison, for $\alpha = 1.5$, of $u_{\text{Riesz}}$ and $u_{\text{spectral}}$, using three methods to compute the Riesz solution and one method to compute the spectral solution. 
\emph{Top left}:
The Riesz solution obtained using the RBF collocation method based on the directional representation
\emph{Top right}:
The spectral solution obtained using the SEM.
\emph{Bottom left}:
The Riesz solution obtained using the AFEM.
\emph{Bottom right}:
The Riesz solution obtained using the WOS method. 
The only solution with a significant difference is the spectral solution (\emph{top right}); all other solutions are equivalent up to numerical error.}
\end{figure}

\FloatBarrier

\vfill
\break

\section{Additional L-shape Comparisons\label{app:Lshape}}

\begin{figure}[ht!]
\centering
\includegraphics[width=0.6\textwidth]{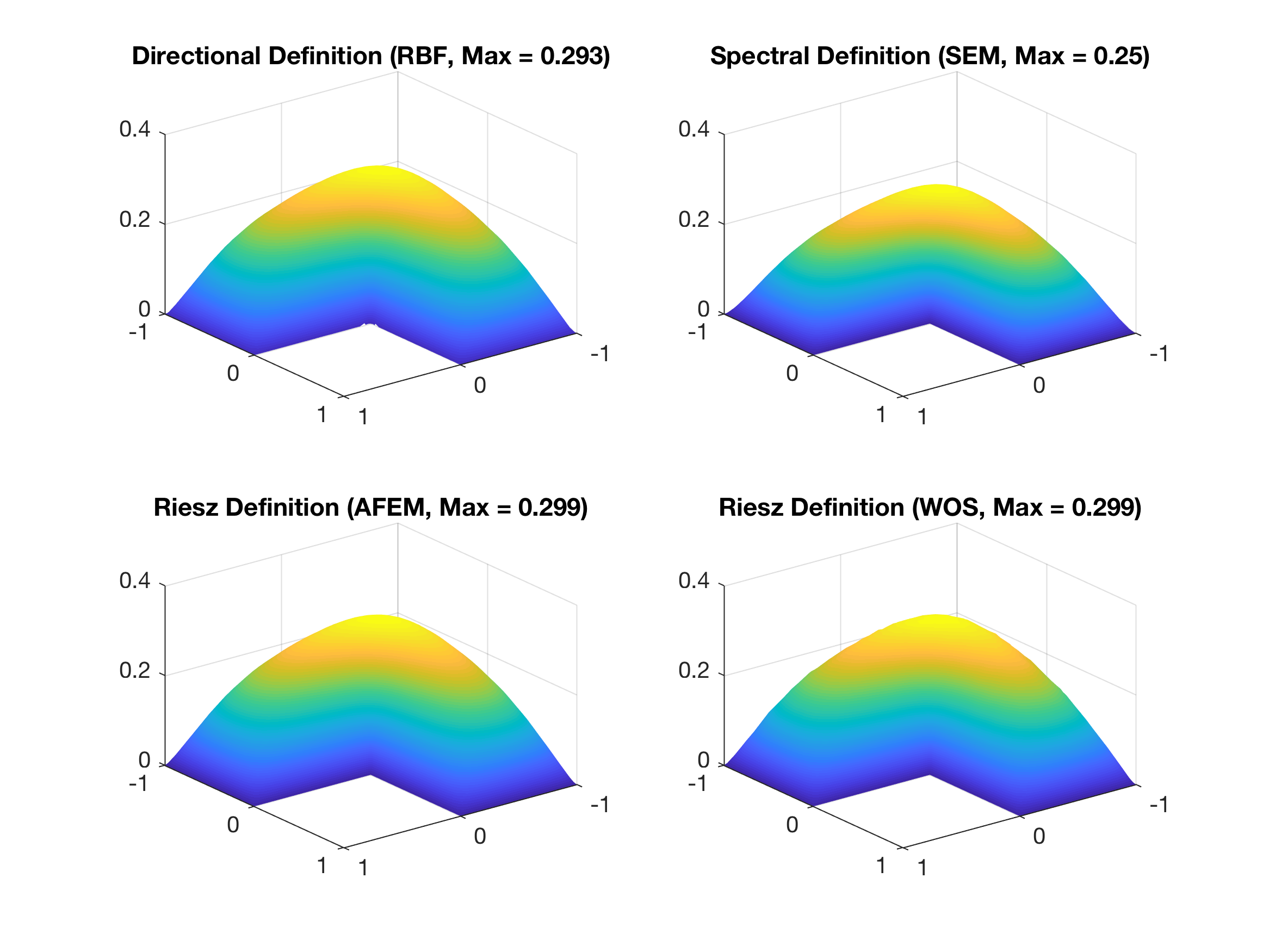}
\caption{\label{cmp-L-const} 
\color{blue}
L-shaped domain, $f(x)=1$, and $g(x)=0$: 
Comparison, for $\alpha = 1.5$, of $u_{\text{Riesz}}$ and $u_{\text{spectral}}$, using three methods to compute the Riesz solution and one method to compute the spectral solution. 
\emph{Top left}:
The Riesz solution obtained using the RBF collocation method based on the directional representation
\emph{Top right}:
The spectral solution obtained using the SEM.
\emph{Bottom left}:
The Riesz solution obtained using the AFEM.
\emph{Bottom right}:
The Riesz solution obtained using the WOS method. 
The only solution with a significant difference is the spectral solution (\emph{top right}); all other solutions are equivalent up to numerical error.}
\end{figure}

\begin{figure}[ht!]
\centering
\includegraphics[width=0.6\textwidth]{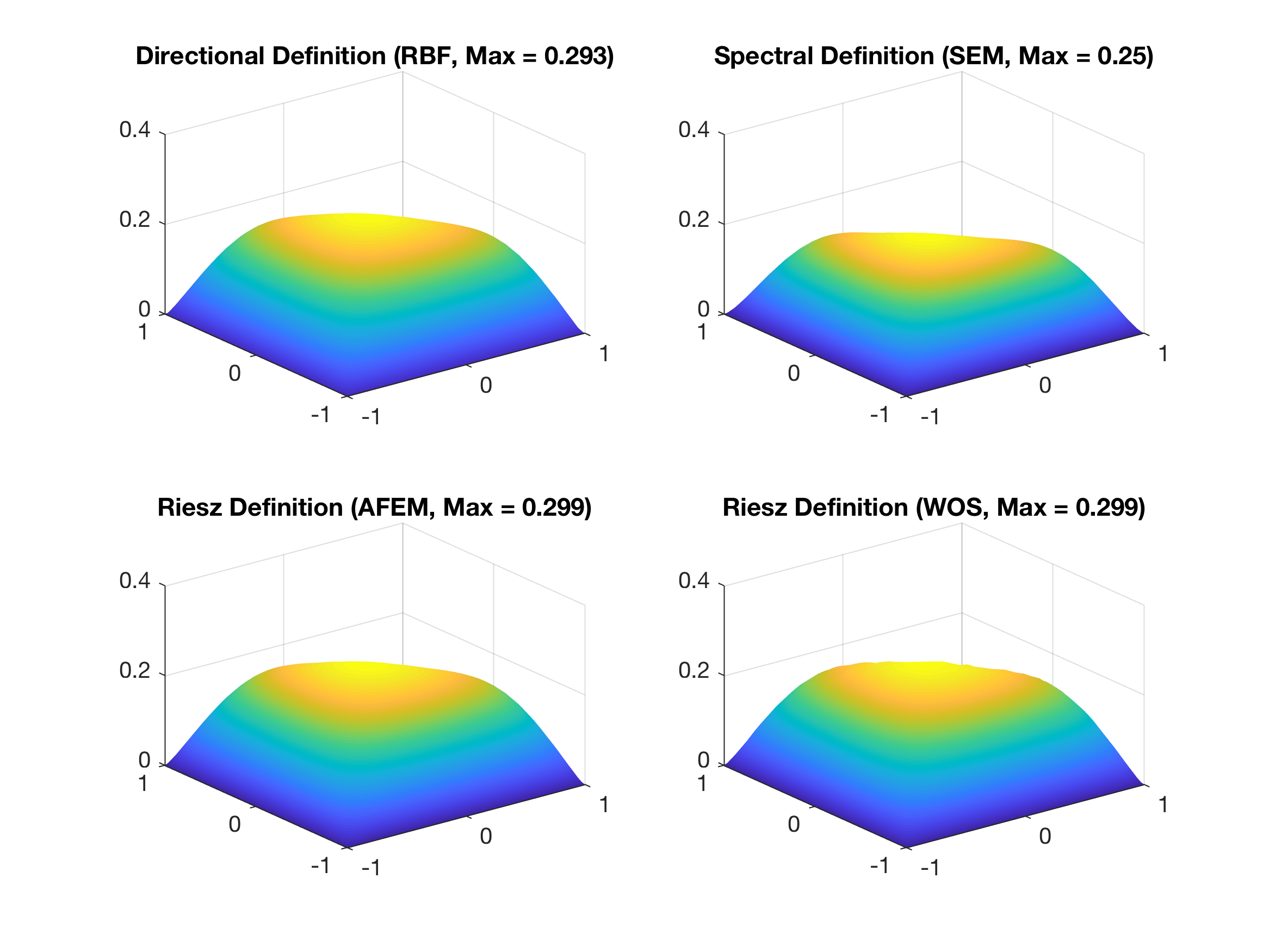}
\caption{\label{cmp-L-const-2}
\color{chocolate}The same as Fig. \ref{cmp-L-const} , but with a view facing the outside corner. 
}
\end{figure}

\FloatBarrier

\end{appendices}

\end{document}